\documentclass[a4paper,11pt]{amsart}
\usepackage{amssymb,hyperref, xcolor}
\usepackage[margin=2.5cm]{geometry}
\newtheorem{theorem}{Theorem}
\newtheorem{lemma}{Lemma}[section]
\newtheorem{proposition}[lemma]{Proposition}
\newcommand{\rc}{\mathrm{RC}}
\renewcommand{\H}{\mathrm{H}}
\numberwithin{equation}{section}
\usepackage{etoolbox}
\makeatletter
\patchcmd{\@maketitle}
  {\ifx\@empty\@dedicatory}
  {\ifx\@empty\@date \else {\vskip3ex \centering\footnotesize\@date\par\vskip1ex}\fi
   \ifx\@empty\@dedicatory}
  {}{}
\patchcmd{\@adminfootnotes}
  {\ifx\@empty\@date\else \@footnotetext{\@setdate}\fi}
  {}{}{}
\makeatother

\begin{document}
\title[Relational complexity of affine groups]{On the maximum relational complexity\\ of affine groups}

\author[C.~del Valle]{Coen del Valle}
\address{School of Mathematics and Statistics, The Open University, Milton Keynes, UK}
\email{Coen.del-Valle@open.ac.uk}

\author[N.~Gill]{Nick Gill}
\address{School of Mathematics and Statistics, The Open University, Milton Keynes, UK}
\email{nick.gill@open.ac.uk}

\author[P.~Spiga]{Pablo Spiga}
\address{Dipartimento di Matematica e Applicazioni, University of Milano-Bicocca, Via Cozzi 55, 20125 Milano, Italy}
\email{pablo.spiga@unimib.it}
\thanks{This research is supported by the Engineering and Physical Sciences Research Council (EPSRC) [grant number EP/Z534742/1]. CdV and NG would like to thank the Dipartimento di Matematica e Applicazioni at the University of Milano-Bicocca, for hosting them during the initiation of this work.}
\date{\today}
\begin{abstract}
Let $G$ be a group of permutations of a set $\Omega$. The relational complexity $\mathrm{RC}(G)$ is the least integer $r\geq 2$ such that the $G$-orbit of every finite tuple is determined by the $G$-orbits of its $r$-subtuples; there is currently only a small collection of groups $G$ for which $\mathrm{RC}(G)$ is known precisely. In this paper we study the relational complexity of finite affine groups acting naturally on a $d$-dimensional vector space over $\mathbb{F}_q$. We prove that every such group has relational complexity at most $d+2$ when $q>2$ and at most $d+1$ when $q=2$; we produce a complete classification of the primitive such groups for which these bounds are attained.
\end{abstract}

\keywords{relational complexity, height, affine group}

\subjclass[2020]{Primary 20B25, Secondary 20B15}

\maketitle
\vspace{-0.3cm}\hrule
\section{Introduction}

Let $G$ be a group of permutations of a finite set $\Omega$. Informally, the \textit{\textbf{relational complexity}} of $G$ is the least integer $r\geq 2$ such that the orbits of the induced action of $G$ on $\Omega^r$ determine the orbits of $G$ on $\Omega^m$ for all $m\geq r$. Over recent years, there has been significant interest in studying the relational complexity of various important families of permutation groups. Much of this intrigue has been sparked by conjectures and problems posed by G. Cherlin, several of which can be found in~\cite{Cherlin2000}. Among these problems is \emph{Cherlin's Conjecture}, which proposes a classification of the finite primitive permutation groups with relational complexity two -- that is, relational complexity as small as possible -- the conjecture was proved by the second and third authors along with M.W. Liebeck in~\cite{GLieSp}. 

The current paper is concerned with the other end of the spectrum; specifically, we are interested in the question \emph{``for which primitive permutation groups is the relational complexity as big as possible?"} In order to make this question precise, we must first introduce a few definitions.

For $\Lambda \subseteq \Omega$, we say that $\Lambda$ is an \textit{\textbf{independent set}} if its pointwise stabilizer is strictly smaller than the pointwise stabilizer of every proper subset of $\Lambda$. We define the \textit{\textbf{height}} of $G$ to be the maximum size of an independent set, and we denote this quantity by $\H(G)$.

We now formally introduce the relational complexity of the permutation group $G$. Let $r,n \in \mathbb{N}$ with $r \le n$. If $I,J \in \Omega^n$, then we write
$$
I \sim_r J,
$$
and say that $I$ and $J$ are $r$-equivalent with respect to the action of $G$ if, for every subtuple of size $r$ of $I$, there exists an element of $G$ mapping it to the corresponding subtuple of $J$. More precisely,
for every choice of distinct $k_1,k_2,\ldots,k_r \in \{1,\ldots,n\}$, there exists $h \in G$ such that $I_{k_i}^h=J_{k_i}$ for every $i \in \{1,\ldots,r\}$. Here, $I_k$ denotes the $k^{\rm th}$ entry of the tuple $I$, while $I^h$ denotes the image of $I$ under the action of $h$.

The group $G$ is said to have \textit{\textbf{relational complexity}} $r$ if $r$ is the smallest integer at least 2 such that, for every $n \in \mathbb{N}$ with $n \ge r$ and all $n$-tuples $I,J \in \Omega^n$,
$$
I \sim_r J \hbox{ implies } I \sim_n J.
$$
In this case, we write $\rc(G)=r$.

In 2016, Cherlin~\cite{Cherlin2016} asked which primitive permutation groups of degree $n$ have relational complexity much larger than $\log n$. It was shown in~\cite{GLS} and~\cite{KelRo} that such groups must be large-base -- that is, groups $G$ satisfying $\mathrm{Alt}(m)^\ell\leq G\leq \mathrm{Sym}(m)\wr \mathrm{Sym}(\ell)$ where the action of $\mathrm{Sym}(m)$ is on the $k$-element subsets of $\{1,2,\dots,m\}$ and the wreath product is equipped with its product action -- and very recently P. Wu showed that every large-base group of degree ${m\choose k}^\ell$ has relational complexity at most $m\ell$~\cite{WuThesis}.

In light of these results, it is natural to ask how large the relational complexity of a primitive permutation group can be (in terms of the degree) for the different families given by the O'Nan--Scott theorem. We answer this question for the family of primitive permutation groups of affine type.

\subsection{Main results} To state our main results we shall introduce some notation. Let $d$ and $f$ be positive integers, let $p$ be a prime, and set $q=p^f$. 
Let $V=\mathbb{F}_q^d$ be the $d$-dimensional vector space of row vectors over the finite field $\mathbb{F}_q$ of order $q$. Then
$$
\mathrm{AGL}_d(q)=V\rtimes \mathrm{GL}_d(q)
$$
acts naturally on $V$. More precisely, for $\omega\in V$ and $(v,A)\in \mathrm{AGL}_d(q)$, where $v\in V$ and $A\in\mathrm{GL}_d(q)$, we have
$$
\omega^{(v,A)}=v+\omega A.
$$

Our first main result is the following, some of which has already been established in the literature but is restated for completeness (see e.g.~\cite{Cherlin2000}).
\begin{theorem}\label{thrm:main}
Let $d$ be a positive integer and $q$ a prime power. Then
$$
\H(\mathrm{AGL}_d(q))=
\begin{cases}
d+1 & \textrm{when }(d,q)\ne(1,2),\\
1   & \textrm{when }(d,q)=(1,2).
\end{cases}
$$
Moreover,
$$
\rc(\mathrm{AGL}_d(q))=
\begin{cases}
d+2 & \textrm{when }q>2\textrm{ and }(d,q)\ne(1,3),\\
d+1 & \textrm{when }q=2\textrm{ and }d\textrm{ is odd},
      \textrm{ or when }(d,q)=(1,3),\\
d   & \textrm{when }q=2\textrm{ and }d\textrm{ is even},\\
\end{cases}
$$
Moreover, every subgroup of $\mathrm{AGL}_d(q)$ has relational complexity at most $d+2$ when $q>2$, and at most $d+1$ when $q=2$.
\end{theorem}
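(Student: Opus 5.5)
Our plan is to treat the three assertions in turn: the height, the relational complexity of $\mathrm{AGL}_d(q)$ itself, and the uniform bound for subgroups. Throughout we use the standard fact $\rc(G)\le \H(G)+1$ for any finite permutation group $G$. This holds because if $I\sim_r J$ with $r=\H(G)+1$, we may pick a maximal independent subtuple of $I$. Its entries determine the pointwise stabilizer of all of $I$, and an $r$-subtuple consisting of that independent part plus any single further coordinate forces the matching element of $G$ to agree on every coordinate.

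\textbf{Height.} Independent sets of $\mathrm{AGL}_d(q)$ are exactly affinely independent sets. Indeed, the pointwise stabilizer of $\Lambda$ is the stabilizer of its affine span, and it shrinks strictly exactly when the affine span grows, except when $q=2$ and $d=1$, where every stabilizer of a point is already trivial. This gives $\H=d+1$ in general and $\H=1$ for $(d,q)=(1,2)$.

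\textbf{Relational complexity of $\mathrm{AGL}_d(q)$.} The upper bound $d+2$ follows from the height. For $q=2$ we sharpen it: a tuple in which the new coordinate lies in the affine span of the others is controlled by affine combinations with coefficients in $\mathbb{F}_2$, which are sums of an \emph{odd} number of points. Using this, we show that $d+1$-equivalence already forces equivalence, and even that $d$-equivalence suffices when $d$ is even. The main step here is a parity argument: an affine relation among $d+1$ vectors over $\mathbb{F}_2$ with all coefficients $1$ requires an odd number of terms, so when $d$ is even such a full-length dependency is impossible.

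For the lower bounds we construct explicit pairs of tuples. When $q>2$ and $(d,q)\neq(1,3)$, take $I=(0,e_1,\dots,e_d,\lambda\sum e_i)$ and let $J$ differ from $I$ only in the last coordinate, with the coefficient changed to $\mu\ne\lambda$. Choose $\lambda,\mu$ so that every proper subtuple of size $d+1$ is affinely independent in both $I$ and $J$; this requires that every affine coefficient be nonzero. Then $I\sim_{d+1}J$ but $I\not\sim_{d+2}J$. Choosing such $\lambda,\mu$ requires $q\ge4$, or $q=3$ with $d\ge2$, and the case $(1,3)$ is checked by hand. For $q=2$ with $d$ odd, take $I=(0,e_1,\dots,e_d,\sum e_i)$ and compare it with a suitable $J$. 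We expect the hardest part of the whole theorem to be pinning down these exact constructions and the parity phenomenon for $q=2$.

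\textbf{Subgroups.} For $G\le \mathrm{AGL}_d(q)$ we cannot use the height of $G$ directly, since it could be larger. Instead, let $I\sim_{d+2}J$ (respectively $\sim_{d+1}$ when $q=2$), and choose a minimal subtuple $I'$ of $I$ whose affine span equals that of $I$. Then $|I'|\le d+1$. Append any further coordinate; since $d+2$ entries are available, some $g\in G$ maps this extended subtuple correctly. An element of $\mathrm{AGL}_d(q)$ is determined on the affine span by its values on $I'$. Hence all of these elements $g$ agree on $\mathrm{span}(I')$, and therefore a single one of them maps all of $I$ to $J$. For $q=2$ we save one coordinate by combining this with the parity trick above: an extra point lying in the span is an odd sum of points of $I'$.
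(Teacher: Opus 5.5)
Your height computation and your affine-span argument giving $\mathrm{RC}(K)\le d+2$ for every $K\le\mathrm{AGL}_d(q)$ are sound. Two small corrections: height \emph{is} monotone under passing to subgroups, so $\mathrm{RC}(K)\le\mathrm{H}(K)+1\le d+2$ is immediate, which is how the paper argues. Also, in your sketch of $\mathrm{RC}(G)\le\mathrm{H}(G)+1$ you should take a minimal subtuple with the same pointwise stabilizer as $I$, which is automatically independent. A maximal independent subtuple can have a strictly larger stabilizer in a general permutation group.

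The first genuine failure is the lower bound for $q=3$. With last entries $\lambda\mathbf{1}$ and $\mu\mathbf{1}$, the frame conditions are exactly $\lambda,\mu\neq0$, $d\lambda\neq1$, $d\mu\neq1$ and $\lambda\neq\mu$. If $q=3$ and $3\nmid d$, the condition $d\lambda\neq1$ excludes one of the two elements of $\mathbb{F}_3^\ast$, so only one admissible scalar remains and no pair exists. For instance, for $(d,q)=(2,3)$ only $\lambda=1$ works, since $(-1,-1)$ has coordinate sum $1$. So your claim that suitable $\lambda,\mu$ exist whenever $q=3$ and $d\ge2$ is false for $d\equiv\pm1\pmod 3$, and those cases (e.g.\ $\mathrm{RC}(\mathrm{AGL}_2(3))=4$) remain unproved. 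The fix is to allow non-constant coefficient vectors, as the paper does with $T(a),T(b)$ for $a,b\in(\mathbb{F}_3^\ast)^d$ chosen according to $d\bmod 3$. For example, take $a=(1,\ldots,1)$ and $b=(-1,1,\ldots,1)$ when $d\equiv2\pmod 3$.

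The second failure is the $q=2$ upper bound. Suppose the minimal spanning subtuple $I'$ is a frame of $V$ and an extra entry is $x=\sum_{i\in S}P_i$ with $|S|$ odd. You can discard a point of $I'$ outside $S$ only if $S\neq I'$. When $d$ is even, $S=I'$ is possible, and no choice of frame avoids it: in $(0,e_1,\ldots,e_d,\sum_i e_i)$ every entry is the sum of all the others. Your parity remark is the right tool for showing that an almost independent set of $d+1$ points is a frame when $d$ is even. Here, however, the count is $d+1$ frame points, and for even $d$ their full sum is a legitimate affine combination.

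The missing idea, which is the content of the paper's lemma on subgroups of $\mathrm{AGL}_d(2)$, is to transport the structure to $J$. Every $(d+1)$-subtuple of $I$ is a frame and is carried by an affine map onto the corresponding subtuple of $J$, so every $(d+1)$-subset of the entries of $J$ is a frame. This forces the extra entry $y$ of $J$ to be the full sum of the others as well. The element of $K$ sending $(P_0,\ldots,P_d)$ to $(Q_0,\ldots,Q_d)$ then sends $x$ to $y$. Without this step, neither $\mathrm{RC}(K)\le d+1$ for $K\le\mathrm{AGL}_d(2)$ with $d$ even, nor the passage from $(d+1)$-tuples to longer tuples in your bound $\mathrm{RC}(\mathrm{AGL}_d(2))\le d$, is established.

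Finally, the $q=2$ lower bounds are not supplied: you give no $J$ for odd $d$, and you do not address $\mathrm{RC}(\mathrm{AGL}_d(2))\ge d$ for even $d\ge4$. The paper compares $(0,e_1,\ldots,e_d)$ with $(0,e_1,\ldots,e_{d-1},\sum_{i<d}e_i)$ for odd $d$. For even $d$ it compares $(0,e_1,\ldots,e_{d-1})$ with $(0,e_1,\ldots,e_{d-2},\sum_{i\le d-2}e_i)$. Both are checked by the same deletion argument.
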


Given that our main interest is in relational complexity, we should explain why Theorem~\ref{thrm:main} includes a bound for the height. This is because of a fundamental inequality relating $\H(G)$ and $\rc(G)$. By~\cite[Lemma~2.1]{GLS}, if $G\ne 1$ then we have
\begin{equation}\label{eq:2.1}
\rc(G) \le \H(G)+1.
\end{equation}
Although the proof of this inequality is elementary, the result is particularly useful. Relational complexity does not behave well under passage to subgroups. In general, if $G$ is a permutation group and $K$ is a proper subgroup of $G$, there need not be any relation between $\rc(G)$ and $\rc(K)$. By contrast, height is monotone under taking subgroups: if $K\le G$, then $\H(K)\le \H(G)$; see~\cite[Lemma~2.2]{GLS}. Consequently, whenever $1\ne K\le G$,~\eqref{eq:2.1} yields
\begin{equation}\label{eq:height sub}
\rc(K)\le \H(K)+1\le \H(G)+1.
\end{equation}
We will make use of this observation throughout the paper.



Our remaining results classify the primitive groups of affine type that achieve the upper bounds on relational complexity given in the final sentence of Theorem~\ref{thrm:main}, namely $d+2$ with $q>2$ and $d+1$ with $q=2$.

\begin{theorem}\label{thrm:main2}
Let
$G=V\rtimes G_0\le \mathrm{AGL}_d(q)$,
where $G_0$ is the stabilizer in $G$ of the zero vector and acts irreducibly on $V$. Suppose that
$\rc(G)=d+2$.
Then $q>2$ and one of the following holds:
\begin{enumerate}
\item\label{part1} $d=1$ and $|G_0|>2$,
\item\label{part2} $d\ge 2$ and there exists a divisor $f_1$ of $f$ such that
\begin{align*}
&\mathrm{SL}_d(p^{f_1})\unlhd G_0
\le \mathrm{GL}_d(p^{f_1})\mathbb{F}_q^\ast,\\
&\mathrm{SL}_d(p^{f_1})<G_0\cap \mathrm{GL}_d(p^{f_1}),
\end{align*}
where $\mathbb{F}_q^\ast$ is identified with the group of non-zero scalar matrices in $\mathrm{GL}_d(q)$.

\item\label{part3} $d= 2$, $p=3$, $f$ is even and
$G_0=\mathrm{SL}_2(5)Z$, where $\mathrm{SL}_2(5)$ is a maximal subgroup of $\mathrm{SL}_2(9)$ and $Z\le\mathbb{F}_q^\ast$ is a group of scalar matrices. Moreover, $G_0$ contains a scalar matrix of order $4$.
\end{enumerate}

The converse also holds, that is, if $G_0$ is as in parts~\eqref{part1},~\eqref{part2} or~\eqref{part3}, then $G=V\rtimes G_0$ has relational complexity $d+2$.
\end{theorem}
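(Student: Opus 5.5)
Since $\rc(G)\le \H(G)+1\le d+2$ by \eqref{eq:height sub} and Theorem~\ref{thrm:main}, equality $\rc(G)=d+2$ forces a very rigid witness. Suppose $I,J\in V^n$ satisfy $I\sim_{d+1}J$ but $I\not\sim_{d+2}J$. I would first reduce to $n=d+2$ and show that the witness has a canonical shape. Translating, we may take $I_1=J_1=0$. Consider the remaining $d+1$ entries of $I$. If these span a proper subspace, or if some $d+1$-subtuple already fixes the others, then the standard height argument of \cite[Lemma~2.1]{GLS} gives $I\sim_{d+1}J\Rightarrow I\sim J$. Hence $(I_2,\dots,I_{d+1})$ must be a basis $e_1,\dots,e_d$ of $V$ with trivial pointwise stabilizer, and $I_{d+2}=\sum a_ie_i$ with all $a_i\neq0$; this is the independence condition. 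The same holds for $J$. Using $(d+1)$-equivalence on $(0,I_2,\dots,I_{d+1})$, we may assume $J_i=I_i$ for $i\le d+1$. Then $(d+1)$-equivalence says that for each $k$, the subtuple obtained by deleting $e_k$ is mapped to the corresponding subtuple of $J$ by some $g_k\in G_0$ fixing all $e_i$ with $i\ne k$ and sending $v=I_{d+2}$ to $w=J_{d+2}\ne v$. Applying the same reasoning to the subtuple with $0$ deleted gives an affine element as well. Conversely, any such configuration yields $\rc(G)=d+2$. The problem therefore becomes the following: classify irreducible $G_0$ containing, for each hyperplane-basis $\{e_i\}_{i\ne k}$, an element fixing it pointwise, with these elements compatible on $v$ and $w$, together with the affine condition, but with no element of $G_0$ fixing all $e_i$.

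When $q=2$, all $a_i=1$, so $w-v$ lies in the intersection of the lines fixed-direction constraints. A short calculation should show that $w=v$, giving the first claim. For $d=1$ the condition reduces to $G_0\le\mathbb{F}_q^\ast$: we need a scalar $\lambda$ with $\lambda\cdot1\neq 1$, and the affine subtuple $(I_1,I_3)\mapsto(J_1,J_3)$ must also be realizable. Checking this yields exactly $|G_0|>2$ (for $|G_0|=2$ the only candidate is $w=-v$, and the affine condition fails). For $d\ge2$, each $g_k$ is a nontrivial transvection or dilation fixing the hyperplane $H_k=\langle e_i:i\ne k\rangle$ pointwise; by irreducibility these are hyperplanes in general position. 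So $G_0$ contains elements fixing $d$ hyperplanes in general position, i.e. pseudoreflections or transvections.

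The next step is to invoke the classification of irreducible linear groups generated by transvections (McLaughlin) and by pseudoreflections (Kantor and Wagner, and Zalesskii–Serezhkin). After taking the normal subgroup generated, this gives $\mathrm{SL}_d(p^{f_1})$, or classical groups $\mathrm{Sp}$, $\mathrm{SU}$, $\mathrm{O}$, or small exceptional cases. The classical and symplectic cases fail because the element $g_k$ must map $v$ to the same $w$ for every $k$, and a form-preserving argument forces $w=v$. The main obstacle, I expect, is this case analysis, especially the reflection groups in characteristic $3$ and the $2$-dimensional case $\mathrm{SL}_2(5)<\mathrm{SL}_2(9)$, where one must verify the condition by hand and find the need for a scalar of order $4$. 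In the $\mathrm{SL}_d(p^{f_1})$ case, I would then show that the normalizer in $\mathrm{GL}_d(q)$ is $\mathrm{GL}_d(p^{f_1})\mathbb{F}_q^\ast$ modulo field automorphisms. Field automorphisms can be excluded since they cannot fix a basis while moving $v$ appropriately. The condition $\mathrm{SL}_d(p^{f_1})<G_0\cap\mathrm{GL}_d(p^{f_1})$ then comes from requiring $g_k$ to be a dilation in $\mathrm{GL}_d(p^{f_1})$ not absorbed by scalars. Finally, for the converse I would exhibit an explicit witness in each case: $v=\sum e_i$ and $w=v+(\lambda-1)e_k$-type coordinates, chosen so that the diagonal matrices required lie in $G_0$ while no single element fixes all $e_i$ and moves $v$.
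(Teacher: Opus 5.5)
Your reduction to witnesses $I=(0,e_1,\dots,e_d,v)$, $J=(0,e_1,\dots,e_d,w)$ is fine and agrees with the paper, as are the deduction $q>2$ and the $d=1$ analysis. For the reduction you need all coordinates of $v,w$ non-zero and coordinate sums $\neq1$.

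The gap is the case $d\ge2$, which is the substance of the theorem. You never use the fact that the elements $B_0,\dots,B_d$ realizing the $(d+1)$-equivalences all have the \emph{same} moved line. With $\delta=w-v$, one has $B_k=I_d+c_k\delta$ for explicit columns $c_k$. Moreover some $\lambda_r=\det B_r\neq1$, because $v\neq w$. Then $(B_jB_r)^{-1}B_rB_j=I_d+u_j\delta$, where $u_j=(\lambda_j-1)c_r-(\lambda_r-1)c_j\neq0$ and $\delta u_j=0$, so this is a non-trivial transvection. Conjugation by $B_i$ multiplies $u$ by $\lambda_i$.

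This yields an elementary abelian group $E\le G_0$ of transvections with common centre, with $|E|=q_0^{d-1}$ where $\mathbb{F}_{q_0}=\mathbb{F}_p(\lambda_1,\dots,\lambda_d)$, and $\mathbf{C}_V(E)=\mathbb{F}_q\delta$. Its normal closure $N$ is irreducible and generated by transvections. So Kantor's theorem applies, rather than the much broader pseudoreflection classification. These two properties of $E$ then eliminate every case except $\mathrm{SL}_d(q_1)$ and $\mathrm{SL}_2(5)<\mathrm{SL}_2(9)$:
\begin{itemize}
\item forms for $d\ge3$, because $\mathbf{C}_{V_1}(E)=L_1\neq L_1^\perp$;
\item the monomial case, because a transvection there is determined by its centre;
\item the small characteristic-$2$ cases, by comparing $|E|\ge4$ with the orders of Sylow $2$-subgroups.
\end{itemize}

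Without this idea your route does not go through. First, the normal closure of pseudoreflections in an irreducible group need not be irreducible (take diagonal reflections in a monomial group). So you cannot cite Wagner or Zalesskii--Serezhkin before proving irreducibility, and you do not address this.

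More seriously, the pseudoreflection classification is far larger than ``$\mathrm{SL}$, classical groups, small exceptions''. It contains the monomial groups $G(m,r,d)$. For $d=2$ it also contains dihedral groups, and $\mathrm{SL}_2(3)$- and $\mathrm{SL}_2(5)$-type groups extended by scalars, over many fields. You give no way to exclude these. The way to do so is precisely the observation above: your configuration forces a large group of transvections with a common centre.

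Your form argument is also wrong as stated. $G_0$ need only normalize the classical group, so the $B_k$ preserve the form only up to a multiplier. For $d\ge3$ the multiplier is $1$, since $B_k$ fixes a hyperplane pointwise. For $d=2$, however, $\mathrm{Sp}_2=\mathrm{SL}_2$, and the $B_r$ with $\det B_r\neq1$ are not isometries. So ``the symplectic case fails'' would exclude exactly the examples in part~(2).

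Part~(3) is asserted rather than proved. You must show that some $B_i$ has projective image of order $2$ in $\mathrm{Alt}(5)$. Orders $3$ and $5$ are excluded using the order of a Sylow $3$-subgroup of $N$, the subgroup structure of $\mathrm{Alt}(5)$, and the non-proportionality of $c_1,c_2$. Then $\det B_i=-1=z^2$ produces the scalar of order $4$. The converse in (3) also needs an explicit witness.

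Two minor points. $\mathbf{N}_{\mathrm{GL}_d(q)}(\mathrm{SL}_d(q_1))=\mathrm{GL}_d(q_1)\mathbb{F}_q^\ast$ exactly, so there are no field automorphisms to exclude. Your converse witness $v=\sum_ie_i$ violates the coordinate-sum condition when $d\equiv1\pmod p$.
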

\begin{theorem}\label{thrm:main3}
Let
$G=V\rtimes G_0\le \mathrm{AGL}_d(2)$,
where $G_0$ is the stabilizer in $G$ of the zero vector and acts irreducibly on $V$. Suppose that
$\rc(G)=d+1$.
Then one of the following holds:
\begin{enumerate}
\item\label{part12} $d$ is odd and $G_0=\mathrm{SL}_d(2)$,
\item\label{part22} $d\geq 2$ is even and $G_0$ is the natural image of either $\mathrm{Alt}(d+1)$ or $\mathrm{Alt}(d+2)$ acting on the fully deleted permutation module,
\item\label{part32} $d\ge 6$, $d$ is even and $G_0$ is either $\Omega_d^+(2)$ or $\Omega_d^-(2)$,
\item\label{part42} $d=4$ and $G_0=\mathrm{Alt}(7)$.
\end{enumerate}

The converse also holds, that is, if $G_0$ is as in parts~\eqref{part12},~\eqref{part22},~\eqref{part32} or~\eqref{part42}, then $G=V\rtimes G_0$ has relational complexity $d+1$.
\end{theorem}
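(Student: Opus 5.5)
The plan has two directions. First I reduce $\rc(G)=d+1$ to a structural condition on $G_0$; then I classify the irreducible subgroups of $\mathrm{GL}_d(2)$ that satisfy it.

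\emph{Reduction.} By Theorem~\ref{thrm:main} and \eqref{eq:height sub}, $\rc(G)\le \H(G)+1\le d+1$. So equality forces two things: an independent set of size $d$ (or $d+1$) must exist, and some pair of tuples must be $d$-equivalent but not in the same $G$-orbit. Over $\mathbb{F}_2$ the natural witnesses are $I=(0,e_1,\dots,e_d)$ and $J=(0,e_1,\dots,e_{d-1},w)$, or affine variants of these, where $w$ lies outside the span of $e_1,\dots,e_{d-1}$ but is not $G_0$-equivalent in the required way. Translating by $V$, I will show the following. If $\rc(G)=d+1$, then there are $d+1$ points in general affine position such that every $d$-subset lies in a common $G$-orbit pattern, but the whole tuple does not. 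Equivalently, after fixing $0$, the stabilizer $G_0$ acts transitively on bases ``$(d-1)$-locally'' but not globally. Concretely, I expect the condition to read as follows: for a basis $(e_1,\dots,e_d)$, each $(d-1)$-subtuple of $(e_1,\dots,e_d,e_1+\dots+e_d)$ is in the $G_0$-orbit of the corresponding subtuple of some fixed reference tuple, and the full tuple is not. I will carry out this translation carefully using the affine structure, with $0$ playing the role of one coordinate.

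\emph{Converse direction.} For each group on the list I exhibit the witnessing tuples.

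For $\mathrm{SL}_d(2)=\mathrm{AGL}_d(2)_0$ this is just Theorem~\ref{thrm:main}, which gives $\rc=d+1$ for $d$ odd.

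For $\mathrm{Alt}(d+1)$ and $\mathrm{Alt}(d+2)$ on the fully deleted module, I take the images of the standard vectors $e_i+e_{i+1}$, or the $d+2$ points $e_i$ in the quotient module. An even/odd permutation parity argument then shows that $d$-subtuples are matched while the full tuple is not. This is the classical reason $\rc(\mathrm{Alt}(n))=n-1$, and it transfers to these groups.

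For $\Omega^{\pm}_d(2)$ I use a quadratic form $Q$. The Arf invariant, or equivalently the spinor norm/Dickson invariant, separates two frames that have identical values of $Q$ and of the bilinear form on all proper subsets. I choose $d+1$ vectors where this happens. The $d\ge 6$ restriction comes from Witt's lemma needing enough room.

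The case $\mathrm{Alt}(7)<\mathrm{Alt}(8)\cong\mathrm{SL}_4(2)$ is handled by direct computation.

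\emph{Forward direction (classification).} The condition from the reduction forces $G_0$ to act transitively on sequences of $d-1$ linearly independent vectors with prescribed constraints. In particular $G_0$ is transitive on nonzero vectors, or on the nonsingular or singular vectors of a form, so $G_0$ has large orbits on $(d-1)$-frames. I would then apply the classification of subgroups of $\mathrm{GL}_d(2)$ that are transitive on nonzero vectors (Hering's theorem). Those subgroups are $\mathrm{SL}_{d/k}(2^k)$, $\mathrm{Sp}$, $G_2(2^k)$, $\mathrm{Alt}(6)$, $\mathrm{Alt}(7)$, and so on. For the orthogonal and alternating cases, where $G_0$ is not transitive on nonzero vectors, I would use the classification of irreducible subgroups with few orbits on $V$, or the Liebeck rank-3 classification. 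I then rule out each non-listed candidate by checking that the frame-stabilizer sizes are too small. For example, for $\mathrm{SL}_{d/k}(2^k)$ with $k>1$, any independent set has size at most $d/k+1$, so the height bound \eqref{eq:height sub} already gives $\rc(G)<d+1$. In the same way I compute $\H(G)$ and use $\rc\le \H+1$ to discard most groups.

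I expect the main obstacle to be exactly this classification step. The groups with height equal to $d$ or $d+1$ must be determined, and doing so requires a careful argument that a long independent chain of stabilizers forces $G_0$ to contain transvection-like or reflection-like elements. Here I would invoke McLaughlin's classification of irreducible groups generated by transvections over $\mathbb{F}_2$: the outcomes are $\mathrm{SL}$, $\mathrm{Sp}$, $\mathrm{O}^\pm$, $\mathrm{Sym}(d+1)$ and $\mathrm{Sym}(d+2)$. After that, I would decide which of their subgroups of index at most two attain $\rc=d+1$, using the parity and Dickson-invariant arguments above.
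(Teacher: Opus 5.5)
Your converse direction is broadly the paper's. For $\Omega^\pm_d(2)$ and $\mathrm{Alt}(d+1),\mathrm{Alt}(d+2)$ the paper also uses an index-two argument. It takes $I=(0,e_1,\ldots,e_d)$ with $(e_i)$ dual to a suitable basis of reflection/transvection directions, and $J=I^s$ for a reflection or transposition $s$ in $\mathrm{O}^\pm_d(2)$ or $\mathrm{Sym}(m)$ but not in $G_0$. The invariant at work is the Dickson invariant, not the Arf invariant, and small $d$ (including $\mathrm{Alt}(7)$) is done by computer.

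The forward direction, however, does not work as proposed. You assert that $\rc(G)=d+1$ forces $G_0$ to be transitive on non-zero vectors, or on the singular or non-singular vectors of a form. This is unjustified, since $\rc(G)=d+1$ only provides a single pair with $I\sim_d J$ and $I\not\sim_{d+1}J$. It is also false for groups in the conclusion. $\mathrm{Alt}(d+1)$ on its fully deleted module has non-zero orbits indexed by the weights $2,4,\ldots,d$. So it is not transitive on $V\setminus\{0\}$, nor (for $d\ge 6$) on the non-singular vectors of its invariant quadratic form, nor of rank $3$. Hering's theorem and the rank-$3$ classification therefore cannot produce the list.

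The height bound does not rescue this. For example, $V\rtimes\mathrm{Sp}_d(2)$ and $V\rtimes\mathrm{Sym}(d+1)$ have height $d+1$, so they must be excluded by a genuine argument. In the paper, Lemma~\ref{l:formpres} gives $\langle e_1,\ldots,e_{d-1}\rangle=\delta^\perp$, so the transvection $\tau_\delta\in G_0$ maps $I$ to $J$.

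More fundamentally, McLaughlin's theorem cannot reach the groups that actually occur. $\Omega^\pm_d(2)$, $\mathrm{Alt}(d+1)$ and $\mathrm{Alt}(d+2)$ contain no transvections at all. You give no mechanism producing an overgroup generated by transvections in which $G_0$ has index two.

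The missing idea is to extract elements of $G_0$ from the witnesses themselves. Via Lemma~\ref{l: almost independent} and Lemma~\ref{l:ai} one normalizes to $I=(0,e_1,\ldots,e_{d-1},e_d)$ and $J=(0,e_1,\ldots,e_{d-1},w)$. (Your normal form built on $e_1+\cdots+e_d$ is not the relevant configuration of size $d+1$.) The elements witnessing the $d$-equivalences then give $B_i\in G_0$ with $\mathrm{rank}(B_i-I_d)\le 2$ and explicitly known fixed spaces.

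\begin{itemize}
\item If $G_0$ contains a transvection, these fixed spaces show that the subgroup generated by transvections is irreducible, and McLaughlin applies. When $d$ is odd and $w=e_1+\cdots+e_{d-1}$, a transvection must first be manufactured, using a tournament argument.
\item If $G_0$ contains no transvection, every $B_i$ is a bireflection of order $2$ or $3$. One must then prove that their normal closure $N$ is absolutely irreducible, primitive and tensor-indecomposable, in order to invoke the Guralnick--Saxl classification of groups generated by bireflections. That is what yields $\Omega^\pm_d(2)$, $\mathrm{Alt}(d+1)$ and $\mathrm{Alt}(d+2)$, followed by a normalizer computation.
\end{itemize}

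Without the rank-at-most-$2$ elements and a bireflection-type classification, your forward direction is not established.
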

Theorems~\ref{thrm:main2} and~\ref{thrm:main3} apply, in particular, to all primitive groups of affine type: a group $G$ is primitive of affine type if and only if there exists a prime $p$ and an integer $d\geq 1$ such that $G$ is permutation isomorphic to a subgroup $H\leq\mathrm{AGL}_d(p)$ containing the translation subgroup $V=\mathbb{F}_p^d$ and the stabiliser $H_0$ of the zero vector acts irreducibly on $V$.


\subsection{Structure of the paper} In Section~\ref{sec:ais} we begin with some general background in affine geometry and the theory of relational complexity, before proving Theorem~\ref{thrm:main}. In Section~\ref{sec:maximal}, we prove the forward implication of Theorem~\ref{thrm:main2}; the converse is proved in Section~\ref{sec:converse}. The final two sections are organized in the reverse order: in Section~\ref{sec:someq=2} we explicitly calculate the relational complexities of some groups which are relevant to the proof of Theorem~\ref{thrm:main3}, and in Section~\ref{sec:maximalq=2} we complete the proof of Theorem~\ref{thrm:main3}.

\section{Independent sets and almost independent sets}\label{sec:ais}

Throughout this section let $e_1,\ldots,e_d$ denote the standard basis of $V=\mathbb{F}_q^d$. An \textit{\textbf{affine subspace}} is a set $\pi$ of \textit{\textbf{points}} of the form $P+w$, where $P\in V$ is fixed and $w$ ranges over a vector subspace $W$ of $V$. The dimension of $\pi$ is defined to be the vector space dimension $\dim_{\mathbb{F}_q}W$ of $W$. By abuse of notation, we write $\dim \pi$ for this dimension.

\subsection{Frames and independent sets} If $P_1,P_2,\ldots,P_r$ are points of $\pi$, then a straightforward calculation shows that
$$
a_1P_1+a_2P_2+\cdots+a_rP_r\in\pi,
$$
for all $a_1,a_2,\ldots,a_r\in\mathbb{F}_q$ satisfying
$
a_1+a_2+\cdots+a_r=1.$
A \textit{\textbf{frame}} of the affine subspace $\pi$ is a set of points $\{P_1,P_2,\ldots,P_r\}$ such that every point of $\pi$ can be written uniquely in the form
$
a_1P_1+a_2P_2+\cdots+a_rP_r,
$
where
$
a_1+a_2+\cdots+a_r=1.
$
Observe that every frame of $\pi$ contains exactly $1+\dim \pi$ points. We will also use the fact that $\mathrm{AGL}_d(q)$ acts regularly, that is, transitively and freely, on the set of ordered frames of the affine space $V$.

\begin{lemma}\label{l:0}
Let $\emptyset\ne \Lambda\subseteq V$. Then $\Lambda$ is an independent set for the action of $\mathrm{AGL}_d(q)$ if and only if $\Lambda$ is a frame for some affine subspace of $V$, with the single exception that, when $(d,q)=(1,2)$, the frame $\Lambda=V$ is not independent.
\end{lemma}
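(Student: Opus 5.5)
The plan is to describe pointwise stabilizers in $G=\mathrm{AGL}_d(q)$ through affine spans, and then compare the stabilizer of $\Lambda$ with the stabilizers of its proper subsets. Everything rests on one observation. Every $g\in G$ is an affine map, so it preserves affine combinations: if $\sum a_i=1$ then $(\sum a_iP_i)^g=\sum a_iP_i^g$. Hence the pointwise stabilizer $G_{(S)}$ of any set $S$ equals the pointwise stabilizer of the affine span $\langle S\rangle$, the smallest affine subspace containing $S$. Conversely, if $\pi=\langle S\rangle$ has dimension $k$, then $G_{(\pi)}$ is determined by $\pi$. After conjugating, we may take $\pi$ to be $\langle 0,e_1,\dots,e_k\rangle$. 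Then $G_{(\pi)}$ consists of the maps $(0,A)$ with $e_iA=e_i$ for $i\le k$. Consequently, for affine subspaces $\pi'\subsetneq\pi$ we get $G_{(\pi)}<G_{(\pi')}$ strictly, because there is some $A$ fixing a basis of the direction space of $\pi'$ but moving a vector of $\pi$'s direction. The one exception is the case where no such nonidentity map exists, which I handle below.

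For the forward direction, suppose $\Lambda$ is independent. If $\Lambda$ were not a frame of $\langle\Lambda\rangle$, then some $P\in\Lambda$ would lie in $\langle\Lambda\setminus\{P\}\rangle$. This is because a set affinely spans with unique coefficients exactly when it is affinely independent, that is, when no point is an affine combination of the others. Then $\langle\Lambda\setminus\{P\}\rangle=\langle\Lambda\rangle$, so the two sets have equal pointwise stabilizers, contradicting independence. Hence $\Lambda$ is a frame of $\langle\Lambda\rangle$.

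For the converse, let $\Lambda=\{P_0,\dots,P_k\}$ be a frame of $\pi$. Any proper subset $\Lambda'$ spans a proper affine subspace $\pi'\subsetneq\pi$ of smaller dimension, since a frame has exactly $1+\dim$ points. Using regularity on ordered frames, extend $\Lambda'$ to a frame of $V$ and translate so that a point of $\Lambda'$ is $0$. I need some $g$ fixing $\pi'$ pointwise but not $\pi$. Take $\pi'=\langle 0,e_1,\dots,e_j\rangle$ and $\pi=\langle 0,e_1,\dots,e_k\rangle$ with $j<k$. Choose the linear map with $e_i\mapsto e_i$ for $i\neq j+1$ and $e_{j+1}\mapsto \lambda e_{j+1}$ for some $\lambda\ne 1$; this works when $q>2$. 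When $q=2$ and $d\ge2$, instead send $e_{j+1}\mapsto e_{j+1}+e_m$ for some $m\ne j+1$, which is possible whenever $d\ge 2$. Alternatively, send $e_{j+1}$ to some other vector outside the span of $e_1,\dots,e_j$ and different from $e_{j+1}$; this exists unless $d=1,q=2$.

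It remains to treat $(d,q)=(1,2)$ separately. There $G\cong C_2$ is regular on $V$. Singletons and $\emptyset$ behave fine, so a one-point set is independent: $G_{(\emptyset)}=G>1=G_{(\{P\})}$. The frame $V=\{0,1\}$, however, has pointwise stabilizer equal to that of $\{0\}$, so it is not independent. The only real obstacle is the $q=2$ case of the converse, where scalars are unavailable and a transvection-type map must be used instead. This is exactly where $d\ge2$ is needed.
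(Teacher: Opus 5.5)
Your proof is correct and follows essentially the same route as the paper. Both normalize to the standard frame $\{0,e_1,\ldots,e_\ell\}$. For the forward direction, both use that a point lying in the span of the others does not shrink the pointwise stabilizer; the paper translates to $0$ and argues linearly in $\mathrm{GL}_d(q)$, while you argue affinely. For the converse, you give explicit witnesses — a diagonal map for $q>2$ and a transvection for $q=2$, $d\ge 2$ — where the paper just says \emph{direct calculation}.

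One small omission: when $\Lambda'=\emptyset$ there is no point of $\Lambda'$ to translate to $0$. In that case $G_{(\Lambda)}<G$ follows at once from the transitivity of $\mathrm{AGL}_d(q)$ on $V$.
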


\begin{proof}
Suppose first that $\Lambda$ is a frame for an affine subspace $\pi$ of $V$ and that $\Lambda\ne V$ when $(d,q)=(1,2)$. Up to replacing $\Lambda$ with a suitable $\mathrm{AGL}_d(q)$-conjugate, we have
$$
\Lambda=\{0,e_1,\ldots,e_\ell\}
$$
for some $0\le\ell\le d$. A direct calculation shows that the pointwise stabilizer of $\Lambda$ is strictly contained in the pointwise stabilizer of each subset obtained by deleting one point from $\Lambda$. Thus $\Lambda$ is an independent set for the action of $\mathrm{AGL}_d(q)$ on $V$.

Conversely, suppose that $\Lambda$ is an independent set for the action of $\mathrm{AGL}_d(q)$ on $V$. Up to replacing $\Lambda$ with a suitable $\mathrm{AGL}_d(q)$-conjugate, we may suppose that $0\in\Lambda$. Thus $\Lambda=\{0,v_1,\ldots,v_\ell\}$,
for some $v_1,\ldots,v_\ell\in V\setminus\{0\}$. It follows from the independence of $\Lambda$ that $\{v_1,\ldots,v_\ell\}$ is an independent set for the action of $\mathrm{GL}_d(q)$ on $V$. Hence the vectors $v_1,\ldots,v_\ell$ are linearly independent. Therefore, $\Lambda$ is a frame for the affine subspace
$$
0+\langle v_1,\ldots,v_\ell\rangle.
$$
Finally, when $(d,q)=(1,2)$, the pointwise stabilizers of $V$ and of either singleton subset of $V$ are all trivial, so $V$ is not independent.
\end{proof}

\subsection{Almost independent sets in \texorpdfstring{V}{$V$}}
Let $G$ be a permutation group on a set $\Omega$. We say that a subset, $\Lambda,$ of $\Omega$ is \textit{\textbf{almost independent}} for $G$ if, for every $\lambda\in\Lambda$, the set $\Lambda\setminus\{\lambda\}$ is independent for $G$. (This notion was introduced by Scott Hudson in his PhD thesis \cite{ScottHudson}.)

\begin{lemma}[{\cite[Lemma~2.4.2]{ScottHudson}}]\label{l: almost independent}
Suppose that $m$ is a positive integer, that $I,J\in \Omega^{m+1}$, that $I\sim_{m} J$ but $I\not\sim_{m+1} J$. Then the underlying set of $I$ (and, therefore, of $J$) is an almost independent set.
\end{lemma}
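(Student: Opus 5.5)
We prove the contrapositive: if some proper subset of the underlying set of $I$ fails to be independent after deleting one point, we show that $I\sim_m J$ forces $I\sim_{m+1}J$. Write $I=(I_1,\ldots,I_{m+1})$ and $J=(J_1,\ldots,J_{m+1})$.

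First we dispose of repeated entries. Suppose $I_k=I_l$ for some $k\ne l$. Any $2$-subtuple, and hence any $m$-subtuple containing positions $k,l$, is mapped to the corresponding subtuple of $J$ by some element of $G$. So $J_k=J_l$ (we use $m\ge 2$, which holds since $\rc\ge 2$ is the setting; if $m=1$ we handle it in the same way via the orbit condition). Take $g\in G$ mapping the $m$-subtuple obtained by deleting position $l$ of $I$ to that of $J$. Then $g$ sends $I_l=I_k$ to $J_k=J_l$, so $I\sim_{m+1}J$, a contradiction. Hence the entries of $I$ are distinct, and likewise those of $J$, and the underlying set $\Lambda$ of $I$ has $m+1$ elements.

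Now fix $\lambda=I_k\in\Lambda$ and suppose that $\Lambda\setminus\{\lambda\}$ is not independent. Then there is some $\mu=I_l\in\Lambda\setminus\{\lambda\}$ such that the pointwise stabilizers satisfy
$$
G_{(\Lambda\setminus\{\lambda,\mu\})}=G_{(\Lambda\setminus\{\lambda\})}.
$$
(If $|\Lambda\setminus\{\lambda\}|\le1$ we argue separately: non-independence of a singleton $\{\mu\}$ means $G_\mu=G$, and the same argument below applies with the empty set.)

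Choose $g\in G$ mapping $I$ with position $l$ deleted to $J$ with position $l$ deleted. Choose $h\in G$ mapping $I$ with position $k$ deleted to $J$ with position $k$ deleted. Then $hg^{-1}$ fixes $I_i$ for every $i\ne k,l$, so
$$
hg^{-1}\in G_{(\Lambda\setminus\{\lambda,\mu\})}=G_{(\Lambda\setminus\{\lambda\})},
$$
and in particular $hg^{-1}$ fixes $\mu=I_l$... but this is not quite the quantity we need, so we reorganize.

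Set $x=gh^{-1}$. Then $x$ maps $I_i$ to $h^{-1}$ of $J_i$, and $I_i^{h}=J_i$ for $i\ne k$, so $x$ fixes $I_i$ for all $i\ne k,l$. The stabilizer equality then gives that $x$ fixes $I_l$, that is, $I_l^g=I_l^h=J_l$. Since $g$ already maps every other entry of $I$ correctly, $g$ maps $I$ to $J$, contradicting $I\not\sim_{m+1}J$.

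The main points to check carefully are the role of the degenerate cases: the repeated-entry case, and the small cases where the subset has size at most one.
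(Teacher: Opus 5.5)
The paper gives no proof of this lemma; it is quoted from Hudson (Lemma~2.4.2 there). Your core argument is the standard direct proof and is correct: take $g$ and $h$ witnessing the $m$-equivalence with positions $l$ and $k$ deleted, note that $gh^{-1}$ fixes $I_i$ for all $i\neq k,l$ and hence lies in $G_{(\Lambda\setminus\{\lambda,\mu\})}=G_{(\Lambda\setminus\{\lambda\})}$, and conclude $I_l^g=I_l^h=J_l$.

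A few points need tidying.

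\emph{Distinctness of entries.} Your preliminary claim that the entries of $I$ are distinct is false when $m=1$. If $G$ is transitive and $a\neq b$, then $I=(a,a)$ and $J=(a,b)$ satisfy $I\sim_1 J$ and $I\not\sim_2 J$, so the orbit condition does not force $J_1=J_2$. Invoking $\rc(G)\geq 2$ does not rescue this, since the lemma is stated for every positive $m$. The cure is to delete that step entirely. Choose $k,l$ with $I_k=\lambda\neq\mu=I_l$. The inclusion $\Lambda\setminus\{\lambda,\mu\}\subseteq\{I_i : i\neq k,l\}$ then holds whether or not entries repeat, and that inclusion is all the main argument uses.

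\emph{The $hg^{-1}$ detour.} Your abandoned $hg^{-1}$ computation was already finished: $I_l^{hg^{-1}}=I_l$ says exactly $I_l^g=J_l$.

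\emph{The singleton case.} This needs no separate treatment once you state the equivalence you are using implicitly: $S$ is independent if and only if $G_{(S\setminus\{\mu\})}>G_{(S)}$ for every $\mu\in S$. This holds because pointwise stabilizers reverse inclusion.

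\emph{The claim for $J$.} The assertion about the underlying set of $J$ follows by the symmetry of $\sim_m$.
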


We make three general remarks. First, note that independent sets are almost independent. Second, it is clear that the maximum size of an almost independent set is either $\H(G)$ or $\H(G)+1$. Lemma~\ref{l: almost independent} implies that if $\rc(G)=\H(G)+1$, then there exists an almost independent set of cardinality $\H(G)+1$.

Third, observe that, like independent sets, almost independent sets behave well with respect to subgroups: if $H\leq G$ and $\Gamma$ is almost independent for $H$, then $\Gamma$ is almost independent for $G$. Consequently the maximum size of an almost independent set for $H$ is less than or equal to the maximum size of an almost independent set for $G$.

Now we return to the situation where $G\leq \mathrm{AGL}_d(q)$. Our aim is to provide a description of the ``large'' almost independent subsets of $V$. By the third remark we can (and will) take $G=\mathrm{AGL}_d(q)$ for the rest of this subsection.

We first record an observation that will be used repeatedly. Let
$$
x=\sum_{i=1}^r\alpha_i e_i,
\qquad 1\le r\le d,
$$
with $x\not\in \{0,e_1,\ldots,e_r\}$.
Then $\{0,e_1,\ldots,e_r,x\}$
is almost independent if and only if
$\alpha_i\ne 0$ for every $i\in\{1,\ldots,r\}$
and $\sum_{i=1}^r\alpha_i\ne 1$.
Indeed, deleting $x$ leaves a frame. Deleting $e_i$ leaves a frame if and only if $\alpha_i\ne 0$, while deleting $0$ leaves a frame if and only if
$\sum_{i=1}^r\alpha_i\ne 1$.

\begin{lemma}\label{l:aiodd}
Suppose that $q>2$, and let $\Lambda$ be an almost independent subset of $V$ of cardinality at least $d+2$. Then $|\Lambda|=d+2$, and $\Lambda$ is $\mathrm{AGL}_d(q)$-conjugate to a set of the form
$$
\left\{0,e_1,\ldots,e_d,\sum_{i=1}^d\alpha_i e_i\right\},
$$
where $\alpha_i\ne 0$ for every $i$ and $\sum_{i=1}^d\alpha_i\ne 1$.
\end{lemma}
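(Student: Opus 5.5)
We use Lemma~\ref{l:0} together with the height bound of Theorem~\ref{thrm:main}. Let $\Lambda$ be almost independent with $|\Lambda|\ge d+2$, and fix $\lambda\in\Lambda$. Then $\Lambda\setminus\{\lambda\}$ is independent, so $|\Lambda|-1\le \H(\mathrm{AGL}_d(q))=d+1$. This forces $|\Lambda|=d+2$, and $\Lambda\setminus\{\lambda\}$ is an independent set of size $d+1$. By Lemma~\ref{l:0} it is a frame of an affine subspace of dimension $d$, that is, a frame of $V$ itself. (The exception $(d,q)=(1,2)$ does not arise, since $q>2$.)

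Since $\mathrm{AGL}_d(q)$ acts transitively on ordered frames of $V$, we may replace $\Lambda$ by a conjugate in which $\Lambda\setminus\{\lambda\}=\{0,e_1,\ldots,e_d\}$. Then $\lambda=x=\sum_{i=1}^d\alpha_ie_i$ with $x\notin\{0,e_1,\ldots,e_d\}$. The observation recorded just before the lemma, applied with $r=d$, gives the rest: $\Lambda$ is almost independent exactly when every $\alpha_i\ne0$ and $\sum_i\alpha_i\ne1$. This is the stated form.

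The main obstacle is just checking that the earlier observation applies. We need $x\notin\{0,e_1,\ldots,e_d\}$, which holds because $\Lambda$ has $d+2$ distinct elements. We also need that almost independence is tested by deleting each point and asking whether what remains is a frame of some affine subspace. Deleting $x$ leaves the frame $\{0,e_1,\ldots,e_d\}$. Deleting $e_i$ leaves a set of $d+1$ points, which is a frame exactly when the vectors $e_j$ $(j\ne i)$ together with $x$ are linearly independent, i.e.\ when $\alpha_i\ne0$. Deleting $0$ leaves a frame exactly when $x-e_1,e_2-e_1,\ldots,e_d-e_1$ are linearly independent, i.e.\ when $\sum_i\alpha_i\ne1$. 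Both equivalences were already recorded in the paper, so the proof is short.

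Note that $q>2$ is needed for this form to exist at all. When $q=2$, every $\alpha_i$ would have to equal $1$, and then $\sum_i\alpha_i=d$ must not equal $1$ in $\mathbb{F}_2$, which fails for odd $d$. This is consistent with the parity cases in Theorem~\ref{thrm:main}.
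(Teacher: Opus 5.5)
Your proof is correct and follows the paper's argument: you bound $|\Lambda|$ by deleting a point and invoking Lemma~\ref{l:0}, move $\Lambda\setminus\{\lambda\}$ to $\{0,e_1,\ldots,e_d\}$ using transitivity on frames, and finish with the observation preceding the lemma, and your explicit checks of the two deletion conditions are right. One correction to your closing aside: when $q=2$ and $d$ is even the stated form does exist (it is case~(3) of Lemma~\ref{l:ai}), so the hypothesis $q>2$ serves only to exclude the exceptional case $(d,q)=(1,2)$ of Lemma~\ref{l:0}.
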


\begin{proof}
For every $\lambda\in\Lambda$, the set $\Lambda\setminus\{\lambda\}$ is independent. By Lemma~\ref{l:0}, it has cardinality at most $d+1$. Therefore,
$|\Lambda|\le d+2$,
and hence $|\Lambda|=d+2$.

Fix $\lambda\in\Lambda$. The set $\Lambda\setminus\{\lambda\}$ is a frame for $V$. After replacing $\Lambda$ by a suitable $\mathrm{AGL}_d(q)$-conjugate, we may assume that
$$
\Lambda\setminus\{\lambda\}=\{0,e_1,\ldots,e_d\}.
$$
Write 
$\lambda=\sum_{i=1}^d\alpha_i e_i$.
The result now follows from the observation preceding the lemma.
\end{proof}

\begin{lemma}\label{l:ai}
Suppose that $q=2$ and $d\ge 2$, and let $\Lambda$ be an almost independent subset of $V$ of cardinality at least $d+1$. Then $\Lambda$ is $\mathrm{AGL}_d(2)$-conjugate to one of the following:
\begin{enumerate}
\item\label{l:ai1}$
\{0,e_1,\ldots,e_d\}$,
in which case $\Lambda$ is independent;

\item\label{l:ai2}
$\left\{0,e_1,\ldots,e_{d-1},
\sum_{i=1}^{d-1}e_i\right\}$,
where $d$ is odd;

\item\label{l:ai3}
$
\left\{0,e_1,\ldots,e_d,
\sum_{i=1}^d e_i\right\}$,
where $d$ is even.
\end{enumerate}
\end{lemma}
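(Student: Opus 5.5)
Since $q=2$ and $d\ge 2$, Lemma~\ref{l:0} says every independent set has at most $d+1$ points, so $|\Lambda|\le d+2$. I will treat the cases $|\Lambda|=d+2$ and $|\Lambda|=d+1$ separately, and in each case split according to whether $\Lambda$ is itself independent.

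\emph{Case $|\Lambda|=d+2$.} Fix $\lambda\in\Lambda$. Then $\Lambda\setminus\{\lambda\}$ is an independent set of size $d+1$, hence a frame for $V$. As in the proof of Lemma~\ref{l:aiodd}, conjugate so that $\Lambda=\{0,e_1,\ldots,e_d,x\}$ with $x=\sum_{i=1}^d\alpha_ie_i$. Apply the observation preceding Lemma~\ref{l:aiodd} with $r=d$. It forces every $\alpha_i$ to be nonzero, and over $\mathbb{F}_2$ this means $\alpha_i=1$, so $x=\sum_{i=1}^d e_i$. The remaining condition $\sum\alpha_i\ne1$ reads $d\cdot 1\ne 1$ in $\mathbb{F}_2$, i.e.\ $d$ is even. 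This is case~\eqref{l:ai3}.

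\emph{Case $|\Lambda|=d+1$, $\Lambda$ independent.} By Lemma~\ref{l:0}, $\Lambda$ is a frame of an affine subspace with $d+1$ points, hence of $V$ itself. It is therefore conjugate to $\{0,e_1,\ldots,e_d\}$, which is case~\eqref{l:ai1}.

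\emph{Case $|\Lambda|=d+1$, $\Lambda$ not independent.} Pick $\lambda\in\Lambda$. Then $\Lambda\setminus\{\lambda\}$ is independent with $d$ points, so it is a frame of a hyperplane. Conjugating, $\Lambda\setminus\{\lambda\}=\{0,e_1,\ldots,e_{d-1}\}$.
\begin{itemize}
\item If $\lambda\notin\langle e_1,\ldots,e_{d-1}\rangle$, then $\Lambda$ is a frame of $V$ and so independent, a contradiction.
\item Hence $\lambda=\sum_{i=1}^{d-1}\alpha_ie_i$. The observation with $r=d-1$ forces all $\alpha_i=1$ and $(d-1)\cdot 1\ne 1$ in $\mathbb{F}_2$, i.e.\ $d$ odd. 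This gives case~\eqref{l:ai2}.
\end{itemize}

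The observation requires $r\ge1$, which holds because $d\ge2$. It also requires $x\notin\{0,e_1,\ldots,e_r\}$, which holds because $\Lambda$ is a set of distinct points.

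The main point needing care is the case split on independence when $|\Lambda|=d+1$. Also, one should check that in case~\eqref{l:ai2} $\Lambda$ is genuinely not independent, which holds because it spans only a $(d-1)$-dimensional affine subspace. No step is really hard: everything reduces to the observation preceding Lemma~\ref{l:aiodd}.
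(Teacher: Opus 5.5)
Your proposal is correct and follows essentially the same route as the paper. You bound $|\Lambda|\le d+2$ via Lemma~\ref{l:0}, then split into $|\Lambda|=d+1$ (either independent, hence a frame of $V$, or a frame of a hyperplane together with a point in its span) and $|\Lambda|=d+2$, and in each non-independent case you apply the observation preceding Lemma~\ref{l:aiodd} together with the fact that the only non-zero element of $\mathbb{F}_2$ is $1$; your explicit checks that $r\ge 1$ and $x\notin\{0,e_1,\ldots,e_r\}$ are details the paper leaves implicit.
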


\begin{proof}
As in the proof of Lemma~\ref{l:aiodd}, we have
$|\Lambda|\le d+2$.

Suppose first that $|\Lambda|=d+1$. If $\Lambda$ is independent, then by Lemma~\ref{l:0} it is a frame for $V$ and is therefore conjugate to the set in~\eqref{l:ai1}.

Suppose that $\Lambda$ is not independent. Choose $\lambda\in\Lambda$. Since $\Lambda\setminus\{\lambda\}$ is an independent set of cardinality $d$, it is a frame for an affine subspace of dimension $d-1$. After replacing $\Lambda$ by a suitable conjugate, we may assume that
$$
\Lambda\setminus\{\lambda\}
=
\{0,e_1,\ldots,e_{d-1}\}.
$$
Since $\Lambda$ is not independent, $\lambda$ belongs to the affine span of these points. Thus
$$
\lambda=\sum_{i=1}^{d-1}\alpha_i e_i.
$$
The observation preceding Lemma~\ref{l:aiodd} shows that every $\alpha_i$ is non-zero and that
$$
\sum_{i=1}^{d-1}\alpha_i\ne 1.
$$
Since $q=2$, we have $\alpha_i=1$ for every $i$. The second condition is therefore equivalent to $d-1$ being even, or equivalently to $d$ being odd. This gives the set in~\eqref{l:ai2}.

Suppose now that $|\Lambda|=d+2$. As in the proof of Lemma~\ref{l:aiodd}, we may assume that
$$
\Lambda=
\left\{0,e_1,\ldots,e_d,
\sum_{i=1}^d\alpha_i e_i\right\}.
$$
Once again, every $\alpha_i$ is non-zero, and hence $\alpha_i=1$ for every $i$. Moreover,
$$
\sum_{i=1}^d\alpha_i\ne 1
$$
if and only if $d$ is even. This gives the set in~\eqref{l:ai3}.
\end{proof}

\subsection{Proving Theorem~\ref{thrm:main}}
Because of the peculiar behaviour of the relational complexity of $\mathrm{AGL}_d(2)$, before proving Theorem~\ref{thrm:main} we need a technical lemma.
\begin{lemma}\label{l:binarysubgroups}
Let $K\le \mathrm{AGL}_d(2)$. Then
$\rc(K)\le d+1$.
\end{lemma}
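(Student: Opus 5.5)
The plan is to show that $\rc(K)=d+2$ is impossible by combining Lemma~\ref{l: almost independent} with the classification in Lemma~\ref{l:ai}. Assume $K\ne 1$; for $K=1$ every tuple is $2$-equivalent only to itself, so $\rc(K)=2\le d+1$. By~\eqref{eq:height sub} and the height computation $\H(\mathrm{AGL}_d(2))\le d+1$ (Lemma~\ref{l:0}), we already have $\rc(K)\le d+2$. The case $d=1$ is immediate: there $\H(\mathrm{AGL}_1(2))=1$, so $\rc(K)\le 2=d+1$. So assume $d\ge 2$ and, for a contradiction, that $\rc(K)=d+2$.

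\emph{Step 1: reduction to a bad pair of distinct $(d+2)$-tuples.} Since $\rc(K)>d+1$, there exist $n\ge d+2$ and $I,J\in V^n$ with $I\sim_{d+1}J$ but $I\not\sim_n J$. Choose $m\ge d+1$ maximal with $I\sim_m J$. Then some $(m+1)$-subtuples $I',J'$ satisfy $I'\sim_m J'$ but $I'\not\sim_{m+1}J'$. Next remove repeated entries. Because $I'\sim_2 J'$, we have $I'_a=I'_b$ if and only if $J'_a=J'_b$, so deleting a repeated coordinate from both tuples changes neither equivalence. The resulting tuples still fail full equivalence while being $m$-equivalent. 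If they now had length at most $m$, that would be a contradiction; hence $I'$ already has $m+1$ distinct entries. Lemma~\ref{l: almost independent} then shows that the underlying set of $I'$ is almost independent of size $m+1\ge d+2$. By Lemma~\ref{l:ai} this forces $m+1=d+2$, $d$ even, and the underlying set of $I'$ is $\mathrm{AGL}_d(2)$-conjugate to $\Lambda=\{0,e_1,\ldots,e_d,\sum_i e_i\}$. The same conclusion holds for $J'$, since the entries of $J'$ are also distinct and $J'\sim_m I'$ gives the symmetric statement.

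\emph{Step 2: an affine relation forces full equivalence.} The key observation is that the $d+2$ points of $\Lambda$ sum to $0$. Indeed, $\sum_i e_i+\sum_i e_i=0$. This property is preserved by every element of $\mathrm{AGL}_d(2)$: linear parts are additive, and the $d+2$ translation contributions cancel because $d+2$ is even. Hence the entries of $I'$ sum to $0$, and likewise the entries of $J'$. Consequently the last entry is determined by the others:
\[
I'_{d+2}=\sum_{i=1}^{d+1}I'_i,
\]
which is an affine combination because $d+1$ is odd, so the coefficients sum to $1$ in $\mathbb{F}_2$. Now use $I'\sim_{d+1}J'$ to pick $g\in K$ with $I_i'^{\,g}=J'_i$ for $i\le d+1$. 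Since $g$ is affine, it preserves affine combinations, so
\[
I_{d+2}'^{\,g}=\sum_{i=1}^{d+1}J'_i=J'_{d+2}.
\]
Thus $I'\sim_{d+2}J'$, contradicting Step 1, and therefore $\rc(K)\le d+1$.

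The main point needing care is Step 1: passing from an arbitrary witness to the failure of $(d+1)$-equivalence to a witness of length exactly $d+2$ with distinct entries, so that Lemmas~\ref{l: almost independent} and~\ref{l:ai} apply to both tuples. Once that is in place, Step 2 is a short parity computation.
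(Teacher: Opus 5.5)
Your proof is correct and follows essentially the same route as the paper. You use Lemma~\ref{l: almost independent} to reduce to almost independent $(d+2)$-sets, then Lemma~\ref{l:ai} to force $d$ even and the configuration $\{0,e_1,\ldots,e_d,\sum_i e_i\}$, and finally the fact that the remaining point is the sum (an affine combination) of the other $d+1$, so any element of $K$ matching $d+1$ coordinates also matches the last one. Your explicit handling of repeated entries and the zero-sum formulation, which makes irrelevant which coordinate is ``last'', just spell out details the paper leaves implicit. One small point: applying Lemma~\ref{l:ai} to sets that are almost independent for $K$ relies on the paper's remark that such sets are also almost independent for $\mathrm{AGL}_d(2)$, which you should cite.
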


\begin{proof}
The case $d=1$ is immediate, so assume $d\ge 2$.
Suppose that there exist $I$ and $J$ with $I\sim_{d+1}J$ and $I\not\sim_{d+2}J$. By Lemma~\ref{l: almost independent}, we may assume that the underlying sets $I'$ and $J'$ of $I$ and $J$ are almost independent sets of cardinality $d+2$. These sets are also almost independent sets for $\mathrm{AGL}_d(2)$ and hence, by Lemma~\ref{l:ai}, we have that $d$ is even and $I=(P_0,\ldots,P_d,P_{d+1})$ and $J=(Q_0,\ldots,Q_d,Q_{d+1})$, where $\{P_0,\ldots,P_d\}$ and $\{Q_0,\ldots,Q_d\}$ are frames and $P_{d+1}=\sum_iP_i$ and $Q_{d+1}=\sum_iQ_i$. Since $I\sim_{d+1}J$, we may assume $P_i=Q_i,$ for every $i\in \{0,\ldots,d\}$. Over the field with two elements, this implies $P_{d+1}=Q_{d+1}$ and $I=J$, contrary to the fact that $I\not\sim_{d+2}J$. 
\end{proof}

\begin{proof}[Proof of Theorem~$\ref{thrm:main}$]
The formula for the height follows from Lemma~\ref{l:0}. Indeed, a frame of an affine subspace of dimension $\ell$ has cardinality $\ell+1$, and the largest affine subspaces of $V$ have dimension $d$. The exceptional case $(d,q)=(1,2)$ also follows from Lemma~\ref{l:0}.

It follows from~\eqref{eq:2.1} that
$\rc(\mathrm{AGL}_d(q))\le d+2$
when $(d,q)\ne(1,2)$. When $(d,q)=(1,2)$, we have
$\mathrm{AGL}_1(2)=\mathrm{Sym}(2)$,
and hence
$\rc(\mathrm{AGL}_1(2))=2$.
Similarly, when $(d,q)=(1,3)$, we have
$\mathrm{AGL}_1(3)=\mathrm{Sym}(3)$,
and hence 
$\rc(\mathrm{AGL}_1(3))=2$.

Assume now that $q>2$ and $(d,q)\ne(1,3)$. For
$a=(a_1,\ldots,a_d)\in(\mathbb{F}_q^\ast)^d$,
define
$$
T(a)=
\left(0,e_1,\ldots,e_d,\sum_{i=1}^d a_i e_i\right).
$$
Suppose that $a,b\in(\mathbb{F}_q^\ast)^d$ satisfy
\begin{equation}\label{eq:q+2}
\sum_{i=1}^d a_i\ne 1,
\qquad
\sum_{i=1}^d b_i\ne 1,
\qquad
a\ne b.
\end{equation}
We claim that
\[
T(a)\sim_{d+1}T(b) \textrm{  and  }
T(a)\not\sim_{d+2}T(b). \]

To prove the claim, observe first that, after deleting the last entry, the two tuples are identical. After deleting $e_i$, the remaining points form a frame for $V$ in each tuple because $a_i$ and $b_i$ are non-zero. After deleting $0$, the remaining points form a frame for $V$ because
$\sum_{i=1}^d a_i\ne 1$ and $\sum_{i=1}^d b_i\ne 1$.
Thus, since $\mathrm{AGL}_d(q)$ is transitive on ordered frames we deduce that $T(a)\sim_{d+1}T(b)$.
On the other hand, an element mapping $T(a)$ to $T(b)$ would fix
$0,e_1,\ldots,e_d$
and would therefore be the identity. This is impossible because $a\ne b$. Hence
$T(a)\not\sim_{d+2}T(b)$, as claimed. 

The claim implies that, so long as there exist $a,b$ satisfying~\eqref{eq:q+2}, we have $\rc(\mathrm{AGL}_d(q))=d+2$. It remains to show that such $a$ and $b$ exist.

Suppose first that $q>3$. Choose $\alpha\in\mathbb{F}_q^\ast$ such that
$d\alpha\ne 1$.
Then choose
$\lambda\notin
\{0,\alpha,1-(d-1)\alpha\}$.
Such choices are possible because $q>3$. We may now take
$a=(\alpha,\ldots,\alpha)$
and $b=(\lambda,\alpha,\ldots,\alpha)$.

It remains to consider $q=3$ and $d>1$. If $d\equiv2\pmod 3$, take
$a=(1,\ldots,1)$
and $b=(-1,1,\ldots,1)$.
If $d\equiv0\pmod 3$, take 
$a=(-1,\ldots,-1)$
and
$b=(1,-1,\ldots,-1)$.
Finally, if $d\equiv1\pmod 3$ and $d\ge4$, take
$a=(-1,\ldots,-1)$
and
$b=(1,1,-1,\ldots,-1)$.
In each case, all coordinates are non-zero, the coordinate sums are not equal to $1$, and $a\ne b$. Therefore,
$\rc(\mathrm{AGL}_d(q))=d+2$
whenever $q>2$ and $(d,q)\ne(1,3)$.

Assume now that $q=2$ and $d>1$. 
Suppose first that $d$ is odd and $d>1$. Consider the two $(d+1)$-tuples
\begin{align*}
I&=(0,e_1,\ldots,e_d),\\
J&=\left(0,e_1,\ldots,e_{d-1},
\sum_{i=1}^{d-1}e_i\right).
\end{align*}
$I\not\sim_{d+1}J$.
We claim that 
$I\sim_d J$.
After deleting the last entry, the two tuples are identical. After deleting any $e_i$ with $1\le i\le d-1$, the remaining points form frames for affine subspaces of dimension $d-1$. Finally, after deleting $0$, the remaining points again form frames for affine subspaces of dimension $d-1$, because $d-1$ is even. Hence
$I\sim_d J$.
It follows that
$\rc(\mathrm{AGL}_d(2))=d+1$
when $d>1$ is odd by Lemma~\ref{l:binarysubgroups}.

Finally, suppose that $d$ is even. We first prove that
$\rc(\mathrm{AGL}_d(2))\ge d$.
When $d=2$, this is vacuous, so suppose that $d\ge4$. Consider the two $d$-tuples
\begin{align*}
I&=(0,e_1,\ldots,e_{d-1}),\\
J&=\left(0,e_1,\ldots,e_{d-2},
\sum_{i=1}^{d-2}e_i\right).
\end{align*}
The entries of $I$ form a frame for an affine subspace of dimension $d-1$, whereas the entries of $J$ do not. Thus
$I\not\sim_d J$.
The same deletion argument used in the odd-dimensional case shows that
$I\sim_{d-1}J$,
because $d-2$ is even. Therefore,
$\rc(\mathrm{AGL}_d(2))\ge d$.

It remains to prove the reverse inequality. Let $I,J\in V^{d+1}$ satisfy
$I\sim_d J$.
By Lemmas~\ref{l: almost independent} and~\ref{l:binarysubgroups}, it is enough to consider the case in which the sets of entries of $I$ and $J$ are almost independent and have cardinality $d+1$.
Since $d$ is even, Lemma~\ref{l:ai} shows that both sets of entries are frames for $V$. The regular action of $\mathrm{AGL}_d(2)$ on ordered frames therefore gives
 $I\sim_{d+1}J$.
Hence $
\rc(\mathrm{AGL}_d(2))\le d$.

We finish with the only assumption on $(d,q)$ being that it is not in $\{(1,2),(1,3)\}$.  Let $K$ be a subgroup of $\mathrm{AGL}_d(q)$. Then, from~\eqref{eq:2.1} and from the monotonicity of the height function, $\rc(K)\le \H(K)+1\le \H(\mathrm{AGL}_d(q))+1=d+2$. By Lemma~\ref{l:binarysubgroups}, the stronger inequality $\rc(K)\le d+1$ holds when $q=2$. This concludes the proof of Theorem~\ref{thrm:main}.
\end{proof}

\section{Affine groups having maximal relational complexity}\label{sec:maximal}

Let $p$ be prime, let $f$ and $d$ be positive integers and let $q=p^f$. Let $G=V\rtimes G_0\le \mathrm{AGL}_d(q)$, where $G_0$ acts irreducibly on $V$, and suppose that $\rc(G)=d+2$. Here $G_0\le \mathrm{GL}_d(q)$ is the stabilizer in $G$ of the zero vector. From Theorem~\ref{thrm:main} we deduce that $q>2$. Throughout this section we fix all of the above assumptions.

In this section, we aim to classify all such groups $G$ and thereby prove Theorem~\ref{thrm:main2}.
First, however, we make an observation that we use repeatedly throughout the whole argument. Let $a$ be a non-zero $d$-dimensional row vector and let $b$ be a non-zero $d$-dimensional column vector. Then $ba$ is a $d\times d$-matrix of rank $1$. Thus, by setting $B=I_d+ba$, we have $\mathrm{rank}(B-I_d)=1$. In fact, $B$ is a transvection if and only if $ab=0$.

In what follows, whenever $B$ is linear transformation of $V$ such that $\mathrm{rank}(B-I_d)=1$, then we call the image space, $\mathrm{Im}(B-I_d)$, the \textit{\textbf{moved line}} of $B$. In this case where $B$ is a transvection this is often called the \textit{\textbf{centre}} of $B$.

Given a subfield $\mathbb{F}_{q_0}\leq\mathbb{F}_q$ and a subset $X\subseteq V$, we use $\langle X\rangle_{\mathbb{F}_{q_0}}$ to denote the span of $X$ over the subfield $\mathbb{F}_{q_0}$. The following lemma will be helpful in this section and beyond.

\begin{lemma}\label{l:irredN}
    Let $\mathbb{F}_{q_0}\leq \mathbb{F}_q$ be an inclusion of fields. Let $E\leq H$ be the group of all matrices of the form $I_d+w^\top\delta$ where $w$ ranges over $W:=\langle X\rangle_{\mathbb{F}_{q_0}}$ for some $X\subseteq V$ and $\delta$ is a fixed non-zero row vector. Suppose that the common fixed-point space $C_V(E)=\mathbb{F}_q\delta$, and that $H\geq E$ is irreducible on $V$. Then $N:=\langle E^g\mid g\in H\rangle$ is irreducible on $V$.
\end{lemma}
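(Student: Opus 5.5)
The plan is to exploit the very rigid shape of the elements of $E$ and of their $H$-conjugates. Matrices act on row vectors on the right, so for $e_w:=I_d+w^\top\delta\in E$ and $v\in V$ we have
$$
ve_w=v+(v\cdot w)\,\delta,
$$
where $v\cdot w=vw^\top\in\mathbb{F}_q$. Thus every element of $E$ moves vectors only along the line $\mathbb{F}_q\delta$. Conjugating by $g\in H$ gives
$$
g^{-1}e_wg=I_d+(g^{-1}w^\top)(\delta g).
$$
So every element of $E^g$ moves vectors only by multiples of $\delta g$. Moreover, $C_V(E^g)=C_V(E)g=\mathbb{F}_q\delta g$ by the hypothesis on $C_V(E)$.

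First I will record the dichotomy for an $N$-invariant subspace. Let $0\ne U\le V$ be $N$-invariant and fix $g\in H$. Since $E^g\le N$, the space $U$ is $E^g$-invariant. Either $U\subseteq C_V(E^g)=\mathbb{F}_q\delta g$, or there exist $u\in U$ and $x\in E^g$ with $ux\ne u$. In the latter case $0\ne ux-u\in U$ is a non-zero multiple of $\delta g$, so $\delta g\in U$. In both cases $U$ and the line $\mathbb{F}_q\delta g$ are comparable under inclusion.

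Next I will use the irreducibility of $H$. The span $S:=\langle \delta g\mid g\in H\rangle$ is a non-zero $H$-invariant subspace, so $S=V$. There are two cases.

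\begin{itemize}
\item If $\delta g\in U$ for every $g\in H$, then $U\supseteq S=V$.
\item Otherwise there is some $g_0$ with $U\subseteq\mathbb{F}_q\delta g_0$, so $U=\mathbb{F}_q\delta g_0$ is a line. Applying the dichotomy to every $h\in H$, either $U\subseteq \mathbb{F}_q\delta h$ or $\delta h\in U$. Since $U$ is one-dimensional, both options force $\delta h\in U$. Hence $V=S\subseteq U$ again, which incidentally forces $d=1$.
\end{itemize}

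In either case $U=V$, so $N$ is irreducible.

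I do not expect a serious obstacle; the argument is elementary. The only point needing care is the bookkeeping for conjugation with row-vector conventions: one must check that $C_V(E^g)=C_V(E)g$ and that the moved line of $E^g$ is $\mathbb{F}_q\delta g$. The subfield $\mathbb{F}_{q_0}$ and the particular set $X$ play no role beyond making $E$ a set of such rank-one perturbations with the prescribed fixed space.
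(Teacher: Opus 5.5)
Your proof is correct and follows essentially the same route as the paper. For each $g\in H$, a non-zero $N$-invariant $U$ either has a vector moved by some element of $E^g$, which forces $\delta g\in U$, or lies in $C_V(E^g)=\mathbb{F}_q\delta g$, which forces $U=\mathbb{F}_q\delta g$; irreducibility of $H$ then gives $V=\sum_{g}\mathbb{F}_q\delta g\le U$. Your two-case split is a little more roundabout than needed, because the second alternative already yields $\delta g\in U$ directly, and that is how the paper phrases it.
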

\begin{proof}
    Let $0\ne U\le V$ be an $N$-invariant subspace. In particular, $U$ is $E$-invariant. Suppose first that there exist $x\in U$ and $u\in W$ such that $xu^\top\ne 0$. Then
$$
x(I_d+u^\top\delta)-x=(xu^\top)\delta\in U,
$$
and hence
$$
\mathbb{F}_q\delta\le U.
$$
On the other hand, if $xu^\top=0$ for every $x\in U$ and every $u\in W$, then
$$
U\le {\bf C}_V(E)=\mathbb{F}_q\delta.
$$
Since $U\ne 0$, we again obtain $
\mathbb{F}_q\delta\le U$.
Since $U$ is $N$-invariant, it is invariant under $E^g$ for every $g\in H$. Applying the preceding argument to each $E^g$, we obtain
$\mathbb{F}_q\delta^g\le U,$
for every $g\in H$. Since $H$ acts irreducibly on $V$ and $\delta\ne 0$, we have
$$
V=\sum_{g\in H}\mathbb{F}_q\delta^g.
$$
Therefore, $U=V$, and $N$ is irreducible.
\end{proof}

We proceed now with the one-dimensional case for Theorem~\ref{thrm:main2}. Thus $d=1$ and $G_0\le\mathbb{F}_q^\ast$. The reverse implication of the coming lemma is simply a special case of~\cite[Lemma 1.8.1]{GLieSp}, however we present an alternative proof which is perhaps more natural in our current setup.
\begin{lemma}\label{l:d=1}
    Suppose that $d=1$. Then $\rc(G)=d+2=3$ if and only if $|G_0|>2$.
\end{lemma}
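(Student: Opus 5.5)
We prove both directions directly, using only Lemma~\ref{l: almost independent}, Lemma~\ref{l:aiodd} and the bound $\rc(G)\le \H(G)+1\le 3$ from Theorem~\ref{thrm:main}.

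Here $d=1$, so $V=\mathbb{F}_q$ and $G=\{x\mapsto v+gx : v\in\mathbb{F}_q,\ g\in G_0\}$, where $G_0\le\mathbb{F}_q^\ast$ is cyclic of order $m$. Since $\rc(G)\le 3$, we have $\rc(G)=3$ exactly when there exist $I,J\in V^3$ with $I\sim_2 J$ but $I\not\sim_3 J$. Relational complexity is at least $2$ by definition, so $3$-tuples determine everything once no such pair exists.

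\emph{The case $|G_0|\le 2$.} If $|G_0|=1$, then $G=V$ acts regularly. For $I\sim_2 J$ there is a translation taking $(I_1,I_2)$ to $(J_1,J_2)$. This translation is the shift $J_1-I_1$, and the same shift also carries $I_3$ to $J_3$, using the pair $(1,3)$. Hence $I\sim_3 J$, and in fact the relational complexity is $2$ (regular groups have $\rc=2$).

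If $|G_0|=2$, then $q$ is odd and $G_0=\{\pm1\}$. By Lemma~\ref{l: almost independent} and Lemma~\ref{l:aiodd}, we may assume $I=(0,1,\alpha)$ with $\alpha\ne 0,1$. Since $I\sim_2 J$, some element maps $(0,1)$ to $(J_1,J_2)$, so after applying it we may take $J=(0,1,\beta)$.
\begin{itemize}
\item The pair $(1,3)$ gives $\beta=\pm\alpha$.
\item The pair $(2,3)$ gives $\beta-1=\pm(\alpha-1)$.
\end{itemize}
Suppose $\beta\ne\alpha$. The first condition forces $\beta=-\alpha$, and the second then forces $-\alpha-1=-(\alpha-1)$, which gives $2=0$. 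This is impossible since $q$ is odd. So $\beta=\alpha$ and $I\sim_3 J$. Therefore $\rc(G)=2$ in this case. (This is consistent with the identification $\mathrm{AGL}_1(3)=\mathrm{Sym}(3)$.)

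\emph{The case $|G_0|=m>2$.} We take $I=(0,1,\alpha)$ and $J=(0,1,\beta)$ with $\alpha\ne\beta$ and $\alpha,\beta\notin\{0,1\}$. Since the stabiliser of $0$ and $1$ in $G$ is trivial, $I\not\sim_3 J$. The condition $I\sim_2 J$ amounts to:
\begin{itemize}
\item pair $(1,2)$: trivially satisfied;
\item pair $(1,3)$: $\beta/\alpha\in G_0$;
\item pair $(2,3)$: $(\beta-1)/(\alpha-1)\in G_0$.
\end{itemize}
So we need $g\ne h\in G_0\setminus\{1\}$ with $g\alpha=\beta$ and $h(\alpha-1)=\beta-1$. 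These give $\alpha=(1-h)/(g-h)$ and $\beta=g\alpha$. Any $g\ne h$ in $G_0$, neither equal to $1$, yields $\alpha\ne0$, $\alpha\ne1$, and $\beta\ne\alpha$. Two such distinct non-identity elements exist precisely when $m\ge 3$. We then check that $\beta\notin\{0,1\}$, which follows from $\beta-1=h(\alpha-1)\ne0$.

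The only mildly delicate step is the $|G_0|=2$ computation, which needs $q$ odd. Everything else is explicit.
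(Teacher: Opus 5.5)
Your proof is correct and is essentially the paper's. The paper's witnesses $I=(0,1,\frac{1}{1+\gamma})$ and $J=(0,1,\frac{\gamma}{1+\gamma})$ are exactly your construction with $g=\gamma$ and $h=\gamma^{-1}$; your formula $\alpha=(1-h)/(g-h)$ is just the general solution. For $G_0=\{\pm1\}$, both arguments rest on the same observation that $\beta=-\alpha$ and $\beta-1=-(\alpha-1)$ force $2=0$.

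The only difference is packaging. The paper runs that computation directly on arbitrary $n$-tuples, anchored at two distinct entries $x_r\neq x_s$. You instead first reduce to $3$-tuples via $\rc(G)\le\H(G)+1\le 3$ and Lemmas~\ref{l: almost independent} and~\ref{l:aiodd}. For that reduction you should say explicitly that a repeated entry in $I$ would make $I\sim_2 J$ already imply $I\sim_3 J$, so the underlying set really has size $3$ before Lemma~\ref{l:aiodd} applies.
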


\begin{proof}
 Suppose first that $|G_0|>2$. Choose $
\gamma\in G_0\setminus\{1,-1\}$,
and set
$$
\alpha=\frac{1}{1+\gamma},
\qquad
\beta=\frac{\gamma}{1+\gamma}.
$$
Consider the triples
$I=(0,1,\alpha)$ and 
$J=(0,1,\beta)$.
The pairs obtained by deleting the third entries are identical. The map
$$
x\longmapsto \gamma x
$$
maps $(0,\alpha)$ to $(0,\beta)$, while the map
$$
x\longmapsto \gamma^{-1}x+1-\gamma^{-1}
$$
maps $(1,\alpha)$ to $(1,\beta)$. Since $\gamma,\gamma^{-1}\in G_0$, it follows that
$I\sim_2J$.
On the other hand, $\alpha\ne\beta$, because $\gamma\ne1$, and the only affine transformation fixing both $0$ and $1$ is the identity. Hence
$I\not\sim_3J$.
Therefore, $\rc(G)\ge3$. Since $\H(G)\le \H(\mathrm{AGL}_1(q))=2$ by Theorem~\ref{thrm:main},
equation~\eqref{eq:2.1} gives $\rc(G)\le3$. Thus $\rc(G)=3$.

Conversely, suppose that $|G_0|\le2$. If $|G_0|=1$ then $G$ is regular, hence $\rc(G)=2$. Suppose that $|G_0|=2$. Then $2\mid (q-1)$, so $q$ is odd and we have
$G_0=\{1,-1\}$.
Let $$I=(x_1,\ldots,x_n)\sim_2(y_1,\ldots,y_n)=J$$
be $2$-equivalent tuples. If all entries of $I$ are equal, then all entries of $J$ are equal, and a translation maps $I$ to $J$. Otherwise, choose $r,s$ such that $x_r\ne x_s$. After applying an element of $G$ to $J$, we may assume that
$y_r=x_r$ and $y_s=x_s$.
For every $k\leq n$, since $I\sim_2 J$ we deduce that there is some $(u,\ell)\in\mathbb{F}_q\rtimes\{1,-1\}$ such that $$(u+\ell x_r,u+\ell x_k)=(x_r,x_k)^{(u,\ell)}=(y_r,y_k)=(x_r,y_k).$$ In particular, $y_k-x_r=u+\ell x_k-u-\ell x_r=\ell(x_k-x_r)$, that is, for every $k\leq n$, $y_k-x_r=\pm(x_k-x_r)$ and similarly $y_k-x_s=\pm(x_k-x_s)$.
If $y_k\ne x_k$, the first equality gives $y_k=2x_r-x_k$,
while the second gives $y_k=2x_s-x_k$.
Since the characteristic is odd, this implies $x_r=x_s$, a contradiction. Therefore, $y_k=x_k$ for every $k$, and hence $I\sim_nJ$. Thus $\rc(G)=2$. 
\end{proof}
Lemma~\ref{l:d=1} proves the one-dimensional case of Theorem~\ref{thrm:main2}, namely part~\eqref{part1}.

We assume then for the remainder of the section that $d>1$. As $\rc(G)=d+2$, there exist two $(d+2)$-tuples $I$ and $J$ such that $I\sim_{d+1}J$ and $I\not\sim_{d+2}J$.
By Lemma~\ref{l: almost independent}, the underlying sets of $I$ and $J$ are almost independent sets of cardinality $d+2$. After simultaneously permuting the coordinates, replacing $J$ by a $G$-conjugate, and conjugating $G$, $I$, and $J$ simultaneously by a suitable element of $\mathrm{AGL}_d(q)$, Lemma~\ref{l:aiodd} allows us to assume that
\begin{equation*}
I=\left(0,e_1,\ldots,e_d,\sum_{i=1}^d\alpha_i e_i\right),\,
J=\left(0,e_1,\ldots,e_d,\sum_{i=1}^d\beta_i e_i\right),
\end{equation*}
where
\begin{equation}\label{eq:condition1}
\alpha_i\ne 0,
\qquad
\beta_i\ne 0
\end{equation}
for every $i\in\{1,\ldots,d\}$, and
\begin{equation}\label{eq:condition2}
\sum_{i=1}^d\alpha_i\ne 1,
\qquad
\sum_{i=1}^d\beta_i\ne 1.
\end{equation}
Set
$$
v=\sum_{i=1}^d\alpha_i e_i,
\qquad
w=\sum_{i=1}^d\beta_i e_i,
\qquad
\delta=w-v.
$$
Since $I\not\sim_{d+2}J$, we have $v\ne w$, and hence $\delta\ne 0$.

For each $i\in\{1,\ldots,d\}$, choose $(v_i,B_i)\in G$ mapping
$$
(0,e_1,\ldots,e_{i-1},e_{i+1},\ldots,e_d,v)
$$
to
$$
(0,e_1,\ldots,e_{i-1},e_{i+1},\ldots,e_d,w).
$$
Since this affine transformation fixes $0$, we have $v_i=0$. Also choose $(v_0,B_0)\in G$ mapping
$$
(e_1,\ldots,e_d,v)
$$
to
$$
(e_1,\ldots,e_d,w).
$$
The linear parts $B_0,B_1,\ldots,B_d$ belong to $G_0$. 

For each $i\in\{1,\ldots,d\}$, define
\begin{equation}\label{def:ci}
c_i=\frac{1}{\alpha_i}e_i^\top ,
\end{equation}
where $e_i^\top $ denotes the transpose of the row vector $e_i$.
\begin{lemma}\label{l:Bidef}
    Let $i\in \{1,\dots,d\}$, and let $\delta$, $c_i$ and $B_i$ be defined as above. Then $$B_i=I_d+c_i\delta$$
\end{lemma}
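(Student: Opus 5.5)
The plan is to read off $B_i$ directly from the $d$ vector conditions it must satisfy, since these determine a linear map completely. By construction, $B_i\in G_0\le \mathrm{GL}_d(q)$ is a linear map (the translation part $v_i$ is $0$) satisfying
$$
e_jB_i=e_j\quad (j\ne i),\qquad vB_i=w .
$$
I will show that $B:=I_d+c_i\delta$ satisfies the same $d$ conditions. Then $B_i$ and $B$ agree on the vectors $e_1,\ldots,e_{i-1},e_{i+1},\ldots,e_d,v$, so it suffices to check that these vectors form a basis of $V$.

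First, the basis claim. The vector $v=\sum_j\alpha_je_j$ has $i$-th coordinate $\alpha_i$, which is non-zero by~\eqref{eq:condition1}. Hence $v\notin\langle e_j : j\ne i\rangle$, and together with the $e_j$, $j\ne i$, it spans $V$. Equivalently, $\{0,e_j\,(j\ne i),v\}$ is a frame, exactly as in the observation preceding Lemma~\ref{l:aiodd}.

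Second, the direct computation with the row-vector convention $x\mapsto xB$:
$$
xB=x+(xc_i)\delta,\qquad xc_i=\frac{1}{\alpha_i}\,x e_i^\top=\frac{x_i}{\alpha_i},
$$
where $x_i$ is the $i$-th coordinate of $x$. For $j\ne i$ we get $e_jc_i=0$, so $e_jB=e_j$. For $x=v$ we get $vc_i=\alpha_i/\alpha_i=1$, so $vB=v+\delta=w$.

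Finally, a linear map is determined by its values on a basis, so $B_i=I_d+c_i\delta$.

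There is no real obstacle here. The only points needing care are the convention that matrices act on row vectors from the right, so that $c_i\delta$ is a column times a row (a rank one matrix), and the use of $\alpha_i\ne0$, both to define $c_i$ and to get the basis.
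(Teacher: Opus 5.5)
Your proof is correct and matches the paper's: both check that $I_d+c_i\delta$ fixes each $e_j$ with $j\neq i$ and sends $v$ to $w$, and then conclude by uniqueness. The only difference is phrasing: you get uniqueness from the vectors $e_j$ ($j\neq i$) and $v$ forming a basis, while the paper says the tuples are ordered frames on which $\mathrm{AGL}_d(q)$ acts regularly.
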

\begin{proof}
    Since $(0,e_1,\dots,e_{i-1},e_{i+1},\dots,e_d,v)$ and $(0,e_1,\dots,e_{i-1},e_{i+1},\dots,e_d,w)$ are ordered frames, it suffices to show that $I_d+c_i\delta$ maps $v$ to $w$ and fixes $e_j$ for every $j\ne i$.

    Let $j\in\{1,\dots,d\}\setminus\{i\}$. Then $$e_j(I_d+c_i\delta)=e_j+\frac{1}{\alpha_i}e_je_i^\top \delta=e_j+0,$$ as desired. Moreover, $$v(I_d+c_i\delta)=v+\frac{1}{\alpha_i}\left(\sum_{j=1}^d\alpha_je_j\right)e_i^\top \delta=v+\delta=w,$$ as was to be shown.
\end{proof}
Set
$$
\alpha=\sum_{i=1}^d\alpha_i,
\qquad
\beta=\sum_{i=1}^d\beta_i,
$$
and let
$$
c_0=-\frac{1}{1-\alpha}\mathbf{1}^\top ,
$$
where $\mathbf{1}=(1,\ldots,1)\in V$.
\begin{lemma}\label{l:B0def}
    Take $\alpha, \beta,c_0,v_0$ and $B_0$ as defined above. Then \begin{equation*}\label{eq:B0}
B_0=I_d+c_0\delta
\quad\text{and}\quad
v_0=\frac{1}{1-\alpha}\delta.
\end{equation*}
\end{lemma}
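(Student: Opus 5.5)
The approach mirrors the proof of Lemma~\ref{l:Bidef}: the affine map $(v_0,B_0)$ is uniquely determined by its action on an ordered frame. Since $\sum_i\alpha_i=\alpha\ne 1$, the tuple $(e_1,\ldots,e_d,v)$ is an ordered frame of $V$, as is $(e_1,\ldots,e_d,w)$. Because $\mathrm{AGL}_d(q)$ acts regularly on ordered frames, it suffices to check one thing: the affine map $x\mapsto \frac{1}{1-\alpha}\delta+x(I_d+c_0\delta)$ sends $e_j\mapsto e_j$ for every $j$ and sends $v\mapsto w$. Uniqueness then forces $(v_0,B_0)=\bigl(\tfrac{1}{1-\alpha}\delta,\,I_d+c_0\delta\bigr)$.

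The first computation is the image of $e_j$. We have $e_jc_0=-\frac{1}{1-\alpha}e_j\mathbf{1}^\top=-\frac{1}{1-\alpha}$. Hence
\[
e_j(I_d+c_0\delta)=e_j-\tfrac{1}{1-\alpha}\delta,
\]
and adding the translation part $\frac{1}{1-\alpha}\delta$ returns $e_j$.

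The second computation is the image of $v$. Here $vc_0=-\frac{1}{1-\alpha}\sum_i\alpha_i=-\frac{\alpha}{1-\alpha}$. So $v$ is sent to
\[
\tfrac{1}{1-\alpha}\delta+v-\tfrac{\alpha}{1-\alpha}\delta=v+\delta=w.
\]

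The only care needed is the well-definedness of the constant, i.e.\ $\alpha\ne1$, which is condition~\eqref{eq:condition2}. There is also an alternative derivation, if one prefers to find the formula rather than verify it. Write $B_0=I_d+M$. The conditions $e_jB_0+v_0=e_j$ force every row of $M$ to equal $-v_0$, i.e.\ $M=-\mathbf{1}^\top v_0$. Then $v\mapsto w$ gives $v_0-\alpha v_0=\delta$, which yields the same formulas. Neither route has a real obstacle; the verification is routine.
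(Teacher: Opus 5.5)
Your proposal is correct and follows the paper's proof. Both use the regular action of $\mathrm{AGL}_d(q)$ on ordered frames to reduce to checking that $x\mapsto \frac{1}{1-\alpha}\delta+x(I_d+c_0\delta)$ fixes each $e_j$ and sends $v$ to $w$, via $e_jc_0=-\frac{1}{1-\alpha}$ and $vc_0=-\frac{\alpha}{1-\alpha}$. Your alternative derivation, solving $M=-\mathbf{1}^\top v_0$ and then $(1-\alpha)v_0=\delta$, is also correct, and it shows uniqueness directly without first needing the candidate to lie in $\mathrm{AGL}_d(q)$.
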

\begin{proof}
    As in the proof of Lemma~\ref{l:Bidef}, it suffices to show that $\frac{1}{1-\alpha}\delta+e_i(I_d+c_0\delta)=e_i$ for each $i\in\{1,\dots,d\}$ and that $\frac{1}{1-\alpha}\delta+v(I_d+c_0\delta)=w$.

    Let $i\in\{1,\dots,d\}$. Then $(e_ic_0)\delta=-\frac{1}{1-\alpha}\delta$, whence $\frac{1}{1-\alpha}\delta+e_i(I_d+c_0\delta)=e_i$. Additionally, $vc_0=-\frac{\alpha}{1-\alpha}$, so $$\frac{1}{1-\alpha}\delta+v(I_d+c_0\delta)=\frac{1}{1-\alpha}\delta+v-\frac{\alpha}{1-\alpha}\delta=\delta+v=w,$$ as was to be shown.
\end{proof}

For each $i\in\{0,\ldots,d\}$, from Lemmas~\ref{l:Bidef} and~\ref{l:B0def}, the matrix $B_i-I_d$ has rank $1$ and
$\lambda_i:=\det(B_i)=1+\delta c_i$ by the matrix determinant lemma. Thus
\begin{equation}\label{eq:det1}
\lambda_i=
\begin{cases}
\dfrac{\beta_i}{\alpha_i}
& \textrm{if }i\in\{1,\ldots,d\},\\[2mm]
\dfrac{1-\beta}{1-\alpha}
& \textrm{if }i=0.
\end{cases}
\end{equation}

The description above shows that every $B_i$ preserves the one-dimensional subspace $\mathbb{F}_q\delta$ and acts trivially on the quotient $V/\mathbb{F}_q\delta$. Thus, the same is true for
$$
K:=\langle B_0,B_1,\ldots,B_d\rangle\le G_0.
$$ Since $G_0$ is irreducible and $d>1$, the subspace $\mathbb{F}_q\delta$ is not $G_0$-invariant. Hence
$K\ne G_0$.

Let
$$
\mathbb{F}_{q_0}=\mathbb{F}_p(\lambda_1,\ldots,\lambda_d)\le \mathbb{F}_q,
$$
where $q_0=p^{f_0}$ for some divisor $f_0$ of $f$.
Since $v\ne w$, there exists $r\in\{1,\ldots,d\}$ such that $\alpha_r\ne\beta_r$, and hence 
\begin{equation}\label{lambdar}
\lambda_r\ne 1.\end{equation} Fix such an $r$ and for each $j\in\{1,\dots,d\}$ define
$$
u_j=(\lambda_j-1)c_r-(\lambda_r-1)c_j.
$$ and set $$
W=
\left\langle
u_j\mid j\in\{1,\ldots,d\}\setminus\{r\}
\right\rangle_{\mathbb{F}_{q_0}}.
$$

\begin{lemma}
    The group $K$ contains an elementary abelian group $E=\{I_d+u\delta\mid u\in W\}$ of transvections of dimension $d-1$ over $\mathbb{F}_{q_0}$. Moreover, the common fixed-point space of $E$ on $V$ is $\mathbf{C}_V(E)=\mathbb{F}_q\delta$. Set $N:=\langle E^g\mid g\in G_0\rangle$. Then $N\trianglelefteq G_0$, $N\leq \mathrm{SL}_d(q)$, and $N$ is irreducible.
\end{lemma}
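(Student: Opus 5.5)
The plan is to exploit the fact that every $B_i$ lies in the set
$$
\mathcal{T}=\{I_d+x\delta \mid x \text{ a column vector},\ 1+\delta x\ne 0\}.
$$
First I will check that $\mathcal{T}$ is a group. The multiplication rule is
$$
(I_d+x\delta)(I_d+y\delta)=I_d+\bigl(x+y+(\delta x)\,y\bigr)\delta,
$$
so that
$$
(I_d+x\delta)^{-1}=I_d-(1+\delta x)^{-1}x\delta .
$$
The determinant map $I_d+x\delta\mapsto 1+\delta x$ is a homomorphism $\mathcal{T}\to\mathbb{F}_q^\ast$. Its kernel $\mathcal{T}_1=\{I_d+u\delta\mid \delta u=0\}$ is an abelian group of transvections with $(I_d+u\delta)(I_d+u'\delta)=I_d+(u+u')\delta$.

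Next I will compute conjugation. Using $\delta u=0$ and $\delta(I_d+x\delta)=\lambda\delta$, where $\lambda=1+\delta x$, one gets
$$
(I_d+x\delta)^{-1}(I_d+u\delta)(I_d+x\delta)=I_d+\lambda\, u\delta .
$$
So $\mathcal{T}_1$ is an $\mathbb{F}_p[\lambda_0,\dots,\lambda_d]$-module, on which $B_i$ acts by multiplication by $\lambda_i$.

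Now fix $j\ne r$ and produce a non-trivial element of $K\cap\mathcal{T}_1$ whose vector is a multiple of $u_j$. If $\lambda_j=1$, then $B_j$ itself is a transvection with vector $c_j=-(\lambda_r-1)^{-1}u_j$. In general I will use the commutator $[B_j,B_r]$. By the product formula,
$$
B_jB_r=I_d+\bigl(c_j+c_r+(\lambda_j-1)c_r\bigr)\delta,\qquad B_rB_j=I_d+\bigl(c_r+c_j+(\lambda_r-1)c_j\bigr)\delta .
$$
These differ because $\lambda_r\ne1$ by \eqref{lambdar} and $c_j,c_r$ are linearly independent, so the commutator is non-trivial. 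It has determinant $1$, so it lies in $\mathcal{T}_1$. A direct expansion should show that its vector lies in $\langle c_j,c_r\rangle_{\mathbb{F}_q}\cap\ker(\delta\,\cdot)$. This space is the line spanned by $u_j$, since $\delta u_j=(\lambda_j-1)(\lambda_r-1)-(\lambda_r-1)(\lambda_j-1)=0$. Hence $I_d+\mu_j u_j\delta\in K$ for some $\mu_j\ne0$.

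Conjugating by words in the $B_i$ and multiplying, $K\cap\mathcal{T}_1$ then contains $I_d+\mu_j\,s\,u_j\delta$ for every $s$ in the subring generated by $\lambda_1,\dots,\lambda_d$. Since we are in a finite field, this subring is $\mathbb{F}_{q_0}$. As $\mu_j$ is a fixed non-zero scalar and $\mathbb{F}_{q_0}u_j$ is closed under $\mathbb{F}_{q_0}$-scaling, I expect to be able to absorb $\mu_j$: either $\mu_j\in\mathbb{F}_{q_0}$, which I would verify by computing it explicitly as a rational function of the $\lambda$'s, or we simply rescale the generators of $W$. Summing over $j\ne r$ gives $E\le K$.

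The $u_j$ are $\mathbb{F}_q$-linearly independent because the $c_j$ are. Hence $E$ is elementary abelian of $\mathbb{F}_{q_0}$-dimension $d-1$. The $\mathbb{F}_q$-span of the $u_j$ is then the whole hyperplane $\{y\mid \delta y=0\}$. Therefore $x\in\mathbf{C}_V(E)$ if and only if $xu=0$ for all $u$ in that hyperplane, that is, if and only if $x\in\mathbb{F}_q\delta$.

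Finally, $N$ is normal in $G_0$ by construction. It is generated by transvections, so $N\le\mathrm{SL}_d(q)$. It is irreducible by Lemma~\ref{l:irredN} applied with $H=G_0$ and $X=\{u_j\mid j\neq r\}$.

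The main obstacle is the commutator step, namely pinning down $\mu_j$ so that the $\mathbb{F}_{q_0}$-span is exactly $W$. I would first compute $[B_j,B_r]$ explicitly with the multiplication rule above. I expect to obtain $\mu_j=\lambda_r^{-1}\lambda_j^{-1}$ or something similar, which lies in $\mathbb{F}_{q_0}$. This is what makes the span come out right.
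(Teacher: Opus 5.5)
Your argument follows the paper's route. The commutator of $B_j$ and $B_r$ is a transvection whose parameter lies on the line $\mathbb{F}_qu_j$. Conjugation by $B_i$ multiplies parameters by $\lambda_i$ and products add them, so $E\le K$. Then $\mathbb{F}_qW=\ker(\delta)$ gives $\mathbf{C}_V(E)=\mathbb{F}_q\delta$, and Lemma~\ref{l:irredN} with $H=G_0$ gives irreducibility of $N$.

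Your determinant-homomorphism framing explains neatly why the commutator lands on $\mathbb{F}_qu_j$, but it leaves the scalar $\mu_j$ undetermined. That is the one step you still have to close. The statement fixes $W$ as the $\mathbb{F}_{q_0}$-span of the $u_j$, so your fallback of rescaling the generators of $W$ would prove a different statement. You need $\mu_j\in\mathbb{F}_{q_0}$.

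This is quick. If $I_d+x\delta$ and $I_d+y\delta$ have the same determinant, then $(I_d+x\delta)^{-1}(I_d+y\delta)=I_d+(y-x)\delta$. With $B_jB_r=I_d+(\lambda_rc_j+c_r)\delta$ and $B_rB_j=I_d+(c_j+\lambda_jc_r)\delta$, this gives $(B_jB_r)^{-1}B_rB_j=I_d+u_j\delta$, which is exactly the paper's computation. Depending on the commutator convention, one gets $\mu_j\in\{\pm1,\pm(\lambda_j\lambda_r)^{-1}\}$, and all of these lie in $\mathbb{F}_{q_0}$.

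Alternatively, without computing: every word in $B_j^{\pm1},B_r^{\pm1}$ has parameter with $c_j$- and $c_r$-coordinates in $\mathbb{F}_p(\lambda_j,\lambda_r)$. This is because the only structure constants are $\delta c_j=\lambda_j-1$ and $\delta c_r=\lambda_r-1$. Comparing $c_j$-coordinates with $\mu_ju_j$ then forces $\mu_j\in\mathbb{F}_{q_0}$.

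Finally, your multiplication rule has the cross term reversed. Since $x\delta y\delta=(\delta y)x\delta$, the correct rule is $(I_d+x\delta)(I_d+y\delta)=I_d+\bigl(x+y+(\delta y)x\bigr)\delta$, so your displayed formulas for $B_jB_r$ and $B_rB_j$ are interchanged. None of your structural conclusions depend on this (the determinant homomorphism, closure of $\langle c_j,c_r\rangle$, non-commutativity). However, the explicit evaluation of $\mu_j$ should be done with the correct rule.
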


\begin{proof}
Throughout this proof we will repeatedly use that $\lambda_j-1=\delta c_j$ for $j\in\{0,\dots, d\}$. This implies first that
\begin{equation}\label{eq:key}
\delta u_j=(\lambda_j-1)\delta c_r -(\lambda_r-1)\delta c_j=(\delta c_j)\delta c_r -(\delta c_r)\delta c_j=0.
\end{equation}
Moreover, for any $j\in\{1,\dots,d\}$, 
\[B_j^{-1}=(I_d+c_j\delta)^{-1}=I_d-\dfrac{c_j\delta}{\lambda_j}.\]
Therefore, for $j\in\{1,\ldots,d\}\setminus\{r\}$,
\[
(B_jB_r)^{-1}=\left(I_d-\dfrac{c_r\delta}{\lambda_r}\right)\left(I_d-\dfrac{c_j\delta}{\lambda_j}\right)=I_d-\frac{c_r\delta}{\lambda_r}-\frac{c_j\delta}{\lambda_j}+\frac{c_r\delta c_j\delta}{\lambda_r\lambda_j}= I_d-\frac{c_j\delta}{\lambda_j}-\frac{c_r\delta}{\lambda_r\lambda_j},\]
where the final equality is using (again) that $\delta c_j=\lambda_j-1$. Similarly, \[B_rB_j=(I_d+c_r\delta)(I_d+c_j\delta)=I_d+c_r\delta+c_j\delta+(\lambda_j-1)c_r\delta=I_d+\lambda_jc_r\delta+c_j\delta, 
\]
whence
\begin{align*}(B_jB_r)^{-1}B_rB_j&=\left(I_d-\frac{c_j\delta}{\lambda_j}-\frac{c_r\delta}{\lambda_r\lambda_j}\right)(I_d+\lambda_jc_r\delta+c_j\delta)\\&=I_d-\frac{c_j\delta}{\lambda_j}-\frac{c_r\delta}{\lambda_r\lambda_j}+\lambda_jc_r\delta-c_j\delta c_r\delta-\frac{c_r\delta c_r\delta}{\lambda_r}+c_j\delta-\frac{c_j\delta c_j\delta}{\lambda_j}-\frac{c_r\delta c_j\delta}{\lambda_r\lambda_j}\\&=
I_d-c_r\delta+\lambda_jc_r\delta-(\lambda_r-1)c_j\delta\\&=I_d+u_j\delta.\end{align*}

By~\eqref{eq:key},
$
(u_j\delta)^2=0,
$
thus, for $j\ne r$ the element $I_d+u_j\delta$ is a transvection in $K$.

The vectors $u_j$, with $j\in\{1,\ldots,d\}\setminus\{r\}$, are linearly independent over $\mathbb{F}_{q_0}$. Indeed, the vectors $c_1,\ldots,c_d$ form an $\mathbb{F}_q$-basis of the column-vector space, and, in any linear combination of the $u_j$, the coefficient of $c_j$ is $-(\lambda_r-1)$ times the coefficient of $u_j$.

If $u$ is a column vector satisfying $\delta u=0$, then 
$$
B_i^{-1}(I_d+u\delta)B_i=(I_d-\lambda_i^{-1}c_i\delta)(I_d+u\delta)(I_d+c_i\delta)=(I_d+u\delta-\lambda_i^{-1}c_i\delta)(I_d+c_i\delta)=I_d+\lambda_i u\delta.
$$
Also, if $\delta u=\delta u'=0$, then
$$
(I_d+u\delta)(I_d+u'\delta)=I_d+(u+u')\delta.
$$
It follows that $K$ contains $E$. Moreover, $\dim_{\mathbb{F}_{q_0}}(W)=d-1$, $|E|=q_0^{d-1}$,
and every nonidentity element of $E$ is a transvection in $K$, and hence in $G_0$: products add the parameters $u$, while conjugation by $B_i$
multiplies them by $\lambda_i$; thus the admissible scalars contain $\mathbb{F}_{q_0}$. We have now proved the first claim.

Observe next that
$$
\mathbb{F}_qW
=
\ker(\delta),
$$
where by abuse of notation we are identifying with $\delta$ the linear mapping $V^\top \to \mathbb{F}_q$ sending $u$ to $\delta u$. Given vectors $u,v\in V$, $v$ is fixed by $I+u^\top\delta$ if and only if $vu^\top=0$, whence the common fixed-point space of $E$ is (isomorphic to) the annihilator of $\mathbb{F}_q W=\ker(\delta)$, and hence has dimension 1 over $\mathbb{F}_q$. Therefore, 
$$
{\bf C}_V(E)=\mathbb{F}_q\delta.
$$

Clearly $N=\langle E^g\mid g\in G_0\rangle\unlhd G_0$ and $N\le \mathrm{SL}_d(q)$---that $N$ is irreducible follows from Lemma~\ref{l:irredN}.
\end{proof}

In summary, $N$ is an irreducible subgroup of $\mathrm{SL}_d(q)$ generated by transvections. 

Choose a subfield $\mathbb{F}_{q_1}\le\mathbb{F}_q$ of smallest
possible order such that, after conjugation in $\mathrm{GL}_d(q)$,
$$
N\le \mathrm{GL}_d(q_1).
$$
Replacing $N$, $G_0$, and $E$ by their corresponding conjugates, we may assume that
this containment holds. Since $N\le\mathrm{SL}_d(q)$, we have $
N\le\mathrm{SL}_d(q_1)$.

Let
$$
V_1=\mathbb{F}_{q_1}^d.
$$
The group $N$ acts irreducibly on $V_1$. Indeed, if
$0<U_1<V_1$ were an $N$-invariant $\mathbb{F}_{q_1}$-subspace, then $U_1\otimes_{\mathbb{F}_{q_1}}\mathbb{F}_q$
would be a proper non-zero $N$-invariant $\mathbb{F}_q$-subspace of
$V$, contrary to the irreducibility of $N$ on $V$.

Moreover, by the minimality of $q_1$, the group $N$ is not contained
in a subfield subgroup of $\mathrm{SL}_d(q_1)$. We may therefore apply
\cite[Theorem~1.4]{Eberhard} (Eberhard attributes this result to Kantor~\cite{Kantor}, although the version in~\cite{Eberhard} is more tailored to our application). Since $N$ is irreducible and generated
by transvections, one of the following holds, up to conjugacy:
\begin{enumerate}
\item\label{case1} $N=\mathrm{SL}_d(q_1)$;
\item\label{case2} $q_1$ is a square and
$N=\mathrm{SU}_d(\sqrt{q_1})$;
\item\label{case3} $d$ is even and
$N=\mathrm{Sp}_d(q_1)$;
\item\label{case4} $d$ and $q_1$ are even and
$N=\mathrm{O}_d^\pm(q_1)$;
\item\label{case5} $q_1$ is even and
$N\cong C_a^{d-1}\rtimes\mathrm{Sym}(d)$,
where $a>1$ is odd and $q_1$ is the smallest power of $2$ such
that $a$ divides $q_1-1$;
\item\label{case6} $q_1=2$, $d$ is even, and
$N$ is the natural image of $\mathrm{Sym}(d+1)$
or $\mathrm{Sym}(d+2)$ on its fully deleted permutation module;
\item\label{case7} $d=2$, $q_1=9$, and
$N\cong\mathrm{SL}_2(5)$;
\item\label{case8} $d=3$, $q_1=4$, and
$N\cong 3\mathord{\cdot}A_6$;
\item\label{case9} $d=6$, $q_1=4$, and
$N\cong
3\mathord{\cdot}\mathrm{P}\Omega_{6}^{-,\pi}(3)$;
\item\label{case10} $d=2$, $q_1$ is even, and $N$ is dihedral of order dividing $2(q_1+1)$.
\end{enumerate}
\vspace{11pt}
\noindent{\sc Remark.} Case~\eqref{case10} is inadvertently left out of~\cite[Theorem 1.4]{Eberhard} and the statement of~\cite[Theorem II]{Kantor}, although it can be found in~\cite[Section 11]{Kantor} and~\cite{Dickson}.\vspace{11pt}

For the definition of $\Omega_6^{-,\pi}(3)$, see~\cite[page~349]{Kantor}.

We deal with each of these cases in turn and we  use the particular elementary abelian subgroup $E$ constructed above. Recall that $q>2$, that every nonidentity element of $E$ is a transvection, that all these transvections have the same moved line (i.e. $\mathrm{Im}(E-I_d)$ is the same for all transvections in $E$), and that their common fixed-point space is precisely that moved line.

These properties remain true after conjugation. More precisely, \begin{align}\begin{split}\label{eq:L1}\text{there
exists a one-dimensional subspace $L_1$ of $V_1$ such that}\\\text{
$[V_1,x]=L_1,$
for every $x\in E\setminus\{I_d\}$, and
${\bf C}_{V_1}(E)=L_1$.}\end{split}\end{align}
Indeed, scalar extension from $\mathbb{F}_{q_1}$ to $\mathbb{F}_q$
gives
$$
[V_1,x]\otimes_{\mathbb{F}_{q_1}}\mathbb{F}_q=[V,x]
$$
and
$$
{\bf C}_{V_1}(E)\otimes_{\mathbb{F}_{q_1}}\mathbb{F}_q
=
{\bf C}_V(E).
$$
The corresponding subspaces over $\mathbb{F}_q$ are both equal to the
one-dimensional subspace $\mathbb{F}_q\delta$, and hence the
corresponding subspaces over $\mathbb{F}_{q_1}$ are both equal to
$L_1$.

We first exclude the unitary (case~\eqref{case2}), symplectic (case~\eqref{case3}), and orthogonal (case~\eqref{case4}) cases when
$d\ge3$.
\begin{lemma}\label{l:noforms}
    Suppose that $d\geq3$. Then $N$ preserves no nondegenerate alternating, Hermitian, or quadratic form.
\end{lemma}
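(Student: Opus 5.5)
The plan is to contradict property \eqref{eq:L1} by using the standard relation, for an isometry of a nondegenerate form, between its fixed-point space and the image of $g-1$. Suppose, for a contradiction, that $N$ preserves a nondegenerate form $B$ on $V_1$, where $B$ is alternating, Hermitian, or the polar bilinear form of a quadratic form. In the quadratic case we are in case~\eqref{case4}, so $d$ and $q_1$ are even and the polar form is a nondegenerate alternating form. So in every case $B$ is a nondegenerate reflexive sesquilinear form (bilinear or Hermitian) preserved by $N$, and hence by $E\le N$. Since $E\le \mathrm{GL}_d(q_1)$ stabilises $V_1$, the property \eqref{eq:L1} may be used directly inside $V_1$.

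The key step is the following claim: for any isometry $g$ of $B$,
$$\mathbf{C}_{V_1}(g)=[V_1,g]^{\perp}.$$
To prove the inclusion from left to right, take $v$ fixed by $g$ and any $u\in V_1$. Since $vg^{-1}=v$, we get
$$B(v,ug)=B(vg^{-1},u)=B(v,u),$$
so $B(v,ug-u)=0$, and hence $v\in[V_1,g]^\perp$. For equality we compare dimensions. We have $\dim \mathbf{C}_{V_1}(g)=d-\dim[V_1,g]$ by rank--nullity applied to $g-1$. Because $B$ is nondegenerate, $\dim [V_1,g]^\perp = d-\dim[V_1,g]$ as well.

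Now apply the claim to each $x\in E\setminus\{I_d\}$. By \eqref{eq:L1}, $[V_1,x]=L_1$, so $\mathbf{C}_{V_1}(x)=L_1^\perp$, which is a hyperplane independent of $x$. Since $E\neq 1$ (it has order $q_0^{d-1}>1$), we obtain
$$\mathbf{C}_{V_1}(E)=\bigcap_{x\in E}\mathbf{C}_{V_1}(x)=L_1^\perp,$$
which has dimension $d-1$. But \eqref{eq:L1} also says $\mathbf{C}_{V_1}(E)=L_1$, which has dimension $1$. Hence $d-1=1$, contradicting $d\ge3$. Note that in case~\eqref{case4} a quadratic form with even $d$ already has nondegenerate polar form, so no separate treatment of degenerate polar forms is needed.

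The only step needing care is the claim $\mathbf{C}(g)=[V,g]^\perp$ in the Hermitian and quadratic settings. In the Hermitian case the argument is identical, with $B$ semilinear in one variable, and $\perp$ still gives codimension equal to dimension. In the quadratic case, every isometry of $Q$ preserves its polar form, so it suffices to work with that alternating form.
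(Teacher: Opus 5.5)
Your proof is correct and follows the paper's argument. For each $x\in E\setminus\{I_d\}$ the invariant form forces $\mathbf{C}_{V_1}(x)=L_1^\perp$, so $\mathbf{C}_{V_1}(E)=L_1^\perp$ has dimension $d-1$, which contradicts $\mathbf{C}_{V_1}(E)=L_1$ when $d\geq 3$; the only difference is that you prove $\mathbf{C}_{V_1}(g)=[V_1,g]^\perp$ for isometries and reduce the quadratic case to its nondegenerate polar form, whereas the paper takes both as standard.
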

\begin{proof}
Suppose otherwise. If $x$ is a transvection in $E$ preserving such a form and $[V_1,x]=L_1$, then $
{\bf C}_{V_1}(x)=L_1^\perp$.
Since every nonidentity element of $E$ has moved line $L_1$, it follows
that
${\bf C}_{V_1}(E)=L_1^\perp$.
On the other hand, we have seen above that $
{\bf C}_{V_1}(E)=L_1$.
Thus $L_1=L_1^\perp$.
This is impossible when $d\ge3$, because 
$\dim_{\mathbb{F}_{q_1}}L_1=1$
whereas
$\dim_{\mathbb{F}_{q_1}}L_1^\perp=d-1>1$.
Therefore the unitary, symplectic, and orthogonal cases do not occur
when $d\ge3$.
\end{proof}

We next exclude the cases~\eqref{case5},~\eqref{case6},~\eqref{case8},~\eqref{case9}, and~\eqref{case10}.
\begin{lemma}\label{l:d=2}
    If $N\ne \mathrm{SL}_d(q_1)$, then $d=2$ and $N$ is either a classical group in characteristic $p$ or $q_1=9$ and $N\cong \mathrm{SL}_2(5)\leq \mathrm{GL}_2(9).$
\end{lemma}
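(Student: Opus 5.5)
We go through the list of cases supplied by \cite[Theorem~1.4]{Eberhard} and exclude, for every $d$, the cases~\eqref{case5},~\eqref{case6},~\eqref{case8},~\eqref{case9} and~\eqref{case10}. When $d\ge 3$ we additionally exclude cases~\eqref{case2}--\eqref{case4} by Lemma~\ref{l:noforms}; when $d=2$ these are classical groups in characteristic $p$ and are allowed by the statement. Our main tool is the group $E\le N$. It is elementary abelian of order $q_0^{d-1}$, every non-identity element of $E$ is a transvection, and property~\eqref{eq:L1} holds: all elements of $E$ share the moved line $L_1$, and ${\bf C}_{V_1}(E)=L_1$. For $d\ge 3$ the second condition is strong. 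Transvections $I+u\delta$ with a fixed centre $\delta$ have fixed hyperplanes $\ker u$, and the hyperplanes of the elements of $E$ must intersect exactly in $L_1$. Hence $E$ contains at least $d-1$ transvections with the same centre whose axes are in general position, so $E$ has rank at least $d-1$ and a ``full'' root-group-like structure.

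Case~\eqref{case6} is excluded at once: it needs $q_1=2$, and with $q>2$ and $d\ge3$ we would use the parameter $q_0$ as follows. The field $\mathbb{F}_{q_0}$ is generated by the $\lambda_i$. The group $N$ contains $E$ and, conjugating by $B_i$, an action of the multiplicative group generated by the $\lambda_i$ on the parameters. If all $\lambda_i\in\mathbb{F}_2$ then every $\lambda_i=1$, contradicting~\eqref{lambdar}. So $q_0>2$, and because $N\le\mathrm{GL}_d(q_1)$ after conjugation, the transvection group $E$ forces $q_1\ge q_0>2$. More precisely, the transvections of $N\le \mathrm{SL}_d(q_1)$ with a given centre and axis form a group of order at most $q_1$, and $E$ contains $q_0$ transvections sharing the centre and an axis. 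This removes case~\eqref{case6}.

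For case~\eqref{case5}, in the monomial group $C_a^{d-1}\rtimes\mathrm{Sym}(d)$ the transvections are exactly the images of transpositions. These are involutions of the form $I+$rank one, and any two distinct such elements with the same centre have different axes and generate a non-abelian group, or in fact the number with a fixed centre is $1$. Since $|E|=q_0^{d-1}\ge 3^{\,d-1}>2$, this gives a contradiction. For the sporadic cases~\eqref{case8} and~\eqref{case9} (where $q_1=4$) and the dihedral case~\eqref{case10}, the argument is the same: we count the maximal elementary abelian subgroups of transvections sharing a centre. In $3\cdot A_6\le\mathrm{SL}_3(4)$ and $3\cdot\mathrm{P}\Omega_6^{-,\pi}(3)\le\mathrm{SL}_6(4)$ these have order $2$ or $4$, using the known transvection classes from \cite{Kantor}, which is far below $q_0^{d-1}\ge 4^{d-1}$ (or below what is needed to have ${\bf C}_{V_1}(E)=L_1$, which requires $d-1$ independent axes). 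In a dihedral group, two transvections sharing a centre commute only if they are equal, so $|E|\le2$. But $d=2$ and $q_0>2$ give $|E|=q_0\ge 4$ for even $q$, a contradiction.

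The main obstacle is making the counts for~\eqref{case8} and~\eqref{case9} rigorous. I would do this by using Kantor's description: in those groups the transvection subgroups with a common centre have order $2$, while ${\bf C}_{V_1}(E)=L_1$ forces $|E|\ge 2^{d-1}$, which is at least $4$ for $d\ge3$. The remaining possibilities for $d=2$ are $\mathrm{SL}_2(q_1)$, which is classical, and $\mathrm{SL}_2(5)\le\mathrm{GL}_2(9)$, which is exactly the conclusion of the statement.
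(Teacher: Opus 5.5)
Your treatment of cases (5), (6) and (10) is essentially sound. For case (6) you take a valid route that differs from the paper's. Comparing the root group $\{I_d+\mu u\delta:\mu\in\mathbb{F}_{q_0}\}\le E$ with the root groups of $\mathrm{GL}_d(q_1)$ gives $q_0\le q_1$, and $q_0\ge 4$ in characteristic $2$ then rules out $q_1=2$. The paper instead uses the invariant alternating form and Lemma~\ref{l:noforms} to force $d=2$, and then observes that $\mathrm{SL}_2(2)$ has Sylow $2$-subgroups of order $2$. In case (5) you assert, but do not prove, that a transvection of $C_a^{d-1}\rtimes\mathrm{Sym}(d)$ is determined by its centre; the remark about ``different axes and a non-abelian group'' is not an argument. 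The proof is short: since $a$ is odd, the permutation part is a single transposition. The nontrivial block is then $\left(\begin{smallmatrix}0&c\\ c^{-1}&0\end{smallmatrix}\right)$, and the centre $\langle e_i+ce_j\rangle$ recovers both $\{i,j\}$ and $c$.

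The genuine gap is in cases (8) and (9), which you yourself flag as the main obstacle. You never establish the sizes of transvection subgroups with a common centre. Worse, the version you propose to make rigorous is false. In $3\cdot A_6\le\mathrm{SL}_3(4)$, the stabilizer of a hyperoval, each of the $15$ external points is the centre of exactly three transvections of $N$. Their axes are the three secants through that point, and together with $I_d$ they form a Klein four-group whose common fixed space is exactly the centre. So ``order $2$'' is wrong, and the bound $|E|\ge 2^{d-1}=4$ coming from ${\bf C}_{V_1}(E)=L_1$ gives no contradiction when $d=3$.

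The contradiction has to come from $q_0\ge 4$, and exploiting it needs no knowledge of transvection classes. $E$ is a $2$-subgroup of $N$ of order $q_0^{d-1}$, so $|E|\ge 16$ when $d=3$ and $|E|\ge 2^{10}$ when $d=6$. On the other hand, a Sylow $2$-subgroup of $3\cdot A_6$ has order $8$, and one of $3\cdot\mathrm{P}\Omega_6^{-,\pi}(3)$ has order $2^8$. This comparison of orders is exactly how the paper disposes of these two cases.
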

\begin{proof}We consider first the monomial case~\eqref{case5}, that is
$$
N\cong C_a^{d-1}\rtimes\mathrm{Sym}(d)
$$
in its natural monomial representation. Since $a$ is odd, a
transvection in $N$ has permutation part consisting of a single
transposition. After permuting the basis, its non-trivial block has the
form
$$
\begin{pmatrix}
0&c\\
c^{-1}&0
\end{pmatrix}
$$
for some $c\in\mathbb{F}_{q_1}^\ast$. Its moved line is
$$
\left\langle e_i+ce_j\right\rangle.
$$
The moved line determines the pair $\{i,j\}$ and the scalar $c$, and
hence determines the transvection. Thus there is at most one
transvection in $N$ with any prescribed moved line.

All nonidentity elements of $E$ have the same moved line. Therefore
$E$ has at most one nonidentity element, and hence $|E|\le 2$.
This is impossible. Indeed, the monomial case occurs only in
characteristic $2$, and~\eqref{lambdar} implies that
$\mathbb{F}_{q_0}\ne\mathbb{F}_2$.
Consequently, $q_0\ge4$ and $|E|=q_0^{d-1}\ge4$.
Thus the monomial case cannot occur.

We now exclude case~\eqref{case6}. A group of this type preserves
a nondegenerate alternating form on its fully deleted permutation module;
see~\cite[Section~6.2]{Eberhard}. Hence, we may assume that $d=2$ by Lemma~\ref{l:noforms}. Since $d=2$, the image of the symmetric group on the fully deleted permutation
module is isomorphic to $\mathrm{SL}_2(2)$ and has Sylow $2$-subgroups
of order $2$. This is also impossible, since in characteristic $2$ we
have $|E|=q_0\ge4$. Similarly, a dihedral group of order dividing $2(q_1+1)$ has Sylow 2-subgroup of order $2$, so case~\eqref{case10} cannot occur.

Consider now case~\eqref{case8}.
This case has characteristic $2$. Since $q_0\ge4$ and $d=3$, we have
$|E|=q_0^2\ge4^2=16$.
However, a Sylow $2$-subgroup of $3.A_6$ has order $8$.
Therefore $3.A_6$ cannot contain $E$, and this case is
impossible.

Consider now case~\eqref{case9}.
Again the characteristic is $2$, and $|E|=q_0^5\ge4^5=2^{10}$.
On the other hand, a Sylow $2$-subgroup of $3.\mathrm{P}\Omega_{6}^{-,\pi}(3)$
has cardinality $2^8$.
Thus this group cannot contain $E$, and this case is also impossible.
\end{proof}

It remains to consider the low-dimensional classical cases and case~\eqref{case7}. We deal with the classical case first.
\begin{lemma}
    If $N$ is a classical group in characteristic $p$ then $N=\mathrm{SL}_d(q_1)$.
\end{lemma}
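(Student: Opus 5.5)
The plan is to go through the three non-linear classical possibilities from the list, cases~\eqref{case2}, \eqref{case3} and~\eqref{case4}, and show that each one either is already $\mathrm{SL}_d(q_1)$ or cannot occur. Lemma~\ref{l:noforms} excludes all three whenever $d\ge 3$. Indeed, a unitary, symplectic or orthogonal $N$ preserves a nondegenerate Hermitian, alternating or quadratic form; in the orthogonal case in characteristic $2$ the transvections preserve the associated alternating form, so the argument $\mathbf{C}_{V_1}(x)=L_1^\perp$ still applies. So we may assume $d=2$ and treat the three cases directly.

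\emph{Symplectic case~\eqref{case3}.} With $d=2$ we have $\mathrm{Sp}_2(q_1)=\mathrm{SL}_2(q_1)$, since every element of $\mathrm{SL}_2$ preserves the determinant form. Hence $N=\mathrm{SL}_d(q_1)$ and there is nothing to prove.

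\emph{Orthogonal case~\eqref{case4}.} Here $q_1$ is even, and $\mathrm{O}_2^\pm(q_1)$ is dihedral of order $2(q_1\mp1)$, so its Sylow $2$-subgroups have order $2$. As in the proof of Lemma~\ref{l:d=2}, \eqref{lambdar} gives $\lambda_r\neq 1$, so $\mathbb{F}_{q_0}\ne\mathbb{F}_2$. Thus in characteristic $2$ we get $|E|=q_0^{d-1}=q_0\ge 4$. But $E$ is a $2$-subgroup of $N$, which is a contradiction.

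\emph{Unitary case~\eqref{case2}.} Here $q_1=q_2^2$ and $N=\mathrm{SU}_2(q_2)$. The key fact to invoke is the classical isomorphism $\mathrm{SU}_2(q_2)\cong\mathrm{SL}_2(q_2)$, realised inside $\mathrm{GL}_2(q_1)$ by conjugation. Concretely, $\mathrm{SU}_2(q_2)$ acts absolutely irreducibly on $\mathbb{F}_{q_1}^2$ and its Brauer character takes values in $\mathbb{F}_{q_2}$, since traces of $\mathrm{SU}_2$ elements satisfy $t=t^{q_2}$. By the standard realisability criterion for finite fields (Schur indices are trivial), the representation is therefore conjugate in $\mathrm{GL}_2(q_1)$ into $\mathrm{GL}_2(q_2)$. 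Alternatively one can exhibit an explicit conjugating matrix carrying the Hermitian form to one defined over $\mathbb{F}_{q_2}$. Either way $N$ is conjugate into $\mathrm{GL}_d(q_2)$ with $q_2<q_1$, contradicting the minimal choice of $q_1$. So this case does not occur.

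I expect the unitary case to be the only step needing care. The main point is to justify cleanly that $\mathrm{SU}_2(q_2)$ lies in a conjugate of a subfield subgroup of $\mathrm{GL}_2(q_1)$, and I would cite the well-known identification $\mathrm{SU}_2(q)=\mathrm{SL}_2(q)^g$ for a suitable $g\in\mathrm{GL}_2(q^2)$. The remaining cases are immediate.
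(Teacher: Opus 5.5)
Your proof is correct and follows the paper's argument. You reduce to $d=2$ via Lemma~\ref{l:noforms}, use $\mathrm{Sp}_2(q_1)=\mathrm{SL}_2(q_1)$, exclude the orthogonal case because $\mathrm{O}_2^\pm(q_1)$ has Sylow $2$-subgroups of order $2$ while $|E|\ge 4$, and exclude the unitary case because $\mathrm{SU}_2(\sqrt{q_1})$ is $\mathrm{GL}_2(q_1)$-conjugate into a proper subfield subgroup, contradicting the minimality of $q_1$. The one addition is your trace-field/trivial-Schur-index justification of that conjugacy, which the paper simply asserts. It is valid: the traces lie in $\mathbb{F}_{\sqrt{q_1}}$ since $\mathrm{tr}(A^{-1})=\mathrm{tr}(A)$ for $A\in\mathrm{SL}_2$. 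Note that ``Brauer character'' should read ``trace function''.
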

\begin{proof}
    By Lemma~\ref{l:noforms}, we can assume that $d=2$. Now, 
$\mathrm{Sp}_2(q_1)=\mathrm{SL}_2(q_1)$,
so the symplectic case is already the linear case. Moreover, if
$q_1=s^2$, then the natural group $\mathrm{SU}_2(s)$ is conjugate in
$\mathrm{GL}_2(q_1)$ to $\mathrm{SL}_2(s)$. It is therefore contained
in a proper subfield subgroup, contrary to the minimality of $q_1$.
The two-dimensional orthogonal case can occur only in characteristic
$2$. In that case, $
|\mathrm{O}_2^\pm(q_1)|=2(q_1\mp1)$,
so every Sylow $2$-subgroup has order $2$. Since $|E|\ge4$, this case
is also impossible.
\end{proof}

Consequently, in dimension $2$, the only possibilities are
$$
N=\mathrm{SL}_2(q_1)
$$
and the exceptional case~\eqref{case7}, that is, 
$q_1=9$ and $N\cong\mathrm{SL}_2(5)$.
\begin{lemma}
    Suppose that $N=\mathrm{SL}_d(q_1)$. Then
$\mathrm{SL}_d(q_1)\unlhd G_0
\le \mathrm{GL}_d(q_1)\mathbb{F}_q^\ast$, and\\
$\mathrm{SL}_d(q_1)<G_0\cap \mathrm{GL}_d(q_1),$ where $\mathbb{F}_q^\ast$ is identified with the group of scalar matrices.
\end{lemma}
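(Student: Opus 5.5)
The plan is to derive both assertions from the normality of $N$ in $G_0$. Normality gives an upper bound on $G_0$ through the normalizer of $\mathrm{SL}_d(q_1)$ in $\mathrm{GL}_d(q)$. The strictness $\mathrm{SL}_d(q_1)<G_0\cap\mathrm{GL}_d(q_1)$ will come from the specific element $B_r\in K\le G_0$, which has $\det B_r=\lambda_r\ne 1$ by~\eqref{lambdar}.

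\emph{Upper bound.} Let $g\in G_0$. Since $N\unlhd G_0$, conjugation by $g$ is an automorphism of $N=\mathrm{SL}_d(q_1)$. The natural module $V=V_1\otimes\mathbb{F}_q$ twisted by this automorphism is isomorphic to $V$ itself, since $g$ intertwines the two actions. I would use this to show that the automorphism is inner-diagonal.
\begin{enumerate}
\item A nontrivial field automorphism changes the Brauer character, equivalently the traces of elements of $N$. These traces generate $\mathbb{F}_{q_1}$, because $N$ lies in no proper subfield subgroup, by the minimality of $q_1$.
\item For $d\ge3$, a graph automorphism replaces $V$ by its dual, which is not isomorphic to $V$. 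For $d=2$ the graph automorphism is inner.
\end{enumerate}
Hence there is $h\in\mathrm{GL}_d(q_1)$ inducing the same automorphism of $N$. Then $gh^{-1}$ centralizes $N$. Since $\mathrm{SL}_d(q_1)$ is absolutely irreducible on $V$, Schur's lemma shows that $gh^{-1}$ is a scalar, so $g\in\mathrm{GL}_d(q_1)\mathbb{F}_q^\ast$. This gives $\mathrm{SL}_d(q_1)\unlhd G_0\le\mathrm{GL}_d(q_1)\mathbb{F}_q^\ast$. If needed, small exceptional cases such as $q_1\le3$ can be checked directly, although $q_1\ge q_0$ and $q_0>2$ when $p=2$.

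\emph{Strictness.} First I show $\mathbb{F}_{q_0}\le\mathbb{F}_{q_1}$. After the conjugation, $E\le N\le\mathrm{GL}_d(q_1)$. Fix one nonidentity element $I_d+u_0\delta$ of $E$, so that $T:=u_0\delta$ is a nonzero matrix over $\mathbb{F}_{q_1}$. For every $a\in\mathbb{F}_{q_0}$ the element $I_d+aT$ also lies in $E$, since $W$ is an $\mathbb{F}_{q_0}$-space. Hence $aT$ has entries in $\mathbb{F}_{q_1}$, and comparing a nonzero entry gives $a\in\mathbb{F}_{q_1}$. In particular $\lambda_r\in\mathbb{F}_{q_1}$.

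Next I write $B_r=zh$ with $h\in\mathrm{GL}_d(q_1)$ and $z\in\mathbb{F}_q^\ast$, which is possible by the upper bound. The moved line $\mathbb{F}_q\delta$ is an eigenline of $B_r$ with eigenvalue $\lambda_r$. By~\eqref{eq:L1} this line equals $L_1\otimes\mathbb{F}_q$, so it is spanned by a vector $\ell\in V_1$. Then $\ell h=z^{-1}\lambda_r\ell$ with $\ell h\in V_1$, so $z^{-1}\lambda_r\in\mathbb{F}_{q_1}$. Combined with $\lambda_r\in\mathbb{F}_{q_1}$, this gives $z\in\mathbb{F}_{q_1}$. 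Therefore $B_r\in G_0\cap\mathrm{GL}_d(q_1)$ and $\det B_r=\lambda_r\ne1$, so $\mathrm{SL}_d(q_1)<G_0\cap\mathrm{GL}_d(q_1)$.

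I expect the main obstacle to be the upper bound, specifically excluding field and graph automorphisms cleanly. If the character-theoretic argument becomes awkward, my fallback is to cite the standard description of $N_{\mathrm{GL}_d(q)}(\mathrm{SL}_d(q_1))$, for example from Kleidman--Liebeck, for subfield subgroups.
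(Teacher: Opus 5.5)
Your proposal is correct and follows the paper's proof. Like you, the paper gets the upper bound from $N\unlhd G_0$ and the equality ${\bf N}_{\mathrm{GL}_d(q)}(\mathrm{SL}_d(q_1))=\mathrm{GL}_d(q_1)\mathbb{F}_q^\ast$, which it simply cites rather than proving through automorphisms as you do. It likewise gets strictness by writing $B_r=\lambda B$ with $B\in\mathrm{GL}_d(q_1)$ and using the line $L_1$ of~\eqref{eq:L1} to force $\lambda\in\mathbb{F}_{q_1}$.

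The one difference is in that last step. The paper reads off $\lambda^{-1}$ as the scalar by which $B$ acts on the nonzero quotient $V_1/L_1$, where $B_r$ acts trivially. This avoids your extra step of proving $\mathbb{F}_{q_0}\le\mathbb{F}_{q_1}$ in order to know $\lambda_r\in\mathbb{F}_{q_1}$.
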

\begin{proof}After conjugation in $\mathrm{GL}_d(q)$,
$
N=\mathrm{SL}_d(q_1)\unlhd G_0.
$
Therefore
$$
G_0\le
{\bf N}_{\mathrm{GL}_d(q)}
\bigl(\mathrm{SL}_d(q_1)\bigr)=\mathrm{GL}_d(q_1)\mathbb{F}_q^\ast.
$$
Hence
$
\mathrm{SL}_d(q_1)\unlhd G_0
\le
\mathrm{GL}_d(q_1)\mathbb{F}_q^\ast$.

Next, we may write $B_r=\lambda B$ for some $\lambda\in\mathbb{F}_q^\ast$ and $B\in\mathrm{GL}_d(q_1)$. Since each of the $B_i$ have the same moved line $L$ over $V$, it follows that $B$ preserves $L_1$ as defined in~\eqref{eq:L1}. Additionally, we have already seen that $B_r$ acts trivially on $V/L$, whence $B$ acts as the scalar $\lambda^{-1}$ on $V/L$ and hence on $V_1/L_1$. Thus, $\lambda\in\mathbb{F}_{q_1}^\ast$, and so $B_r\in\mathrm{GL}_d(q_1)$. Moreover, $\det(B_r)=\lambda_r\ne1$, so $\mathrm{SL}_d(q_1)<G_0\cap \mathrm{GL}_d(q_1)$.\end{proof}
Thus part~\eqref{part2} of Theorem~\ref{thrm:main2} is satisfied.

For the remainder of the section we consider the exceptional case~\eqref{case7}. Thus, we assume that
$p=3$, $d=2$, $q_1=9$ and that
$N\cong\mathrm{SL}_2(5)$.
In particular, $\mathbb{F}_9\le\mathbb{F}_q$, and hence $f$ is even.
Let $\pi:\mathrm{GL}_2(q)\longrightarrow\mathrm{PGL}_2(q)$
be the natural projection. Set
$\overline{N}=\pi(N)$.
Since the center of $N\cong\mathrm{SL}_2(5)$ consists of the scalar
matrices $\{\pm I_2\}$, we have
$\overline{N}\cong\mathrm{PSL}_2(5)\cong \mathrm{Alt}(5)$.
\begin{lemma}\label{eq:normalizer}
    Fix the notation of above. Then \begin{equation*}
{\bf N}_{\mathrm{PGL}_2(q)}(\overline{N})=\overline{N}.\end{equation*}
\end{lemma}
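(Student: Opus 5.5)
We need to show that the normalizer in $\mathrm{PGL}_2(q)$ of $\overline{N}\cong\mathrm{Alt}(5)$ is $\overline{N}$ itself. The plan has three steps: show the normalizer acts on $\overline N$ by inner automorphisms, compute the centralizer of $\overline N$ in $\mathrm{PGL}_2(q)$, and combine the two.

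For the first step, let $\overline{M}={\bf N}_{\mathrm{PGL}_2(q)}(\overline{N})$. Conjugation gives a homomorphism $\overline{M}\to\mathrm{Aut}(\mathrm{Alt}(5))\cong\mathrm{Sym}(5)$ with kernel ${\bf C}_{\mathrm{PGL}_2(q)}(\overline{N})$. I would rule out outer automorphisms by a trace (character) argument. An outer automorphism of $\mathrm{Alt}(5)$ swaps the two classes of elements of order $5$. Lift to $N\cong\mathrm{SL}_2(5)\le\mathrm{SL}_2(9)$. The elements of order $5$ in $N$ have traces $-\zeta-\zeta^{-1}$ with $\zeta$ a primitive $5$th root of unity in $\mathbb{F}_{81}$; these traces are $(1\pm\sqrt5)/2$, i.e.\ they lie in $\mathbb{F}_9\setminus\mathbb{F}_3$, and the two classes have Galois-conjugate, distinct traces. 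Suppose $g\in\mathrm{GL}_2(q)$ normalizes $N$ modulo scalars. Then $x^g=\epsilon y$ for some scalar $\epsilon$, and since $x^g$ is again in $N$ up to scalars, $\epsilon$ must be such that $\epsilon y\in N$, forcing $\epsilon=\pm1$. Choosing the lift of order $5$ (rather than $10$) forces $\epsilon=1$. Hence $\mathrm{tr}(x^g)=\mathrm{tr}(x)$, and the $5$-classes are preserved. So $\overline{M}$ induces inner automorphisms only, which gives $\overline{M}=\overline{N}\,{\bf C}_{\mathrm{PGL}_2(q)}(\overline{N})$.

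For the second step, I would show ${\bf C}_{\mathrm{PGL}_2(q)}(\overline{N})=1$. If $\bar g$ centralizes $\overline N$, then for each $x\in N$ we have $x^g=\epsilon(x)x$ with $\epsilon(x)=\pm1$. The map $\epsilon$ is a homomorphism $N\to\{\pm1\}$, and it is trivial because $N\cong\mathrm{SL}_2(5)$ is perfect. So $g$ centralizes $N$ in $\mathrm{GL}_2(q)$. Since $N$ is absolutely irreducible on $V$ (it is irreducible and noncommutative in dimension $2$), Schur's lemma makes $g$ scalar, so $\bar g=1$.

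Combining the two steps gives $\overline{M}=\overline{N}$. The main obstacle is the first step: handling the scalar ambiguity when lifting to $\mathrm{GL}_2(q)$ and confirming that the two $5$-classes really have distinct traces. If the trace bookkeeping becomes awkward, I would fall back on known subgroup structure: $\mathrm{PGL}_2(q)$ contains no $\mathrm{Sym}(5)$ in characteristic $3$, since by Dickson's list $\mathrm{Sym}(5)\cong\mathrm{PGL}_2(5)$ does not embed there. This would also exclude the outer case.
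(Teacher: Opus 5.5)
Your proof is correct, and its first step takes a genuinely different route from the paper.

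\textbf{The centralizer step.} Your second step is essentially the paper's argument. There is a homomorphism $N\to\mathbb{F}_q^\ast$, which is trivial because $N\cong\mathrm{SL}_2(5)$ is perfect, and Schur's lemma finishes. The only difference is the source of absolute irreducibility: the paper uses non-solvability of $N$, you use non-commutativity of a faithful irreducible $2$-dimensional module. Both are fine.

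\textbf{Ruling out $\mathrm{Sym}(5)$.} This is where the routes differ. The paper places $\mathbf{N}_{\mathrm{PGL}_2(q)}(\overline{N})$ inside $\mathrm{PGL}_2(3^m)$ with $m$ minimal. It then runs through Dickson's list of maximal subgroups (via Giudici) to see that no $\mathrm{Sym}(5)$ occurs; this is exactly your fallback. You instead show directly that no element of $\mathrm{PGL}_2(q)$ induces an outer automorphism of $\overline{N}$:
\begin{itemize}
\item outer automorphisms of $\mathrm{Alt}(5)$ swap its two classes of elements of order $5$;
\item these classes lift to the two classes of elements of order $5$ in $N$;
\item those lifts have traces $\zeta+\zeta^{-1}\neq\zeta^2+\zeta^{-2}$, distinct because $\sqrt{5}\neq 0$ in characteristic $3$;
\item conjugation in $\mathrm{GL}_2(q)$ preserves traces.
\end{itemize}
Your route is self-contained. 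It also explains why characteristic $5$ is the genuine exception: there the two traces coincide and $\mathrm{PGL}_2(5)\cong\mathrm{Sym}(5)$. The paper's route avoids any class and trace bookkeeping, at the cost of quoting the subgroup classification.

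\textbf{Two small things to tidy.}
\begin{itemize}
\item You state $\epsilon=\pm1$ without proof. The reason is $\epsilon^2=\det(\epsilon y)=\det(x^g)=1$. More simply, $g$ normalizes the preimage $N\mathbb{F}_q^\ast$ of $\overline{N}$, hence also $N\mathbb{F}_q^\ast\cap\mathrm{SL}_2(q)=N$ (using $-I_2\in N$). So $x^g\in N$ and $\mathrm{tr}(x^g)=\mathrm{tr}(x)$, with no choice of lift needed.
\item Elements of order $5$ have trace $\zeta+\zeta^{-1}=(-1\pm\sqrt{5})/2$; the values $(1\pm\sqrt{5})/2$ you wrote are the traces of elements of order $10$. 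This is harmless, since only the distinctness of the two values is used.
\end{itemize}
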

\begin{proof}
Let $\overline{g}\in\mathrm{PGL}_2(q)$ centralize $\overline{N}$, and
choose $g\in\mathrm{GL}_2(q)$ with $\pi(g)=\overline{g}$. For every
$n\in N$, there exists $\lambda_n\in\mathbb{F}_q^\ast$ such that
$g^{-1}ng=\lambda_n n$.
The map
$n\longmapsto\lambda_n$
is a homomorphism from $N$ to $\mathbb{F}_q^\ast$. Since
$N\cong\mathrm{SL}_2(5)$ is perfect, this homomorphism is trivial.
Therefore, $g^{-1}ng=n$
for every $n\in N$.
Now, the natural two-dimensional representation of $N$ is absolutely
irreducible. Indeed, if it were reducible over the algebraic closure, then $N$ would be conjugate to a subgroup of the group of upper
triangular matrices, which is solvable. This is impossible because
$N\cong\mathrm{SL}_2(5)$ is not solvable. Hence, by Schur's lemma,
$${\bf C}_{\mathrm{GL}_2(q)}(N)=\mathbb{F}_q^\ast.$$
Thus $g$ is scalar and $\overline{g}=1$, proving that
${\bf C}_{\mathrm{PGL}_2(q)}(\overline{N})=1$.
Conjugation now gives an embedding
$$
{\bf N}_{\mathrm{PGL}_2(q)}(\overline{N})
\longrightarrow
\mathrm{Aut}(\overline{N})\cong \mathrm{Sym}(5).
$$
Since the image contains the group of inner automorphisms
$\mathrm{Inn}(\overline{N})\cong \mathrm{Alt}(5)$,
we have
${\bf N}_{\mathrm{PGL}_2(q)}(\overline{N})
\cong \mathrm{Alt}(5)$ or ${\bf N}_{\mathrm{PGL}_2(q)}(\overline{N})
\cong \mathrm{Sym}(5)$. If the first possibility occurs, then $\bar N={\bf N}_{\mathrm{PGL}_2(q)}(\overline N)$ and our claim is proven. Thus
suppose, for a contradiction, that
${\bf N}_{\mathrm{PGL}_2(q)}(\overline{N})\cong \mathrm{Sym}(5)$.
Choose a subfield
$\mathbb{F}_{3^m}\le\mathbb{F}_q$
of smallest possible order such that, after conjugation,
$$
{\bf N}_{\mathrm{PGL}_2(q)}(\overline{N})
\le\mathrm{PGL}_2(3^m).
$$
Since this group contains $\overline{N}\cong \mathrm{Alt}(5)$, we have $m\ge2$.
Let $M$ be a maximal subgroup of $\mathrm{PGL}_2(3^m)$ containing
${\bf N}_{\mathrm{PGL}_2(q)}(\overline{N})$.  By Dickson's classification~\cite{Giudici},
$\mathrm{PSL}_2(3^m)$ has no subgroup isomorphic to
$\mathrm{Sym}(5)$. Hence
${\bf N}_{\mathrm{PGL}_2(q)}(\overline{N})
\not\le \mathrm{PSL}_2(3^m)$, and therefore $M$ does not contain
$\mathrm{PSL}_2(3^m)$.

By~\cite[Theorem~3.5]{Giudici} (see also~\cite{Dickson}), $M$ is one of the following:
\begin{enumerate}
\item a point stabilizer;
\item a dihedral group;
\item a subgroup isomorphic to $S_4$;
\item a proper subfield subgroup $\mathrm{PGL}_2(3^{m_0})$.
\end{enumerate}
The first two possibilities are solvable, and the third has order $24$,
so none of them can contain a subgroup isomorphic to $\mathrm{Sym}(5)$. The final
possibility contradicts the minimality of $m$. Therefore,
${\bf N}_{\mathrm{PGL}_2(q)}(\overline{N})
\not\cong \mathrm{Sym}(5).$ Hence the result.    
\end{proof}

\begin{lemma}
    Suppose that $p=3$ and that $N=\mathrm{SL}_2(5)<\mathrm{SL}_2(9)$. Then $G_0=NZ$ where $Z\leq\mathbb{F}_q^\ast$, and $G_0$ contains a scalar matrix of order 4.
\end{lemma}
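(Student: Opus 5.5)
The plan is to derive the structure of $G_0$ from Lemma~\ref{eq:normalizer}, and then to get the scalar of order $4$ by comparing the determinant of $B_r$ with the size of $E$.

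\emph{The structure $G_0=NZ$.}
\begin{itemize}
\item Since $N\unlhd G_0$, the image $\pi(G_0)$ normalizes $\overline{N}$.
\item By Lemma~\ref{eq:normalizer}, $\pi(G_0)\le {\bf N}_{\mathrm{PGL}_2(q)}(\overline{N})=\overline{N}$.
\item Hence $G_0\le \pi^{-1}(\overline{N})=N\mathbb{F}_q^\ast$, where $\mathbb{F}_q^\ast$ denotes the scalar matrices, as in the statement.
\item Because $N\le G_0$, Dedekind's modular law gives $G_0=N(G_0\cap\mathbb{F}_q^\ast)$. Setting $Z:=G_0\cap \mathbb{F}_q^\ast$ yields $G_0=NZ$ with $Z\le\mathbb{F}_q^\ast$.
\end{itemize}

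\emph{Pinning down $q_0$.} Every nonidentity element of $E$ is a transvection in $N$, so $E$ is a $3$-subgroup of $N\cong \mathrm{SL}_2(5)$. A Sylow $3$-subgroup of $\mathrm{SL}_2(5)$ has order $3$. Since $d=2$, we have $|E|=q_0^{d-1}=q_0$, and therefore $q_0=3$. Thus $\mathbb{F}_{q_0}=\mathbb{F}_p(\lambda_1,\lambda_2)=\mathbb{F}_3$, so every $\lambda_i$ lies in $\{1,-1\}$. By~\eqref{lambdar}, $\lambda_r\ne1$, hence $\lambda_r=-1$.

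\emph{The scalar of order $4$.} Write $B_r=zn$ with $z\in Z$ and $n\in N$, which is possible because $B_r\in G_0=NZ$. Then
\[
-1=\lambda_r=\det(B_r)=z^2\det(n)=z^2,
\]
since $N\le \mathrm{SL}_2(q)$ and $z$ is a scalar $2\times 2$ matrix. In characteristic $3$ we have $-1\ne 1$, so $z$ has multiplicative order exactly $4$. Since $z\in Z\le G_0$, this is the required scalar matrix of order $4$ in $G_0$.

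The step needing the most care is the identification $q_0=3$. It uses two facts: $|E|=q_0^{d-1}$ comes from the lemma constructing $E$, and $\mathrm{SL}_2(5)$ has Sylow $3$-subgroups of order $3$ because $|\mathrm{SL}_2(5)|=120$. Everything else is formal once Lemma~\ref{eq:normalizer} is in hand.
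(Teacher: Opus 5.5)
Your proof is correct. The first half ($G_0=NZ$, via $\pi(G_0)\le{\bf N}_{\mathrm{PGL}_2(q)}(\overline N)=\overline N$) is the same as the paper's, but the second half takes a different route.

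The paper does not use $E$ or $\mathbb{F}_{q_0}$ at this point. It looks at the images in $G_0/Z\cong\mathrm{Alt}(5)$ of the three distinct non-scalar matrices $B_0,B_1,B_2$, which share the moved line $L$. It then rules out, case by case, that all of them have projective order $3$ or $5$. For this it uses three facts: a Sylow $3$-subgroup of $N$ has order $3$; no proper subgroup of $\mathrm{Alt}(5)$ contains elements of orders $3$ and $5$; and $c_1,c_2$ are linearly independent. This produces some $B_i$ of projective order $2$, hence with eigenvalues $1,-1$, and then $-1=\det(B_i)=z^2$.

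You instead combine the already established count $|E|=q_0^{d-1}=q_0$ with the same Sylow bound to get $q_0=3$ at once. Hence $\lambda_r=\det(B_r)\in\mathbb{F}_3^\ast\setminus\{1\}=\{-1\}$, and the determinant comparison applies directly to $B_r$. This is the same kind of order argument the paper itself uses to exclude the characteristic-$2$ cases (via $|E|=q_0^{d-1}\ge 4$).

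What your route buys is that it removes the case analysis inside $\mathrm{Alt}(5)$ and names the element, $B_r$, whose scalar part has order $4$. What the paper's route buys is that it needs only the moved-line description of $B_0,B_1,B_2$, not the field generated by the $\lambda_i$.

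One small reading point: your claim that every $\lambda_i$ lies in $\{1,-1\}$ is justified only for $i\in\{1,2\}$, since $\lambda_0$ is not among the generators of $\mathbb{F}_{q_0}$. You only use $\lambda_r$ with $r\ge 1$, so nothing is affected.
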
 
\begin{proof}First, observe that Lemma~\ref{eq:normalizer} implies that $
{\bf N}_{\mathrm{GL}_2(q)}(N)=N\mathbb{F}_q^\ast$.
Set $Z=G_0\cap \mathbb{F}_q^\ast$. Since $N\unlhd G_0$, we have
$G_0=ZN$. 

It remains to show that $G_0$ contains a scalar matrix of order $4$. Since $K\leq G_0=ZN$ we have that
$B_0,B_1,B_2$ are three distinct non-scalar elements of $G_0$, all having the
same moved line $L=\mathbb{F}_q\delta$.

Let $\overline{B}_i$ denote the projective image of $B_i$. Since
$G_0/Z\cong \mathrm{Alt}(5)$,
the order of each $\overline{B}_i$ is $2$, $3$, or $5$.
We claim that at least one $\overline{B}_i$ has order $2$. Suppose
otherwise.
If some $\overline{B}_i$ has order 3, then $$B_i^3=(I_d+c_i\delta)^3=I_d+(c_i\delta)^3=I_d+(\delta c_i)^2c_i\delta\in\mathbb{F}_q^\ast,$$ since we are working in characteristic 3. But $c_i\delta$ has rank 1, and thus $\delta c_i=0$, that is, $B_i$ is a transvection. Thus, if all three projective images have order $3$, then each $B_i$ is a
transvection belonging to $N$. The transvections in $N$ with moved
line $L$, together with the identity, form a $3$-subgroup of $N$.
Since a Sylow $3$-subgroup of $N$ has order $3$,
there are at most two nonidentity transvections in $N$ with moved line
$L$. This contradicts the fact that $B_0,B_1,B_2$ are distinct.

Suppose that some $\overline{B}_i$ has order $3$ and some
$\overline{B}_j$ has order $5$. Then $
\langle\overline{B}_i,\overline{B}_j\rangle$
fixes $L$ and is therefore a proper subgroup of $\mathrm{Alt}(5)$. However, no
proper subgroup of $\mathrm{Alt}(5)$ contains elements of both orders $3$ and $5$,
a contradiction.

Finally, suppose that all three projective images have order $5$.
Since they generate a proper subgroup of $\mathrm{Alt}(5)$ fixing $L$, they belong
to the same cyclic subgroup of order $5$. Consequently, $B_0,B_1$ and $B_2$  all fix setwise the same two lines.  It follows that the rank-one
matrices $
B_i-I_2=c_i\delta$
have the same kernel, and hence the vectors $c_0,c_1,c_2$ are proportional.
This contradicts the linear independence of $c_1$ and $c_2$ -- see~\eqref{def:ci}.

Thus some $\overline{B}_i$ has order $2$. Write $
B_i=zn$,
with $zI_2\in Z$ and $n\in N$. Since $B_i$ has an eigenvalue $1$ and its
projective image has order $2$, its other eigenvalue is $-1$. Therefore,
$\det(B_i)=-1$.
On the other hand,  $\det(B_i)=z^2$,
because $n\in\mathrm{SL}_2(q)$. Hence 
$z^2=-1$.
Thus $zI_2$ has order $4$ and belongs to $G_0$.
\end{proof}
Thus part~\eqref{part3} of Theorem~\ref{thrm:main2} is now proved. It remains to show the converse of Theorem~\ref{thrm:main2}.

\section{The converse in Theorem~\ref{thrm:main2}}\label{sec:converse}
In this section, we address the reverse implication in Theorem~\ref{thrm:main2}. When part~\eqref{part1} holds, that is, $d=1$ and $|G_0|>2$, we have shown in Lemma~\ref{l:d=1} that $\rc(V\rtimes G_0)=3=d+2$. Therefore, we may suppose that $d>1$. We first deal with part~\eqref{part2}.

\begin{lemma}
     Suppose that $d\ge 2$ and that there exists a divisor $f_1$ of $f$ such that $\mathrm{SL}_d(p^{f_1})\unlhd G_0
\le \mathrm{GL}_d(p^{f_1})\mathbb{F}_q^\ast$, and
$\mathrm{SL}_d(p^{f_1})<G_0\cap \mathrm{GL}_d(p^{f_1})$. Then $\rc(G)=d+2$.

\end{lemma}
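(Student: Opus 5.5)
The upper bound $\rc(G)\le d+2$ holds by Theorem~\ref{thrm:main}, so the task is to exhibit two $(d+2)$-tuples $I\sim_{d+1}J$ with $I\not\sim_{d+2}J$. Conjugating inside $\mathrm{GL}_d(q)$, we may assume $\mathrm{SL}_d(q_1)\le G_0$, where $q_1=p^{f_1}$. We may also fix an element $D\in G_0\cap\mathrm{GL}_d(q_1)$ with $\det D=\mu\neq 1$ and $\mu\in\mathbb{F}_{q_1}^\ast$. Hence $G_0$ contains every matrix of $\mathrm{GL}_d(q_1)$ whose determinant lies in $\langle\mu\rangle$, which is a nontrivial subgroup of $\mathbb{F}_{q_1}^\ast$. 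In particular $q_1>2$.

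We reverse the analysis of Section~\ref{sec:maximal}. Take
\[
I=\Bigl(0,e_1,\ldots,e_d,\textstyle\sum\alpha_ie_i\Bigr),\qquad J=\Bigl(0,e_1,\ldots,e_d,\textstyle\sum\beta_ie_i\Bigr),
\]
with all $\alpha_i,\beta_i\in\mathbb{F}_{q_1}^\ast$ and $\alpha,\beta\ne 1$. Lemmas~\ref{l:Bidef} and~\ref{l:B0def} show that the unique elements of $\mathrm{AGL}_d(q)$ realising the required $(d+1)$-subtuple maps are $(0,B_i)$ for $1\le i\le d$ and $(v_0,B_0)$. Their linear parts are $B_i=I_d+c_i\delta$, with entries in $\mathbb{F}_{q_1}$ because all the data do. Their determinants are $\lambda_i=\beta_i/\alpha_i$ and $\lambda_0=(1-\beta)/(1-\alpha)$, by \eqref{eq:det1}. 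The translation $v_0$ lies in $V$, so it is in $G$. Deleting the last coordinate gives equal tuples, and $I\ne J$ forces $I\not\sim_{d+2}J$ since ordered frames have trivial stabilizers. Therefore it suffices to choose the data so that every $\lambda_i$ ($0\le i\le d$) lies in $\langle\mu\rangle$ and $v\ne w$. Then each $B_i\in G_0$, and $I\sim_{d+1}J$.

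The natural choice is $\alpha_i=\alpha_1$ for all $i$, and $\beta_i=\alpha_i$ for $i\ge 2$ while $\beta_1=\mu\alpha_1$. Then $\lambda_i=1$ for $i\ge 2$ and $\lambda_1=\mu$. The remaining constraint is
\[
\frac{1-d\alpha_1-(\mu-1)\alpha_1}{1-d\alpha_1}\in\langle\mu\rangle.
\]
The simplest target is $\lambda_0=\mu^{-1}$, which makes the product of all determinants $1$. Clearing denominators gives $\mu(1-(d+\mu-1)\alpha_1)=1-d\alpha_1$. This equation is linear in $\alpha_1$, with solution $\alpha_1=(\mu-1)/(\mu(d+\mu-1)-d)=(\mu-1)/((\mu-1)(\mu+d))=1/(\mu+d)$, provided $\mu\ne -d$. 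For this $\alpha_1$ we have $\alpha_1\ne 0$ and $d\alpha_1=d/(\mu+d)\ne 1$, because $\mu\ne 0$. We also have $\beta_1\ne0$ and $\beta=\alpha+(\mu-1)\alpha_1=(d+\mu-1)/(\mu+d)\ne 1$. So every condition in \eqref{eq:condition1} and \eqref{eq:condition2} holds, and $v\ne w$ because $\mu\ne1$.

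The main obstacle is the degenerate case $\mu=-d$ (in $\mathbb{F}_p$), where this choice fails. In that case $\langle\mu\rangle$ may be small, possibly $\{\pm1\}$. I would then vary the pattern. First, let $\beta$ differ from $\alpha$ in two coordinates, $\beta_1=\mu\alpha_1$ and $\beta_2=\mu^{-1}\alpha_2$, and aim for $\lambda_0=1$. This requires $(\mu-1)\alpha_1+(\mu^{-1}-1)\alpha_2=0$, i.e.\ $\alpha_2=\mu\alpha_1$, together with $\alpha\ne1$. These leave free parameters to satisfy the non-vanishing conditions. That argument needs $d\ge2$ and sufficiently many field elements, and small fields such as $q_1=3$ would be checked separately. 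Alternatively, one can replace $\mu$ by another element of $\langle\mu\rangle\setminus\{1\}$ with $\mu'\ne -d$, or use unequal $\alpha_i$, for example $\alpha_1=\gamma$ and the remaining $\alpha_i=1$, to shift the constant $d$.
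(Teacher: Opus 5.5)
Your proof is correct, and in outline it is the paper's argument. Both realise the configuration of Section~\ref{sec:maximal} with all $\alpha_i,\beta_i\in\mathbb{F}_{q_1}$, so that $B_0,\dots,B_d\in\mathrm{GL}_d(q_1)$, and both choose the data so that every $\lambda_i$ is an admissible determinant.

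The only difference is the choice of witnesses. The paper takes $v=e_1+z$ and $w=\lambda e_1+z$, where $z=\sum_{i\ge 2}\gamma_ie_i$ with $\gamma_i\in\mathbb{F}_{q_1}^\ast$ and $\sum_{i\ge2}\gamma_i=1$ (possible because $q_1>2$). Then $\alpha=2$ and $\beta=1+\lambda$, and \eqref{eq:det1} gives $\lambda_0=\lambda_1=\lambda$ and $\lambda_i=1$ for $i\ge 2$. There are no exceptional cases, since $1-\alpha=-1$ in every characteristic.

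Your symmetric choice $\alpha_i=1/(\mu+d)$ forces $\lambda_0=\mu^{-1}$ and breaks down when $\mu=-d$ in $\mathbb{F}_p$. This case genuinely occurs with no escape inside $\langle\mu\rangle$; for example $q_1=3$, $d=4$, $\mu=-1$, which covers $G_0=\mathrm{GL}_4(3)$. So your suggestion of replacing $\mu$ by another element of $\langle\mu\rangle$ does not always help.

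Your two-coordinate fallback, however, works in full generality and needs no separate treatment of $q_1=3$. Take $\beta_1=\mu\alpha_1$, $\beta_2=\mu^{-1}\alpha_2$, $\alpha_2=\mu\alpha_1$ and $\beta_i=\alpha_i$ for $i\ge 3$. The determinants are then $\mu,\mu^{-1},1,\dots,1$, with $\lambda_0=1$, and $\beta=\alpha$. The only remaining condition is $\alpha\ne 1$:
\begin{itemize}
\item for $d\ge3$, it excludes at most one value of the free non-zero coordinate $\alpha_d$;
\item for $d=2$ and $\mu\ne -1$, it excludes at most one value of $\alpha_1$;
\item for $d=2$ and $\mu=-1$, it holds automatically because $\alpha=0$.
\end{itemize}
Since $q_1-1\ge 2$, a valid choice always exists. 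This construction alone proves the lemma, so you can drop the case split on $\mu=-d$ entirely.
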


\begin{proof}
Set $q_1=p^{f_1}$, let  $H_1=G_0\cap\mathrm{GL}_d(q_1)$ 
and  $D=\det(H_1)\leq\mathbb{F}_{q_1}^\ast$.
Since
$$
\ker\left(
\det:\mathrm{GL}_d(q_1)\longrightarrow
\mathbb{F}_{q_1}^\ast
\right)
=
\mathrm{SL}_d(q_1)
$$
and $\mathrm{SL}_d(q_1)<H_1$,
we have $H_1 =\left\{
A\in\mathrm{GL}_d(q_1):
\det(A)\in D
\right\}$.The strict inclusion $\mathrm{SL}_d(q_1)<H_1$
implies that $D\neq\{1\}$ and, in particular, $q_1>2$. Choose 
$\lambda\in D$ with
$\lambda\neq1$. Next, choose non-zero $\gamma_2,\ldots,\gamma_d\in\mathbb{F}_{q_1}^\ast$ with $\sum_{i=2}^d\gamma_i=1$. The existence of $\gamma_2,\ldots,\gamma_d$ is guaranteed by the fact that $q_1>2$.
Define $z=\sum_{i=2}^d\gamma_i e_i$, $v=e_1+z$ and
$w=\lambda e_1+z$.
By construction, all coordinates  of $v$ and $w$ are non-zero, the sum of the coordinates is not $1$ and $v\neq w$. In particular,~\eqref{eq:condition1} and~\eqref{eq:condition2} are satisfied.

Let $\delta=w-v$, $I=(0,e_1,\ldots,e_d,v)$ and $J=(0,e_1,\ldots,e_d,w)$. Consider the matrices $B_1,\ldots,B_d$ in Lemma~\ref{l:Bidef} and the matrix $B_0$ in Lemma~\ref{l:B0def}.

From~\eqref{eq:det1}, for $0\leq i\leq d$, we have
$$
\det(B_i)=
\begin{cases}
\lambda&\textrm{when }i\in\{0,1\},\\
1&\textrm{when }i>1.
\end{cases}
$$
All entries of the matrices \(B_0,B_1,\ldots,B_d\) belong to
\(\mathbb{F}_{q_1}\). Hence
$$
B_0,B_1\in H_1
$$
because their determinants belong to $D$, while
$$
B_2,\ldots,B_d\in\mathrm{SL}_d(q_1)\leq H_1.
$$

Therefore $
B_0,B_1,\ldots,B_d\in G_0.$ Similar calculations to the proofs of Lemmas~\ref{l:Bidef} and~\ref{l:B0def} show that $I\sim_{d+1}J$ whereas clearly $I\not\sim_{d+2}J$ since $v\ne w$. Hence $\rc(V\rtimes G_0)\ge d+2$. The reverse inequality follows
from Theorem~\ref{thrm:main}, and therefore
$\rc(V\rtimes G_0)=d+2$.
\end{proof}

It remains to consider the groups as in part~\eqref{part3}.

\begin{lemma}
    Suppose that $d= 2$, $p=3$, $f$ is even and that
$G_0=NZ$, where $N=\mathrm{SL}_2(5)$ is a maximal subgroup of $\mathrm{SL}_2(9)$ and $Z\le\mathbb{F}_q^\ast$ is a group of scalar matrices containing an element of order $4$. Then $\rc(G)=d+2=4.$
\end{lemma}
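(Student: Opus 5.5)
The upper bound $\rc(G)\le d+2=4$ is immediate from Theorem~\ref{thrm:main}, so the task is to produce $4$-tuples $I=(0,e_1,e_2,v)$ and $J=(0,e_1,e_2,w)$ with $v\ne w$ such that $I\sim_3 J$. I would reverse-engineer the analysis of Section~\ref{sec:maximal}. By Lemmas~\ref{l:Bidef} and~\ref{l:B0def}, $I\sim_3J$ holds exactly when the three matrices $B_1=I_2+c_1\delta$, $B_2=I_2+c_2\delta$ and $B_0=I_2+c_0\delta$ all lie in $G_0$, where $\delta=w-v$. Deleting the last coordinate costs nothing, since the truncated tuples coincide. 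So it suffices to find, inside $G_0=NZ$, three matrices of the form $I_2+c\delta$ with a common moved line $L=\mathbb{F}_q\delta$, whose fixed lines are in the position these formulas require.

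\textbf{Step 1: constructing the candidate matrices.} Let $\iota\in Z$ be the scalar of order $4$, so $\iota^2=-1$ and $\iota\in\mathbb{F}_9$.
\begin{itemize}
\item The elements of order $3$ in $N\cong\mathrm{SL}_2(5)\le\mathrm{SL}_2(9)$ are unipotent, hence transvections. Fix a Sylow $3$-subgroup $P=\langle T\rangle$ of $N$, and let $L$ be its centre line.
\item In $\overline N\cong\mathrm{Alt}(5)$, the normalizer of $\overline P$ is $\mathrm{Sym}(3)$, so there is $n\in N$ of order $4$ normalizing $P$. Then $n$ fixes $L$, and its eigenvalues are $\pm\iota$.
\item Replacing $n$ by $n^{-1}$ if necessary, the element $R=\iota n\in G_0$ has eigenvalue $1$ and eigenvalue $-1$ on $L$. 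Thus $R$ is a reflection with moved line $L$, $\det R=-1$, and it acts trivially on $V/L$.
\end{itemize}
The set of matrices $I_2+c\delta$ with $\delta$ fixed is closed under products, since $(I+a\delta)(I+b\delta)=I+(a+b+(\delta a)b)\delta$. Hence $\langle R,T\rangle$ consists of such matrices. Concretely it is the group of maps fixing $L$ and acting trivially on $V/L$, and it contains $R$, $T$, $T^2$, $RT$, $TR$, \dots

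\textbf{Step 2: reading off a frame.} A matrix $I_2+c\delta$ fixes pointwise the line $\ker c$. I would take $B_1=R$, $B_2=T$, and $B_0$ a third element of $\langle R,T\rangle$, say $RT$ or $TRT^{-1}$, whose fixed line differs from those of $B_1$ and $B_2$. I would then choose the basis so that $e_2$ spans the fixed line of $B_1$ and $e_1$ spans the fixed line of $B_2$; this makes $c_i$ proportional to $e_i^\top$. Next I would solve for $\alpha_i$ from $c_i=\alpha_i^{-1}e_i^\top$, and for $\alpha$ from the requirement that $c_0$ be the multiple of $\mathbf 1^\top$ prescribed in Lemma~\ref{l:B0def}. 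For this, $\ker c_0$ must be spanned by $e_1-e_2$, which is arranged by rescaling $e_1$ and $e_2$ independently. Finally I would set $v=\alpha_1e_1+\alpha_2e_2$ and $w=v+\delta$.

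\textbf{Step 3: verification, the main obstacle.} It remains to check conditions~\eqref{eq:condition1} and~\eqref{eq:condition2}: $\alpha_i\ne0$ and $\beta_i\ne0$, $\alpha\ne1$ and $\beta\ne1$, and $v\ne w$ (automatic since $\delta\neq 0$). The determinants give $\lambda_1=-1$, $\lambda_2=1$ and $\lambda_0=\det B_0$, so~\eqref{eq:det1} yields $\beta_1=-\alpha_1$, $\beta_2=\alpha_2$ and $(1-\beta)=\lambda_0(1-\alpha)$. The consistency between the scale of $\delta$ and the $\alpha_i$ is where a careless choice of $B_0$ could force $\alpha=1$ or a vanishing coordinate. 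I would carry this out in explicit coordinates over $\mathbb{F}_9$, for instance taking $L=\langle e_1\rangle$-adapted matrices $T=\begin{pmatrix}1&0\\1&1\end{pmatrix}$-type and an explicit order-$4$ element of $\mathrm{SL}_2(5)$ normalizing $\langle T\rangle$. If the first candidate for $B_0$ fails, I would try the other elements of $\langle R,T\rangle$ (a group of order $6$ in $G_0/\ldots$ with several distinct fixed lines) until the inequalities hold.
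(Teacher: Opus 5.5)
Your proposal is correct and is essentially the paper's proof. The paper takes $B_2=t$ a transvection of order $3$ in $N$, $B_1=\iota^{-1}x$ (your reflection $R$) with $x\in N$ of order $4$ inverting $t$, and $B_0=B_1B_2$ (your first candidate $RT$). In the basis where $B_1=\mathrm{diag}(-1,1)$ and $B_2=\begin{pmatrix}1&0\\1&1\end{pmatrix}$, one gets $B_0=\begin{pmatrix}-1&0\\1&1\end{pmatrix}$, whose fixed line is already $\langle e_1-e_2\rangle$. So $v=e_1+e_2$ and $w=-e_1+e_2$ work, with $\alpha=-1\neq1$ and $\beta=0\neq1$, and the search you describe in Step~3 is unnecessary. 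One small slip: the product formula should read $(I+a\delta)(I+b\delta)=I+(a+b+(\delta b)a)\delta$, which does not affect your closure argument.
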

\begin{proof}
Since $\mathbb{F}_9\le\mathbb{F}_q$, we may choose $\iota\in\mathbb{F}_9$ such that
$\iota^2=-1$
and $\iota I_2\in G_0$.
Let $
\overline{N}=N/\{\pm I_2\}\cong \mathrm{Alt}(5)$.
Choose an element $\overline{t}\in\overline{N}$ of order $3$. The normalizer of $\langle\overline{t}\rangle$ in $\overline{N}$
is isomorphic to $\mathrm{Sym}(3)$. So there exists an
involution $\overline{x}\in\overline{N}$ such that
$\overline{x}\,\overline{t}\,\overline{x}^{-1}
=
\overline{t}^{-1}$.
Choose lifts $t,x\in N$ such that $t$ has order $3$. Then $t$ is a
transvection, while
$x^2=-I_2$ and $x^{-1}tx=t^{-1}$.

After choosing a suitable basis $e_1,e_2$ of $V$, we may assume that
$$
t=
\begin{pmatrix}
1&0\\
1&1
\end{pmatrix}
\hbox{ and }
x=
\iota
\begin{pmatrix}
-1&0\\
0&1
\end{pmatrix}.
$$
Define
$$
B_1=\iota^{-1}x=
\begin{pmatrix}
-1&0\\
0&1
\end{pmatrix},
\qquad
B_2=t=
\begin{pmatrix}
1&0\\
1&1
\end{pmatrix},
$$
and
$$
B_0=B_1B_2=
\begin{pmatrix}
-1&0\\
1&1
\end{pmatrix}.
$$
Since $\iota I_2\in G_0$ and $x,t\in N$, all three matrices belong to
$G_0$.

Now set $
v=e_1+e_2$ and $w=-e_1+e_2$.
Then
$v\ne w$,
and a direct calculation gives
$$
(0,e_2,v)^{B_1}=(0,e_2,w)
$$
and
$$
(0,e_1,v)^{B_2}=(0,e_1,w).
$$
Moreover, the affine transformation
$
(-e_1,B_0)
$
satisfies
$
(e_1,e_2,v)^{(-e_1,B_0)}
=
(e_1,e_2,w).
$
Thus $(0,e_1,e_2,v)\sim_3(0,e_1,e_2,w)$ and $(0,e_1,e_2,v)\not\sim_4(0,e_1,e_2,w)$. Hence $\rc(G)\ge4$. The reverse inequality follows from
Theorem~\ref{thrm:main}, and therefore $\rc(G)=4$. 
\end{proof}
This completes the proof of Theorem~\ref{thrm:main2}.
\section{Some affine groups with $q=2$ having relational complexity $d+1$}\label{sec:someq=2}
In this section we discuss the relational complexity of some primitive subgroups $G=V\rtimes G_0$ of $\mathrm{AGL}_d(2)$, which are relevant for the proof of Theorem~\ref{thrm:main3}. Throughout, we assume $d\ge 2$. The goal of the section is to prove the following:

\begin{proposition}\label{thrm:calculations}
    Let $G=V\rtimes G_0$ where $V$ is a vector space of dimension $d\geq 2$ over $\mathbb{F}_2$ and $G_0$ is an irreducible subgroup of $\mathrm{SL}_d(2)$. If $G_0$ is one of
    \begin{enumerate}
        \item $\mathrm{SL}_d(2)$, $d$ odd;
        \item $\Omega^\pm_d(2)$, $d\geq 6$ even; or
        \item $\mathrm{Alt}(m)$, $m\in\{d+1,d+2\}$, $d$ even, acting on the fully deleted permutation module,
    \end{enumerate}
    then $\mathrm{RC}(G)=d+1$.
    If $G_0$ is one of
    
    \begin{enumerate}
        \item[(4)] $\mathrm{Sp}_d(2)$, $d\geq 4$;
        \item[(5)] $\mathrm{O}^\pm_d(2)$, $d\geq 4$; or
        \item[(6)] $\mathrm{Sym}(m)$, $m\in\{d+1,d+2\}$, $d$ even, acting on the fully deleted permutation module,
    \end{enumerate}
    then $\mathrm{RC}(G)\ne d+1$.
\end{proposition}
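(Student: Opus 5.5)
The plan is to reduce everything to a statement about ordered frames and then treat the two halves separately. By Lemma~\ref{l:binarysubgroups} we already know $\rc(G)\le d+1$. So $\rc(G)=d+1$ if and only if there are $(d+1)$-tuples $I\sim_d J$ with $I\not\sim_{d+1}J$.

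\textbf{Reduction.} If $I$ has a repeated entry, then the $d$-subtuple obtained by deleting one copy already determines $I$, so $I\sim_{d+1}J$. Hence by Lemmas~\ref{l: almost independent} and~\ref{l:ai} the underlying sets of $I$ and $J$ are almost independent of size $d+1$. Each is therefore either a frame of $V$ or, only when $d$ is odd, the configuration of Lemma~\ref{l:ai}\eqref{l:ai2}.

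\textbf{Case $G_0=\mathrm{SL}_d(2)$, $d$ odd.} Take the pair $I=(0,e_1,\dots,e_d)$ and $J=(0,e_1,\dots,e_{d-1},\sum_{i<d}e_i)$ from the proof of Theorem~\ref{thrm:main}. Every $d$-subtuple of either is an ordered frame of an affine hyperplane. Since $V\le G$ and $\mathrm{SL}_d(2)$ is transitive on ordered linearly independent $(d-1)$-tuples, $G$ is transitive on such ordered frames, so $I\sim_d J$. On the other hand $I$ is a frame of $V$ and $J$ is not, so $I\not\sim_{d+1}J$.

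\textbf{Reformulation for $d$ even.} In every remaining case $d$ is even, so both $I$ and $J$ are frames of $V$. Replace $J$ by a $G$-conjugate and write $I=(P_0,\dots,P_d)$ and $J=I^g$ with $g\in\mathrm{AGL}_d(2)$. Let $T_i$ be the pointwise stabilizer in $\mathrm{AGL}_d(2)$ of $I$ with $P_i$ deleted. This is an elementary abelian group of order $2^{d-1}$ of ``affine transvections'' with axis the face $H_i=\langle P_j: j\neq i\rangle_{\rm aff}$. Since $\mathrm{AGL}_d(2)$ is regular on ordered frames, we get
\[
\rc(G)=d+1 \iff \text{there are a frame } I \text{ and } g\in\textstyle\bigcap_{i=0}^d GT_i \text{ with } g\notin G .
\]

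\textbf{Positive cases (2) and (3).} Let $\widehat G_0$ be $\mathrm{O}^\pm_d(2)$ or $\mathrm{Sym}(m)$ respectively, and put $\widehat G=V\rtimes\widehat G_0$. Then $|\widehat G_0:G_0|=2$, and $\widehat G_0\setminus G_0$ contains transvections: orthogonal transvections $x\mapsto x+B(x,a)a$ with $Q(a)=1$, respectively the transpositions. The aim is to find a frame $I$ such that every face $H_i$ is pointwise fixed by some element of $\widehat G\setminus G$. Then $\widehat G=G(\widehat G\cap T_i)$ for each $i$. Hence any $g\in\widehat G\setminus G$ lies in $\bigcap_i GT_i$, and the criterion above gives $\rc(G)=d+1$.
\begin{itemize}
\item For $\Omega^\pm_d(2)$, translating the face to pass through $0$, the requirement is that each of the $d+1$ face-hyperplanes be $a^\perp$ with $a$ nonsingular. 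I would build such a frame explicitly from a hyperbolic basis, or count, using that $d\ge6$ leaves room.
\item For $\mathrm{Alt}(m)$, choose a frame in the fully deleted module whose faces are exactly fixed hyperplanes of transpositions, for instance differences of the images of the standard vectors.
\end{itemize}

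\textbf{Negative cases (4)--(6).} Here we show $I\sim_d J$ forces $I\sim_{d+1}J$, using a Witt-type argument. For $\mathrm{Sp}_d(2)$ and $\mathrm{O}^\pm_d(2)$, the $G$-orbit of an ordered frame $(P_0,\dots,P_d)$ is determined by the values $B(P_a-P_0,P_b-P_0)$, together with $Q(P_a-P_0)$ in the orthogonal case. This holds because the vectors $P_a-P_0$ form a basis and a linear map preserving the Gram data is an isometry. Each such value involves at most three points, and $d\ge4$ means every three points lie in some $d$-subtuple. So $I\sim_d J$ forces equal data, hence $I\sim_{d+1}J$.

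For $\mathrm{Sym}(m)$ I would attempt the same strategy. Realise $\mathrm{Sym}(m)$ as the full stabilizer in $\mathrm{GL}_d(2)$ of a combinatorial structure, namely the symplectic form together with the set of images of the standard basis vectors $\epsilon_k$ (or of the pairs $\epsilon_k+\epsilon_l$). Then show that membership of an affine combination of frame points in that set can be detected on a proper sub-frame.

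This last step is the main obstacle, because weights of long sums are not obviously local. If it fails, the fallback is to verify directly that no $g\in\mathrm{AGL}_d(2)\setminus G$ lies in $\bigcap_i GT_i$. For this I would use that $G(\widehat{G}\cap T_i)$ is computable and that $\mathrm{Sym}(m)$ is self-normalizing and maximal in $\mathrm{Sp}_d(2)$, with the case of $\mathrm{Sp}_d(2)$ already settled.
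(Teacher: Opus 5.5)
\textbf{Case (6) is not proved.} Cases (1)--(5) are sound in outline. Your criterion via the groups $T_i$ and an index-two overgroup $\widehat G$ is the same mechanism as the paper's proofs for $\Omega^\pm_d(2)$ and $\mathrm{Alt}(m)$. Your Gram-data argument for $\mathrm{Sp}_d(2)$ and $\mathrm{O}^\pm_d(2)$ is a correct and more direct alternative to the paper's route through $U=\delta^\perp$ and the transvection $\tau_\delta$.

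In case (6) you name locality of membership in the set $R$ of transposition directions as the main obstacle, and your fallback does not close it. Knowing $g\in V\rtimes\mathrm{Sp}_d(2)$, plus maximality and self-normalization of $\mathrm{Sym}(m)$ in $\mathrm{Sp}_d(2)$, gives no reason for $g\in G$.

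The missing idea is that for $d$ even over $\mathbb{F}_2$ there is no obstacle. Write $x=\sum_{a=1}^d c_a(P_a-P_0)$.
\begin{itemize}
\item If some $c_a=0$, then $x$ lies in the direction $W_a$ of the face omitting $P_a$.
\item Otherwise all $c_a=1$, so $\sum_a c_a=d\equiv 0$, and $x$ lies in the direction $W_0$ of the face omitting $P_0$.
\end{itemize}
Hence $V=\bigcup_{i=0}^d W_i$. Let $A$ be the linear part of the affine map taking $I$ to $J$. On each $W_i$ it agrees with the linear part $A_i\in G_0$ of the element witnessing the $i$-th $d$-subtuple, so $A$ maps $R$ onto $R$. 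Your Gram argument shows that $A$ also preserves the alternating form.

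You must still check that $\mathrm{Sp}_d(2)\cap\mathrm{Stab}_{\mathrm{GL}_d(2)}(R)$ is the image of $\mathrm{Sym}(m)$.
\begin{itemize}
\item For $m\ge 5$, $m\ne 6$, two elements of $R$ sum into $R$ exactly when the corresponding pairs meet. So Whitney's theorem for the triangular graph applies.
\item For $m=6$, $R=V\setminus\{0\}$, but $\mathrm{Sym}(6)\cong\mathrm{Sp}_4(2)$.
\item For $m\le 4$ the group is $\mathrm{GL}_2(2)$.
\end{itemize}
The paper argues combinatorially instead. Each $B_i$ has moved space of dimension at most $2$ containing $\delta$, and $\delta$ is not a transposition direction. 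So each $B_i$ is a double transposition whose support is determined by $\delta$, leaving at most three candidates for the $d-1\ge 5$ pairwise distinct $B_1,\dots,B_{d-1}$.

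\textbf{Case (2) is only asserted at its key step.} You need a basis $f_1,\dots,f_d$ of non-singular vectors with $f_1+\cdots+f_d$ non-singular; the dual basis then gives a frame all of whose $d+1$ face normals are non-singular. This is the paper's Lemma~\ref{l:non-singular-basis}, and it is the real content of that case. ``$d\ge 6$ leaves room'' is not an argument. For plus type in dimension $4$, the six non-singular vectors of $x_1x_2+x_3x_4$ sum to $0$, so no five of them sum to $0$ and the configuration does not exist. The paper gives explicit bases in dimensions $2,4$ (minus type) and $6,8$ (plus type). It then propagates them by orthogonal sums with a $4$-dimensional plus-type block whose four non-singular basis vectors have singular sum.

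\textbf{Case (3) needs the normals specified.} For $\mathrm{Alt}(m)$, take for instance the $(d+1)$-cycle $\overline{f_i+f_{i+1}}$ ($1\le i\le d$) together with $\overline{f_1+f_{d+1}}$. If you instead take the frame points themselves to be $\overline{f_i+f_m}$, this works for $m=d+2$. It fails for $m=d+1\ge 5$, where the face normals have weight $d$ rather than $2$.
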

Each case shall be treated separately, although we will use the same setup for (4), (5), and (6).
\subsection{Setup for Cases (4), (5), and (6) of Proposition~\ref{thrm:calculations}}\label{sec:setup} Assume that $G_0$ is as in (4), (5), or (6) and suppose for a contradiction that $\rc(G)=d+1$. Then there exist
$I,J\in V^{d+1}$ such that \(I\sim_d J\) and $I\not\sim_{d+1}J$.
By Lemma~\ref{l: almost independent}, we may suppose that the supports of $I$ and $J$
are almost independent sets of cardinality $d+1$.
In each case $d$ is even, so up to replacing $G$, $I$ and $J$ by suitable $\mathrm{AGL}_d(2)$-conjugates, from Lemma~\ref{l:ai}, we may suppose that 
$I=(0,e_1,\ldots,e_{d-1},e_d)$ and $J=(0,e_1,\ldots,e_{d-1},w)$, where $e_1,\ldots,e_{d-1},w$ form a basis of $V$. Let $\delta=w+e_d\in \langle e_1,\ldots,e_{d-1}\rangle$.
For each $i \in \{0, \ldots, d\}$, choose an affine transformation $g_i = (t_i,B_i ) \in G$ witnessing the $d$-equivalence of $I$ and $J$ after deleting the $(i+1)$-th coordinate. In particular, $t_i=0$ for $i\ne 0$. Write $U=\langle e_1,\dots, e_{d-1}\rangle$.
\begin{lemma}\label{l:formpres}
    Suppose that $d\geq 4$. Let $\langle\, , \rangle$ denote the natural nondegenerate alternating or polar form on $V$ preserved by $G_0$. Then $U=\delta^\perp$, where orthogonality is with respect to the form $\langle\, , \rangle$. Therefore, $\langle e_d,\delta\rangle=1$.
\end{lemma}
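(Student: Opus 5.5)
The plan is to show that $\delta$ is orthogonal to all of $U$, and then use nondegeneracy and a dimension count. The only ingredients are the linear parts $B_1,\ldots,B_{d-1}$ of the elements $g_1,\ldots,g_{d-1}$ from the setup, together with the standard fact about isometries that fix a subspace pointwise.

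First I fix the form. In cases (4) and (6), $G_0$ preserves a nondegenerate alternating form; for (6) this is the form on the fully deleted permutation module, as recalled from \cite[Section~6.2]{Eberhard}. In case (5), $G_0$ preserves a quadratic form, and I take $\langle\,,\rangle$ to be its polar form. Since $d$ is even this polar form is nondegenerate and alternating, and it is preserved by $G_0$.

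Next, the key observation. Let $B\in G_0$ fix pointwise a subspace $X\le V$. Then for all $x\in V$ and $u\in X$,
$$\langle xB-x,u\rangle=\langle xB,uB\rangle-\langle x,u\rangle=0,$$
so $\mathrm{Im}(B-I_d)\le X^\perp$.

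Now apply this to $B_i$ for $i\in\{1,\ldots,d-1\}$. Since $t_i=0$, the matrix $B_i$ fixes $e_j$ for every $j\in\{1,\ldots,d-1\}\setminus\{i\}$ and sends $e_d$ to $w$. Set $W_i=\langle e_j : j\le d-1,\ j\ne i\rangle$. The observation gives
$$\delta=w+e_d=e_dB_i-e_d\in W_i^\perp$$
for every $i\le d-1$. Because $d-1\ge 3\ge 2$, we can take two distinct indices $i\ne k$, and then $W_i+W_k=U$. Therefore
$$\delta\in W_i^\perp\cap W_k^\perp=(W_i+W_k)^\perp=U^\perp.$$
Since $w\ne e_d$ we have $\delta\ne 0$. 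The form is nondegenerate and $\dim U=d-1$, so $\dim U^\perp=1$, hence $U^\perp=\langle\delta\rangle$ and $U=\delta^\perp$.

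For the final claim, note that $V=U\oplus\langle e_d\rangle$ and $\delta\perp U$. If $\langle e_d,\delta\rangle=0$ held, then $\delta$ would lie in the radical of the form, which is impossible. So $\langle e_d,\delta\rangle\ne 0$, and over $\mathbb{F}_2$ this means $\langle e_d,\delta\rangle=1$.

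I do not expect a genuine obstacle. The only points needing care are the two justifications of nondegeneracy: in the orthogonal case in characteristic $2$ (which uses that $d$ is even), and in the $\mathrm{Sym}(m)$ case via the cited description of the fully deleted module.
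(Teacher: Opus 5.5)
Your proof is correct and follows the paper's argument. The paper fixes each $j\le d-1$ and picks $i\ne j$, so that $B_i$ fixes $e_j$ and sends $e_d$ to $e_d+\delta$; this gives $\langle e_j,\delta\rangle=0$, then $U=\delta^\perp$ by a dimension count, and $\langle e_d,\delta\rangle=1$ because $e_d\notin U$. Your formulation via $\mathrm{Im}(B-I_d)\le X^\perp$ and $W_i+W_k=U$ is a repackaging of that same step.
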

\begin{proof}
Fix $j\in\{1,\ldots,d-1\}$. Since $d\geq4$, we may choose $i\in\{1,\ldots,d-1\}\setminus\{j\}$.
The matrix $B_i$ fixes $e_j$ and maps
$e_d$ to $w=e_d+\delta$.
Since $B_i$ preserves the form,
\[
\langle e_j,e_d\rangle
=
\langle e_jB_i,e_dB_i\rangle
=
\langle e_j,e_d+\delta\rangle.
\]
It follows that $\langle e_j,\delta\rangle=0$.
This holds for every $j\in\{1,\ldots,d-1\}$, and hence $U\leq\delta^\perp$.
Since $\delta\neq0$ and the form is nondegenerate,
$\dim\delta^\perp=d-1$.
As $\dim U=d-1$, we obtain $U=\delta^\perp$, as claimed.
In particular, $e_d\notin\delta^\perp$, and hence $\langle e_d,\delta\rangle=1$.
\end{proof}
\subsection{The affine general linear group}\label{sec:5.1}
When $G=V\rtimes\mathrm{SL}_d(2)$, we have seen in Theorem~\ref{thrm:main} that $\rc(G)=d+1$ if and only if $d$ is odd and hence part~\eqref{part12} of Proposition~\ref{thrm:calculations} immediately follows.

\subsection{The symplectic groups $G_0=\mathrm{Sp}_d(2)$}\label{sec:5.2}
Suppose, for a contradiction, that $\mathrm{RC}(G)=d+1$, and assume the setup and notation from Section~\ref{sec:setup}. When $d=2$, we have $\mathrm{Sp}_2(2)=\mathrm{SL}_2(2)$,
so we may assume that $d\geq4$. Let $\langle\,,\rangle$ denote the nondegenerate alternating form preserved by $G_0$. By Lemma~\ref{l:formpres}, $U=\delta^\perp$ and $\langle e_d,\delta \rangle =1.$

Now consider the symplectic transvection
$\tau_\delta:x\longmapsto
x+\langle x,\delta\rangle\delta$.
We have $\tau_\delta\in G_0$.
Since $U=\delta^\perp$, the transformation $\tau_\delta$ fixes
$e_1,\ldots,e_{d-1}$, while
$$
e_d\tau_\delta
=
e_d+\langle e_d,\delta\rangle\delta
=
e_d+\delta
=
w.
$$
Thus $\tau_\delta$ maps the entire tuple $I$ to the entire tuple $J$,
contrary to the assumption that $I$ and $J$ are not
$G$-conjugate.
Therefore, $G_0\neq\mathrm{Sp}_d(2)$.

\subsection{The orthogonal groups $G_0=\mathrm{O}_d^\pm(2)$}\label{sec:5}
Suppose, for a contradiction, that $\mathrm{RC}(G)=d+1$. Let $Q:V\longrightarrow\mathbb F_2$
be the nondegenerate quadratic form preserved by $G_0$, and let
$p(x,y)=Q(x+y)+Q(x)+Q(y)$ be its associated polar form. Assume the setup and notation from Section~\ref{sec:setup}. In dimension $2$, the group $\mathrm{O}_2^-(2)$ coincides with
$\mathrm{SL}_2(2)$, while $\mathrm{O}_2^+(2)$ is reducible.  We
may therefore assume that $d\geq4$. By Lemma~\ref{l:formpres}, $U=\delta^\perp$ and $p( e_d,\delta ) =1.$

Now $B_i$ preserves $Q$ and maps $e_d$ to $e_d+\delta$. Therefore,
$Q(e_d)=Q(e_d+\delta)$.
Using the definition of the polar form, we obtain
$$
Q(e_d+\delta)
=
Q(e_d)+Q(\delta)+p(e_d,\delta).
$$
Thus $Q(\delta)+p(e_d,\delta)=0$.
Since $p(e_d,\delta)=1$, we deduce that $Q(\delta)=1$.
It follows that the orthogonal transvection $
\tau_\delta:x\longmapsto x+p(x,\delta)\delta$
belongs to $G_0$.
Since  $U=\delta^\perp$,
the element $\tau_\delta$ fixes $e_1,\ldots,e_{d-1}$.
Moreover,
$$
e_d\tau_\delta
=
e_d+p(e_d,\delta)\delta
=
e_d+\delta
=
w.
$$
Hence $\tau_\delta$ maps the whole tuple $I$ to the whole tuple $J$,
contrary to the assumption that $I$ and $J$ are not $G$-conjugate.
Therefore, $G_0\neq\mathrm{O}_d^\pm(2)$.

\subsection{The ``omega'' groups $G_0=\Omega_d^\pm(2)$}\label{sec:omega}
\begin{lemma}\label{l:non-singular-basis}
Let $(V,Q)$ be a nondegenerate orthogonal space of even dimension $d$
over $\mathbb F_2$. Suppose either that $V$ has minus type, or that
$V$ has plus type and $d\geq6$. Then $V$ has a basis
$f_1,\ldots,f_d$ such that
$Q(f_i)=1$, for every $i\in\{1,\ldots,d\}$ and
$Q(f_1+\cdots+f_d)=1$.
\end{lemma}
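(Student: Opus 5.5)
The plan is to build the basis inductively, using the decomposition of $(V,Q)$ into an orthogonal sum of $2$-dimensional nondegenerate pieces and the identity $Q(x+y)=Q(x)+Q(y)$ whenever $p(x,y)=0$. Recall that a nondegenerate plane over $\mathbb F_2$ is either a hyperbolic plane $H=\langle x,y\rangle$ with $Q(x)=Q(y)=0$ and $p(x,y)=1$, or an anisotropic plane $A=\langle a,b\rangle$ with $Q(a)=Q(b)=Q(a+b)=1$. A space of minus type is $A\perp H\perp\cdots\perp H$. A space of plus type of dimension $d\ge 6$ can be written as $W\perp A$, where $W$ is of minus type and dimension $d-2\ge 4$.

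\textbf{Base case.} For $V=A$, take $f_1=a$ and $f_2=b$. Then $f_1+f_2=a+b$ is nonsingular, so the conditions hold.

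\textbf{Step adding a hyperbolic plane.} Suppose $W$ is nondegenerate and has a basis $f_1,\ldots,f_k$ satisfying the conditions. Put $s=f_1+\cdots+f_k$, so $Q(s)=1$. For $W\perp H$ we take
\[
f_{k+1}=x+y,\qquad f_{k+2}=x+u,
\]
where $u\in W$ is chosen with $Q(u)=1$ and $p(s,u)=1$.

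\begin{itemize}
\item Orthogonality gives $Q(f_{k+1})=1$ and $Q(f_{k+2})=Q(x)+Q(u)=1$.
\item The total sum is $s+y+u$, and its value is $Q(s+u)+Q(y)=Q(s)+Q(u)+p(s,u)=1$.
\item The new vectors are independent modulo $W$, so we still have a basis.
\end{itemize}

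The main point is the existence of $u$.

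\begin{itemize}
\item If $\dim W=2$, so $W=A$, then $s=a+b$ and $u=a$ works.
\item If $\dim W\ge 4$, nondegeneracy gives some $u_0$ with $p(s,u_0)=1$. If $Q(u_0)=0$, we replace $u_0$ by $u_0+z$, where $z\in\langle s,u_0\rangle^\perp$ is nonsingular. Such a $z$ exists because $\langle s,u_0\rangle^\perp$ is a nondegenerate space of dimension at least $2$. This replacement gives $Q(u_0+z)=Q(u_0)+Q(z)=1$ and leaves $p(s,\cdot)$ unchanged.
\end{itemize}

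Iterating from $A$ handles every space of minus type.

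\textbf{Plus type, $d\ge 6$.} Write $V=W\perp A$ with $W$ of minus type, $\dim W\ge 4$. By the minus-type case, $W$ has a basis satisfying the conditions; let $s$ be the sum of that basis. Add the vectors
\[
a,\qquad b+u,
\]
where $u\in W$ satisfies $Q(u)=0$ and $p(s,u)=1$.

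\begin{itemize}
\item We have $Q(a)=1$ and $Q(b+u)=Q(b)+Q(u)=1$.
\item The new total sum is $s+a+b+u$. Its value is $Q(s+u)+Q(a+b)=\bigl(1+0+p(s,u)\bigr)+1=1$.
\end{itemize}

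Such a singular $u$ exists by the same adjustment as before, now using a singular $z\ne 0$ in $\langle s,u_0\rangle^\perp$. This complement is nondegenerate of dimension at least $2$; if it is anisotropic, $u_0$ can instead be adjusted inside a hyperbolic plane. I expect the only delicate bookkeeping to be these existence checks for $u$ in the adjusting steps. That is also exactly where $d\ge 6$ is needed in the plus-type case: the construction requires $\dim W\ge 4$, and the $4$-dimensional plus-type space is excluded from the statement.
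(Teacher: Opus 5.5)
Your construction works, but the existence argument for the singular vector $u$ in the plus-type step fails as written. Suppose $Q(u_0)=1$ and $z\in\langle s,u_0\rangle^\perp$ is singular. Then $Q(u_0+z)=Q(u_0)+Q(z)+p(u_0,z)=1$. Adding an orthogonal singular vector never changes the value of $Q$, so this adjustment cannot produce a singular $u$.

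What you need is a \emph{nonsingular} $z$, exactly as in your minus-type step. Since $p(s,u_0)=1$, the plane $\langle s,u_0\rangle$ is nondegenerate. Hence $\langle s,u_0\rangle^\perp$ is nondegenerate of dimension $\dim W-2\geq 2$, and it contains a nonsingular $z$: if $p(z_1,z_2)=1$, then one of $z_1,z_2,z_1+z_2$ is nonsingular. Then $u=u_0+z$ satisfies $Q(u)=Q(u_0)+Q(z)=0$ and $p(s,u)=p(s,u_0)=1$.

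With this correction, the worry about an anisotropic complement disappears, since every nonzero vector there is nonsingular, which is what you want. The unexplained fallback ``adjusted inside a hyperbolic plane'' can be deleted. The hypothesis $\dim W\geq 4$ is still genuinely used, because it makes $\langle s,u_0\rangle^\perp\neq 0$; for $W=A$ there is no nonzero singular vector at all. Everything else checks out: the base case, the hyperbolic-plane step with $x+y$ and $x+u$, the plus-type step with $a$ and $b+u$, the evaluations of $Q$ on the new sums, and linear independence modulo $W$.

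Once repaired, your argument is a genuinely different route from the paper's. The paper enlarges the space four dimensions at a time. It adjoins a fixed plus-type $4$-space $P$ with an explicit basis $z_1,\dots,z_4$ of nonsingular vectors whose sum $\gamma$ is singular. The new vectors do not interact with $W$, so no existence argument is needed. The cost is four hand-checked base cases: minus type in dimensions $2$ and $4$, plus type in dimensions $6$ and $8$.

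You instead enlarge by one hyperbolic plane at a time. The price is a new vector $x+u$ that depends on the previous sum $s$, and hence a small existence argument inside $W$. In return you need only the single base case $A$, and you obtain every plus-type space from a minus-type one by adjoining one anisotropic plane. Your version is shorter and shows exactly where $d\geq 6$ enters; the paper's is fully explicit.
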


\begin{proof}
Let $p$ denote the associated polar form. We first describe a four-dimensional plus-type space which will be
used to increase the dimension by $4$. Let
$P=\langle \alpha,\beta\rangle\perp\langle \gamma,\delta\rangle$,
where $(\alpha,\beta)$ and $(\gamma,\delta)$ are hyperbolic pairs. Thus
\[
Q(\alpha)=Q(\beta)=Q(\gamma)=Q(\delta)=0,
\qquad
p(\alpha,\beta)=p(\gamma,\delta)=1,
\]
and all the other pairings among these vectors are zero. Set
\[
z_1=\gamma+\delta,\qquad
z_2=\beta+\gamma+\delta,\qquad
z_3=\alpha+\gamma+\delta,\qquad
z_4=\alpha+\beta+\delta.
\]
Then $z_1,z_2,z_3,z_4$ form a basis of $P$ and
\[
Q(z_i)=1
\]
for every $i$. Moreover,
\[
z_1+z_2+z_3+z_4=\gamma,
\]
and hence
\[
Q(z_1+z_2+z_3+z_4)=0.
\]

Consequently, suppose that a nondegenerate orthogonal space $W$ has a
basis $x_1,\ldots,x_n$ satisfying
\[
Q(x_i)=1
\quad\text{and}\quad
Q(x_1+\cdots+x_n)=1.
\]
Then
\[
x_1,\ldots,x_n,z_1,z_2,z_3,z_4
\]
is a basis of $W\perp P$ consisting of non-singular vectors, and
\[
Q\left(
x_1+\cdots+x_n+z_1+\cdots+z_4
\right)
=
Q(x_1+\cdots+x_n)+Q(\gamma)=1.
\]
Since $P$ has plus type, $W\perp P$ has the same type as $W$.

It therefore suffices to give the required bases in dimensions $2$
and $4$ for the minus type, and in dimensions $6$ and $8$ for the
plus type.

For the two-dimensional minus-type space, let $\alpha,\beta$ be an anisotropic
basis, so that
\[
Q(\alpha)=Q(\beta)=Q(\alpha+\beta)=1.
\]
Then $\alpha,\beta$ is the required basis.

For the four-dimensional minus-type space, write
\[
V=\langle \alpha,\beta\rangle\perp\langle \gamma,\delta\rangle,
\]
where $\langle \alpha,\beta\rangle$ is anisotropic and $(\gamma,\delta)$ is a hyperbolic
pair. Then
\[
\gamma+\delta,\qquad \beta,\qquad \beta+\delta,\qquad \alpha
\]
is a basis consisting of non-singular vectors, and its sum is $\alpha+\gamma$,
which is non-singular.

For the six-dimensional plus-type space, write
\[
V=
\langle \alpha,\beta\rangle
\perp
\langle \gamma,\delta\rangle
\perp
\langle \varepsilon,\zeta\rangle,
\]
where the three displayed pairs are hyperbolic. The vectors
\[
\begin{split}
&\varepsilon+\zeta,\qquad \delta+\varepsilon+\zeta,\qquad \gamma+\varepsilon+\zeta,\\
&\gamma+\delta+\zeta,\qquad \beta+\varepsilon+\zeta,\qquad \alpha+\varepsilon+\zeta
\end{split}
\]
form a basis of $V$ and are all non-singular. Their sum is
\[
\alpha+\beta+\varepsilon,
\]
which is non-singular.

Finally, for the eight-dimensional plus-type space, write
\[
V=
\langle \alpha,\beta\rangle
\perp
\langle \gamma,\delta\rangle
\perp
\langle \varepsilon,\zeta\rangle
\perp
\langle \eta,\mu\rangle,
\]
where all four displayed pairs are hyperbolic. The vectors
\[
\begin{split}
&\eta+\mu,\qquad \zeta+\eta+\mu,\qquad \varepsilon+\eta+\mu,\qquad \varepsilon+\zeta+\mu,\\
&\delta+\eta+\mu,\qquad \gamma+\eta+\mu,\qquad \beta+\eta+\mu,\qquad \alpha+\varepsilon+\zeta
\end{split}
\]
form a basis and are all non-singular. Their sum is
\[
\alpha+\beta+\gamma+\delta+\varepsilon+\zeta+\mu,
\]
which is non-singular.

The result now follows inductively by taking an orthogonal direct sum
with the four-dimensional space $P$. Indeed, the base dimensions
$2$ and $4$ cover all even dimensions of minus type, while the base
dimensions $6$ and $8$ cover all even dimensions at least $6$ of
plus type.
\end{proof}

\begin{proposition}\label{prop:converse-omega}
Let $V$ be a nondegenerate orthogonal space of even dimension $d$ over
$\mathbb F_2$, and let
$G=V\rtimes\Omega_d^\varepsilon(2)$.
Suppose that either $\varepsilon=-$, or that $\varepsilon=+$ and
$d\geq6$. Then $\rc(G)=d+1$.
\end{proposition}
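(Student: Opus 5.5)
By Lemma~\ref{l:binarysubgroups} we already have $\rc(G)\le d+1$, so it suffices to produce two $(d+1)$-tuples $I,J\in V^{d+1}$ with $I\sim_d J$ and $I\not\sim_{d+1}J$. Both should be almost independent, following the shape of the setup in Section~\ref{sec:setup}. Take the basis $f_1,\ldots,f_d$ from Lemma~\ref{l:non-singular-basis} and set
\[
I=(0,f_1,\ldots,f_{d-1},f_d),\qquad J=(0,f_1,\ldots,f_{d-1},w),
\]
where $w=f_d+\delta$ for a suitable $\delta\in U:=\langle f_1,\ldots,f_{d-1}\rangle$. Since $\Omega_d^\varepsilon(2)$ has index $2$ in $\mathrm{O}_d^\varepsilon(2)$, the strategy is as follows. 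The single element of $\mathrm{O}_d^\varepsilon(2)$ that could carry $I$ to $J$ is a reflection (orthogonal transvection), and reflections lie outside $\Omega$. Meanwhile, each deleted subtuple should be carried across by an element of $\mathrm{O}$ that is a product of two reflections, which does lie in $\Omega$.

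Following Lemma~\ref{l:formpres}, I would choose $\delta$ with $U=\delta^\perp$. Then any linear map fixing $f_1,\ldots,f_{d-1}$ and preserving $Q$ is either the identity or $\tau_\delta$. So if $\delta\ne0$, the only candidate in $\mathrm{O}$ sending $I$ to $J$ is $\tau_\delta$. This requires $Q(\delta)=1$ and $p(f_d,\delta)=1$, and in that case $\tau_\delta\notin\Omega$, so $I\not\sim_{d+1}J$ in $G$.

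For the $d$-subtuples I would handle three cases.
\begin{itemize}
\item Deleting the last coordinate gives identical subtuples.
\item Deleting $f_j$ with $1\le j\le d-1$, I want $\tau_\delta\tau_{y}$ with $y$ chosen so that $\tau_y$ fixes $0$, $f_i$ for $i\ne j$, and $w$. This means $y\in\langle f_i: i\ne j\rangle^\perp\cap w^\perp$, with $Q(y)=1$ and $y\ne\delta$. The product then sends $f_d\mapsto w$ and fixes the others. Equivalently, I need a nonsingular vector orthogonal to $f_i$ for $i\ne j,d$ and to $w$.
\item Deleting $0$, I need $(t,B)\in G$ with $B\in\Omega$ sending the frame $(f_1,\ldots,f_d)$ to $(f_1,\ldots,f_{d-1},w)$ modulo translation. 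Comparing differences forces the linear part to fix $f_i-f_1$ and to send $f_d-f_1\mapsto w-f_1$. This should again be realised as $\tau_\delta$ times a reflection in a nonsingular vector orthogonal to the relevant $(d-1)$-space.
\end{itemize}

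The natural choice is $\delta$ with $p(\delta,f_i)=0$ for $i<d$ and $p(\delta,f_d)=1$. The condition $Q(\delta)=1$ must then be checked, and I expect this is exactly where the conditions $Q(f_i)=1$ and $Q(\sum f_i)=1$ from Lemma~\ref{l:non-singular-basis} enter. In the deleting-$0$ case, the relevant perpendicular vector is the one attached to $\sum f_i$ under the dual-basis relations.

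The main obstacle is the bookkeeping. In each deletion case I must exhibit a nonsingular vector in a one-dimensional perpendicular complement (a line in $V$, since $d-1$ linear conditions are imposed). Its nonsingularity is precisely what the special basis guarantees, and I need that vector to differ from $\delta$.

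I would first compute the dual basis $f_1^*,\ldots,f_d^*$ with respect to the polar form. In these terms $\delta=f_d^*$, the vector needed when deleting $f_j$ is a combination involving $f_j^*$, and the vector needed when deleting $0$ is $\sum_i f_i^*$ or a close variant. I would then verify $Q=1$ on each of these using the Gram matrix, and check that Dickson invariants (equivalently, the parity of the number of reflection factors) put each witness in $\Omega$. If this identification fails, the fallback is to swap the roles: choose $I$ and $J$ so that their defining vectors are the nonsingular ones supplied by Lemma~\ref{l:non-singular-basis} directly.
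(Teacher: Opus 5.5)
You have the right mechanism, and it is the paper's: use $[\mathrm{O}_d^\varepsilon(2):\Omega_d^\varepsilon(2)]=2$, obtain $J$ from $I$ by a reflection (which lies outside $\Omega$), and realise each deleted subtuple by a product of two reflections. The gap is in where you put the special basis.

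With $I=(0,f_1,\ldots,f_d)$, every reflection you are forced to use is in a \emph{dual} vector $f_j^*$, where $p(f_i,f_j^*)=\delta_{ij}$:
\begin{itemize}
\item First, $\delta=f_d^*$.
\item Deleting $f_j$, your line is spanned by $y=f_j^*+p(f_j^*,f_d^*)f_d^*=f_j^*\tau_\delta$. Hence $\tau_\delta\tau_y=\tau_{f_j^*}\tau_\delta$, and what is really needed is $Q(f_j^*)=1$.
\item Deleting $0$, compose the witness with $\tau_\delta$. This gives an element of $V\rtimes\mathrm{O}_d^\varepsilon(2)$ fixing $f_1,\ldots,f_d$ with non-identity linear part. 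Its linear part must therefore be the reflection in $\sum_i f_i^*$, because the affine span of the $f_i$ is $\{x: p(x,\sum_i f_i^*)=1\}$.
\end{itemize}
An element of $\mathrm{O}_d^\varepsilon(2)$ fixing a hyperplane $u^\perp$ pointwise is $1$ or $\tau_u$. So your $I,J$ work \emph{if and only if} $Q(f_j^*)=1$ for all $j$ and $Q(\sum_j f_j^*)=1$.

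Lemma~\ref{l:non-singular-basis} gives these conditions for the $f_j$, not for their duals, and they do not transfer. Take the paper's four-dimensional minus-type basis $\gamma+\delta,\ \beta,\ \beta+\delta,\ \alpha$, where $(\gamma,\delta)$ is the hyperbolic pair from that proof, not your $\delta$. The vector $\delta$ satisfies $p(\gamma+\delta,\delta)=1$ and is orthogonal to $\beta$, $\beta+\delta$ and $\alpha$. So it is the dual vector of $\gamma+\delta$, yet $Q(\delta)=0$. Since all dual vectors must be nonsingular, no reordering rescues this. Your Gram-matrix check would fail here, and your expectation that this is ``exactly where $Q(f_i)=1$ and $Q(\sum f_i)=1$ enter'' is mistaken.

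The repair is your fallback, and it is not optional: take the points to be the dual basis $e_1,\ldots,e_d$ with $p(e_i,f_j)=\delta_{ij}$. Then $e_j^*=f_j$, so every reflection required is $\rho_{f_j}$, or the affine map $(f_0,\rho_{f_0})$ with $f_0=\sum_j f_j$. The latter fixes each $e_j$ because $p(e_j,f_0)=1$. Their membership in $\mathrm{O}_d^\varepsilon(2)$ is exactly what Lemma~\ref{l:non-singular-basis} supplies.

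This is the paper's proof. It takes $I=(0,e_1,\ldots,e_d)$ and $J=I^s$ with $s=(0,\rho_{f_1})$. The witnesses are $g_i=r_is\in G$, with $r_i=(0,\rho_{f_i})$ for $i\ge1$ and $r_0=(f_0,\rho_{f_0})$, each $r_i$ fixing the corresponding deleted subtuple pointwise. Non-conjugacy holds because the only linear map sending $I$ to $J$ is $\rho_{f_1}\notin\Omega_d^\varepsilon(2)$.
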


\begin{proof}
Let $Q$ be the quadratic form on $V$, and let $\langle x,y\rangle
=
Q(x+y)+Q(x)+Q(y)$
be its associated polar form. By Lemma~\ref{l:non-singular-basis}, there is a basis
$f_1,\ldots,f_d$ of $V$ such that $Q(f_i)=1$
for every $i$ and $
Q(f_0)=1$, where $f_0=f_1+\cdots+f_d$.

Let $e_1,\ldots,e_d$ be the basis dual to
$f_1,\ldots,f_d$ with respect to the polar form. Thus
$\langle e_i,f_j\rangle=\delta_{ij}$.
In particular, 
$\langle e_j,f_0\rangle=1$,
for every $j\in\{1,\ldots,d\}$.

For every non-singular vector $f\in V$, define
$\rho_f:x\longmapsto x+\langle x,f\rangle f$.
Since $Q(f)=1$, the map $\rho_f$ preserves $Q$. Moreover,
$\rho_f-I_d$ has rank $1$, and hence $\rho_f$ has non-trivial Dickson
invariant. Therefore
$\rho_f\in
\mathrm O_d^\varepsilon(2)\setminus
\Omega_d^\varepsilon(2)$.

Set $\widehat G=V\rtimes\mathrm O_d^\varepsilon(2)$.
Then $G\unlhd\widehat G$ and $[\widehat G:G]=2$.

Consider the $(d+1)$-tuple
\[
I=(0,e_1,\ldots,e_d).
\]
Set $s=(0,\rho_{f_1})\in\widehat G\setminus G$
and define $J=I^s$.

For $i\in\{1,\ldots,d\}$, set
\[
r_i=(0,\rho_{f_i}).
\]
The element $r_i$ fixes $0$ and fixes every $e_j$ with $j\neq i$,
because $\langle e_j,f_i\rangle=0$,
when $j\neq i$. Thus $r_i$ fixes pointwise the tuple obtained from
$I$ by deleting the coordinate $e_i$.

For the coordinate containing $0$, set
$r_0=(f_0,\rho_{f_0})$.
For every $j\in\{1,\ldots,d\}$, we have
\[
e_j^{r_0}
=
e_j+\langle e_j,f_0\rangle f_0+f_0
=
e_j+f_0+f_0
=
e_j.
\]
Thus $r_0$ fixes pointwise the tuple obtained from $I$ by deleting its
first coordinate.

Every $r_i$ belongs to $\widehat G\setminus G$. Therefore
$g_i=r_is\in G$,
for every $i\in\{0,\ldots,d\}$. If $I_{\widehat i}$ and
$J_{\widehat i}$ denote the tuples obtained by deleting the $i$-th
coordinate, then
$$
I_{\widehat i}^{\,g_i}
=
I_{\widehat i}^{\,r_is}
=
I_{\widehat i}^{\,s}
=
J_{\widehat i}.
$$
Therefore, $
I\sim_d J.$

We claim that $I$ and $J$ are not $G$-conjugate. Suppose that
$(v,A)\in G$ maps $I$ to $J$. Since the first coordinate of both
tuples is $0$, we have $v=0$. Moreover, $e_jA=e_j\rho_{f_1}$,
for every $j\in\{1,\ldots,d\}$. Since $e_1,\ldots,e_d$ is a basis,
this implies  $A=\rho_{f_1}$.
However,  $\rho_{f_1}\notin\Omega_d^\varepsilon(2)$,
a contradiction. Hence $ I\not\sim_{d+1}J$.

It follows that $\rc(G)\geq d+1$. The general upper bound
$\rc(G)\leq d+1$ gives $\rc(G)=d+1$.
\end{proof}

\subsection{Symmetric groups in their action on the fully deleted permutation modules}\label{sec:5.5}
Suppose, for a contradiction, that $\mathrm{RC}(G)=d+1$, and assume the setup and notation from Section~\ref{sec:setup}. When $m\in \{3,4\}$, the natural image of $\mathrm{Sym}(m)$ is $\mathrm{SL}_2(2)$, so these cases are covered earlier. Moreover,
the low-dimensional coincidences $\mathrm{Sym}(5)\cong\mathrm{O}_4^-(2)$, $\mathrm{Sym}(6)\cong\mathrm{Sp}_4(2)$ and $\mathrm{Sym}(8)\cong\mathrm{O}_6^+(2)$
are covered by the orthogonal and symplectic cases. We may therefore
assume that $m\geq 7$ and, when $m$ is even, that $m\geq10$. The fully deleted permutation module has a natural nondegenerate
$G_0$-invariant alternating form, which we denote by $\langle\ ,\ \rangle$. By Lemma~\ref{l:formpres}, $U=\delta^\perp$ and $\langle e_d,\delta \rangle =1.$

Let $\mathcal R$ be the set of directions of the transvections in $G_0$.
In the fully deleted permutation realization,
\[
\mathcal R=
\left\{
\overline{f_\alpha+f_\beta}\mid 1\leq \alpha<\beta\leq m
\right\},
\]
where $f_1,\ldots,f_m$ is the standard basis of the permutation module
and the bar denotes the corresponding vector in the fully deleted module.
We claim that $\delta\notin\mathcal R$.
Indeed, if $\delta\in\mathcal R$, then the transposition corresponding
to $\delta$ induces the transvection
$\tau_\delta:x\longmapsto
x+\langle x,\delta\rangle\delta$.
Since $U=\delta^\perp$, the transformation $\tau_\delta$ fixes
$e_1,\ldots,e_{d-1}$ and maps $e_d$ to $e_d+\delta=w$.
Thus $\tau_\delta$ maps the whole tuple $I$ to the whole tuple $J$, a
contradiction. Hence $\delta$ is not a transvection direction.

For every $i\in\{0,\ldots,d-1\}$, we have $
\delta\in\operatorname{Im}(B_i-I_d).$
Indeed, for $i\geq1$, this follows from $e_d(B_i-I_d)=\delta$.
For $i=0$, we have $(e_d+e_1)(B_0-I_d)=\delta$.
Moreover, $1\leq\mathrm{rank}(B_i-I_d)\leq2$. Similarly to before we say that $\mathrm{Im}(B_i-I_d)$ is the \textit{\textbf{moved space}} of $B_i$ for $i=0,\dots, d-1$.

We now use the elements of $\mathrm{Sym}(m)$ having rank at most $2$
on the fully deleted permutation module. Under our assumptions on $m$, such
a nonidentity element is one of the following:
\begin{enumerate}
\item a transposition, of rank $1$;
\item a $3$-cycle, of rank $2$;
\item a product of two disjoint transpositions, of rank $2$.
\end{enumerate}

If there exists $i\in\{0,\dots, d-1\}$ such that $B_i$ is a transposition, then (by the definition of $\mathcal R$) the moved space of $B_i$ is a line spanned by a vector in to $\mathcal R$. On the other hand we have seen that this line would also need to equal $\langle\delta\rangle$ and we have seen that $\delta \not\in \mathcal R$. Thus none of the $B_i$ are transvections.

We claim that none of the $B_i$ are $3$-cycles. Indeed, if the $3$-cycle is
$(\alpha\,\beta\,\gamma)$, then its moved space has non-zero vectors
$\overline{f_\alpha+f_\beta},\overline{f_\beta+f_\gamma},\overline{f_\alpha+f_\gamma}$,
all of which belong to $\mathcal R$. Since $\delta$ belongs to this
moved space, this would imply $\delta\in\mathcal R$, a contradiction.

Therefore, every $B_i$ is the image of a product of two disjoint
transpositions for $i\in\{0,\dots, d-1\}$. If $B_i=(\alpha\,\beta)(\gamma\,\zeta)$,
then
$$
\mathrm{Im}(B_i-I_d)
=
\left\langle
\overline{f_\alpha+f_\beta},\,
\overline{f_\gamma+f_\zeta}
\right\rangle.
$$
Its two displayed non-zero vectors belong to $\mathcal R$, while its
third non-zero vector is $\overline{f_\alpha+f_\beta+f_\gamma+f_\zeta}$.
Since $\delta\notin\mathcal R$, we must have
$\delta=
\overline{f_\alpha+f_\beta+f_\gamma+f_\zeta}$.
Thus $\delta$ determines the four-point support
$\{\alpha,\beta,\gamma,\zeta\}$.
There are exactly three products of two disjoint transpositions on
this support:
$$
(\alpha\,\beta)(\gamma\,\zeta),\qquad
(\alpha\,\gamma)(\beta\,\zeta),\qquad
(\alpha\,\zeta)(\beta\,\gamma).
$$
Therefore, there are at most three possible matrices $B_i$.

On the other hand, the matrices $B_1,\ldots,B_{d-1}$
are pairwise distinct. Indeed, if $B_i=B_j$ for distinct
$i,j\in\{1,\ldots,d-1\}$, then $B_i=B_j$ maps $I$ to $J$, contrary to $I\not\sim_{d+1}J$.
Hence there are at least $d-1$ distinct possible matrices $B_i$.  Since $m\ge 7$ and $d$ is even, we have $d\ge 6$, and hence $d-1>3$, a contradiction.
Therefore neither $G_0\cong\mathrm{Sym}(d+1)$
nor $G_0\cong\mathrm{Sym}(d+2)$
can occur.

\subsection{The alternating groups in their action on the fully deleted permutation module}\label{sec:alt}
\begin{proposition}\label{prop:converse-alternating}
Let $d\geq 2$ be even, let $m\in\{d+1,d+2\}$, and let $G_0$ be the
natural image of $\mathrm{Alt}(m)$ on the fully deleted permutation module
$V$ over $\mathbb F_2$. Set 
$G=V\rtimes G_0$.
Then  $\rc(G)=d+1$.
\end{proposition}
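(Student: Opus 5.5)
The upper bound $\rc(G)\le d+1$ is Lemma~\ref{l:binarysubgroups}, so the work is to exhibit $I,J\in V^{d+1}$ with $I\sim_d J$ and $I\not\sim_{d+1}J$. I will use the same device as in Proposition~\ref{prop:converse-omega}. Put $\widehat G=V\rtimes \mathrm{Sym}(m)$, where $\mathrm{Sym}(m)$ acts through its natural image on the fully deleted module. This image is faithful: for $m\ge 5$ because $\mathrm{Alt}(m)$ is simple and $\mathrm{Sym}(m)$ acts non-trivially, and the small cases $m\in\{3,4\}$ ($d=2$) are checked by hand, with the transpositions still acting as transvections. Then $[\widehat G:G]=2$. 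The point is that transpositions act as transvections $\tau_r:x\mapsto x+\langle x,r\rangle r$, where $r=\overline{f_\alpha+f_\beta}\in\mathcal R$, and these lie in $\widehat G\setminus G$.

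The aim is to find a basis $r_1,\dots,r_d$ of $V$ with the following properties: each $r_i\in\mathcal R$, and also $r_0:=r_1+\cdots+r_d\in\mathcal R$. Take a path $1\!-\!2\!-\!\cdots\!-\!(d+1)$ and set $r_i=\overline{f_i+f_{i+1}}$ for $i=1,\dots,d$. These are the images of the Coxeter generators. For $m=d+1$ the module is $\{\text{even-weight vectors}\}/\langle \mathbf 1\rangle$, which has dimension $m-2=d-1$; hence $m=d+1$ corresponds to the quotient case and $m=d+2$ as well. I will treat both uniformly: choose $d$ transpositions forming a spanning path or tree whose images are a basis. Their sum telescopes to $\overline{f_1+f_{d+1}}$, which again lies in $\mathcal R$. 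In the quotient case, linear independence of the images of $d$ adjacent transpositions has to be checked; I expect this is routine, since a dependence would force a sum of the $f_i+f_{i+1}$ to lie in $\langle \mathbf 1\rangle$, and that only happens for a full cycle. Whichever of the two models applies, this bookkeeping is the step I would verify most carefully.

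Next, let $e_1,\dots,e_d$ be the basis dual to $r_1,\dots,r_d$ under the invariant alternating form. This form is nondegenerate in the relevant cases since $d$ is even. Then $\langle e_j,r_i\rangle=\delta_{ij}$ and $\langle e_j,r_0\rangle=1$ for all $j$. Set $I=(0,e_1,\dots,e_d)$, $s=(0,\tau_{r_1})$ and $J=I^s$. For $i\ge1$, $\rho_i=(0,\tau_{r_i})$ fixes every entry of $I$ except $e_i$. The element $\rho_0=(r_0,\tau_{r_0})$ fixes each $e_j$, because $e_j+r_0+r_0=e_j$. Each $\rho_i$ lies in $\widehat G\setminus G$, so $g_i=\rho_i s\in G$ maps $I$ with its $i$-th entry deleted to $J$ with its $i$-th entry deleted, giving $I\sim_d J$.

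Finally, any element of $G$ mapping $I$ to $J$ must fix $0$. It therefore agrees with $\tau_{r_1}$ on the basis $e_1,\dots,e_d$, so it equals $\tau_{r_1}\notin G_0$, a contradiction. Hence $I\not\sim_{d+1}J$ and $\rc(G)=d+1$. The main obstacle is confirming the two structural facts in both the $m=d+1$ and $m=d+2$ models, namely faithfulness and the nondegeneracy of the form, together with the independence of the chosen transposition directions. The small case $d=2$ can be handled directly: there $G_0\cong C_3$ acts on $\mathbb{F}_4$, and $\widehat G=\mathrm{AGL}_2(2)$.
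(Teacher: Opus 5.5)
Your proposal is correct and is essentially the paper's proof: the same path vectors $\overline{f_i+f_{i+1}}$ ($1\le i\le d$) with telescoping sum $\overline{f_1+f_{d+1}}$, the dual basis $e_1,\dots,e_d$, the index-two overgroup coming from $\mathrm{Sym}(m)$, and the elements $\rho_i s\in G$ with $\rho_0=(r_0,\tau_{r_0})$. You should fix a few bookkeeping slips. For $m=d+1$ (odd) the module is the even-weight subspace of $\mathbb F_2^m$ itself, of dimension $d$; only for $m=d+2$ is it the quotient by $\langle\mathbf 1\rangle$. In that quotient case the path vectors stay independent because a nonempty set of edges of the path on $1,\dots,d+1$ can sum neither to $0$ (it contains no cycle) nor to $\mathbf 1$ (the point $d+2$ has degree $0$). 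Finally, for the index-two step you only need the kernel of $\mathrm{Sym}(m)$ on $V$ to lie in $\mathrm{Alt}(m)$, and this also covers the non-faithful case $m=4$.
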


\begin{proof}
Let $E=\mathbb F_2^m$, with standard basis
$f_1,\ldots,f_m$, and set
\[
E_0=
\left\{
\sum_{i=1}^m x_if_i\in E
\mathrel{}\middle|\mathrel{}
\sum_{i=1}^m x_i=0
\right\}.
\]
When $m=d+1$, we identify $V$ with $E_0$. When
$m=d+2$, we identify $V$ with $E_0/\langle f_1+\cdots+f_m\rangle$.
In both cases, we denote by $\overline{x}$ the vector of $V$
corresponding to $x\in E_0$.

The usual scalar product on $E$ induces a nondegenerate alternating
bilinear form on $V$, which we denote by $\langle\ ,\ \rangle$.
For distinct $a,b\in\{1,\ldots,m\}$, set $
z_{a,b}=\overline{f_a+f_b}$.
The transposition $(a\,b)$ induces on $V$ the transvection 
$\tau_{a,b}:x\longmapsto
x+\langle x,z_{a,b}\rangle z_{a,b}$.

For every $i\in\{1,\ldots,d\}$, set $z_i=z_{i,i+1}
=
\overline{f_i+f_{i+1}}$.
The vectors $z_1,\ldots,z_d$ form a basis of $V$.

Set $z_0=z_1+\cdots+z_d$.
The intermediate terms cancel, and therefore $z_0=\overline{f_1+f_{d+1}}$.
In particular, $z_0$ is also the direction of a transvection induced
by a transposition.

Let $e_1,\ldots,e_d$ be the basis of $V$ dual to
$z_1,\ldots,z_d$ with respect to $\langle\ ,\ \rangle$. Thus
$\langle e_i,z_j\rangle=\delta_{ij}$.
Since $z_0=z_1+\cdots+z_d$, we have
\begin{equation}\label{eq:dual-z0}
\langle e_j,z_0\rangle=1
\end{equation}
for every $j\in\{1,\ldots,d\}$.

Let $\widehat G_0$ be the natural image of $\mathrm{Sym}(m)$ on $V$,
and set $\widehat G=V\rtimes\widehat G_0$.
The kernel of the action of $\mathrm{Sym}(m)$ on $V$ is contained in
$\mathrm{Alt}(m)$. Indeed, the action is faithful except when $m=4$,
in which case its kernel is the Klein four group. Therefore, $G\unlhd\widehat G$  and $[\widehat G:G]=2$.
In particular, the image of every transposition belongs to
$\widehat G_0\setminus G_0$.

For $i\in\{1,\ldots,d\}$, let $\tau_i$ be the transvection induced by
the transposition $(i\,\,i+1)$, so that
$x\tau_i=x+\langle x,z_i\rangle z_i$.
Let $\tau_0$ be the transvection induced by the transposition
$(1\,\,d+1)$, so that $x\tau_0=x+\langle x,z_0\rangle z_0$.

Consider the $(d+1)$-tuple
\[
I=(0,e_1,\ldots,e_d).
\]
Set $s=(0,\tau_1)\in\widehat G\setminus G$
and define $J=I^s$.

For $i\in\{1,\ldots,d\}$, set $r_i=(0,\tau_i)$.
If $j\neq i$, then
\[
e_j^{r_i}
=
e_j+\langle e_j,z_i\rangle z_i
=
e_j.
\]
Moreover, $r_i$ fixes $0$. Thus $r_i$ fixes pointwise the tuple
obtained from $I$ by deleting the entry $e_i$.

For the coordinate containing $0$, set $r_0=(z_0,\tau_0)$.
By~\eqref{eq:dual-z0}, for every $j\in\{1,\ldots,d\}$ we have
\[
e_j^{r_0}
=
e_j+\langle e_j,z_0\rangle z_0+z_0
=
e_j+z_0+z_0
=
e_j.
\]
Thus $r_0$ fixes pointwise the tuple obtained from $I$ by deleting its
first coordinate.

Every $r_i$ and $s$ belongs to $\widehat G\setminus G$. Since
$G$ has index $2$ in $\widehat G$, it follows that
$g_i=r_is\in G$,
for every $i\in\{0,\ldots,d\}$.

Let $I_{\widehat i}$ and $J_{\widehat i}$ denote the $d$-tuples
obtained from $I$ and $J$, respectively, by deleting the $i$-th
coordinate. Since $r_i$ fixes $I_{\widehat i}$ pointwise, we have
\[
I_{\widehat i}^{\,g_i}
=
I_{\widehat i}^{\,r_is}
=
I_{\widehat i}^{\,s}
=
J_{\widehat i}.
\]
Therefore
\[
I\sim_d J.
\]

We finally prove that $I$ and $J$ are not $G$-conjugate. Suppose that
$(v,A)\in G$ maps $I$ to $J$. Since the first coordinate of both
tuples is $0$, we have $v=0$.
Moreover, $e_jA=e_j\tau_1$,
for every $j\in\{1,\ldots,d\}$. Since
$e_1,\ldots,e_d$ form a basis of $V$, this gives  $A=\tau_1$.
However, $\tau_1$ is induced by a transposition and hence $\tau_1\notin G_0$,
a contradiction. Thus $I\not\sim_{d+1}J$.

It follows that $\rc(G)\geq d+1$.
The general upper bound $\rc(G)\leq d+1$ now gives $\rc(G)=d+1$.
\end{proof}

The proof of Proposition~\ref{prop:converse-alternating} completes the proof of Proposition~\ref{thrm:calculations}.

\section{Affine groups with $q=2$ having maximum relational complexity}\label{sec:maximalq=2}
In this section, we prove Theorem~\ref{thrm:main3}.
We have verified Theorem~\ref{thrm:main3} with a computer when $d\le 8$. In particular, along with Proposition~\ref{thrm:calculations}, this completes the proof of the reverse implication in Theorem~\ref{thrm:main3}: we confirm that if $G_0$ is as in parts (1), (2), (3) or (4), then $G=V\rtimes G_0$ has relational complexity $d+1$.

Thus assume that $d>8$, and let
$G=V\rtimes G_0\leq \mathrm{AGL}_d(2)$, 
where $G_0\leq \mathrm{GL}_d(2)$ acts irreducibly on $V$. Suppose that $\rc(G)=d+1$.
We aim to classify all such groups $G$, and thereby prove
Theorem~\ref{thrm:main3}.

By Lemma~\ref{l: almost independent}, there exist two  $(d+1)$-tuples $I$ and $J$
such that $I\sim_d J$ and  $I\not\sim_{d+1}J$,
and such that the underlying sets of $I$ and $J$ are almost independent sets of
cardinality $d+1$. Write
$
I=(P_0,\ldots,P_{d-1},P_d)$ and 
$J=(Q_0,\ldots,Q_{d-1},Q_d).$
Since $I\sim_d J$, there exists an element of $G$ mapping $(P_0,\ldots,P_{d-1})$ to $
(Q_0,\ldots,Q_{d-1})$.
Thus, after replacing $J$ by a suitable $G$-conjugate, we may assume that $P_i=Q_i$, for every $i\in\{0,\ldots,d-1\}$.
The set $\{P_0,\ldots,P_{d-1}\}$
is independent for $G$ since the underlying set of $I$ is an almost independent set. Moreover, an independent set for $G$ is also
independent for $\mathrm{AGL}_d(2)$. Indeed, an element of $G$ witnessing
the strict containment of two pointwise stabilizers also witnesses the
corresponding strict containment in $\mathrm{AGL}_d(2)$. Consequently, by
Lemma~\ref{l:0}, the set $\{P_0,\ldots,P_{d-1}\}$ is a frame for an
affine subspace of dimension $d-1$.

After conjugating $G$, $I$, and $J$ simultaneously by a suitable element of
$\mathrm{AGL}_d(2)$, we may therefore assume that 
$I=\left(0,e_1,\ldots,e_{d-1},v\right)$ and $J=\left(0,e_1,\ldots,e_{d-1},w\right)$,
for some $v,w\in V$.

Lemma~\ref{l:ai} therefore implies that precisely one
of the following holds for $v$:
\begin{itemize}
    \item the vectors
    $e_1,\ldots,e_{d-1},v$
    form a basis of $V$;
    \item $d$ is odd and
    $v=\sum_{i=1}^{d-1}e_i$.
\end{itemize}
The analogous alternatives hold for $w$.

The second alternative cannot hold simultaneously for both $v$ and $w$, since
in that case $v=w$, and hence $I=J$, contradicting
$I\not\sim_{d+1}J$.
Consequently, after interchanging $I$ and $J$ if necessary, we may assume that
$e_1,\ldots,e_{d-1},v$
is a basis of $V$. In particular, we may suppose $v=e_d$. We are therefore in one of the following two cases:
\begin{enumerate}
    \item\label{casecase1}$
    e_1,\ldots,e_{d-1},w$
    forms a  basis of $V$;
    \item $d$ is odd and
    $w=\sum_{i=1}^{d-1}e_i$.
\end{enumerate}

We consider these two cases separately, beginning with the case $w=\sum_{i=1}^{d-1}e_i$. Before proceeding, we start with a technical lemma, to be used throughout.

\begin{lemma}\label{l:Vdecomp}
    Let $1\ne T\trianglelefteq H$ where $H$ is irreducible on the module $V$ over $\mathbb{F}_2$. If $T$ is generated by transvections, then there is a decomposition $$V=V_1\oplus V_2\oplus\cdots\oplus V_m$$ where the $V_i$ are irreducible $T$-modules of equal dimension $n>1$ and $H$ permutes them transitively.
\end{lemma}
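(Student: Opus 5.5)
By Clifford's theorem, since $T\trianglelefteq H$ and $H$ is irreducible on $V$, the restriction $V|_T$ is semisimple. It decomposes as a direct sum of homogeneous components $W_1\oplus\cdots\oplus W_k$, which $H$ permutes transitively. Each $W_j$ is a direct sum of copies of a single irreducible $T$-module $X_j$, and transitivity forces all the $X_j$ to have the same dimension $n$. The real content of the lemma is therefore twofold: we must refine this into a decomposition into irreducible $T$-modules that $H$ permutes transitively, and we must show that $n>1$. I would use the transvections in $T$ for both.

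First I rule out $n=1$. Over $\mathbb{F}_2$ a one-dimensional module is trivial, so every irreducible constituent would be trivial. Since $V|_T$ is semisimple, $T$ would then act trivially on $V$, contradicting $T\ne 1$. Hence $n>1$.

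Next I would show that each homogeneous component is itself irreducible, that is, that no $X_j$ occurs with multiplicity greater than one. Let $t\in T$ be a transvection, so $\mathrm{rank}(t-1)=1$. Then $t$ acts trivially on all but one summand of $V|_T$, because its commutator $[V,t]$ is one-dimensional and lies in the component where $t$ acts non-trivially. Suppose some $X_j$ appears with multiplicity $c\ge2$. Then $t$ acts on $W_j\cong X_j^{\,c}$ diagonally, as $c$ copies of the same action on $X_j$. So $\mathrm{rank}(t-1)$ restricted to $W_j$ equals $c\cdot\mathrm{rank}(t-1)|_{X_j}$, which is either $0$ or at least $2$. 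It follows that every transvection of $T$ acts trivially on $W_j$, and since $T$ is generated by transvections, $T$ is trivial on $W_j$. This forces $X_j$ to be trivial, hence one-dimensional, contradicting $n>1$. By conjugating with $H$ the same holds for every component, so each $W_j$ is irreducible. Setting $V_i=W_i$ with $m=k$ then gives the decomposition, and $H$ permutes the $V_i$ transitively because they are exactly the homogeneous components.

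The main subtlety is the multiplicity-one step. It requires that the isomorphism $W_j\cong X_j^{\,c}$ is compatible with the action of $t$, and this is automatic because the isomorphism is an isomorphism of $T$-modules. It also requires that at least one transvection acts non-trivially on $W_j$, and this follows because $T$ acts non-trivially on every component: if $T$ acted trivially on one component, it would by transitivity act trivially on all of them.
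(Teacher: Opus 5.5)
Your proposal is correct and is essentially the paper's proof: Clifford's theorem gives homogeneous components permuted transitively by $H$, and the rank of a transvection on a component $X^{\oplus c}$ is $c$ times its rank on $X$, which forces $c=1$. The only cosmetic difference is the order of steps: you first obtain $n>1$ from faithfulness and semisimplicity and then argue by contrapositive, whereas the paper first shows $T$ acts non-trivially on every component (using that $\mathbf{C}_V(T)$ is $H$-invariant, hence zero) and only deduces $n>1$ at the end.
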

\begin{proof}
If $V$ is irreducible as a $T$-module then the result is trivial, so suppose not. Since $T\unlhd H$, $V$ is completely reducible as a $T$-module.
Let $U_1,\ldots,U_m$ be the homogeneous components of $V$ as a
$T$-module. The group $H$ permutes these components. Moreover,
this action is transitive.
We claim that every homogeneous component has multiplicity one. Fix
one such component $U$, and write $U\cong W^{\oplus r}$,
for some irreducible $T$-module $W$ and some positive integer
$r$. Since $T\neq 1$, we have ${\bf C}_V(T)\neq V$. On the other hand,
${\bf C}_V(T)$ is $H$-invariant, and hence the irreducibility of $H$ gives ${\bf C}_V(T)=0$.
In particular, $T$ acts non-trivially on $U$. Since $T$ is
generated by transvections, there exists a transvection $t\in T$
which acts non-trivially on $U$. Under the decomposition $U\cong W^{\oplus r}$, the action of $t$ on $U$ is the direct
sum of $r$ copies of its action on $W$. Therefore $\mathrm{rank}((t-I_d)|_U)
 =r\cdot\mathrm{rank}((t-I_d)|_W) \geq r$.
Since $t$ is a transvection on $V$, we have $\mathrm{rank}(t-I_d)=1$.
Therefore, $r=1$.
Thus every homogeneous component is irreducible as a $T$-module. We may therefore
write
\begin{equation}\label{eq:T-decomposition}
V=V_1\oplus\cdots\oplus V_m,
\end{equation}
where the $V_i$ are irreducible $T$-modules and $H$ permutes
them transitively. 

Set
$n=\dim_{\mathbb F_2}(V_i)$. Then $n>1$, as
$\mathrm{GL}_1(2)$ contains no transvections.\end{proof}

\subsection{Case $v=e_d$ and $w=\sum_{i=1}^{d-1}e_i$.}Set
$$
\mathbf{1}=w-v=w+v=\sum_{j=1}^{d}e_j,
$$
where $\mathbf{1}$ is the all one row vector and where the second equality follows from the fact that the underlying field is
$\mathbb F_2$.

For each $i\in\{0,\ldots,d\}$, choose an affine transformation
$g_i=(t_i,B_i)\in G$ witnessing the $d$-equivalence of $I$ and $J$
after deleting the $i$-th coordinate. Thus, $g_i$ maps the $d$-tuple
obtained from $I$ by deleting its $i$-th coordinate to the corresponding
$d$-tuple obtained from $J$.

After deleting the final coordinate, the resulting tuples are identical.
Consequently, we may take $B_d=I_d$ and $t_d=0$.

Now let $i\in\{1,\ldots,d-1\}$. After deleting the coordinate
corresponding to $e_i$, the affine transformation $g_i$ fixes $0$, and
hence $t_i=0$. Moreover,
\begin{equation}\label{eq:Bieven}
e_jB_i=e_j
\end{equation}
for every $j\in\{1,\ldots,d-1\}\setminus\{i\}$, while
\begin{equation}\label{eq:B0even}
e_dB_i=w=e_d+\mathbf{1}.
\end{equation}
Set
$$
a_i=e_i(B_i-I_d)=\sum_{j=1}^{d}a_{i,j}e_j.
$$
It follows from~\eqref{eq:Bieven} and~\eqref{eq:B0even} that
\begin{equation}\label{eq:binary-Bi}
B_i=I_d+e_i^\top a_i+e_d^\top \mathbf{1}.
\end{equation}

By the description of $B_i$, we have
$$
\begin{aligned}
\det(B_i)
&=
\det
\begin{pmatrix}
1+a_{i,i} & a_{i,d}\\
1 & 0
\end{pmatrix}
=
a_{i,d}.
\end{aligned}
$$
Since $B_i$ is invertible, $\det(B_i)=1$, and hence
\begin{equation}\label{eq:binary-aid}
a_{i,d}=1.
\end{equation}

We now consider the affine transformation $g_0=(t_0, B_0)$ obtained by
deleting the coordinate corresponding to $0$. It satisfies
\begin{equation}\label{eq:B0B0even}
e_jB_0+t_0=e_j
\end{equation}
for every $j\in\{1,\ldots,d-1\}$, and
\begin{equation}\label{eq:B0B0B0even}
e_dB_0+t_0=w.
\end{equation}
Set $t_0=a_0=\sum_{j}a_{0,j}e_j$. Since the characteristic is $2$,~\eqref{eq:B0B0even} and~\eqref{eq:B0B0B0even} yield
\begin{equation}\label{eq:binary-B0}
B_0=I_d+\mathbf{1}^\top a_0+e_d^\top \mathbf{1}.
\end{equation}
Since $d$ is odd, we have $ (\mathbf{1})(\mathbf{1}^\top )=1.$
Moreover, since $B_0$ is invertible and every entry of the final column is $a_{0,d}$, it follows that 
\begin{equation}\label{eq:binary-a0d}
a_{0,d}=1.
\end{equation}

For $i\in\{1,\ldots,d-1\}$, the column vectors $e_i^\top $ and $e_d^\top $
are linearly independent. Hence, from~\eqref{eq:binary-Bi},
\begin{equation}\label{eq:rank2}
\mathrm{rank}(B_i-I_d)
=
\dim_{\mathbb F_2}\langle a_i,\mathbf{1}\rangle.
\end{equation}
Similarly, from~\eqref{eq:binary-B0} since $\mathbf{1}^\top $ and $e_d^\top $ are linearly independent,
\begin{equation}\label{eq:rank2bis}
\mathrm{rank}(B_0-I_d)
=
\dim_{\mathbb F_2}\langle a_0,\mathbf{1}\rangle.
\end{equation}
Now,~\eqref{eq:binary-aid} and~\eqref{eq:binary-a0d} imply $a_i\ne0$, for all $i\in \{0,\ldots,d-1\}$. Therefore, for every $i\in\{0,\ldots,d-1\}$, it follows that
\begin{equation}\label{eq:rank-one-ai}
\mathrm{rank}(B_i-I_d)=1\hbox{ if and only if }
a_i=\mathbf{1}.
\end{equation}
In the rest of the proof we analyse the possible ranks of $B_i-I_d$. Throughout, we will repeatedly make use of the fact that if $B\in\mathrm{SL}_d(2)$ is such that $\mathrm{rank}(B-I_d)=1$, then $B$ is a transvection. Indeed, if $\mathrm{rank}(B-I_d)=1$ then $B=I_d+ba$ for some row vector $a\ne 0$ and column vector $b\ne 0$, thus $0\ne aB=a+aba=(1+ab)a$, and so $ab=0$.

\subsubsection{Suppose that
$\mathrm{rank}(B_i-I_d)=1$,
for every $i\in\{0,\ldots,d-1\}$.}\label{sec:subsub1}Here the argument is similar to the case $q>2$. By~\eqref{eq:rank-one-ai}, we have
$a_i=\mathbf{1}$ for every $i$. Therefore,
$B_0=I_d+(\mathbf{1}^\top +e_d^\top )\mathbf{1}$
and
$B_i=I_d+(e_i^\top +e_d^\top )\mathbf{1},$
for every $i\in\{1,\ldots,d-1\}$.

As $d$ is odd, we have $\mathbf{1}\cdot(\mathbf{1}^\top +e_d^\top )=0 $ and $\mathbf{1}\cdot(e_i^\top +e_d^\top )=0$, for every $i\in \{1,\ldots,d-1\}$.
Thus every $B_i$ is a transvection
with moved line $\langle\mathbf{1}\rangle$.

Let
$$
W=\langle \mathbf{1}^\top +e_d^\top ,e_1^\top +e_d^\top ,\ldots,e_{d-1}^\top +e_d^\top \rangle_{\mathbb F_2}.
$$
The vectors $e_1^\top +e_d^\top ,\ldots,e_{d-1}^\top +e_d^\top $ are linearly independent and belong to
$\ker(\mathbf{1})$, where we are identifying with $\mathbf{1}$ the linear mapping sending $z$ to $\mathbf{1}\cdot z^\top $. Since $\dim_{\mathbb F_2}\ker(\mathbf{1})=d-1$, it follows
that 
$W=\ker(\mathbf{1})$.
Therefore, $
E=\{I_d+c\mathbf{1}\mid c\in W\}$
is an elementary abelian subgroup of $G_0$ of order $2^{d-1}$, and every
nonidentity element of $E$ is a transvection. Moreover,
$
{\bf C}_V(E)=\langle\mathbf{1}\rangle.
$

Define
$N=\langle E^g\mid g\in G_0\rangle.$
Then $N\trianglelefteq G_0$, and $N$ is generated by transvections. Moreover, $N$ is irreducible on $V$ by Lemma~\ref{l:irredN}.

Thus $N$ is an irreducible subgroup of $\mathrm{GL}_d(2)$ generated by
transvections, and we may apply McLaughlin's classification of such groups~\cite{McL69}. In particular, up to conjugacy in
$\mathrm{GL}_d(2)$, one of the following holds:
\begin{enumerate}
    \item $N=\mathrm{SL}_d(2)$;
    \item $d$ is even and $N=\mathrm{Sp}_d(2)$;
    \item $d$ is even and $N=\mathrm{O}_d^{\pm}(2)$; or
    \item $d$ is even and $N$ is the natural image of
    $\mathrm{Sym}(d+1)$ or $\mathrm{Sym}(d+2)$ on its fully deleted
    permutation module.
\end{enumerate}
But $d$ is odd, whence $N=\mathrm{SL}_d(2)$.
Since $\mathbb F_2^\ast=\{1\}$, we have
$\mathrm{SL}_d(2)=\mathrm{GL}_d(2)$ and hence
$G=\mathrm{AGL}_d(2)$. This gives the example in~\eqref{part12} of Theorem~\ref{thrm:main3}.

\subsubsection{Suppose that
$\mathrm{rank}(B_r-I_d)=1,$
for some $r\in\{0,\ldots,d-1\}$, while 
$\mathrm{rank}(B_s-I_d)=2,$
for some $s\in\{0,\ldots,d-1\}$.}\label{sec:subsub2}
As in the previous subsection, we shall find an irreducible subgroup generated by transvections and apply McLaughlin's classification. We start with a technical lemma.

\begin{lemma}
Let
$T=\left\langle x\in G_0\mid \mathrm{rank}(x-I_d)=1\right\rangle$. Then $T\neq 1$ is an irreducible subgroup of $G_0$ generated by transvections.
\end{lemma}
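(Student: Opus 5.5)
The plan is in three steps: show $T\ne1$, show $T$ is generated by transvections, and then deduce irreducibility from normality.

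\emph{$T\ne 1$.} By hypothesis, $B_r\in G_0$ satisfies $\mathrm{rank}(B_r-I_d)=1$. Since $G_0\le\mathrm{GL}_d(2)=\mathrm{SL}_d(2)$, the remark preceding this subsection shows that $B_r$ is a transvection. Hence $T\ne1$.

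\emph{Generation by transvections.} Every generator $x$ of $T$ has $\mathrm{rank}(x-I_d)=1$ and determinant $1$, so by the same remark it is a transvection. Thus $T$ is generated by transvections.

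\emph{Normality.} The condition $\mathrm{rank}(x-I_d)=1$ is invariant under conjugation by $G_0$. Therefore the generating set of $T$ is $G_0$-stable, and $T\unlhd G_0$.

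\emph{Irreducibility.} This is the real content. By Lemma~\ref{l:Vdecomp} applied with $H=G_0$, we get
\[
V=V_1\oplus\cdots\oplus V_m,
\]
where the $V_i$ are irreducible $T$-modules of common dimension $n>1$, permuted transitively by $G_0$. We want to show $m=1$, so suppose $m\ge2$.

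Each transvection $t\in T$ satisfies $\mathrm{rank}(t-I_d)=1$. Since $t$ preserves every $V_i$, it acts non-trivially on exactly one summand $V_i$ and trivially on all the others. Its moved line lies in that $V_i$.

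Now consider $B_r=I_d+(e_r'^\top)\mathbf{1}$, where $e_r'^\top$ denotes $e_r^\top+e_d^\top$ for $r\ge1$, or $\mathbf{1}^\top+e_d^\top$ for $r=0$. By \eqref{eq:rank-one-ai} we have $a_r=\mathbf{1}$, so the moved line of $B_r$ is $\langle\mathbf{1}\rangle$. Hence $\mathbf{1}\in V_{i_0}$ for some $i_0$.

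Next use $B_s$ of rank $2$. Its moved space is $\langle a_s,\mathbf{1}\rangle$, and its fixed space $\ker(B_s-I_d)$ contains all $e_j$ with $j\ne s,d$ (for $s\ge1$), or the vectors $e_j+e_k$ (for $s=0$).

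I would also exploit the fact that $\mathbf{1}=v+w$, together with the observation that each $B_i$ for $i\ne r$ moves $e_d$ to $e_d+\mathbf{1}$. Since $T\unlhd G_0$, each $B_i$ permutes the summands $V_j$. Because $B_i$ fixes the $d-2$ basis vectors $e_j$ and $B_i-I_d$ has small rank, $B_i$ can move at most a bounded number of summands. Write $\dim V_j=n$. For $B_i$ to swap two summands it needs $\mathrm{rank}(B_i-I_d)\ge n\ge2$. So with rank at most $2$, either $B_i$ fixes every summand, or $n=2$ and $B_i$ swaps exactly one pair.

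\emph{Contradiction.} Case 1: every $B_i$ fixes every summand. Then $B_r$, which already lies in $T$, and the other $B_i$ preserve $V_{i_0}\ni\mathbf{1}$. Projecting the frame vectors $e_j$ onto the decomposition and using that the $B_i$ jointly act with rank-one deviations along $\mathbf{1}$, I would aim to show that $V_{i_0}$ is invariant under a group that conjugates $I$ to $J$ except along $V_{i_0}$. This should force the $e_j$-coordinates to lie largely in a single summand, contradicting $m\ge2$ together with the fact that $\{e_1,\dots,e_{d-1}\}$ spans a hyperplane. Case 2: $n=2$. Then the $T$-modules are $2$-dimensional, so $T\le\mathrm{SL}_2(2)^m$ and each $V_j$ contains only $3$ nonzero vectors. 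I would try to get a contradiction there from $d>8$ by counting how many of the $B_i$ can be distinct.

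\emph{Main obstacle.} The step I expect to be hardest is the irreducibility argument, namely ruling out $m\ge2$ for the rank-$2$ elements $B_s$. If the direct approach above stalls, I would instead try to reach irreducibility through Lemma~\ref{l:irredN}, by first exhibiting inside $T$ a large elementary abelian group of transvections with common centre $\mathbf{1}$, built from products such as $B_rB_s$ and their $G_0$-conjugates.
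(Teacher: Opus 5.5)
\textbf{There is a genuine gap in the irreducibility step.} Your first three steps ($T\neq 1$, generation by transvections, normality) are correct. The irreducibility step, which you rightly call the real content, is not proved. Your Case~1 is a hope rather than an argument. The phrase about a group conjugating $I$ to $J$ ``except along $V_{i_0}$'' does not translate into a concrete statement. It is also not true that the $B_i$ act with rank-one deviations along $\mathbf{1}$: a rank-two $B_s$ has moved space $\langle a_s,\mathbf{1}\rangle$.

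\textbf{Missing idea 1.} You need the observation that $\mathbf{1}B_i=a_i$ for every $i\in\{0,\dots,d-1\}$. This follows from \eqref{eq:binary-Bi} and \eqref{eq:binary-B0}, using $\mathbf{1}e_i^\top=\mathbf{1}e_d^\top=1$ and, for $i=0$, $\mathbf{1}\mathbf{1}^\top=1$ because $d$ is odd. So if $B_i$ stabilizes the summand $V_{i_0}\ni\mathbf{1}$, then $a_i\in V_{i_0}$, and the whole moved space $[V,B_i]=\langle a_i,\mathbf{1}\rangle$ lies in $V_{i_0}$. Since $B_i$ also stabilizes every other summand $V_j$, you get $V_j(B_i-I_d)\le V_j\cap V_{i_0}=0$. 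Thus every $B_i$ is trivial on $\bigoplus_{j\ne i_0}V_j$.

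\textbf{Missing idea 2.} You also need $\bigcap_{i=0}^{d-1}\mathbf{C}_V(B_i)=0$. Write $x=\sum_j x_je_j$ in this intersection. A rank-one $B_i$ with $i\ge 1$ forces $x_i=x_d$, and a rank-two one forces $x_i=x_d=0$. The matrix $B_0$ forces $\sum_{j<d}x_j=0$ (rank one) or $x_d=\sum_j x_j=0$ (rank two). The existence of a rank-two $B_s$ gives $x_d=0$, and then $x=0$. Together with missing idea~1, this contradicts $m\ge 2$.

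\textbf{Your Case~2 is vacuous.} In this subsection $d$ is odd; that is exactly what makes $w=\sum_{i=1}^{d-1}e_i$ possible. The summands have a common dimension $n$ with $mn=d$, so $n$ is odd and hence $n\ge 3$. Then $\mathrm{rank}(B_i-I_d)\le 2<n$ already forces every $B_i$ to stabilize every summand. Only Case~1 occurs, and no counting with $d>8$ is needed.

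\textbf{Your fallback through Lemma~\ref{l:irredN} is not viable as stated.} That lemma needs transvections $I_d+c\mathbf{1}$ with $c$ ranging over a $(d-1)$-dimensional space, so that the common fixed space is exactly $\langle\mathbf{1}\rangle$. You are only guaranteed the single transvection $B_r$. Moreover, a direct computation shows $\mathrm{rank}(B_rB_s-I_d)=2$, so products of that kind are not transvections.
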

\begin{proof}
Clearly, $T\trianglelefteq G_0$, and $B_r\in T$ so $T\ne 1$. Moreover, $T$ is generated by transvections, so there is a decomposition $V=V_1\oplus V_2\oplus\cdots\oplus V_m$ where the $V_i$ are irreducible $T$-modules of equal odd dimension $n\geq 3$ and $G_0$ permutes them transitively, by Lemma~\ref{l:Vdecomp}. If $m=1$ then we are done, so suppose otherwise.

Each $B_i$ normalizes $T$, and hence permutes the spaces
$V_1,\ldots,V_m$. If some $B_i$ induced a non-trivial permutation of these
spaces, then $\mathrm{rank}(B_i-I_d)\geq n\geq 3$, 
contradicting the fact that
$\mathrm{rank}(B_i-I_d)\leq 2$.
Therefore, every $B_i$ stabilizes each $V_j$.

The transvection $B_r$ has moved line $\langle\mathbf{1}\rangle$. After relabelling
the spaces $V_1,\ldots,V_m$, we may suppose that
$\mathbf{1}\in V_1$.
Since every $B_i$ stabilizes $V_1$, we have
$\mathbf{1} B_i\in V_1$.
On the other hand, from the description of the matrices $B_i$ in~\eqref{eq:binary-Bi} and~\eqref{eq:binary-B0}, we obtain
$\mathbf{1} B_i=a_i$
for every $i\in\{0,\ldots,d-1\}$. Thus
$a_i\in V_1$,
for every $i$.
It follows that
$[V,B_i]=\langle a_i,\mathbf{1}\rangle\leq V_1$,
for every $i\in\{0,\ldots,d-1\}$. Since $B_i$ stabilizes each $V_j$, this
implies that $B_i$ fixes
$V_2\oplus\cdots\oplus V_m$
pointwise. Therefore,
\begin{equation}\label{eq:common-fixed-space-Bi}
V_2\oplus\cdots\oplus V_m
\leq
\bigcap_{i=0}^{d-1}{\bf C}_V(B_i).
\end{equation}

Let
$x=\sum_{j=1}^{d}x_je_j$
belong to ${\bf C}_V(B_i)$, for every $i\in\{0,\ldots,d-1\}$.
If $\mathrm{rank}(B_i-I_d)=1$ for some
$i\in\{1,\ldots,d-1\}$, then from~\eqref{eq:rank-one-ai} we have $a_i=\mathbf{1}$. Therefore, 
$$
0
=x(B_i-I_d)
=x\left(e_i^\top a_i+e_d^\top \mathbf{1}\right)
=x_i a_i+x_d\mathbf{1}
=(x_i+x_d)\mathbf{1}.
$$
Since $\mathbf{1}\neq 0$, it follows that
$x_i+x_d=0$, and hence
$x_i=x_d$.
If $\mathrm{rank}(B_i-I_d)=2$, then the linear independence of
$a_i$ and $\mathbf{1}$ and a computation similar to the one above gives
$x_i=x_d=0$.
Similarly, if $\mathrm{rank}(B_0-I_d)=1$, then
$\sum_{j=1}^{d-1}x_j=0$,
while, if $\mathrm{rank}(B_0-I_d)=2$, then
$x_d=0$ and  $\sum_{j=1}^{d}x_j=0$.

By assumption, at least one of the matrices $B_i-I_d$ has
rank $2$. The preceding equations imply first that $x_d=0$, and then that
$x_i=0$
for every $i\in\{1,\ldots,d-1\}$. Hence $x=0$. This has shown 
$$\bigcap_{i=0}^{d-1}{\bf C}_V(B_i)=0.$$
This contradicts~\eqref{eq:common-fixed-space-Bi}, since $m>1$. Thus, $T$ is irreducible on $V$, as was to be shown.
\end{proof} Since $T$ is generated by transvections, arguing as in Section~\ref{sec:subsub1} we obtain the example in~\eqref{part12} of Theorem~\ref{thrm:main3}.

\subsubsection{Suppose 
$\mathrm{rank}(B_i-I_d)=2$, for every $i\in\{0,\ldots,d-1\}$.}
Equivalently, from~\eqref{eq:rank-one-ai}, we have
$a_i\neq\mathbf{1}$ for every $i\in\{0,\ldots,d-1\}$.

Our first goal is to prove that $G_0$ contains a transvection. First, we shall set up some notation.

Set
$$
u_0=\mathbf{1}^\top ,
\qquad
u_i=e_i^\top 
\quad
\text{for every }i\in\{1,\ldots,d-1\}.
$$
Then, using this notation, from~\eqref{eq:binary-Bi} and~\eqref{eq:binary-B0}, we have the uniform notation
\begin{equation}\label{eq:binary-Bi-uniform}
B_i=I_d+u_ia_i+e_d^\top \mathbf{1}
\end{equation}
for every $i\in\{0,\ldots,d-1\}$. Observe that
$u_0+u_1+\cdots+u_{d-1}=e_d^\top $,
and hence $u_0,\ldots,u_{d-1}$ form a basis of the column-vector space.

For $i,j\in\{0,\ldots,d-1\}$, set
$s_{i,j}=a_iu_j\in\mathbb F_2$.
From~\eqref{eq:binary-aid} and~\eqref{eq:binary-a0d}, we have that $a_ie_d^\top =1$; therefore
\begin{equation}\label{eq:row-sum-sij}
\sum_{j=0}^{d-1}s_{i,j}
=
a_i\left(\sum_{j=0}^{d-1}u_j\right)
=
a_ie_d^\top 
=
1
\end{equation}
for every $i\in\{0,\ldots,d-1\}$. In fact, \begin{equation}\label{eq:productshortcuts}
     a_je_d^\top =\mathbf{1} u_j=\mathbf{1} e_d^\top =1\,\,\hbox{ for all $j\in\{0,\dots,d-1\}.$}
\end{equation}.

To prove that $G_0$ contains a transvection we shall split our analysis into three cases:\begin{enumerate}
    \item there exists some $i\in\{0,\dots,d-1\}$ such that $s_{i,i}=1$ or there exist $i,j$ distinct elements of $\{0,\dots,d-1\}$ with $a_i=a_j$;
    \item $s_{i,i}=0$ and each of the $a_i$ are distinct for all $i\in\{0,\dots,d-1\}$, and there exist distinct $i,j\in\{0,\dots,d-1\}$ such that $s_{i,j}=s_{j,i}$; or
    \item $s_{i,i}=0$, $a_i\ne a_j$, and $s_{i,j}+s_{j,i}=1$ for all distinct $i,j\in\{0,\dots,d-1\}.$
\end{enumerate}
\begin{lemma}\label{l:transvection1}
    Suppose that either there exists some $i\in\{0,\dots,d-1\}$ such that $s_{i,i}=1$, or that there exist distinct $i,j\in\{0,\dots,d-1\}$ such that $a_i=a_j$. Then $G_0$ contains a transvection.
\end{lemma}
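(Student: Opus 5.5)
The plan is to produce the transvection explicitly, either as the square of a single $B_i$ or as a quotient $B_j^{-1}B_i$ of two of them. Each candidate lies in $G_0$ and lies in $\mathrm{SL}_d(2)=\mathrm{GL}_d(2)$. So by the fact recorded just before Section~\ref{sec:subsub1} (any $B\in\mathrm{SL}_d(2)$ with $\mathrm{rank}(B-I_d)=1$ is a transvection), it suffices to show that the candidate $B$ satisfies $\mathrm{rank}(B-I_d)=1$. Throughout I would use the uniform description~\eqref{eq:binary-Bi-uniform}, $B_i=I_d+X_i$ with $X_i=u_ia_i+e_d^\top\mathbf{1}$, together with the identities~\eqref{eq:productshortcuts}, namely $a_ie_d^\top=\mathbf{1}u_i=\mathbf{1}e_d^\top=1$. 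I would also use the standing hypothesis of this subsubsection, $a_i\neq\mathbf{1}$.

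First suppose $s_{i,i}=1$ for some $i$. Since we are in characteristic $2$, $B_i^2=I_d+X_i^2$. Expanding $X_i^2$ into its four terms and using $a_iu_i=s_{i,i}$ together with the identities above gives
\[
X_i^2=s_{i,i}\,u_ia_i+u_i\mathbf{1}+e_d^\top a_i+e_d^\top\mathbf{1}.
\]
With $s_{i,i}=1$ this factors as $X_i^2=(u_i+e_d^\top)(a_i+\mathbf{1})$. The column vector $u_i+e_d^\top$ is non-zero, because $u_0,\dots,u_{d-1}$ form a basis and sum to $e_d^\top$ (so $u_i\neq e_d^\top$, using $d>8$). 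The row vector $a_i+\mathbf{1}$ is non-zero because $a_i\neq\mathbf{1}$. Hence $X_i^2$ has rank exactly $1$, and in particular $B_i^2\neq I_d$. Therefore $B_i^2\in G_0$ is a transvection.

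Next suppose $a_i=a_j$ for some distinct $i,j$. Then $B_i-B_j=(u_i+u_j)a_i$, so
\[
B_j^{-1}B_i=I_d+B_j^{-1}(u_i+u_j)a_i .
\]
The column vector $u_i+u_j$ is non-zero since $u_i$ and $u_j$ are distinct basis vectors, and $a_i\neq0$ by~\eqref{eq:binary-aid} and~\eqref{eq:binary-a0d}. Since $B_j^{-1}$ is invertible, $B_j^{-1}B_i-I_d$ has rank exactly $1$, and so $B_j^{-1}B_i\in G_0$ is a transvection.

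The only point needing care is the bookkeeping in the expansion of $X_i^2$. Each cross term must be reduced correctly: $u_ia_ie_d^\top\mathbf{1}=u_i\mathbf{1}$, $e_d^\top\mathbf{1}u_ia_i=e_d^\top a_i$, and $e_d^\top\mathbf{1}e_d^\top\mathbf{1}=e_d^\top\mathbf{1}$. One must also confirm that the resulting rank-one matrix is non-zero, which is where the hypothesis $a_i\neq\mathbf{1}$ of this rank-$2$ subsubsection is used. Everything else is immediate from earlier facts.
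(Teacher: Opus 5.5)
Your proposal is correct and follows the paper's proof: you square $B_i$ to get $B_i^2-I_d=(u_i+e_d^\top)(a_i+\mathbf{1})$ when $s_{i,i}=1$, and you use $B_i-B_j=(u_i+u_j)a_i$ when $a_i=a_j$. The only differences are cosmetic. To conclude that the element is a transvection, you invoke the recorded fact that rank-one elements of $\mathrm{SL}_d(2)$ are transvections, whereas the paper checks $(a_i+\mathbf{1})(u_i+e_d^\top)=0$ directly. You also use $B_j^{-1}B_i$ where the paper uses $B_iB_j^{-1}$.
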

\begin{proof}
Suppose first that $s_{i,i}=1$ for some $i$. From~\eqref{eq:productshortcuts} and~\eqref{eq:binary-Bi-uniform},
we deduce
$$
\begin{aligned}
B_i^2-I_d
&=(u_ia_i+e_d^\top \mathbf{1})^2\\
&=u_i(a_iu_i)a_i+u_i(a_ie_d^\top )\mathbf{1}
  +e_d^\top (\mathbf{1} u_i)a_i+e_d^\top (\mathbf{1} e_d^\top )\mathbf{1}\\
&=(u_i+e_d^\top )(a_i+\mathbf{1}).
\end{aligned}
$$
Both factors are non-zero since $a_i\ne \mathbf{1}$ by assumption and $u_i\ne e_d^\top $ by construction. Moreover,
$(a_i+\mathbf{1})(u_i+e_d^\top )
=
s_{i,i}+1+1+1
=
0$.
Thus $B_i^2$ is a transvection and we are done.


Suppose now that $i,j$ are distinct elements of $\{0,\dots,d-1\}$ with $a_i=a_j$. Then
$B_i-B_j=(u_i+u_j)a_i$
has rank $1$. Therefore,
$ \mathrm{rank}(B_iB_j^{-1}-I_d)=1$,
and $B_iB_j^{-1}$ is a transvection, hence the result.
\end{proof}
\begin{lemma}\label{l:transvections2}
    Suppose that $s_{i,i}=0$ for all $i\in\{0,\dots,d-1\}$ and that $a_0,\dots,a_{d-1}$ are pairwise distinct. If there exist distinct $i,j\in\{0,\dots,d-1\}$ such that $s_{i,j}=s_{j,i}$ then $G_0$ contains a transvection.
\end{lemma}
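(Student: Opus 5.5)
The plan is to produce an explicit transvection inside $\langle B_i,B_j\rangle$ for the pair $i\ne j$ with $s_{i,j}=s_{j,i}$. I would only check that some word $C$ in $B_i,B_j$ satisfies $\mathrm{rank}(C-I_d)=1$: as observed just before Subsection~\ref{sec:subsub1}, an element $B\in\mathrm{SL}_d(2)$ with $\mathrm{rank}(B-I_d)=1$ is automatically a transvection. Throughout I use the uniform description~\eqref{eq:binary-Bi-uniform}, $B_k=I_d+X_k$ with $X_k=u_ka_k+e_d^\top\mathbf{1}$. The only scalar products needed are
\[
a_ku_k=s_{k,k}=0,\quad a_ku_l=s_{k,l},\quad a_ke_d^\top=\mathbf{1}u_l=\mathbf{1}e_d^\top=1,
\]
which come from~\eqref{eq:productshortcuts} and the hypotheses.

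The first step is to reduce to a small computation. Every $X_k$ has image inside $M=\langle a_i,a_j,\mathbf{1}\rangle$ and kills the common kernel of the column vectors $u_i,u_j,e_d^\top$. Hence every element of $\langle B_i,B_j\rangle$ is $I_d$ plus a matrix of the form $\sum_{x,y} c_{x,y}\, x\, y$, with $x\in\{u_i,u_j,e_d^\top\}$ and $y\in\{a_i,a_j,\mathbf{1}\}$. Words in $B_i,B_j$ are therefore governed by a $3\times3$ coefficient matrix. The multiplication rule uses only the $3\times 3$ ``Gram'' table $(y\cdot x)$, whose entries are $0,1,s_{i,j},s_{j,i}$. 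So everything reduces to a finite computation depending only on the common value $s:=s_{i,j}=s_{j,i}\in\{0,1\}$.

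In this model I would compute $X_iX_j$, $X_jX_i$, $B_k^2=I_d+u_k\mathbf{1}+e_d^\top(a_k+\mathbf{1})$ and $B_k^{-1}$. I would then evaluate the commutator $C=B_i^{-1}B_j^{-1}B_iB_j$, and if needed also $B_iB_j^{-1}$ and $(B_iB_j)^{k}$ for small $k$. My expectation is that the symmetry $s_{i,j}=s_{j,i}$ is exactly what makes the cross terms in the commutator cancel in pairs, leaving $C-I_d=(u_i+u_j)(a_i+a_j)$ or a similar product. That would be rank one, and one would check directly that $(a_i+a_j)(u_i+u_j)=s_{i,j}+s_{j,i}=0$. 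This step is where the hypothesis enters, and it explains why the asymmetric case $s_{i,j}+s_{j,i}=1$ is left to a separate lemma. Both values $s=0$ and $s=1$ have to be run through, because the surviving coefficient matrix may differ.

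The remaining point, and the step I expect to be the main obstacle, is that the chosen word must not be the identity, and that its coefficient matrix must genuinely give rank $1$ rather than rank $\le 1$ only formally. The vectors $u_i,u_j,e_d^\top$ are linearly independent, being part of the basis $u_0,\dots,u_{d-1}$ together with their sum $e_d^\top$. So a nonzero coefficient matrix of rank one in the $x$-variables yields a nonzero rank-one matrix, provided the relevant combination of $a_i,a_j,\mathbf{1}$ is nonzero. This is where I would invoke the hypotheses $a_i\ne a_j$ and $a_k\ne\mathbf{1}$ (the latter from the standing assumption $\mathrm{rank}(B_k-I_d)=2$). The subtlety is that $\langle a_i,a_j,\mathbf{1}\rangle$ may have dimension $2$, for instance when $a_i+a_j=\mathbf{1}$. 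Then the coefficient matrix collapses, and the commutator may become trivial or have rank $2$. If that happens, I would instead try $B_iB_j^{-1}$ or $(B_iB_j)^2$ in that subcase. In each subcase I expect one of these words to have image exactly $\langle a_i+a_j\rangle$ or $\langle a_i+\mathbf{1}\rangle$, and hence to be a transvection of $G_0$.
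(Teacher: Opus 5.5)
Your reduction to a $3\times3$ coefficient calculus (columns $u_i,u_j,e_d^\top$, rows $a_i,a_j,\mathbf{1}$, Gram table with entries $0,1,s$) is sound. However, the proposal never exhibits a rank-one word, and the word it is built around does not work.

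Running your own calculus on $C=B_i^{-1}B_j^{-1}B_iB_j$ gives $C-I_d=u_i\mathbf{1}+u_j(a_i+a_j+\mathbf{1})$ when $s=0$, and $C-I_d=u_i(a_i+a_j)+u_j(a_i+\mathbf{1})$ when $s=1$. Under the hypotheses both have rank exactly $2$, so the pairwise cancellation you predict, leaving $(u_i+u_j)(a_i+a_j)$, does not happen. When $s=1$ the commutator even has order $3$, so no power of it helps. Only for $s=0$ do you get $C^2=I_d+(u_i+u_j)(a_i+a_j)$, which is where your expected formula actually appears.

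Your listed fallbacks fail as well. $B_iB_j^{-1}-I_d=u_i(a_i+a_j+(1+s)\mathbf{1})+u_j\mathbf{1}$ has rank $2$. $(B_iB_j)^2-I_d$ has rank $2$ when $s=0$ and rank $3$ when $s=1$; in the latter case $B_iB_j$ has order $7$, so no power of it is a transvection. So as written the argument does not reach a transvection.

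The missing step is to square $B_iB_j^{-1}$, which is what the paper does. From the formula above, $(B_iB_j^{-1})^2-I_d=(u_i+u_j)(a_i+a_j+s\mathbf{1})$. The hypothesis $s_{i,j}=s_{j,i}$ is exactly what makes the $u_i$- and $u_j$-components of this square coincide; it is not a cancellation inside the commutator. The reverse product is then $(a_i+a_j+s\mathbf{1})(u_i+u_j)=s_{i,j}+s_{j,i}=0$. The row vector is nonzero: for $s=0$ because $a_i\neq a_j$, and for $s=1$ because $(a_i+a_j+\mathbf{1})e_d^\top=1$.

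That same pairing shows the obstacle you single out never occurs. Since $a_ie_d^\top=a_je_d^\top=\mathbf{1}e_d^\top=1$, you can never have $a_i+a_j=\mathbf{1}$. Together with $a_i\neq a_j$ and $a_k\neq\mathbf{1}$, this means $a_i,a_j,\mathbf{1}$ are always linearly independent. There is therefore no degenerate subcase requiring a separate word.
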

\begin{proof}
Write
$s_{i,j}=s_{j,i}=\varepsilon\in\mathbb{F}_2$. Using repeatedly~\eqref{eq:productshortcuts} and
the fact that $s_{i,i}=0$, we calculate
\begin{align*}
    B_j^{-1}&=I_d-(1+a_ju_j)e_d^\top \mathbf{1}-(1+\mathbf{1}e_d^\top)u_ja_j+\mathbf{1}u_je_d^\top a_j+a_j e_d^\top u_j\mathbf{1}\\&=I_d+e_d^\top\mathbf{1}+e_d^\top a_j+u_j\mathbf{1},
\end{align*}
so \begin{align*}
    B_iB_j^{-1}=&I_d+u_ia_i+e_d^\top\mathbf{1}+e_d^\top\mathbf{1}+u_ia_ie_d^\top\mathbf{1}+e_d^\top\mathbf{1}e_d^\top\mathbf{1}\\&+e_d^\top a_j+u_ia_ie_d^\top a_j+e_d^\top\mathbf{1}e_d^\top a_j+u_j\mathbf{1}+u_ia_iu_j\mathbf{1}+e_d^\top\mathbf{1}u_j\mathbf{1}\\=&I_d+u_ia_i+u_i\mathbf{1}+u_i a_j+u_j\mathbf{1}+\varepsilon u_i\mathbf{1}\\=&I_d+u_i(a_i+a_j+(1+\varepsilon)\mathbf{1})+u_j\mathbf{1},
\end{align*}

and thus \begin{align*}
    (B_iB_j^{-1})^2-I_d=&(u_i(a_i+a_j+(1+\varepsilon)\mathbf{1})+u_j\mathbf{1})^2\\=&(u_i(a_i+a_j+(1+\varepsilon)\mathbf{1}))^2+u_i(a_i+a_j+(1+\varepsilon)\mathbf{1})u_j\mathbf{1}\\&+u_j\mathbf{1}u_i(a_i+a_j+(1+\varepsilon)\mathbf{1})+u_j\mathbf{1}u_j\mathbf{1}\\=&u_i(a_i+a_j+(1+\varepsilon)\mathbf{1})+u_i\mathbf{1}+u_j(a_i+a_j+(1+\varepsilon)\mathbf{1})+u_j\mathbf{1}\\=&(u_i+u_j)(a_i+a_j+\varepsilon\mathbf{1}).
\end{align*}

The column vector $u_i+u_j$ is non-zero. The row vector $(a_i+a_j+\varepsilon\mathbf{1})$ is also
non-zero: this is clear when $\varepsilon=0$, since $a_i\neq a_j$, while, if
$\varepsilon=1$, then
$(a_i+a_j+\mathbf{1})e_d^\top =1$.
Furthermore,
$$
(a_i+a_j+\varepsilon\mathbf{1})(u_i+u_j)=0.
$$
Thus $(B_iB_j^{-1})^2$ is a transvection, hence the result.\end{proof}
\begin{lemma}\label{l:transvection3}
    Suppose that $s_{i,i}=0$ for all $i\in\{0,\dots,d-1\}$ and that $a_0,\dots,a_{d-1}$ are pairwise distinct. If $s_{i,j}+s_{j,i}=1$ for all $i,j\in\{0,\dots,d-1\}$ distinct, then $G_0$ contains a transvection.
\end{lemma}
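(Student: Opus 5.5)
The plan is the same as in Lemmas~\ref{l:transvection1} and~\ref{l:transvections2}: form short words in $B_0,\dots,B_{d-1}$ and exhibit one whose difference from $I_d$ has rank $1$. Such an element is automatically a transvection, by the remark preceding \S\ref{sec:subsub1}. The difference from the earlier lemmas is that here the $0/1$ matrix $S=(s_{i,j})$ is a \emph{tournament matrix}: its diagonal is zero and $S+S^\top=J-I$. By~\eqref{eq:row-sum-sij} every row of $S$ has odd sum.

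\textbf{Step 1: two-letter words.} Redo the computation in the proof of Lemma~\ref{l:transvections2} without assuming $s_{i,j}=s_{j,i}$. This gives
\[
C_{i,j}:=B_iB_j^{-1}=I_d+u_ic_{i,j}+u_j\mathbf{1},\qquad c_{i,j}=a_i+a_j+(1+s_{i,j})\mathbf{1}.
\]
Using $s_{i,j}+s_{j,i}=1$ and~\eqref{eq:productshortcuts}, we get $c_{i,j}u_i=0$, $c_{i,j}u_j=1$ and $\mathbf{1}u_i=\mathbf{1}u_j=1$. Hence
\[
C_{i,j}^2-I_d=u_i\mathbf{1}+u_j(a_i+a_j+s_{i,j}\mathbf{1}).
\]
The row vectors $\mathbf{1}$ and $a_i+a_j+s_{i,j}\mathbf{1}$ are distinct, because the $a_i$ are pairwise distinct and $a_i\neq\mathbf{1}$. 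So $C_{i,j}^2-I_d$ has rank $2$. Consequently, unlike in Lemma~\ref{l:transvections2}, no power of a single $C_{i,j}$ will work.

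\textbf{Step 2: exploit a cyclic triangle.} Since every row sum of $S$ is odd, every vertex of the tournament has odd out-degree. The $d$ out-degrees are odd and $d$ is odd, so the total $d(d-1)/2$ is odd, forcing $d\equiv 3\pmod 4$. In particular the tournament is not transitive: a transitive tournament has a vertex of out-degree $0$. Hence it contains a cyclic triangle $i\to j\to k\to i$, meaning $s_{i,j}=s_{j,k}=s_{k,i}=1$ and the reverse entries are $0$.

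For such a triple I will compute elements such as $C_{i,j}C_{j,k}^{-1}$, $C_{i,j}C_{k,j}$, or commutators like $[C_{i,j},C_{j,k}]$. All of these are unipotent-type elements whose difference from $I_d$ has image inside $\langle a_i,a_j,a_k,\mathbf{1}\rangle$ and column space inside $\langle u_i,u_j,u_k\rangle$. So each such difference is determined by a $3\times 4$ coefficient matrix, and its entries are given entirely by the $s$-values of the triangle together with the identities in~\eqref{eq:productshortcuts}. The target is a word whose coefficient matrix has rank $1$. If it also squares to zero, we have a transvection, and that is all the lemma needs. I expect a suitable commutator, or the square of a three-letter product, to collapse to the form $(u_i+u_j+u_k)(\cdots)$ or $(u_x+u_y)(\cdots)$, mirroring the factorisations found in Lemmas~\ref{l:transvection1} and~\ref{l:transvections2}.

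\textbf{Main obstacle.} The obstacle is finding the right word: the bookkeeping is routine, but guessing which product degenerates to rank $1$ is not. I would first try $(C_{i,j}C_{j,k}C_{k,i})^2$ and the commutator $[B_iB_j^{-1},B_jB_k^{-1}]$, tracking coefficients only in the bases $\{u_i,u_j,u_k\}$ and $\{a_i,a_j,a_k,\mathbf{1}\}$. If a cancellation depends on a relation among $a_i,a_j,a_k,\mathbf{1}$, I would use the only available inputs, namely distinctness, $a_x\neq\mathbf{1}$ and $a_xe_d^\top=1$. If necessary I would split further according to whether $a_i+a_j+a_k\in\langle\mathbf{1}\rangle$.

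As a fallback, if no word on one triangle works, I would instead use the global parity constraint $d\equiv3\pmod4$ together with a second triangle sharing an edge with the first.
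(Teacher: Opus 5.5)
Your proposal has a genuine gap. It never exhibits a transvection, and the search space you commit to in Step~2 provably contains none.

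Every candidate you name ($C_{i,j}C_{j,k}^{-1}$, $C_{i,j}C_{k,j}$, $[C_{i,j},C_{j,k}]=[B_iB_j^{-1},B_jB_k^{-1}]$, $(C_{i,j}C_{j,k}C_{k,i})^2$) lies in $H=\langle C_{i,j},C_{j,k}\rangle$ for your cyclic triangle $i\to j\to k\to i$.
\begin{itemize}
\item These elements are not unipotent. Write $C_{x,y}=I_d+N$ with $N=u_xc_{x,y}+u_y\mathbf{1}$. Your own pairings $c_{x,y}u_x=0$ and $c_{x,y}u_y=\mathbf{1}u_x=\mathbf{1}u_y=1$ give $N+N^2+N^3=0$, so $C_{x,y}^3=I_d$ for every $x\neq y$.
\item We have $C_{i,j}C_{j,k}C_{k,i}=B_iB_j^{-1}B_jB_k^{-1}B_kB_i^{-1}=I_d$. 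So your first candidate $(C_{i,j}C_{j,k}C_{k,i})^2$ is simply the identity, and $H$ is a quotient of the $(3,3,3)$ triangle group.
\item One checks further that $C_{i,j}C_{j,k}^{-1}$ has order $7$, and that $H$ is the Frobenius group of order $21$.
\end{itemize}
A group of odd order contains no involution, and over $\mathbb{F}_2$ every transvection is an involution. So no word in the $C_{x,y}$ supported on one triangle can ever be a transvection, whatever linear relations hold among $a_i,a_j,a_k,\mathbf{1}$. Splitting on whether $a_i+a_j+a_k\in\langle\mathbf{1}\rangle$ cannot rescue this, and the ``second triangle'' fallback is not an argument. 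Step~1 itself is correct, but as you note it only yields rank-$2$ elements.

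The missing idea is to use a word with an odd number of $B$-factors, so that the $e_d^\top\mathbf{1}$ terms survive. For a cyclic triangle $r\to s\to t\to r$:
\begin{itemize}
\item $B_sB_t^{-1}=I_d+u_s(a_s+a_t)+u_t\mathbf{1}$;
\item $B_rB_sB_t^{-1}=I_d+(e_d^\top+u_r+u_s)(a_s+a_t)+u_ra_r+u_t\mathbf{1}$;
\item $(B_rB_sB_t^{-1})^2=I_d+(e_d^\top+u_r+u_s+u_t)(\mathbf{1}+a_r+a_s+a_t)$.
\end{itemize}
The column factor equals $\sum_{x\notin\{r,s,t\}}u_x$. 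It is non-zero because $d>3$, and it is not of the shape $(u_i+u_j+u_k)(\cdots)$ you anticipated. The product in the reverse order is $0+1+1=0$ by the triangle condition.

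What remains is the non-degeneracy $\mathbf{1}+a_r+a_s+a_t\neq 0$. This need not hold for an arbitrary cyclic triangle, and it requires a tournament argument you do not supply.
\begin{itemize}
\item If the vector vanishes, pairing with $u_x$ shows that each other vertex $x$ is dominated by an odd number of $r,s,t$.
\item If $x$ is dominated by exactly one of them, say $r\to x$, $x\to s$, $x\to t$, then $x\to t\to r\to x$ is a cyclic triangle. Its vector pairs to $1$ with $u_s$, so it is non-zero.
\item If every other vertex is dominated by all of $r,s,t$, the remaining subtournament cannot be transitive: its sink would have out-degree $0$ in the whole tournament, contradicting odd out-degrees. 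Any cyclic triangle $x\to y\to z\to x$ inside it satisfies $(\mathbf{1}+a_x+a_y+a_z)u_r=1$.
\end{itemize}
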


\begin{proof}
Since $s_{i,j}+s_{j,i}=1$, for all distinct $i,j\in\{0,\ldots,d-1\}$ and $s_{i,i}=0$ for all $i$, the matrix
$$
S=(s_{i,j})_{0\leq i,j\leq d-1}
$$
is the adjacency matrix of a tournament $\Gamma$ on $d$ vertices. By
\eqref{eq:row-sum-sij}, every vertex of this tournament has odd out-degree.

Since
$$
\sum_{i=0}^{d-1}\sum_{j=0}^{d-1}s_{i,j}
=
\binom{d}{2},
$$
and the left-hand side is odd (being the sum of an odd number of odd numbers) we have that $d(d-1)/2$ is odd. Hence
$d\equiv 3\pmod 4$.

The tournament $\Gamma$ is not transitive, since a transitive tournament has a vertex
of out-degree $0$, whereas every vertex in $\Gamma$ has odd out-degree. Hence it contains
a directed triangle
$$
r\longrightarrow s\longrightarrow t\longrightarrow r.
$$

We use the fact that $d>3$. We claim that the directed triangle may be chosen
so that
\begin{equation}\label{eq:non-zero-triangle-vector}
\mathbf{1}+a_r+a_s+a_t\neq 0.
\end{equation}
Indeed, suppose that
$\mathbf{1}+a_r+a_s+a_t=0$.
For every $x\notin\{r,s,t\}$, post-multiplying this equality by $u_x$ gives
$$
(\mathbf{1}+a_r+a_s+a_t)u_x=\mathbf{1} u_x+a_ru_x+a_su_x+a_tu_x=1+s_{r,x}+s_{s,x}+s_{t,x}=0.
$$
Thus an odd number of the vertices $r,s,t$ dominate $x$ in the tournament $S$.
Suppose first that precisely one of $r,s,t$ dominates $x$. After cyclically
relabeling $r,s,t$, we may assume that
$$
r\longrightarrow x,
\qquad
x\longrightarrow s,
\qquad
x\longrightarrow t.
$$
Then
$$
x\longrightarrow t\longrightarrow r\longrightarrow x
$$
is a directed triangle. Moreover,
$$
(\mathbf{1}+a_x+a_t+a_r)u_s
=
1+s_{x,s}+s_{t,s}+s_{r,s}
=
1+1+0+1
=
1.
$$
Hence
$\mathbf{1}+a_x+a_t+a_r\neq 0$ and~\eqref{eq:non-zero-triangle-vector} is satisfied  by this new directed triangle.

We may therefore suppose that every vertex outside $\{r,s,t\}$ is dominated
by all three of $r,s,t$. If the subtournament induced on
$\{0,\ldots,d-1\}\setminus\{r,s,t\}$
contains a directed triangle with vertices $x,y,z$, then
$(\mathbf{1}+a_x+a_y+a_z)u_r=1$, because $x,y,z$ are dominated by $r,s,t$. Hence
$\mathbf{1}+a_x+a_y+a_z\neq 0$  and~\eqref{eq:non-zero-triangle-vector} is satisfied  by this new directed triangle.
On the other hand, if this subtournament is transitive, then it has a vertex
of out-degree $0$ within the subtournament. Since this vertex is also
dominated by $r,s,t$, it has out-degree $0$ in the whole tournament, a
contradiction. This proves the claim.

Choose a directed triangle
$$
r\longrightarrow s\longrightarrow t\longrightarrow r
$$
satisfying~\eqref{eq:non-zero-triangle-vector}.
A direct calculation, using
\eqref{eq:binary-Bi-uniform}
gives
\begin{align*}
    B_rB_sB_t^{-1}=&\,B_r(I_d+u_s(a_s+a_t)+u_t\mathbf{1})\\=&\,I_d+u_ra_r+e_d^\top\mathbf{1}+u_s(a_s+a_t)+u_ra_ru_s(a_s+a_t)+e_d^\top\mathbf{1}u_s(a_s+a_t)\\&+u_t\mathbf{1}+u_ra_ru_t\mathbf{1}+e_d^\top\mathbf{1}u_t\mathbf{1}\\=&\,I_d+(u_s+u_r+e_d^\top)(a_s+a_t)+u_ra_r+u_t\mathbf{1},
\end{align*}
so

\begin{align}\begin{split}\label{eq:Y-square-original-coordinates}
(B_rB_sB_t^{-1})^2-I_d
&=((u_s+u_r+e_d^\top)(a_s+a_t)+u_ra_r+u_t\mathbf{1})^2
\\&=
(e_d^\top+u_r+u_s+u_t)(\mathbf{1}+a_r+a_s+a_t).
\end{split}
\end{align}
Both factors on the right-hand side are non-zero. Indeed, the row vector is
non-zero by~\eqref{eq:non-zero-triangle-vector}, while the column vector is
non-zero since $d>3$ and $u_0,\ldots,u_{d-1}$ form a basis.
We now show that the product in the reverse order is zero. First,
$$
(\mathbf{1}+a_r+a_s+a_t)e_d^\top 
=
1+1+1+1
=
0.
$$
Moreover,
$\mathbf{1}(u_r+u_s+u_t)=1$,
and, since
$$
r\longrightarrow s\longrightarrow t\longrightarrow r,
$$
we have
$$
(a_r+a_s+a_t)(u_r+u_s+u_t)
=
s_{r,s}+s_{s,t}+s_{t,r}
=
1.
$$
Therefore,
$
(\mathbf{1}+a_r+a_s+a_t)(e_d^\top +u_r+u_s+u_t)=0$.
It follows from~\eqref{eq:Y-square-original-coordinates} that $(B_rB_sB_t^{-1})^2$ is a
transvection.
\end{proof}

Thus, in every case, $G_0$ contains a transvection.
$$
T:=\left\langle x\in G_0\mid \mathrm{rank}(x-I_d)=1\right\rangle\ne 1,$$ and $T\trianglelefteq G_0$. 
\begin{lemma}
    Let $T=\left\langle x\in G_0\mid \mathrm{rank}(x-I_d)=1\right\rangle.$ Then $T\ne 1$ is an irreducible subgroup of $G_0$ generated by transvections.
\end{lemma}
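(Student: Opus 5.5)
The plan is to follow the proof of the analogous lemma in Section~\ref{sec:subsub2}. The group $T$ is normal in $G_0$, is non-trivial because $G_0$ contains a transvection by Lemmas~\ref{l:transvection1},~\ref{l:transvections2} and~\ref{l:transvection3}, and is generated by transvections. So Lemma~\ref{l:Vdecomp} gives a decomposition $V=V_1\oplus\cdots\oplus V_m$ into irreducible $T$-modules of common dimension $n>1$, permuted transitively by $G_0$. Since $n\mid d$ and $d$ is odd, $n\ge 3$. Suppose $m>1$ and aim for a contradiction.

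First, since $\mathrm{rank}(B_i-I_d)=2<n$, no $B_i$ can permute the $V_j$ non-trivially, so every $B_i$ stabilizes every $V_j$. Second, the computation of $\bigcap_i{\bf C}_V(B_i)$ from the previous subsection applies here, and in the case where every $B_i-I_d$ has rank $2$ it gives $x_i=x_d=0$ for all $i$. Hence $\bigcap_{i=0}^{d-1}{\bf C}_V(B_i)=0$. It therefore suffices to show that all the moved spaces $[V,B_i]=\langle a_i,\mathbf{1}\rangle$ lie in a single component, say $V_1$. Then every $B_i$ fixes $V_2\oplus\cdots\oplus V_m$ pointwise, which is a contradiction.

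To locate the moved spaces, use $e_d(B_i-I_d)=\mathbf{1}$ for every $i\in\{0,\ldots,d-1\}$. Let $\pi_j$ be the projections onto the components. Because $B_i-I_d$ commutes with each $\pi_j$, we get $\pi_j(\mathbf{1})\in[V_j,B_i]$ for every $i$ and $j$. Let $S=\{j:\pi_j(\mathbf{1})\neq 0\}$. Since $[V,B_i]=\bigoplus_j[V_j,B_i]$ has dimension $2$, we get $|S|\le 2$.
\begin{itemize}
\item If $|S|=1$, say $\mathbf{1}\in V_1$, then $a_i=\mathbf{1}B_i\in V_1$, so $[V,B_i]=\langle a_i,\mathbf{1}\rangle\le V_1$ for every $i$, and we are done exactly as before.
\item If $|S|=2$, then $[V,B_i]=\langle\pi_1(\mathbf{1}),\pi_2(\mathbf{1})\rangle$ for all $i$. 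The empty common fixed space then forces $m=2$. Since $a_i\ne\mathbf{1}$, each $a_i$ lies in $\{\pi_1(\mathbf{1}),\pi_2(\mathbf{1})\}$.
\end{itemize}

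The main obstacle is to rule out the case $|S|=2$. My first attempt will be as follows. Since $d>8$, pigeonhole gives many pairs $i\ne j$ with $a_i=a_j$. For each such pair, $B_iB_j^{-1}=I_d+(u_i+u_j)a_i$ is a transvection in $T$. Since it preserves the decomposition, its fixed hyperplane $\ker(u_i+u_j)$ must contain the complementary component. Splitting the indices into two classes $C_1,C_2$ according to the value of $a_i$, this says that the span of the vectors $u_i+u_j$ within each class annihilates the other component. I would combine this with the identities $\sum_j s_{i,j}=1$ and $a_ie_d^\top=\mathbf{1}u_j=1$ from~\eqref{eq:row-sum-sij} and~\eqref{eq:productshortcuts}, and with the fact that $u_0,\ldots,u_{d-1}$ is a basis.

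Concretely, pairing $\pi_k(\mathbf{1})$ against the $u_j$ and summing should give a parity contradiction with $d$ odd. I expect some care here, since the dimension counts alone, namely $|C_1|-1\le n_1$ and $|C_2|-1\le n_2$, do not immediately conflict.
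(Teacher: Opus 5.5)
Your reduction is sound, and it takes a different route from the paper. You locate the moved spaces $[V,B_i]=\langle a_i,\mathbf{1}\rangle$ inside one summand, as in Section~\ref{sec:subsub2}, and then invoke $\bigcap_i{\bf C}_V(B_i)=0$. The paper works only with the fixed spaces ${\bf C}_V(B_i)$: being decomposable, they force every coordinate line $\langle e_j\rangle$ into a single $V_k$, and then the hyperplane ${\bf C}_V(B_0)=\ker\varphi$ of $\langle e_1,\ldots,e_{d-1}\rangle$ cannot be decomposable.

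\textbf{The gap.} As written, your proof is incomplete. The case $|S|=2$ is left with a tentative sketch (pigeonhole on the $a_i$, the transvections $B_iB_j^{-1}$, a hoped-for parity count), and by your own admission this does not yet give a contradiction. The missing step follows at once from what you already have. In that case you showed $m=2$. But Lemma~\ref{l:Vdecomp} gives summands of a common dimension $n$, so $d=mn=2n$ would be even. In this case $d$ is odd, which is exactly what you used to get $n\ge 3$; equivalently, $m\mid d$ forces $m$ to be odd. With this one line, the whole pigeonhole detour can be deleted. (In that detour, note also that $B_iB_j^{-1}=I_d+(u_i+u_j)a_iB_j^{-1}$, not $I_d+(u_i+u_j)a_i$, although its fixed hyperplane is indeed $\ker(u_i+u_j)$.)

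\textbf{A smaller slip.} Your claim $e_d(B_i-I_d)=\mathbf{1}$ fails for $i=0$: since $e_du_0=e_d\mathbf{1}^\top=1$, you get $e_d(B_0-I_d)=a_0+\mathbf{1}$. Use $(e_1+e_d)(B_0-I_d)=\mathbf{1}$ instead. Alternatively, note that $\mathbf{1}\in\langle a_i,\mathbf{1}\rangle=[V,B_i]=\bigoplus_j[V_j,B_i]$, which gives $\pi_j(\mathbf{1})\in[V_j,B_i]$ directly.

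\textbf{Comparison once repaired.} Your argument is then shorter than the paper's. However, it relies on $d$ being odd in two places: $\mathbf{1}B_0=a_0$ needs $\mathbf{1}\mathbf{1}^\top=1$, and oddness is what excludes $m=2$. The paper's fixed-space argument uses oddness only to get $n\ge 3$. That is why essentially the same argument can be reused in Lemmas~\ref{l:G0notransvection} and~\ref{l:N-absolute-complete}, where $d$ may be even.
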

\begin{proof} By Lemmas~\ref{l:transvection1},~\ref{l:transvections2}, and~\ref{l:transvection3}, $T\ne 1$. Moreover, it is clear that $T\unlhd G_0$. Suppose that $T$ is not irreducible. Then by Lemma~\ref{l:Vdecomp} there exists a decomposition
$$
V=V_1\oplus\cdots\oplus V_m
$$
into irreducible $T$-modules of the same odd dimension $n\geq 3$, where
$m>1$, and $G_0$ permutes the spaces $V_1,\ldots,V_m$ transitively. Since
$\mathrm{rank}(B_i-I_d)=2<n$,
each $B_i$ stabilizes every $V_j$.
For $i\in\{1,\ldots,d-1\}$, the linear independence of $a_i$ and $\mathbf{1}$
gives
$$
{\bf C}_V(B_i)
=
\left\{
\sum_{j=1}^{d}x_je_j\in V
:
x_i=x_d=0
\right\}.
$$
Since every $B_i$ stabilizes each $V_j$, every ${\bf C}_V(B_i)$ is decomposable with
respect to $V=V_1\oplus\cdots\oplus V_m$, that is, ${\bf C}_V(B_i)=\bigoplus_j{\bf C}_{V_j}(B_i)$.

Now set
$F=\langle e_1,\ldots,e_{d-1}\rangle$.
We have
$$
F=\sum_{i=1}^{d-1}{\bf C}_V(B_i),
$$
and, for every $j\in\{1,\ldots,d-1\}$,
$$
\langle e_j\rangle
=
\bigcap_{\substack{1\leq i\leq d-1\\i\neq j}}{\bf C}_V(B_i).
$$
Thus $F$ and each line $\langle e_j\rangle$ are decomposable with respect to $V=V_1\oplus\cdots\oplus V_m$, that is, $F=\bigoplus_k(F\cap V_k)$ and $\langle e_j\rangle=\bigoplus_k(\langle e_j\rangle\cap V_k)$. In particular,
each $e_j$ belongs to exactly one of the spaces $V_1,\ldots,V_m$.

Set
$F_k=F\cap V_k$.
Then $
F=F_1\oplus\cdots\oplus F_m$.
Since $F$ has codimension $1$ in $V$, all but one of the spaces $F_k$ are
equal to $V_k$, while the remaining one has codimension $1$ in the
corresponding $V_k$. In particular,
$F_k\neq 0$, for every $k\in\{1,\ldots,m\}$. In fact, each $F_k$ contains at least one of the vectors
$e_1,\ldots,e_{d-1}$. Indeed, every $e_j$ belongs to one of the
summands $V_k$, and the vectors $e_1,\ldots,e_{d-1}$ form a basis of
$F$. More explicitly,
$$
F\cap V_k=
\left\langle
e_j\mid e_j\in V_k
\right\rangle.
$$

On the other hand, the linear independence of $a_0$ and $\mathbf{1}$ gives
$$
{\bf C}_V(B_0)
=
\left\{
\sum_{j=1}^{d-1}x_je_j\in F
 :\
\sum_{j=1}^{d-1}x_j=0
\right\}.
$$
Let $\varphi:F\longrightarrow\mathbb F_2$
be the linear map defined by
$\varphi(e_j)=1$, for every $j\in\{1,\ldots,d-1\}$.
Then  ${\bf C}_V(B_0)=\ker(\varphi)$.

Since $B_0$ stabilizes every $V_k$, the space ${\bf C}_V(B_0)$ is decomposable
with respect to
$F=F_1\oplus\cdots\oplus F_m$.
A decomposable subspace of codimension $1$ in $F$ contains $F_k$ for all
but one value of $k$. On the other hand, since each $F_k$ contains some $e_j$ we deduce that
$\varphi|_{F_k}\neq 0$ for all $k$.
Thus no $F_k$ is contained in $\ker(\varphi)$, a contradiction, whence $T$ is irreducible. \end{proof}

Applying McLaughlin's classification~\cite{McL69} as in Sections~\ref{sec:subsub1} and~\ref{sec:subsub2}, we deduce that
$G_0=\mathrm{GL}_d(2)$.
This completes the analysis of the case $v=e_d$ and $w=\sum_{i=1}^{d-1}e_i$.

\subsection{Case $v=e_d$ and $e_1,\ldots,e_{d-1},w$ form a basis}
Since
$e_1,\ldots,e_{d-1},w$ is a basis of $V$, we write
$$
w=e_d+\delta,
$$
where $0\neq\delta\in\langle e_1,\ldots,e_{d-1}\rangle$.
In particular, $\delta e_d^\top =0$.

For each $i\in\{0,\ldots,d\}$, choose an affine transformation
$g_i=(t_i,B_i)\in G$ witnessing the $d$-equivalence of $I$ and $J$ after
deleting the $i$-th coordinate. As before, we may take
$B_d=I_d$ and $t_d=0$.

For $i\in\{1,\ldots,d-1\}$, set
$a_i=e_i(B_i-I_d).$
Then $t_i=0$ and
$B_i=I_d+e_i^\top a_i+e_d^\top \delta$.
For $i=0$, set $a_0=t_0$. Since
$e_jB_0+t_0=e_j$, for every $j\in\{1,\ldots,d-1\}$,
and $e_dB_0+t_0=w$,
we deduce that
$B_0=I_d+\mathbf{1}^\top a_0+e_d^\top \delta$.

By setting
$$
u_0=\mathbf{1}^\top ,
\qquad
u_i=e_i^\top 
\quad
\text{for every }i\in\{1,\ldots,d-1\},
$$
we may describe uniformly the matrices $B_i$ as
\begin{equation}\label{eq:basis-Bi-uniform}
B_i=I_d+u_ia_i+e_d^\top \delta,
\end{equation}
for every $i\in\{0,\ldots,d-1\}$. Notice that
$u_0,\ldots,u_{d-1}$ form a basis of the column-vector space. 

The following observations will be helpful.
\begin{lemma}\label{l:ainot0}
    Let $i\in\{0,\dots,d-1\}$. Then $a_i\ne 0$ and $\delta\in\mathrm{Im}(B_i-I_d).$
\end{lemma}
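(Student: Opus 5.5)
We must show that $a_i\neq 0$ and $\delta\in\mathrm{Im}(B_i-I_d)$ for every $i\in\{0,\ldots,d-1\}$. The plan is to compute the two relevant images directly from the uniform description~\eqref{eq:basis-Bi-uniform}, $B_i=I_d+u_ia_i+e_d^\top\delta$. Recall that $u_0=\mathbf{1}^\top$ and $u_i=e_i^\top$ for $1\le i\le d-1$, and that $0\neq\delta\in\langle e_1,\ldots,e_{d-1}\rangle$, so $\delta e_d^\top=0$.

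\emph{Step 1: $a_i\neq 0$.} I argue by contradiction. If $a_i=0$, then $B_i=I_d+e_d^\top\delta$, and hence $B_i-I_d=e_d^\top\delta$ has rank one. This matrix is nilpotent, since $(e_d^\top\delta)^2=e_d^\top(\delta e_d^\top)\delta=0$. So $B_i$ is a transvection; in particular it is invertible, and there is no contradiction from determinants. I would instead use the non-conjugacy of $I$ and $J$, handling two cases.

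\emph{Case $i\ge1$.} Then $t_i=0$, and $B_i$ fixes every $e_j$ with $j\le d-1$, including $e_i$ itself, because $e_i e_d^\top=0$. Also $e_dB_i=e_d+\delta=w$. So $(0,B_i)$ maps all of $I$ to $J$, contradicting $I\not\sim_{d+1}J$.

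\emph{Case $i=0$.} Now $a_0=t_0=0$, so $g_0=(0,B_0)$ with $B_0=I_d+e_d^\top\delta$. As before, $B_0$ fixes $e_1,\ldots,e_{d-1}$ and sends $e_d$ to $w$. It also fixes $0$. So again $g_0$ maps $I$ to $J$, a contradiction.

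\emph{Step 2: $\delta\in\mathrm{Im}(B_i-I_d)$.} I evaluate $B_i-I_d$ on a well-chosen row vector.

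For $i\ge1$, take $e_d$. Since $e_du_i=e_de_i^\top=0$ and $e_de_d^\top=1$, we get $e_d(B_i-I_d)=\delta$.

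For $i=0$, the vector $e_d$ gives $e_d(B_0-I_d)=(e_d\mathbf{1}^\top)a_0+\delta=a_0+\delta$, which is not immediately $\delta$. To cancel the $a_0$ term I would take $x=e_d+e_1$ instead. Then $x\mathbf{1}^\top=1+1=0$ and $xe_d^\top=1$, so $x(B_0-I_d)=\delta$. This is the same trick used in Section~\ref{sec:5.5} for $B_0$.

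No step is a real obstacle. The only point needing care is the $i=0$ case in both steps. In Step 1, $t_0=a_0$ must vanish so that $g_0$ fixes $0$. In Step 2, we need a vector $x$ with $x\mathbf{1}^\top=0$ and $xe_d^\top=1$. This requires $d\ge2$, which holds since $d>8$.
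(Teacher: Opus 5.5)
Your proof is correct and follows the paper's argument essentially step for step. If $a_i=0$, then $g_i=(0,I_d+e_d^\top\delta)$ maps the whole tuple $I$ to $J$, a contradiction; and $\delta$ is exhibited as $e_d(B_i-I_d)$ for $i\ge 1$ and as $(e_d+e_1)(B_0-I_d)$ for $i=0$.
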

\begin{proof}
Suppose first that some $a_i=0$. Then $(t_i,B_i)=(0,I_d+e_d^\top\delta)$, but
$B_i=I_d+e_d^\top \delta$ fixes
$e_j$ for all $j\in\{1,\dots,d-1\}$, and maps $e_d$ to
$e_d+\delta=w$.
Thus the affine transformation $g_i=(t_i,B_i)$ maps the whole tuple
$I$ to the whole tuple $J$, contrary to our assumption.

For the second claim, when
$i\in\{1,\ldots,d-1\}$, this follows from
$e_d(B_i-I_d)=\delta$.
For $i=0$, observe that
$(e_d+e_1)u_0=0$ and $(e_d+e_1)e_d^\top =1$,
whence $(e_d+e_1)(B_0-I_d)=\delta$.
\end{proof}
\subsubsection{Suppose $\mathrm{rank}(B_r-I_d)=1$, for some $r\in\{0,\ldots,d-1\}$.}\label{sec:subsub4} 

\begin{lemma}
    Let
$
T=
\left\langle
x\in G_0\mid \mathrm{rank}(x-I_d)=1
\right\rangle.
$
Then $1\ne T\unlhd G_0$ and $T$ is irreducible. 
\end{lemma}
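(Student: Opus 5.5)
The plan is to follow the proof of the corresponding lemma in Section~\ref{sec:subsub2}, with the line $\langle\delta\rangle$ playing the role that $\langle\mathbf{1}\rangle$ played there.

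\emph{Non-triviality and normality.} The group $T$ is normal in $G_0$, because conjugation preserves $\mathrm{rank}(x-I_d)$. By hypothesis $\mathrm{rank}(B_r-I_d)=1$. Since $B_r\in\mathrm{SL}_d(2)$, the remark made before Section~\ref{sec:subsub1} shows that $B_r$ is a transvection, so $T\neq 1$ and $T$ is generated by transvections. By Lemma~\ref{l:ainot0}, $\delta\in\mathrm{Im}(B_r-I_d)$, so the moved line of $B_r$ is $\langle\delta\rangle$.

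\emph{Reduction via Lemma~\ref{l:Vdecomp}.} Suppose $T$ is reducible. Then $V=V_1\oplus\cdots\oplus V_m$ with $m>1$, where the $V_k$ are irreducible $T$-modules of common dimension $n\ge2$, permuted transitively by $G_0$.
\begin{itemize}
\item $T$ stabilises every $V_k$. Hence $[V,B_r]=\langle\delta\rangle$ is the direct sum of its components in the $V_k$. Being one-dimensional, it lies in a single summand, say $\delta\in V_1$.
\item Each $B_i$ stabilises every $V_k$. From \eqref{eq:basis-Bi-uniform}, $\mathrm{rank}(B_i-I_d)\le2$. If $B_i$ moved some $V_j$ to $V_k\neq V_j$, then $\mathrm{rank}(B_i-I_d)\ge n\ge2$.
\item Equality would force $n=2$ and $B_i$ to swap exactly two summands $V_j,V_k$ while fixing the rest pointwise. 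Then $\mathrm{Im}(B_i-I_d)$ is a "diagonal" subspace of $V_j\oplus V_k$ meeting every summand trivially.
\item That contradicts $\delta\in\mathrm{Im}(B_i-I_d)\cap V_1$ (Lemma~\ref{l:ainot0}). So $\mathrm{Im}(B_i-I_d)=\bigoplus_k\mathrm{Im}((B_i-I_d)|_{V_k})$, and this sum contains $\delta\in V_1$.
\end{itemize}

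\emph{Key computation: the common fixed space of the $B_i$.} For $x\in V$,
$$x(B_i-I_d)=(xu_i)\,a_i+x_d\,\delta .$$
Suppose $x$ is fixed by every $B_i$, $0\le i\le d-1$.
\begin{itemize}
\item If $x_d=1$, then $(xu_i)a_i=\delta\neq0$ for every $i$. This forces $a_i=\delta$ and $xu_i=1$ for all $i$, so $x$ is uniquely determined. Concretely, $x_j=1$ for $j<d$, and the condition $xu_0=1$ pins down the parity of $d$.
\item If $x_d=0$, then $(xu_i)a_i=0$, and $a_i\neq0$ by Lemma~\ref{l:ainot0}. Hence $xu_i=0$ for all $i$, and since $u_0,\dots,u_{d-1}$ is a basis, $x=0$.
\end{itemize}
Hence $\dim\bigcap_i{\bf C}_V(B_i)\le1$. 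Moreover, if some $B_i$ has rank $2$, the first alternative is impossible (it needs $a_i=\delta$, i.e.\ rank $1$), so the intersection is $0$.

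\emph{Finishing.} If every $B_i$ satisfies $[V,B_i]\le V_1$, then each $B_i$, which stabilises every $V_k$, fixes $V_2\oplus\cdots\oplus V_m$ pointwise. That space has dimension $\ge n\ge2>1$, contradicting the bound just obtained.

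\emph{Main obstacle.} The difficulty is a rank-$2$ matrix $B_i$ whose image is $\langle\delta,y\rangle$ with $y$ in some $V_k$, $k\neq1$: such a $B_i$ acts non-trivially on two summands. To exclude this I would first try the decomposability argument from the last lemma of the previous subsection:
\begin{itemize}
\item each ${\bf C}_V(B_i)$ is decomposable with respect to the $V_k$;
\item for rank-$2$ $B_i$ with $i\ge1$, ${\bf C}_V(B_i)=\{x_i=x_d=0\}$; for rank-$1$ $B_i$, ${\bf C}_V(B_i)=\{x_i=x_d\}$;
\item so $F=\langle e_1,\dots,e_{d-1}\rangle$ and the lines $\langle e_j\rangle$ are expressible via sums and intersections of these fixed spaces, making each $e_j$ lie in a single $V_k$;
\item the codimension-one subspace ${\bf C}_V(B_0)\cap F$, cut out by the all-ones functional, is then non-decomposable, giving a contradiction.
\end{itemize}
I expect the bookkeeping of mixed rank-$1$/rank-$2$ cases here to be the hardest part. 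The condition $d>8$ may be needed to guarantee enough $B_i$ to pin down every $e_j$.
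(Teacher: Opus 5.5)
\textbf{There is a genuine gap: your finish only covers a special case, and your fallback omits its key step.}

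Your setup is sound. Normality, non-triviality, the reduction via Lemma~\ref{l:Vdecomp}, and your argument that every $B_i$ stabilises every $V_k$ are all correct. In that last step, a non-trivial permutation of the summands forces $n=2$, a single swap, and an image meeting every summand trivially, which contradicts $\delta\in V_1\cap\mathrm{Im}(B_i-I_d)$. This is a slightly tidier version of the paper's two-step argument.

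The problem is the finish. Your contradiction via $\dim\bigcap_i{\bf C}_V(B_i)\le 1$ needs $[V,B_i]\le V_1$ for every $i$, and nothing gives you this. In Section~\ref{sec:subsub2} it came from the identity $\mathbf{1}B_i=a_i$. Here the analogue is $\delta B_i=\delta+(\delta u_i)a_i$, which says nothing about $a_i$ when $\delta u_i=0$. So a rank-$2$ matrix $B_i$ with $\mathrm{Im}(B_i-I_d)=\langle\delta\rangle\oplus\langle y\rangle$, $y\in V_k$, $k\ne 1$, is not excluded; this is exactly the obstacle you flagged. Your fallback is the route the paper actually takes. However, you leave unproved the step that carries the weight, namely that $F=\langle e_1,\ldots,e_{d-1}\rangle$ is decomposable. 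This is not automatic. For a rank-one $B_i$ with $i\ge1$ the fixed space is $\{x_i=x_d\}\not\le F$. If many $B_i$ have rank one, you cannot assemble $F$ from fixed spaces, and without $F$ you cannot isolate the lines $\langle e_j\rangle$ either.

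\textbf{The missing idea is to use the transvection condition.} If $1\le i\le d-1$ and $\mathrm{rank}(B_i-I_d)=1$, then $a_i=\delta$ and $B_i=I_d+(e_i^\top+e_d^\top)\delta$ is a transvection. Hence $\delta(e_i^\top+e_d^\top)=0$, and since $\delta e_d^\top=0$ this gives $\delta_i=0$. Because $\delta\neq0$, the set $Q$ of indices $i\in\{1,\ldots,d-1\}$ with $\mathrm{rank}(B_i-I_d)=2$ is non-empty.
\begin{itemize}
\item If $|Q|\ge2$, then $F={\bf C}_V(B_i)+{\bf C}_V(B_j)$ for distinct $i,j\in Q$. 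This is a sum of two distinct decomposable hyperplanes of $F$, so $F$ is decomposable.
\item If $Q=\{q\}$, then $\delta=e_q$. A rank-one $B_0$ would be a transvection forcing $\sum_{j<d}\delta_j=0$, which fails. So $B_0$ has rank $2$, and $F={\bf C}_V(B_q)+{\bf C}_V(B_0)$ is again decomposable.
\end{itemize}

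Once $F$ is decomposable, ${\bf C}_F(B_i)=\{x\in F: x_i=0\}$ for every $i\ge1$, whatever the rank of $B_i-I_d$. Each line $\langle e_j\rangle$ is an intersection of these, so each $e_j$ lies in a single summand. Every $F\cap V_k$ is non-zero because $n\ge2$, so each contains some $e_j$. Finally, the decomposable hyperplane ${\bf C}_F(B_0)=\ker\varphi$ must contain all but one $F\cap V_k$, yet $\varphi(e_j)=1$ for every $j$, which is a contradiction since $m>1$. The hypothesis $d>8$ plays no role here. Your common-fixed-space computation is correct, but once the decomposability argument is in place it is no longer needed.
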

\begin{proof}
Suppose, for a contradiction, that $T$ is reducible. By Lemma~\ref{l:Vdecomp} there exists a decomposition
$$
V=V_1\oplus\cdots\oplus V_m
$$
into irreducible $T$-modules of the same dimension $n\geq 2$, where $m>1$,
and $G_0$ permutes the spaces $V_1,\ldots,V_m$ transitively.

For every $i\in\{0,\ldots,d-1\}$, equation
\eqref{eq:basis-Bi-uniform} gives
$\mathrm{rank}(B_i-I_d)\leq2$.
We first prove that every $B_i$ stabilizes every $V_k$.

Suppose first that $n>2$. If $B_i$ maps $V_k$ to a distinct
summand $V_\ell$, then the restriction
$(B_i-I_d)|_{V_k}$
is injective. Indeed, if $x\in V_k$ and $x(B_i-I_d)=0$, then
$x=xB_i\in V_k\cap V_\ell=0$.
Therefore, $
\mathrm{rank}(B_i-I_d)\geq\dim V_k=n>2$,
a contradiction. Thus every $B_i$ stabilizes every $V_k$ when
$n\geq3$.

It remains to consider the case $n=2$. Since $B_r$ is a transvection, its moved space has dimension $1$.
As $\delta$ is a non-zero element of this moved space by Lemma~\ref{l:ainot0}, we have
$\mathrm{Im}(B_r-I_d)=\langle\delta\rangle$.
Now $B_r\in T$, so $B_r$ stabilizes each $V_k$. Since $B_r$ stabilizes every $V_k$, the space
$$\mathrm{Im}(B_r-I_d)=\bigoplus_{k}\mathrm{Im}((B_r-I_d)_{V_k})$$
is decomposable. As this space equals the line $\langle\delta\rangle$, there
exists a unique summand, say $W$, such that
$\delta\in W$.
Fix $i\in\{0,\ldots,d-1\}$. Suppose first that $B_i$ maps $W$ to a
distinct summand $W'$. Then
$(B_i-I_d)|_W$
is injective, and hence its image has dimension $2$. Since
$\mathrm{rank}(B_i-I_d)\leq2$, we deduce
$$
\mathrm{Im}(B_i-I_d)
=
\mathrm{Im}\bigl((B_i-I_d)|_W\bigr).
$$
However, if $0\neq x\in W$, then $x(B_i-I_d)=xB_i+x$
has a non-zero component in $W'$. Therefore,
$$
\mathrm{Im}\bigl((B_i-I_d)|_W\bigr)\cap W=0.
$$
This contradicts $0\neq\delta\in
\mathrm{Im}(B_i-I_d)\cap W$.
Thus $B_i$ stabilizes $W$.

Suppose now that $B_i$ maps some summand $V_k$ to a distinct
summand $V_\ell$. Since $B_i$ stabilizes $W$, neither $V_k$ nor
$V_\ell$ is equal to $W$. As above,
$(B_i-I_d)|_{V_k}$
is injective, so its image has dimension $2$ and is contained in
$V_k\oplus V_\ell$. In particular, this image is disjoint from $W$.
Since $0\neq\delta\in
\mathrm{Im}(B_i-I_d)\cap W$,
we deduce that
\[
\mathrm{rank}(B_i-I_d)\geq3,
\]
a contradiction. Therefore, $B_i$ stabilizes every $V_k$ also when
$n=2$.

We have proved that every $B_i$ stabilizes each summand $V_k$.
Therefore,
\[
{\bf C}_V(B_i)
=
\bigoplus_{k=1}^m{\bf C}_{V_k}(B_i)
\]
for every $i\in\{0,\ldots,d-1\}$.

Since $u_i$ and $e_d^\top $ are linearly independent, from~\eqref{eq:basis-Bi-uniform}, we have
$$
\mathrm{rank}(B_i-I_d)
=
\dim\langle a_i,\delta\rangle.
$$
It follows that $
\mathrm{rank}(B_i-I_d)=1$
if and only if $a_i$ and $\delta$ are linearly dependent. Since both
vectors are non-zero by Lemma~\ref{l:ainot0} and the field is $\mathbb F_2$, this is equivalent
to $a_i=\delta$.

For $i\in\{1,\ldots,d-1\}$ and $x=\sum_{j=1}^dx_je_j$, we have
$x(B_i-I_d)=x_i a_i+x_d\delta.$
If $\mathrm{rank}(B_i-I_d)=2$, then $a_i$ and $\delta$ are
linearly independent, and hence
$$
{\bf C}_V(B_i)
=
\left\{
\sum_{j=1}^d x_je_j\in V
\mathrel{}\middle|\mathrel{}
x_i=x_d=0
\right\}.
$$
If $\mathrm{rank}(B_i-I_d)=1$, then $a_i=\delta$,
$B_i-I_d=(e_i^\top +e_d^\top )\delta$ and 
$$
{\bf C}_V(B_i)
=
\left\{
\sum_{j=1}^d x_je_j\in V
\mathrel{}\middle|\mathrel{}
x_i=x_d
\right\}.
$$

Similarly, for $i=0$, if $\mathrm{rank}(B_0-I_d)=2$, then
$a_0$ and $\delta$ are linearly independent. Thus
$$
{\bf C}_V(B_0)
=
\left\{
\sum_{j=1}^d x_je_j\in V
\mathrel{}\middle|\mathrel{}
x_d=0,\quad
\sum_{j=1}^{d-1}x_j=0
\right\}.
$$
If $\mathrm{rank}(B_0-I_d)=1$, then $a_0=\delta$ and $B_0-I_d=(u_0+e_d^\top )\delta$.
Since $x(u_0+e_d^\top )=\sum_{j=1}^{d-1}x_j$,
we obtain
$$
{\bf C}_V(B_0)
=
\left\{
\sum_{j=1}^d x_je_j\in V
\mathrel{}\middle|\mathrel{}
\sum_{j=1}^{d-1}x_j=0
\right\}.
$$

Set
$$
Q=
\left\{
i\in\{1,\ldots,d-1\}
\mid
\mathrm{rank}(B_i-I_d)=2
\right\}.
$$
We claim that $Q\neq\varnothing$. Indeed, suppose that
$\mathrm{rank}(B_i-I_d)=1$. Then $
B_i=I_d+(e_i^\top +e_d^\top )\delta$ is a transvection. Therefore $\delta(e_i^\top +e_d^\top )=0$. Since $\delta e_d^\top =0$, 
we have $\delta_i=0$.
If $Q$ were empty, then $\delta_i=0$ for every
$i\in\{1,\ldots,d-1\}$, contrary to $\delta\neq0$.

Set
$$
U=\langle e_1,\ldots,e_{d-1}\rangle.
$$
We now prove that $U$ is decomposable with respect to
$V=V_1\oplus\cdots\oplus V_m$, that is $U=\bigoplus_k U\cap V_k$

Suppose first that $Q$ contains two distinct indices $i$ and $j$.
Then ${\bf C}_V(B_i)
$
and
${\bf C}_V(B_j)$ are distinct hyperplanes of $U$, and therefore
\begin{align*}
U={\bf C}_V(B_i)+{\bf C}_V(B_j)&=\left(\bigoplus_{k=1}^m{\bf C}_{V_k}(B_i)\right)+\left(\bigoplus_{k=1}^m{\bf C}_{V_k}(B_j)\right)\\&=\bigoplus_{k=1}^m({\bf C}_{V_k}(B_i)+{\bf C}_{V_k}(B_j))\\&=\bigoplus_{k=1}^m((V_k\cap{\bf C}_{V}(B_i))+(V_k\cap{\bf C}_{V}(B_j))).\end{align*} Write $U_k=(V_k\cap{\bf C}_{V}(B_i))+(V_k\cap{\bf C}_{V}(B_j))\leq U\cap V_k$. If $u\in U\cap V_k$ then $u=u_1+\cdots+u_m$ where $u_\ell\in U_\ell\leq V_\ell$ for all $\ell\in\{1,\dots,m\}$. But $u\in V_k$, so $u_\ell=0$ for all $\ell\in\{1,\dots,m\}\setminus\{k\}$, whence $u\in U_k$. Therefore, $U=\bigoplus_k U\cap V_k$, as desired.

Suppose now that $Q=\{q\}$. For every $i\neq q$, the matrix
$B_i-I_d$ has rank $1$, so $\delta_i=0$ by the calculation above.
Since $\delta\neq0$, we have $\delta=e_q$.
We claim that $B_0-I_d$ has rank $2$. If it had rank $1$, then
$B_0=I_d+(u_0+e_d^\top )\delta$ is a transvection and hence
$0=\delta(u_0+e_d^\top )=\sum_{j=1}^{d-1}\delta_j$,
a contradiction.

Since some $B_r$ is a transvection, while $B_q-I_d$ and
$B_0-I_d$ both have rank $2$, we must have $d\geq3$. Now
$$
{\bf C}_V(B_q)
=
\left\{
x\in U\mid x_q=0
\right\},
$$
whereas
$$
{\bf C}_V(B_0)
=
\left\{
x\in U
\mathrel{}\middle|\mathrel{}
\sum_{j=1}^{d-1}x_j=0
\right\}.
$$
From this description, we see that these are subspaces of $V$ contained in $U$. Hence, they are distinct hyperplanes of $U$, and hence
$U={\bf C}_V(B_q)+{\bf C}_V(B_0)$.
Thus $U$ is decomposable in this case as well.

For each $i\in\{1,\ldots,d-1\}$,
whether $B_i-I_d$ has rank $1$ or rank $2$, the descriptions above
yield
$$
{\bf C}_U(B_i)
=
\left\{
\sum_{j=1}^{d-1}x_je_j\in U
\mathrel{}\middle|\mathrel{}
x_i=0
\right\}.
$$
Since both $U$ and ${\bf C}_V(B_i)$ are decomposable, each ${\bf C}_U(B_i)$ is
decomposable.

For every $j\in\{1,\ldots,d-1\}$, we have
$$
\langle e_j\rangle
=
\bigcap_{\substack{1\leq i\leq d-1\\ i\neq j}}{\bf C}_U(B_i).$$
Therefore, every line $\langle e_j\rangle$ is decomposable. Since it
has dimension $1$, it is contained in one of the summands
$V_1,\ldots,V_m$. Thus every $e_j$ belongs to one of the spaces
$V_1,\ldots,V_m$.

Since $U$ is decomposable, we have $U=(U\cap V_1)\oplus\cdots\oplus (U\cap V_m)$ and hence
$$1=\mathrm{codim}_V(U)
=
\sum_{k=1}^m\mathrm{codim}_{V_k}(U\cap V_k).
$$
It follows that all but one of the spaces $U\cap V_k$ are equal to $V_k$,
while the remaining one has codimension $1$ in the corresponding
$V_k$. Since $n\geq2$, we deduce that $U\cap V_k\neq0$,
for every $k$.

Since every $e_j$ belongs to one of the
summands $V_k$, and the vectors $e_1,\ldots,e_{d-1}$ form a basis of
$U$, we deduce that each $U\cap V_k$ contains at least one of the vectors
$e_1,\ldots,e_{d-1}$.

In both the rank-one and rank-two cases for $B_0-I_d$, we have
$$
{\bf C}_U(B_0)
=
\left\{
\sum_{j=1}^{d-1}x_je_j\in U
\mathrel{}\middle|\mathrel{}
\sum_{j=1}^{d-1}x_j=0
\right\}.
$$
Let  $\varphi:U\longrightarrow\mathbb F_2$ be the linear map defined 
by $\varphi(e_j)=1$,
for every $j\in\{1,\ldots,d-1\}$. Then  ${\bf C}_U(B_0)=\ker(\varphi)$.

Since both $U$ and ${\bf C}_V(B_0)$ are decomposable, the subspace
$U\cap {\bf C}_V(B_0)={\bf C}_U(B_0)$ is decomposable with respect to
$U=(U\cap V_1)\oplus\cdots\oplus (U\cap V_m)$.
Thus
$$
{\bf C}_U(B_0)=
\bigoplus_{k=1}^m({\bf C}_U(B_0)\cap V_k).
$$
Since ${\bf C}_U(B_0)$ has codimension $1$ in $U$, we have
$$
1=
\sum_{k=1}^m
\mathrm{codim}_{U\cap V_k}({\bf C}_U(B_0)\cap V_k).
$$
Therefore, ${\bf C}_U(B_0)$ contains $U\cap V_k$ for all but one value of $k$.

On the other hand, every $U\cap V_k$ contains some $e_j$, and
$\varphi(e_j)=1$.
Hence $
\varphi|_{U\cap V_k}\neq0$,
and therefore $U\cap V_k\not\leq\ker(\varphi)={\bf C}_U(B_0)$
for every $k$. Since $m>1$, this is a contradiction.

Therefore, $T$ is irreducible. \end{proof}

Applying McLaughlin's classification~\cite{McL69}, one of the following holds:
\begin{enumerate}
    \item $T=\mathrm{SL}_d(2)$;
    \item $d$ is even and $T=\mathrm{Sp}_d(2)$;
    \item $d$ is even and $T=\mathrm{O}_d^{\pm}(2)$;
    \item $d$ is even and $T$ is the natural image of
    $\mathrm{Sym}(d+1)$ or $\mathrm{Sym}(d+2)$ on its fully deleted
    permutation module.
\end{enumerate}
Now, Theorem~\ref{thrm:main3} immediately follows (in this case) from the fact that $T\le G_0={\bf N}_{G_0}(T)\le{\bf N}_{\mathrm{GL}_d(2)}(T)=T$ (see~\cite[Chapter~2]{KleidmanLiebeck}) and from Section~\ref{sec:someq=2}.

\subsubsection{Suppose that
$\mathrm{rank}(B_i-I_d)=2$ for every
$i\in\{0,\ldots,d-1\}$.}
\label{sec:basis-rank-two-complete}

Since the column vectors $u_i$ and $e_d^\top $ are linearly independent,~\eqref{eq:basis-Bi-uniform} shows that $a_i$ and $\delta$ are
linearly independent, for every $i\in\{0,\ldots,d-1\}$. Set
\[
U=\langle e_1,\ldots,e_{d-1}\rangle.
\]
For $i\in\{1,\ldots,d-1\}$, we have
$x(B_i-I_d)=x_i a_i+x_d\delta$, and hence
\begin{equation}\label{eq:rank-two-fixed-i-complete}
{\bf C}_V(B_i)
=
\left\{
\sum_{j=1}^d x_je_j\in V
\mathrel{}\middle|\mathrel{}
x_i=x_d=0
\right\}.
\end{equation}
Similarly,
\begin{equation}\label{eq:rank-two-fixed-zero-complete}
{\bf C}_V(B_0)
=
\left\{
\sum_{j=1}^d x_je_j\in V
\mathrel{}\middle|\mathrel{}
x_d=0,\quad
\sum_{j=1}^{d-1}x_j=0
\right\}.
\end{equation}
Thus all the spaces in
\eqref{eq:rank-two-fixed-i-complete} and
\eqref{eq:rank-two-fixed-zero-complete} are hyperplanes of $U$.
Moreover,
\begin{equation}\label{eq:intersection-Bi-zero-complete}
\bigcap_{i=0}^{d-1}{\bf C}_V(B_i)=0.
\end{equation}

Set
\begin{equation}\label{eq:C-complete}
C=I_d+e_d^\top \delta.
\end{equation}
Note that, since $\delta e_d^T=0$, $C$ is a transvection.
Then $C$ fixes $e_1,\ldots,e_{d-1}$ and maps $e_d$ to
$e_d+\delta=w$. Therefore $C$ maps the whole tuple $I$ to the whole
tuple $J$. Since $I\not\sim_{d+1}J$, we have
\begin{equation}\label{eq:C-not-G0-complete}
C\notin G_0.
\end{equation}
We remind the reader that, since Theorem~\ref{thrm:main3} has been verified by computer for $d\leq 8$, we are assuming that $d\geq 9$. We first show that $G_0$ contains no transvection.
\begin{lemma}\label{l:G0notransvection}
    The group $G_0$ contains no transvection.
\end{lemma}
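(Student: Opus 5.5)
Suppose, for a contradiction, that $G_0$ contains a transvection. The plan is to reuse the argument of Section~\ref{sec:subsub4} verbatim, and then exclude every outcome of McLaughlin's classification using the fact that $C\notin G_0$.

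First, set $T=\langle x\in G_0\mid \mathrm{rank}(x-I_d)=1\rangle$. Then $1\ne T\unlhd G_0$. To prove that $T$ is irreducible, apply Lemma~\ref{l:Vdecomp} and obtain a decomposition $V=V_1\oplus\cdots\oplus V_m$ with $m>1$ and $n=\dim V_k\ge2$. We then repeat the steps of the proof in Section~\ref{sec:subsub4}:
\begin{enumerate}
\item show that each $B_i$ stabilizes every $V_k$;
\item deduce that $U$, each line $\langle e_j\rangle$, and ${\bf C}_U(B_0)=\ker\varphi$ are decomposable;
\item reach the contradiction that some summand $U\cap V_k$ lies in $\ker\varphi$.
\end{enumerate}
Now all $B_i$ have rank-$2$ moved spaces, and~\eqref{eq:rank-two-fixed-i-complete}--\eqref{eq:rank-two-fixed-zero-complete} show that ${\bf C}_V(B_i)$ and ${\bf C}_V(B_0)$ are distinct hyperplanes of $U$ (taking two of them, $d\ge9$). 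So the decomposability of $U$ follows as in the case $|Q|\ge2$.

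The delicate point is step~(1) when $n=2$. Previously it used a transvection $B_r$ with moved line $\langle\delta\rangle$ to pin down the summand $W\ni\delta$. Here I would instead pick a transvection $t\in T$ and argue directly. Each $B_i$ has rank $2$ and $\delta\in\mathrm{Im}(B_i-I_d)$ by Lemma~\ref{l:ainot0}. If $B_i$ swapped two summands, then $\mathrm{Im}(B_i-I_d)$ would be the $2$-dimensional image of $(B_i-I_d)|_{V_k}$, which is a graph-type subspace inside $V_k\oplus V_\ell$. If $B_i$ moved more than two summands, its rank would exceed $2$. So at worst each $B_i$ is a single swap of two $2$-dimensional summands, with $\delta$ in a diagonal-type subspace. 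I expect to show that this forces ${\bf C}_V(B_i)$ to meet $V_k\oplus V_\ell$ in a $2$-dimensional diagonal. That should be incompatible with the decomposition of the hyperplane structure of $U$, or else to use $d\ge9$ so that $m\ge5$ and count. This is the main obstacle, and I would handle it first by a careful case analysis.

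Once $T$ is irreducible, McLaughlin's classification~\cite{McL69} gives that $T$ is $\mathrm{SL}_d(2)$, $\mathrm{Sp}_d(2)$, $\mathrm{O}^\pm_d(2)$, or $\mathrm{Sym}(d+1)$ or $\mathrm{Sym}(d+2)$ on the fully deleted module. As before, $G_0={\bf N}_{\mathrm{GL}_d(2)}(T)=T$.
\begin{itemize}
\item If $T=\mathrm{SL}_d(2)$, then $C\in G_0$, contradicting~\eqref{eq:C-not-G0-complete}.
\item In the other three cases $d$ is even. Then we are in the setup of Section~\ref{sec:setup}, and the arguments of Sections~\ref{sec:5.2}, \ref{sec:5} and~\ref{sec:5.5} show that $\rc(G)\ne d+1$, contradicting our standing assumption.
\end{itemize}
Hence $G_0$ contains no transvection.
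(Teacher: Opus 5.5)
Your overall structure is the paper's: form $T$, prove it irreducible via Lemma~\ref{l:Vdecomp} and the fixed-space/$\ker\varphi$ argument, then apply McLaughlin's classification with $G_0=T$. You rule out $\mathrm{SL}_d(2)$ by $C\notin G_0$ and the remaining cases by Section~\ref{sec:someq=2}. Those parts are fine, as are steps (2)--(3) once every $B_i$ is known to stabilize every summand.

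The gap is step (1) when $n=2$. You leave it as an expectation, and neither suggested route closes it. A transvection $t\in T$ has no known relation to $\delta$, so it cannot replace the transvection $B_r$ with moved line $\langle\delta\rangle$ used in Section~\ref{sec:subsub4}. Knowing that ${\bf C}_V(B_i)$ meets $V_k\oplus V_\ell$ in a $2$-dimensional ``diagonal'' also does not feed into the hyperplane argument. That argument needs the fixed spaces to be decomposable, which is exactly what fails when some $B_i$ swaps summands.

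The missing idea is to use $\delta$ to force every swap onto the same pair of summands, and then invoke the trivial common fixed space~\eqref{eq:intersection-Bi-zero-complete}.

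Suppose $B_r$ swaps $V_1$ and $V_2$. Then $(B_r-I_d)|_{V_1}$ is injective with $2$-dimensional image, so this image is all of $\mathrm{Im}(B_r-I_d)$. Each non-zero vector $xB_r+x$ ($x\in V_1$) in it has non-zero components in exactly $V_1$ and $V_2$. By Lemma~\ref{l:ainot0}, $\delta$ lies in this image, so $\delta$ also has non-zero components in exactly $V_1$ and $V_2$. This has two consequences for an arbitrary $B_i$:
\begin{enumerate}
\item If $B_i$ moves summands, it must swap exactly $V_1$ and $V_2$. It is then trivial on every other, stabilized, summand $V_k$, since $\mathrm{Im}((B_i-I_d)|_{V_k})\le V_k\cap(V_1\oplus V_2)=0$.
\item If $B_i$ stabilizes all summands, then $\delta\in\mathrm{Im}(B_i-I_d)=\bigoplus_k\mathrm{Im}((B_i-I_d)|_{V_k})$. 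So the restrictions of $B_i-I_d$ to $V_1$ and to $V_2$ are both non-zero, and rank $2$ forces $B_i$ to be trivial on all other summands.
\end{enumerate}
Hence $V_3\oplus\cdots\oplus V_m\le\bigcap_i{\bf C}_V(B_i)=0$, which gives $m=2$ and $d=4$, contrary to $d\ge 9$. Only with this in hand do all $B_i$ stabilize every $V_k$, so that steps (2)--(3) can be run.
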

\begin{proof} Suppose, for contradiction, that $G_0$ contains a transvection.
Set
\[
T=
\left\langle
x\in G_0\mid \mathrm{rank}(x-I_d)=1
\right\rangle.
\]
Then $1\neq T\unlhd G_0$. We claim first that $T$ is irreducible.

Suppose otherwise. By Lemma~\ref{l:Vdecomp}, there is a decomposition
\begin{equation}\label{eq:T-decomposition-complete}
V=V_1\oplus\cdots\oplus V_m
\end{equation}
for some $m>1$, where the $V_k$ are irreducible $T$-modules of the same
dimension $n\geq 2$, and $G_0$ permutes them transitively.

Every $B_i$ normalizes $T$ and therefore permutes the summands in
\eqref{eq:T-decomposition-complete}. If $n\geq3$ and $B_i$ maps
$V_k$ to a distinct summand, then
$(B_i-I_d)|_{V_k}$ is injective, so
\[
\mathrm{rank}(B_i-I_d)\geq n>2,
\]
a contradiction. Hence every $B_i$ stabilizes every $V_k$ when
$n\geq3$.

Suppose that $n=2$. A non-trivial cycle of length $\ell$ on the blocks
contributes at least $2(\ell-1)$ to the rank of $B_i-I_d$. Thus the permutation
induced by a matrix $B_i$ on the summands is either trivial or a single transposition. 
Assume that some $B_r$ interchanges two summands; without loss of generality assume $B_r$ interchanges $V_1$ and $V_2$.
The restriction $(B_r-I_d)|_{V_1}$ is injective and has two-dimensional
image. Since $\mathrm{rank}(B_r-I_d)=2$, this is the whole moved space.
Every non-zero vector in this image has non-zero components in both
$V_1$ and $V_2$. Thus Lemma~\ref{l:ainot0} implies that
$\delta$ has non-zero components in precisely $V_1$ and $V_2$.

If another $B_i$ interchanges two summands, the same argument and
Lemma~\ref{l:ainot0} show that it interchanges $V_1$ and
$V_2$. On the other hand, if $B_i$ stabilizes every summand, then the restrictions of $B_i-I_d$ to both $V_1$ and
$V_2$ are non-zero since $\delta\in\mathrm{Im}(B_i-I_d)$ and the $V_1$- and $V_2$-components
of $\delta$ are non-zero. Since $\mathrm{rank}(B_i-I_d)=2$, it follows that $B_i$ fixes every other
summand pointwise, and similarly for a matrix $B_i$ interchanging $V_1$ and $V_2$. Hence every summand different from $V_1$ and $V_2$ is contained in the intersection in~\eqref{eq:intersection-Bi-zero-complete}. It follows that $m=2$ and $d=4$, a contradiction to the assumption that $d\geq 9$.

We may therefore assume that every $B_i$ stabilizes every summand in
\eqref{eq:T-decomposition-complete} and that $n\geq 2$. The fixed spaces in
\eqref{eq:rank-two-fixed-i-complete} and
\eqref{eq:rank-two-fixed-zero-complete} are then decomposable with
respect to~\eqref{eq:T-decomposition-complete}. Since $d>8$, the elements $B_1$ and $B_2$ exist and satisfy $U={\bf C}_V(B_1)+{\bf C}_V(B_2)$, and hence $U$ is decomposable. Also, for every
$j\in\{1,\ldots,d-1\}$,
\[
\langle e_j\rangle
=
\bigcap_{\substack{1\leq i\leq d-1\\i\neq j}}
{\bf C}_V(B_i),
\]
so every line $\langle e_j\rangle$ is decomposable and is contained in
one summand.

Put $U_k=U\cap V_k$. Since $U$ has codimension $1$ in $V$, every
$U_k$ is non-zero. Since the coordinate lines span $U$ and each of
them belongs to one summand, every $U_k$ contains some $e_j$.

Define
\[
\varphi:U\longrightarrow\mathbb F_2,
\qquad
\varphi\left(\sum_{j=1}^{d-1}x_je_j\right)
=
\sum_{j=1}^{d-1}x_j.
\]
Then ${\bf C}_V(B_0)=\ker(\varphi)$. This is a decomposable hyperplane
of the decomposable space
$U=U_1\oplus\cdots\oplus U_m$. Therefore it contains $U_k$ for all
but at most one value of $k$. On the other hand, every $U_k$ contains an
$e_j$, and $\varphi(e_j)=1$. Thus no $U_k$ is contained in
$\ker(\varphi)$, a contradiction.

Therefore $T$ is irreducible. Finally, applying McLaughlin's classification, and using the exclusions already proved at the end of Section~\ref{sec:subsub4}, gives $T=\mathrm{SL}_d(2)=\mathrm{GL}_d(2)$.
Thus $G_0=\mathrm{GL}_d(2)$, contrary to
\eqref{eq:C-not-G0-complete}, hence the result.\end{proof}

For every $i\in\{0,\ldots,d-1\}$, set
\[
b_i=a_i+(a_ie_d^\top )\delta
\qquad\text{and}\qquad
X_i=B_iC.
\]
Using $\delta e_d^\top =0$, we obtain
$$X_i=I_d+u_ib_i.$$
Since $\det(X_i)=1$, we deduce
$b_iu_i=0$. Moreover, $b_i\neq0$ and $b_i\neq\delta$, since either
equality would imply that $a_i$ and $\delta$ are linearly dependent.
Thus every $X_i$ is a transvection, and $B_i=X_iC$. Moreover, since $X_i$ is a transvection, $X_i^2=I_d$, so $CB_iC=B_i^{-1}$.

Set $H=\langle B_0,\ldots,B_{d-1}\rangle$. Then $H\leq G_0$, so $H$ contains no transvection by Lemma~\ref{l:G0notransvection}. Put
\[
\mathcal I=\{\infty,0,\ldots,d-1\},
\]
and define
\[
c_\infty=e_d^\top ,\quad v_\infty=\delta,\quad Y_\infty=C,
\]
and
\[
c_i=u_i,\quad v_i=b_i,\quad Y_i=X_i
\quad
(i\in\{0,\ldots,d-1\}).
\]
Thus $Y_i=I_d+c_iv_i$ is a transvection for every
$i\in\mathcal I$.

\begin{lemma}\label{l:symmetric-pairing-complete}
For all $i,j\in\mathcal I$, we have $v_ic_j=v_jc_i$.
Moreover, every $B_i$ has order $2$ or $3$.
\end{lemma}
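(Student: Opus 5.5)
The plan is to reduce both claims to a computation with pairs of transvections over $\mathbb F_2$, and to use that $H$ contains no transvection (Lemma~\ref{l:G0notransvection}). The key observation is that for every pair $i,j\in\mathcal I$ the product $Y_iY_j$ lies in $H$. Indeed, since $C^2=I_d$ and $B_i=X_iC$, we have $B_j^{-1}=CX_j$, and hence
\[
B_iB_j^{-1}=X_iC\,CX_j=X_iX_j=Y_iY_j
\qquad(i,j\in\{0,\ldots,d-1\}).
\]
Moreover $Y_iY_\infty=X_iC=B_i$, and $Y_\infty Y_i=C X_i=C B_iC=B_i^{-1}$. Thus every product of two of the transvections $Y_i$, in either order, lies in $H\le G_0$.

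Fix distinct $i,j\in\mathcal I$ and set $\alpha=v_ic_j$ and $\beta=v_jc_i$, both in $\mathbb F_2$. We have $v_ic_i=0=v_jc_j$ because each $Y_k$ is a transvection. Expanding gives
\[
Y_iY_j=I_d+c_iv_i+c_jv_j+\alpha\, c_iv_j .
\]
Squaring, and using $v_ic_i=v_jc_j=0$ together with characteristic $2$, we obtain
\[
(Y_iY_j)^2-I_d=\alpha\beta\,(\text{terms})+\beta\, c_jv_i+\alpha\, c_iv_j .
\]
When $\alpha=1$ and $\beta=0$ this reduces to $(Y_iY_j)^2-I_d=c_iv_j$. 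This is a nonzero rank-one matrix, since $c_i$ and $v_j$ are both nonzero (recall $b_k\ne0$, $\delta\ne0$, and $u_0,\ldots,u_{d-1},e_d^\top$ are nonzero). Because $(Y_iY_j)^2\in\mathrm{SL}_d(2)$ and $\mathrm{rank}((Y_iY_j)^2-I_d)=1$, the remark in the case $w=\sum_{i=1}^{d-1}e_i$ shows that $(Y_iY_j)^2$ is a transvection. It also lies in $H$, which contradicts Lemma~\ref{l:G0notransvection}. The case $\alpha=0$, $\beta=1$ is symmetric, using $Y_jY_i\in H$ instead. Hence $\alpha=\beta$, which proves $v_ic_j=v_jc_i$; for $i=j$ the identity holds trivially.

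For the second claim, write $B_i=Y_iY_\infty$ and put $\alpha=v_ic_\infty=v_\infty c_i$. If $\alpha=0$, the two transvections commute and $B_i^2=I_d$. Also $B_i\ne I_d$, since $\mathrm{rank}(B_i-I_d)=2$, so $B_i$ has order $2$. If $\alpha=1$, then $Y_i$ and $Y_\infty$ act on the $2$-dimensional space $\langle c_i,c_\infty\rangle^\top$-side, that is, on $V/(\ker v_i\cap\ker v_\infty)$, as the two standard generators of $\mathrm{SL}_2(2)\cong\mathrm{Sym}(3)$. They act trivially on $\ker v_i\cap\ker v_\infty$ and move vectors only into $\langle v_i,v_\infty\rangle$. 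The cleanest way to verify $B_i^3=I_d$ is directly: the expression $I_d+c_iv_i+c_\infty v_\infty+c_iv_\infty$ cubes to $I_d$ once we use $v_ic_\infty=v_\infty c_i=1$ and $v_ic_i=v_\infty c_\infty=0$. Since $B_i\ne I_d$, this gives order $3$.

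The main obstacle is the bookkeeping in the square and cube computations, in particular the cross terms weighted by $\alpha\beta$. I would organise this by restricting to the $\le4$-dimensional picture determined by the vectors $c_i,c_j$ and $v_i,v_j$ with Gram matrix $\begin{pmatrix}0&\alpha\\ \beta&0\end{pmatrix}$, and then reading off the order of $Y_iY_j$ from that small matrix. The other point needing care is nonvanishing, namely $c_iv_j\ne0$, which follows from the nonvanishing of the relevant vectors noted above.
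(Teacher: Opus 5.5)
Your proposal is correct and follows essentially the same route as the paper. You place every product $Y_iY_j$ in $H$, note that $(Y_iY_j)^2-I_d$ reduces to $c_iv_j$ or $c_jv_i$ when exactly one of $v_ic_j,v_jc_i$ equals $1$ (contradicting Lemma~\ref{l:G0notransvection}), and then read off the order of $B_i=Y_iY_\infty$ according to whether the two transvections commute or not.

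One small slip: your heuristic $\mathrm{SL}_2(2)$ description mixes the row and column conventions (the invariant plane is $\langle v_i,v_\infty\rangle$ and the common fixed space is $\{x : xc_i=xc_\infty=0\}$). This does no harm, because your direct check that $(I_d+M)^3=I_d$ for $M=c_iv_i+c_\infty v_\infty+c_iv_\infty$ is correct and suffices on its own.
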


\begin{proof}
Let $i\neq j$. The product $Y_iY_j$ belongs to $H$. Indeed, if
$j=\infty$ then $Y_iY_j=B_i$, if $i=\infty$ then $Y_iY_j=B_j^{-1}$, and if $i,j\neq\infty$, then $X_iX_j=B_iB_j^{-1}\in H$.
Set $\lambda=v_ic_j$ and $\mu=v_jc_i$. If
$(\lambda,\mu)=(1,0)$, a direct multiplication gives
\[
(Y_iY_j)^2=I_d+c_iv_j,
\]
which is a transvection in $H$. If $(\lambda,\mu)=(0,1)$, then
\[
(Y_iY_j)^2=I_d+c_jv_i
\]
is a transvection in $H$. Both options contradict
Lemma~\ref{l:G0notransvection}, whence $\lambda=\mu$ and the first part of the statement is now proven.

Now $B_i=X_iC=Y_iY_\infty$. If
$v_ic_\infty=v_\infty c_i=0$, then the two transvections commute and their
nonidentity product has order $2$. If
$v_ic_\infty=v_\infty c_i=1$, then a direct computation shows that the product $Y_iY_\infty$ has order $3$.
\end{proof}

Now, let $N=\langle H^g\mid g\in G_0\rangle$.
Then $1\neq N\unlhd G_0$. Furthermore, $N$ is generated by the conjugates of the matrices
$B_0,\ldots,B_{d-1}$. Thus it is generated by bireflections (that is, elements $B$ satisfying $\mathrm{rank}(B-I)=2$) of order
$2$ or $3$, and it contains no transvection. Recall that we are assuming $d>8$.

\begin{lemma}\label{l:N-absolute-complete}
The group $N$ is absolutely irreducible on $V$.
\end{lemma}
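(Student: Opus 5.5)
The plan is to prove irreducibility of $N$ over $\overline{\mathbb F}_2$ directly, reusing the decomposition argument already run for $T$ in Lemma~\ref{l:G0notransvection}, together with the explicit fixed spaces \eqref{eq:rank-two-fixed-i-complete}, \eqref{eq:rank-two-fixed-zero-complete} and the triviality \eqref{eq:intersection-Bi-zero-complete}. Set $\overline V=V\otimes\overline{\mathbb F}_2$. Since $N\unlhd G_0$ and $G_0$ is irreducible on $V$, Clifford's theorem shows that $V$ is a completely reducible $N$-module, with homogeneous components permuted transitively by $G_0$. The same holds for $\overline V$ with respect to $N$ and $G_0$, although $G_0$ need not be irreducible on $\overline V$. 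So I argue in two stages: first show that $N$ is irreducible on $V$, then that $\mathrm{End}_N(V)=\mathbb F_2$.

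\emph{Stage one (irreducibility over $\mathbb F_2$).} Suppose $V=V_1\oplus\cdots\oplus V_m$ with $m>1$ is the decomposition into homogeneous $N$-components, or more generally into $N$-irreducible summands permuted by $G_0$. Each $B_i$ lies in $N$, so $B_i$ stabilizes every summand. The key point is that $C_V(N)$ is a $G_0$-invariant proper subspace, hence $C_V(N)=0$, and so $N$ acts nontrivially on each summand. Because $B_i$ stabilizes each summand, every fixed space ${\bf C}_V(B_i)$ is decomposable. Now I copy the closing argument of Lemma~\ref{l:G0notransvection} verbatim.
\begin{itemize}
\item[(a)] $U={\bf C}_V(B_1)+{\bf C}_V(B_2)$ is decomposable.
\item[(b)] Each line $\langle e_j\rangle$ is an intersection of the ${\bf C}_V(B_i)$, so it lies in a single summand.
\item[(c)] Every $U\cap V_k$ is nonzero and contains some $e_j$.
\item[(d)] The hyperplane ${\bf C}_V(B_0)=\ker\varphi$ of $U$ is decomposable, yet contains no $U\cap V_k$. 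This is a contradiction.
\end{itemize}
Two details need checking here. One is that $\dim V_k\ge 2$, needed in (c). This holds because a one-dimensional summand over $\mathbb F_2$ has trivial $N$-action, so by homogeneity and transitivity $N$ would act trivially on all of $V$. The other is that this argument handles non-homogeneous decompositions too, since it only uses that each $B_i$ stabilizes each summand. So $N$ is irreducible on $V$.

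\emph{Stage two (absolute irreducibility).} Let $\mathbb F=\mathrm{End}_N(V)=\mathbb F_{2^k}$. Then $V$ is an $\mathbb F$-space of dimension $d/k$, $N$ is $\mathbb F$-linear, and $G_0$ normalizes $N$ and so acts semilinearly. Each $B_i\in N$ is $\mathbb F$-linear, so $\mathrm{Im}(B_i-I_d)$ is an $\mathbb F$-subspace. Its $\mathbb F_2$-dimension is $2$, so $k\in\{1,2\}$. It remains to exclude $k=2$. In that case each $B_i$ is an $\mathbb F_4$-linear transvection or pseudoreflection, since $\mathrm{Im}(B_i-I)$ is one $\mathbb F_4$-line. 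Since $\delta$ lies in every moved space by Lemma~\ref{l:ainot0}, all the $B_i$ share the moved $\mathbb F_4$-line $\mathbb F_4\delta$. Moreover, each ${\bf C}_V(B_i)$ is an $\mathbb F_4$-hyperplane. But then $U={\bf C}_V(B_1)+{\bf C}_V(B_2)$ is an $\mathbb F_4$-subspace of $\mathbb F_2$-codimension $1$, which is impossible because $\mathbb F_4$-subspaces have even $\mathbb F_2$-codimension. Hence $k=1$ and $N$ is absolutely irreducible.

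The main obstacle I anticipate is Stage one, specifically justifying that every summand has dimension at least $2$ and that the decomposition argument does not secretly rely on transvection generation as Lemma~\ref{l:Vdecomp} did. I will verify this by using only $C_V(N)=0$ and the homogeneity structure. Stage two reduces to the parity count on $U$ and should be short.
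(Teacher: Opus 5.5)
Your proposal is correct and follows the paper's proof essentially step for step: irreducibility comes from the decomposable fixed-space argument (with ${\bf C}_V(N)=0$ forcing every summand to have dimension at least $2$), and absolute irreducibility comes from noting that $U={\bf C}_V(B_1)+{\bf C}_V(B_2)$ is an $\mathrm{End}_N(V)$-subspace of $\mathbb F_2$-codimension $1$. Your preliminary reduction to $k\in\{1,2\}$ via $\mathrm{Im}(B_i-I_d)$ is harmless but unnecessary, since the paper obtains $s\mid d$ and $s\mid d-1$ directly from $V$ and $U$.
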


\begin{proof}
First we prove irreducibility. Since $N\unlhd G_0$, the $N$-socle of
$V$ is $G_0$-invariant. Hence $V$ is completely reducible as an
$N$-module. Suppose that
\[
V=V_1\oplus\cdots\oplus V_m
\]
is a decomposition into irreducible $N$-modules with $m>1$.
The fixed-point space ${\bf C}_V(N)$ is $G_0$-invariant. Since
$N\neq1$ and $G_0$ is irreducible, ${\bf C}_V(N)=0$. Therefore no
$V_k$ is a trivial $N$-module; in particular,
$\dim V_k\geq2$, because $\mathrm{GL}_1(2)=1$.

Every $B_i$ belongs to $N$ and hence stabilizes every $V_k$. Therefore
all the fixed spaces in
\eqref{eq:rank-two-fixed-i-complete} and
\eqref{eq:rank-two-fixed-zero-complete} are decomposable. The
fixed-space argument used above now applies without any permutation of
summands: $U={\bf C}_V(B_1)+{\bf C}_V(B_2)$ is decomposable, every
coordinate line $\langle e_j\rangle$ is decomposable, every non-zero
space $U\cap V_k$ contains some $e_j$, and the decomposable hyperplane
${\bf C}_V(B_0)=\ker(\varphi)$ cannot contain all but one of the spaces
$U\cap V_k$. This contradiction proves that $N$ is irreducible.

Let $E=\mathrm{End}_N(V)$.
By Schur's lemma, $E$ is a finite division ring, and hence
$E\cong\mathbb F_{2^s}$ for some $s\geq1$. Every
${\bf C}_V(B_i)$ is an $E$-subspace, because every element of $E$
commutes with $B_i$. Since $U={\bf C}_V(B_1)+{\bf C}_V(B_2)$,
the space $U$ is also an $E$-subspace. It follows that
$s$ divides $\dim_{\mathbb F_2}V=d$ and $s$ divides $\dim_{\mathbb F_2}U=d-1$.
Thus $s=1$, which is equivalent to absolute
irreducibility.
\end{proof}

Let
\[
k=\overline{\mathbb F}_2
\qquad\text{and}\qquad
\overline V=k\otimes_{\mathbb F_2}V.
\]
We identify $N$ with its faithful image in $\mathrm{GL}(\overline V)$. By
Lemma~\ref{l:N-absolute-complete}, the action of $N$ on $\overline{V}$ is irreducible.
Extension of scalars preserves the rank of $x-I_d$ for every
$x\in N$. Thus $N$ is still generated by bireflections of order $2$
or $3$, and it still contains no transvection.

\begin{lemma}\label{l:N-primitive-complete}
The group $N$ is primitive on $\overline V$.
\end{lemma}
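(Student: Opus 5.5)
Suppose, for a contradiction, that $N$ is imprimitive on $\overline V$. Then there is a decomposition
\[
\overline V=W_1\oplus\cdots\oplus W_m,\qquad m>1,
\]
into subspaces of common dimension $n=d/m$ permuted transitively by $N$. The first move is to note that $m\ge 2$ forces $d$ composite, but $d$ could be even, so we cannot argue by parity alone. Instead we analyse how a generator $x$ (a conjugate of some $B_i$, with $\mathrm{rank}(x-I_d)=2$ and order $2$ or $3$) can act on the blocks. If $x$ induces a non-trivial permutation $\sigma$ on $\{W_1,\ldots,W_m\}$, then each non-trivial cycle of length $\ell$ of $\sigma$ contributes at least $n(\ell-1)$ to $\mathrm{rank}(x-I_d)$. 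Hence either $n=1$, or $n=2$ and $\sigma$ is a single transposition with $x$ acting trivially elsewhere. Since $N$ is generated by such elements and acts transitively on the blocks, some generator must move blocks, so $n\le 2$.

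Case $n=2$: a generator $x$ swapping $W_1,W_2$ has moved space of dimension exactly $2$ inside $W_1\oplus W_2$, and it fixes every other block pointwise. Then $x$ stabilizes the $2$-dimensional space $W_1\oplus W_2$ and equals the identity elsewhere. For the generators $B_i$ themselves (conjugation only relabels blocks), at least those that move blocks are supported on two blocks. I would mimic the argument used in the proof of Lemma~\ref{l:G0notransvection}. Since $\delta\in\mathrm{Im}(B_i-I_d)$ for all $i$ (Lemma~\ref{l:ainot0}), every block-moving $B_i$ swaps the same pair $W_1,W_2$ determined by the support of $\delta$. Every block-fixing $B_i$ has rank $2$ with non-zero action on both $W_1$ and $W_2$, hence fixes the other blocks pointwise. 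So $W_3\oplus\cdots\oplus W_m\le\bigcap_i {\bf C}_{\overline V}(B_i)$, which is $0$ by~\eqref{eq:intersection-Bi-zero-complete} after extension of scalars. This gives $d=4$, contradicting $d\ge 9$. A small gap is that some $B_i$ might move no block while conjugates do, but the argument only uses the $B_i$ together with the intersection condition. The exception is when no $B_i$ moves blocks; that case is covered by the block-fixing argument below.

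Case $n=1$: $N$ is monomial, $N\le D\rtimes\mathrm{Sym}(d)$ with $D$ the diagonal group over $k$. A rank-$2$ element of order $2$ or $3$ acts on the lines $W_j$ either diagonally with two non-trivial entries, or as a transposition of lines (rank $\le 1$ part plus possibly one diagonal entry), or as a $3$-cycle (rank $2$). A rank-$1$ transposition of lines of the form $\begin{pmatrix}0&c\\c^{-1}&0\end{pmatrix}$ is a transvection in characteristic $2$, which is excluded. I expect the resulting permutation image of $N$ to be a transitive group on $d$ points generated by $3$-cycles and double transpositions or transpositions. Then I would use the fixed-space configuration: all ${\bf C}(B_i)$ are hyperplanes of $U$ (extended), $U={\bf C}(B_1)+{\bf C}(B_2)$, each $\langle e_j\rangle$ is an intersection of such spaces, and ${\bf C}(B_0)=\ker\varphi$. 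The aim is to show that the $B_i$ stabilize enough lines $W_j$ that the decomposability argument (every $U\cap W_k$ contains some $e_j$, contradicting $\ker\varphi$) applies.

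Concretely, for a $B_i$ fixing all lines, ${\bf C}(B_i)$ is spanned by lines $W_j$. At most one $B_i$-type permutation pattern can avoid this, because $\delta$ lies in every moved space and so constrains the supports. This monomial subcase is the main obstacle. I would first try to show that all $B_i$ must fix every line. The idea is that $\delta\in\mathrm{Im}(B_i-I)$ for every $i$, while $d-1\ge 8$ independent hyperplane conditions force the moved spaces to share the support of $\delta$. With that in hand, the decomposability contradiction from Lemma~\ref{l:N-absolute-complete} finishes the proof. If the moved-space constraint fails to pin everything down, I would fall back on counting: there are at most $O(1)$ monomial rank-$2$ elements of order $\le 3$ with moved space containing a fixed $\delta$, whereas the $B_i$, $i=0,\dots,d-1$, are pairwise distinct and $d\ge 9$, which yields a contradiction.
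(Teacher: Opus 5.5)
\textbf{Verdict: there is a genuine gap in the monomial case $n=1$.} Your cases $n\ge 3$ and $n=2$ are fine. For $n\ge 3$ you argue that some conjugate of a $B_i$ must move blocks because $N$ is transitive on them, yet a non-trivial cycle costs rank at least $n(\ell-1)\ge 3$; this is a valid and slightly cleaner alternative to the paper's fixed-space argument. Your $n=2$ case is the paper's argument, apart from the slip that $W_1\oplus W_2$ is $4$-dimensional. The monomial case $n=1$ is where the real content of the lemma lies, and there you give a plan rather than a proof.

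Your list of rank-$2$ monomial elements of order $2$ or $3$ is wrong. In characteristic $2$ an involution acts trivially on every line it fixes, so a rank-$2$ involution induces exactly a double transposition of lines. An element of order $3$ either induces a single $3$-cycle or is diagonal with two non-trivial cube roots of unity. There is no ``transposition plus one diagonal entry''. Consequently any $B_i$ of order $2$, which nothing proved so far excludes, necessarily moves lines. So the strategy ``show that all $B_i$ fix every line'' cannot succeed without an extra argument, and the appeal to ``$d-1\ge 8$ independent hyperplane conditions'' does not supply one.

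The counting fallback also fails. Suppose $\delta$ is supported on exactly two lines $W_a,W_b$. Then for every pair $\{c,e\}$ disjoint from $\{a,b\}$ there is a rank-$2$ involution of type $(a\,b)(c\,e)$, with the scalar on $W_a\oplus W_b$ matched to $\delta$, whose moved space contains $\delta$. There is therefore no $O(1)$ bound, and the distinctness of $B_0,\dots,B_{d-1}$ gives no contradiction.

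The paper closes this case by group theory rather than by the geometry of the $B_i$. Let $P$ be the permutation group induced by $N$ on the set $\Omega$ of lines, and let $D$ be its kernel.
\begin{itemize}
\item $P$ is primitive, since a non-trivial block system for $P$ would give a system of imprimitivity with blocks of dimension at least $2$, which is already excluded. Also $P\ne 1$ by irreducibility.
\item By the corrected list, every generator induces a $3$-cycle, a double transposition or the identity. Jordan's theorem with $d>8$ then gives $P\ge\mathrm{Alt}(\Omega)$.
\item Suppose $D\ne 1$. The linear characters $\chi_\omega$ by which $D$ acts on the lines $W_\omega$ are either all equal or pairwise distinct. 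If all are equal, $D$ is scalar and hence trivial in $\mathrm{GL}_d(2)$, a contradiction. If pairwise distinct, the $W_\omega$ are the common eigenspaces of $D$, and the Frobenius map permutes them by an element of ${\bf C}_{\mathrm{Sym}(\Omega)}(P)=1$. So each $W_\omega$ is defined over $\mathbb F_2$, $N$ acts on $V$ by permutation matrices, and $N$ fixes the sum of the basis vectors.
\item Suppose $D=1$. Then a line stabilizer is $\mathrm{Alt}(d-1)$ or $\mathrm{Sym}(d-1)$ and must act trivially on its line: the former is perfect, and the latter has abelianization of order $2$ while $k^\ast$ has no involution. Hence $\overline V$ is the permutation module of $P$ and has a non-zero fixed vector.
\end{itemize}
In every case $N$ is reducible, contradicting irreducibility. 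Your $n=1$ case needs an argument of this kind.
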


\begin{proof}
Suppose that $\overline V=W_1\oplus\cdots\oplus W_m$
is a non-trivial system of imprimitivity for $N$, and put
$r=\dim_k W_j$.

Assume first that $r\geq3$. If some $B_i$ maps $W_j$ to a distinct
block, then $(B_i-I_d)|_{W_j}$ is injective, and hence
$\mathrm{rank}(B_i-I_d)\geq r>2$. Thus every $B_i$ stabilizes every
block. Their fixed spaces are therefore decomposable, and the
fixed-space argument in the proof of
Lemma~\ref{l:N-absolute-complete} -- now over $k$ -- gives a contradiction.

Suppose that $r=2$. A non-trivial cycle of length $\ell$ on the blocks
contributes at least $2(\ell-1)$ to the rank of $B_i-I_d$. Thus the permutation
induced by a matrix $B_i$ on the blocks is either trivial or a single transposition.
If every $B_i$ stabilizes every block, the same fixed-space argument
again gives a contradiction. Hence some $B_r$ interchanges two blocks; without loss of generality assume that $B_r$ interchanges $W_1$ and $W_2$. The restriction
$(B_r-I_d)|_{W_1}$ is injective and has dimension $2$, so its image is the
whole moved space of $B_r$. Every non-zero vector in that image has
non-zero components in both $W_1$ and $W_2$. Since
$\delta\in\mathrm{Im}(B_r-I_d)$, the vector $\delta$ has non-zero
components in precisely these two blocks.

The same reasoning shows that every $B_i$ which interchanges blocks
must interchange $W_1$ and $W_2$. If $B_i$ stabilizes
every block, then
$$
\mathrm{Im}(B_i-I_d)
=
\bigoplus_W
\mathrm{Im}\bigl((B_i-I_d)|_W\bigr).
$$
Since $\delta\in\mathrm{Im}(B_i-I_d)$ has non-zero components in
both $W_1$ and $W_2$, the restrictions of $B_i-I_d$ to $W_1$ and to $W_2$ are
both non-zero. Each of them therefore contributes at least $1$ to $\mathrm{rank}(B_i-I_d)$. Since $
\mathrm{rank}(B_i-I_d)=2$, we deduce that $(B_i-I_d)|_W=0$,
for every block $W_j$ with $j>2$. Thus $B_i$ fixes every such block
pointwise. Similarly, a matrix $B_i$ interchanging $W_1$ and $W_2$ also fixes every other block pointwise. Therefore
every other block lies in
$\bigcap_i{\bf C}_{\overline V}(B_i)=0$. Hence $m=2$ and $d=4$, which contradicts our assumption $d>8$.

It remains to consider $r=1$. Let $\Omega$ be the set of the
one-dimensional blocks, let
\[
\pi:N\longrightarrow P\leq\mathrm{Sym}(\Omega)
\]
be the induced permutation action, and set $D=\ker(\pi)$. The group
$P$ is primitive: a non-trivial block system for $P$ would give a
system of imprimitivity for $N$ with blocks of dimension greater than
$1$, which has already been excluded.

Let $x$ be one of the generating conjugates of a matrix $B_i$. If
$|x|=2$, then each transposition in the permutation induced by $x$
contributes 1 to $\mathrm{rank}(x-I_d)$, while a fixed
one-dimensional block contributes nothing. Hence $\pi(x)$ is a
product of two disjoint transpositions. If $|x|=3$, then either
$\pi(x)$ is a $3$-cycle, or $\pi(x)=1$ and $x$ is diagonal. It follows
that the non-trivial group $P$ (if $P=1$, then every one-dimensional block $W_\omega$ is $N$-invariant, contradicting the irreducibility of $N$ on 
$\overline{V}$) contains either a $3$-cycle or a double
transposition. By the classical Jordan Theorems for primitive
permutation groups, and since $d>8$,
\[
\mathrm{Alt}(\Omega)\leq P;
\]
see, for example, \cite[Section~3.3]{DixonMortimer}. Thus
$P=\mathrm{Alt}(\Omega)$ or $P=\mathrm{Sym}(\Omega)$.

Thus
$$\overline V=\bigoplus_{\omega\in\Omega}W_\omega,
\qquad
\dim_k(W_\omega)=1,$$
where $W_\omega$ denotes the one-dimensional block indexed by
$\omega$. Since $D$ is the kernel of the action on $\Omega$, every
element of $D$ stabilizes each line $W_\omega$. Consequently, for
every $\omega\in\Omega$ and every $x\in D$, there is a unique scalar
$\chi_\omega(x)\in k^\ast$ such that
\[
wx=\chi_\omega(x)w
\]
for every $w\in W_\omega$. The map
\[
\chi_\omega:D\longrightarrow k^\ast
\]
is a linear character, describing the action of $D$ on the
one-dimensional space $W_\omega$.

Suppose first that $D\neq1$. Equality of two such characters is a $P$-invariant
equivalence relation on $\Omega$. Since $P$ is primitive, either all
the characters are equal or they are pairwise distinct. In the first
case $D$ consists of scalar matrices. But every element of $D$ belongs
to $\mathrm{GL}_d(2)$, so the only scalar it can contain is the
identity, contrary to $D\neq1$. Hence the characters are pairwise
distinct, and the spaces $W_\omega$ are precisely the common
eigenspaces of $D$.

The Frobenius map
\[
\sigma:\overline V\longrightarrow\overline V,\qquad
(x_1,\ldots,x_d)\longmapsto(x_1^2,\ldots,x_d^2),
\]
commutes with $N$, and therefore permutes the common eigenspaces
$W_\omega$. The induced permutation centralizes $P$. Since
$P\geq\mathrm{Alt}(\Omega)$ and $|\Omega|=d>3$, its centralizer in
$\mathrm{Sym}(\Omega)$ is trivial. Thus every $W_\omega$ is
Frobenius-stable, and hence is defined over $\mathbb F_2$. We obtain
a decomposition of $V$ into one-dimensional $\mathbb F_2$-spaces
permuted by $N$. Since $\mathrm{GL}_1(2)=1$, the group acts by
permutation matrices and fixes the sum of a basis vector from each
line. This contradicts the irreducibility of $N$ on $V$.

Finally, suppose that $D=1$. Then $N\cong P$. The stabilizer of a line
$W_\omega$ acts on it through a linear character. If
$P=\mathrm{Alt}(d)$, this stabilizer is $\mathrm{Alt}(d-1)$ and is
perfect. If $P=\mathrm{Sym}(d)$, its abelianization has order $2$, but
$k^\ast$ has no element of order $2$. In either case the character
is trivial. Hence $\overline V$ is the ordinary permutation module of
$P$ on $\Omega$, which has a non-zero fixed vector. This again
contradicts irreducibility.
\end{proof}

\begin{lemma}\label{l:N-tensor-complete}
The group $N$ is tensor-indecomposable on $\overline V$.
\end{lemma}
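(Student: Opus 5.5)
Suppose, for a contradiction, that $\overline V\cong A\otimes B$ as $kN$-modules with $a=\dim A\ge 2$, $b=\dim B\ge 2$, and $ab=d$. The plan is to exploit the fact that $N$ is generated by bireflections, that is, by elements $x$ with $\mathrm{rank}(x-I_d)=2$. Write such an $x$ as $x=x_A\otimes x_B$, where $x_A\in\mathrm{GL}(A)$ and $x_B\in\mathrm{GL}(B)$ are determined up to mutually inverse scalars. Because $k$ is algebraically closed, we can rescale so that $x_B$ has an eigenvalue $1$.

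The key linear algebra step is the following. Suppose neither $x_A$ nor $x_B$ is scalar. Choose eigenvalues $\alpha$ of $x_A$ and $\beta$ of $x_B$ with $\alpha\beta=1$. Then the fixed space of $x$ contains $\bigoplus_{\alpha\beta=1}E_\alpha(x_A)\otimes E_\beta(x_B)$, using generalized eigenspaces (or a Jordan form count). From this we want the estimate
\[
\mathrm{rank}(x-I)\ \ge\ \min(a,b)\cdot\bigl(\text{something}\ge 1\bigr)+\dots,
\]
and more precisely $\mathrm{rank}(x_A\otimes x_B-I)\ge \mathrm{rank}(x_A-\lambda I)\cdot b'$ for suitable quantities. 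The cleanest route is to tensor Jordan decompositions and count. We expect the outcome to be this: if both factors are non-scalar, then $\mathrm{rank}(x-I)\ge a+b-2$, and in fact $\ge \min(a,b)$ already in the unipotent case. If $x_B$ is scalar, then $x=x_A'\otimes I_B$, so $\mathrm{rank}(x-I)=b\cdot\mathrm{rank}(x_A'-I)$. With $\mathrm{rank}(x-I)=2$, this forces either $b=2$ and $\mathrm{rank}(x_A'-I)=1$, or $b=1$, and the latter is excluded. So each generator acts as a scalar times a pseudo-reflection on one factor and as a scalar on the other, or else $a+b\le 4$, which gives $d\le 4$ and contradicts $d>8$.

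Next, suppose $b=2$, so $a=d/2\ge 5$. Every generator conjugate of some $B_i$ then has the form (scalar)$\cdot(r\otimes I_B)$, where $r$ is a pseudo-reflection on $A$, unless the roles are reversed. Reversal would require $a=2$, which is impossible here. Hence all generators lie in $\mathrm{GL}(A)\otimes k^\ast$, and so $N$ acts on $B$ by scalars. Consequently $B$ is a direct sum of isomorphic one-dimensional $N$-modules, so $\overline V\cong A\oplus A$ up to twisting, and $\overline V$ is reducible. This contradicts the irreducibility of $N$ on $\overline V$ from Lemma~\ref{l:N-absolute-complete}. The same argument applies with $A$ and $B$ swapped.

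The main obstacle is the rank estimate for $x_A\otimes x_B-I$ when both factors are non-scalar, including non-semisimple order-2 elements in characteristic $2$. First we would reduce to the cases $|x|\in\{2,3\}$ from Lemma~\ref{l:symmetric-pairing-complete}.

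In the order-3 case, $x$ is semisimple, so we diagonalize both factors. If $x_A$ has $s\ge 2$ distinct eigenvalues with multiplicities $m_i$, and $x_B$ has $t\ge 2$ with multiplicities $n_j$, then the fixed space has dimension $\sum_{\alpha_i\beta_j=1}m_in_j\le ab-\min(a,b)$. This gives rank at least $\min(a,b)\ge 2$, with equality only in small cases that force $a=2$ or $b=2$. We then redo the previous paragraph.

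In the order-2 case, $x$ is unipotent and we may take $x_A$ and $x_B$ unipotent. Since $(J_2\otimes J_2)-I$ has rank $3$, it is easier to use the identity $x_A\otimes x_B-I=(x_A-I)\otimes x_B+I\otimes(x_B-I)$. Comparing kernels then gives $\mathrm{rank}\ge \mathrm{rank}(x_A-I)\cdot b$ when $x_B=I$, and $\ge a+b-\dots\ge 3$ otherwise. This last bound we would verify via the Jordan decomposition $J_m\otimes J_n$ in characteristic $2$, noting that the number of Jordan blocks is at most $\min(m,n)\cdot(\text{blocks})$. This contradicts rank $2$.
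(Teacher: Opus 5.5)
There is a genuine gap, in two places.

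First, you only treat the situation in which every element of $N$ acts factorwise on $A\otimes B$. In the paper's proof, tensor-indecomposability means that $N$ normalizes no non-trivial decomposition $\overline V=V_1\otimes\cdots\otimes V_t$, so you must also exclude decompositions whose factors are permuted by $N$ (the tensor-induced case). Your proposal never shows that a generating bireflection cannot move tensor factors. The paper does this first, by a rank count. An order-$2$ generator swapping two $r$-dimensional factors has $\mathrm{rank}(x-I_d)\ge \frac{d}{r^2}\cdot\frac{r(r-1)}{2}\ge d/4>2$. An order-$3$ generator inducing a $3$-cycle has $\mathrm{rank}(x-I_d)\ge \frac{d}{r^3}\cdot\frac{2(r^3-r)}{3}>2$. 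Only after this does the paper reduce to a two-factor decomposition stabilized factorwise.

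Second, you state the central estimate as an expectation but do not prove it, and what you do prove is too weak. That estimate says a generator that is projectively non-scalar on both factors has $\mathrm{rank}(x-I)\ge a+b-2$.

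In the order-$3$ case, your bound $ab-\min(a,b)$ on the fixed space gives only $\mathrm{rank}(x-I)\ge\min(a,b)$. This equals $2$ exactly when $b=2$, which is the case you then analyse. ``Redoing the previous paragraph'' does not help, because that paragraph assumes every generator is scalar on one factor. You need the sharper count $\sum_\alpha \dim E_\alpha(x_A)\dim E_{\alpha^{-1}}(x_B)\le (a-1)(b-1)+1$ whenever both factors have at least two eigenvalues.

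In the order-$2$ case, your claim that $J_2\otimes J_2-I$ has rank $3$ is false. In characteristic $2$ one has $J_2\otimes J_2\cong J_2\oplus J_2$, so the rank is $2$, and your ``$\ge a+b-\dots\ge 3$'' is left unproved. The correct computation writes $x_A=I+P$ and $x_B=I+Q$ with $P^2=Q^2=0$ of ranks $p,q\ge 1$. This gives $\mathrm{rank}(x_A\otimes x_B-I)=pb+qa-2pq\ge a+b-2$, which exceeds $2$ only because $ab=d\ge 9$ forces $a+b\ge 6$.

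With both fixes in place, the rest of your argument is the same as the paper's. Being scalar on one factor forces the other factor to have dimension $2$. Then $N$ acts by scalars on that factor and stabilizes each $A\otimes L$, contradicting irreducibility.
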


\begin{proof}
We use Lemma~\ref{l:symmetric-pairing-complete}. 
Suppose that $N$ normalizes a non-trivial tensor decomposition
\begin{equation}\label{eq:tensor-decomposition-complete}
\overline V=V_1\otimes\cdots\otimes V_t,
\qquad
\dim V_j\geq2.
\end{equation}

We first show that no generating bireflection permutes the tensor
factors non-trivially. Let $x$ be such a generator and suppose first that it permutes the tensor factors non-trivially. If $|x|=2$ then it swaps two $r$-dimensional factors; without loss of generality assume that these are $V_1$ and $V_2$. Observe that the product of the dimensions
of the remaining factors is $d/r^2$. After changing bases in $V_1$ and $V_2$ and absorbing a scalar into the remaining tensor factor, the
action has the form $\tau\otimes y$, where $\tau$ is the swap on
$k^r\otimes k^r$ and $y$ is an involution on a $(d/r^2)$-dimensional
space. 

Abusing notation we identify $V_1$ and $V_2$ and choose a basis $\{e_1,\dots, e_r\}$ such that the swap $\tau$ satisfies $\tau(e_i\otimes e_j) = e_j\otimes e_i$ for all $i,j$. When $i\neq j$, this yields a Jordan block, $J_2$, on the space $\langle e_i\otimes e_j, e_j\otimes e_i\rangle$ corresponding to the matrix $\begin{pmatrix} & 1 \\ 1 & \end{pmatrix}$. Thus $\tau$ has
$r(r-1)/2$ Jordan blocks of size $2$. Since the rank of $J_2-I_2$ is equal to $1$, we  see that
\[
\mathrm{rank}(x-I_d)\geq \frac{d}{r^2}\cdot\frac{r(r-1)}2=\frac{d(1-\frac{1}{r})}{2}\geq d/4>2,\]
a contradiction.

If $|x|=3$ then $x$ induces a $3$-cycle
of $r$-dimensional factors; the cyclic permutation on
$(k^r)^{\otimes3}$ has eigenspace dimensions
\[
\frac{r^3+2r}{3},\qquad
\frac{r^3-r}{3},\qquad
\frac{r^3-r}{3}.
\]
Tensoring with any semisimple element of order dividing $3$ on the
remaining $\left(\frac{d}{r^3}\right)$-dimensional tensor factor cannot give a $1$-eigenspace
larger than $(d/r^3)(r^3+2r)/3$. Hence
\[
\mathrm{rank}(x-I_d)
\geq
\frac{d}{r^3}\cdot\frac{2(r^3-r)}3>2.
\]
Thus every generating bireflection stabilizes every tensor factor, and so we may absorb some factors and assume without loss of generality that $N$ normalizes a decomposition $V=V_1\otimes V_2$.

We next use the following elementary two-factor calculation. Let
$V_1$ and $V_2$ have dimensions $r,s\geq2$, and let
$x=a\otimes b$ have order $2$ or $3$. If both $a$ and $b$ are
projectively non-scalar, then
\begin{equation}\label{eq:tensor-rank-bound-complete}
\mathrm{rank}(x-I)\geq r+s-2.
\end{equation}
Indeed, for order $2$, after rescaling the two factors we may write
$a=I+P$ and $b=I+Q$, where $P^2=Q^2=0$. If
$p=\mathrm{rank}(P)$ and $q=\mathrm{rank}(Q)$, direct calculation gives
\[
\mathrm{rank}(a\otimes b-I)
=
ps+rq-2pq
\geq r+s-2.
\]
For order $3$, the two factors are semisimple. If their three
eigenvalue multiplicities are
$(r_0,r_1,r_2)$ and $(s_0,s_1,s_2)$, respectively, each triple has at
least two non-zero entries. The multiplicity of any prescribed
eigenvalue of $a\otimes b$ is at most
\[
(r-1)(s-1)+1.
\]
This proves~\eqref{eq:tensor-rank-bound-complete}.
Since $rs=d\geq 9$ it follows that $r+s-2\geq 4$, hence each generating bireflection is
projectively non-scalar on exactly one factor, without loss of generality suppose that $x$ is projectively non-scalar on $V_1$. After absorbing scalars,
$x=c\otimes I_s$, and hence
\[
2=\mathrm{rank}(x-I_d)
=
s\,\mathrm{rank}(c-I_r).
\]
Since the tensor decomposition is non-trivial, $s\geq2$, and therefore
$s=2$ and $r=d/2$. Since $d/2>2$, any generating bireflection which is projectively non-scalar on exactly one factor must act by scalar on $V_2$. Therefore, $N$ stabilizes every subspace
$V_1\otimes L$, where $L$ is a one-dimensional subspace of the
two-dimensional factor. This contradicts irreducibility.
\end{proof}

We may now apply the bireflection theorem of Guralnick and Saxl~\cite[Theorem~7.A]{GuralnickSaxl}. Recall that $k=\overline{\mathbb F}_2$ and $\overline{V}=k\otimes_{\mathbb F_2} V$.

\begin{lemma}\label{l:GS-consequence-complete}
One of the following holds:
\[
N\cong\Omega_d^+(2),\quad
N\cong\Omega_d^-(2),\quad
N\cong\mathrm{Alt}(d+1),\quad\text{or}\quad
N\cong\mathrm{Alt}(d+2),\quad
\]
in the corresponding natural or fully deleted representation.
In particular, $d$ is even.
\end{lemma}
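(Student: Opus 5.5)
The plan is to feed the structural facts already established for $N$ into the classification of Guralnick and Saxl~\cite[Theorem~7.A]{GuralnickSaxl}, and then remove every outcome that conflicts with those facts.

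\textbf{What is known about $N$.} The following hold under our standing assumption $d\geq 9$.
\begin{enumerate}
\item $N$ is an absolutely irreducible subgroup of $\mathrm{GL}_d(2)$ (Lemma~\ref{l:N-absolute-complete}).
\item On $\overline V$ it is primitive (Lemma~\ref{l:N-primitive-complete}) and tensor-indecomposable (Lemma~\ref{l:N-tensor-complete}).
\item $N$ is generated by bireflections of order $2$ or $3$, namely the $G_0$-conjugates of $B_0,\ldots,B_{d-1}$.
\item $N$ contains no transvection (Lemma~\ref{l:G0notransvection}).
\end{enumerate}
These are exactly the hypotheses of the Guralnick--Saxl theorem on primitive, tensor-indecomposable linear groups generated by bireflections in characteristic $2$. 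For $d\ge9$ that theorem leaves three families for $N$, modulo scalars, which are trivial here since $N\le \mathrm{GL}_d(2)$:
\begin{enumerate}
\item a classical group in a natural representation over a subfield: $\mathrm{SL}$, $\mathrm{SU}$, $\mathrm{Sp}$ or $\Omega^\pm$, or their normalizers in the relevant group;
\item $\mathrm{Alt}(m)$ or $\mathrm{Sym}(m)$, with $m\in\{d+1,d+2\}$, on the fully deleted permutation module;
\item a short list of sporadic or small-dimensional examples, all of dimension at most $8$.
\end{enumerate}
The small examples are excluded at once by $d\ge9$.

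\textbf{Classical family.} Because $N$ is absolutely irreducible and realized over $\mathbb{F}_2$, the field of definition must be $\mathbb{F}_2$; a unitary group over $\mathbb{F}_4$ would not be realized over $\mathbb{F}_2$. The cases $\mathrm{SL}_d(2)$, $\mathrm{Sp}_d(2)$ and $\mathrm{O}^\pm_d(2)$ all contain transvections, namely elementary transvections, symplectic transvections, and orthogonal transvections $x\mapsto x+\langle x,f\rangle f$ with $Q(f)=1$. The absence of transvections therefore leaves only $\Omega^\pm_d(2)$ in its natural module. Every candidate is an overgroup of a symplectic group, hence preserves a form, which forces $d$ even; this is consistent with $\mathrm{SL}_d(2)$ being excluded.

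\textbf{Permutation-module family.} The image of $\mathrm{Sym}(m)$ contains the transvections induced by transpositions, so it is excluded and only $\mathrm{Alt}(m)$ remains. The fully deleted module has dimension $m-1$ or $m-2$ according to the parity of $m$. Matching this dimension to $d$ gives $d$ even, with $m=d+1$ or $m=d+2$. In that case the module carries the nondegenerate alternating form, which again forces $d$ even.

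\textbf{Expected obstacle.} The difficult step is transcribing the list in~\cite[Theorem~7.A]{GuralnickSaxl} faithfully. In particular one must check that every exceptional entry with $d\ge9$ in characteristic $2$ either contains a transvection or is not realized over $\mathbb{F}_2$. One must also confirm that the theorem's hypotheses match ours: it may be stated for groups generated by elements with $\dim[V,x]\le2$, which our generators satisfy. If needed, I would first check the list for generic representations of Lie type groups in cross characteristic, and exclude them by the dimension and field restrictions.
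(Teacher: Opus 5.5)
Your proposal is correct and follows essentially the paper's argument. You feed absolute irreducibility, primitivity, tensor-indecomposability and $\nu_N(\overline V)\le 2$ into \cite[Theorem~7.A]{GuralnickSaxl}. You then discard everything except $\Omega_d^\pm(2)$, $\mathrm{Alt}(d+1)$ and $\mathrm{Alt}(d+2)$, using Lemma~\ref{l:G0notransvection} together with the field of definition being $\mathbb{F}_2$ (the paper cites \cite[Corollary~2.10.10]{KleidmanLiebeck} for the latter). Finally you read off $d$ even from the dimension of the fully deleted module.

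There are three minor slips.
\begin{enumerate}
\item You say the exceptional entries are all of dimension at most $8$, but the entry the paper actually has to dismiss is the $\mathrm{SU}_5(2)$ case, and it excludes that by the characteristic, not by $d\le 8$.
\item $\Omega_d^\pm(2)$ is a subgroup, not an overgroup, of $\mathrm{Sp}_d(2)$.
\item As the paper does, you should note that the $kN$-isomorphism between $\overline V$ and the fully deleted module over $k$ descends to $\mathbb{F}_2$. Only then is $V$ itself the fully deleted $\mathbb{F}_2N$-module.
\end{enumerate}
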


\begin{proof}
By Lemmas~\ref{l:N-absolute-complete},
\ref{l:N-primitive-complete}, and~\ref{l:N-tensor-complete}, the
hypotheses of \cite[Theorem~7.A]{GuralnickSaxl} hold over
$k$. Since $N$ contains a bireflection,
the invariant $\nu_N(\overline V)$ in that theorem is at most $2$.
Thus $N$ is classical in its
natural representation or is alternating or symmetric on a fully
deleted permutation module. (Note that the $\mathrm{SU}_5(2)$ case listed in \cite{GuralnickSaxl} does not occur since we are working in characteristic $2$.)

Suppose first that the classical socle is linear, symplectic, or
unitary. In characteristic $2$, its natural representation contains a
transvection. Such a transvection belongs to $N$, contradicting Lemma~\ref{l:G0notransvection}. Hence only the orthogonal
case remains.


Let $f$ be a positive integer such that $N$ is defined as a natural orthogonal subgroup of $\mathrm{GL}_d(\mathbb{F}_{2^f})\leq \mathrm{GL}(\overline{V})$. 
Then \cite[Corollary~2.10.10]{KleidmanLiebeck} implies that $f=1$ thus we can assume that $N$ is embedded naturally in $\mathrm{GL}_d(2)$ as an orthogonal group. 
The only linear groups between
$\Omega_d^\pm(2)$ and its natural linear normalizer are
$\Omega_d^\pm(2)$ and $\mathrm O_d^\pm(2)$. The latter contains
orthogonal transvections, whence
$N=\Omega_d^\pm(2)$.

Suppose now that $N$ is of alternating or symmetric type. The fully
deleted module has dimension $c-1$ when $c$ is odd and $c-2$ when
$c$ is even. Hence $c=d+1$ or $c=d+2$ and in both cases $d$ is even.
A transposition acts as a transvection on the fully deleted
module, and so $N=\mathrm{Alt}(d+1)$ or $\mathrm{Alt}(d+2)$ by Lemma~\ref{l:G0notransvection}. Let $U$ be the fully deleted module of dimension $d$ over $\mathbb{F}_2$. By assumption  $\overline{V}$ and $k\otimes_{\mathbb{F}_2} U$ (the fully deleted module over $k$) are isomorphic as $kN$-modules. Therefore $V$ and $U$ are isomorphic as $\mathbb{F}_2N$-modules, as required.
\end{proof}

It remains to recover $G_0$ from the normal subgroup $N$.

\begin{lemma}\label{l:normalizers-complete}
One of the following holds:
\[
G_0\cong\Omega_d^+(2),\quad
G_0\cong\Omega_d^-(2),\quad
G_0\cong\mathrm{Alt}(d+1),\quad\mathrm{Alt}(d+2)
\]
in the corresponding natural or fully deleted representation. In particular, $d$ is even.
\end{lemma}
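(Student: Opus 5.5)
Since $N\unlhd G_0$, we have $N\le G_0\le {\bf N}_{\mathrm{GL}_d(2)}(N)$. The plan is to compute these normalizers in each of the four cases of Lemma~\ref{l:GS-consequence-complete}, and then use Lemma~\ref{l:G0notransvection} (no transvections in $G_0$) together with \eqref{eq:C-not-G0-complete} to exclude any element of the normalizer lying outside $N$.

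\textbf{Orthogonal case.} Let $N=\Omega_d^\pm(2)$ in its natural representation, with $d\ge 10$ even. By \cite[Chapter~2]{KleidmanLiebeck}, ${\bf N}_{\mathrm{GL}_d(2)}(\Omega_d^\pm(2))=\mathrm{O}_d^\pm(2)$. The scalars are trivial over $\mathbb F_2$, and field automorphisms are trivial as well. Since $[\mathrm{O}_d^\pm(2):\Omega_d^\pm(2)]=2$, either $G_0=N$ or $G_0=\mathrm{O}_d^\pm(2)$. The latter contains the orthogonal transvections $\rho_f$ for nonsingular $f$, which contradicts Lemma~\ref{l:G0notransvection}. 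Hence $G_0=\Omega_d^\pm(2)$.

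\textbf{Alternating case.} Let $N=\mathrm{Alt}(m)$ with $m\in\{d+1,d+2\}$, acting on the fully deleted module, where $m\ge 11$. Since $C_{\mathrm{GL}_d(2)}(N)$ is trivial by absolute irreducibility (Lemma~\ref{l:N-absolute-complete}) and the scalars are trivial, $G_0$ embeds in $\mathrm{Aut}(\mathrm{Alt}(m))=\mathrm{Sym}(m)$, because $m\ne 6$. The element of $G_0$ inducing an outer automorphism must preserve the module: the fully deleted module twisted by a transposition is isomorphic to itself. Thus ${\bf N}_{\mathrm{GL}_d(2)}(N)$ is the image of $\mathrm{Sym}(m)$, and it is realized by the natural action. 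Consequently, if $G_0\ne N$, then $G_0$ is the full image of $\mathrm{Sym}(m)$. That image contains the transvections induced by transpositions, contradicting Lemma~\ref{l:G0notransvection}. So $G_0=N$.

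In every case $d$ is even, because the orthogonal groups in question live in even dimension and the fully deleted module has dimension $d=m-1$ or $m-2$ of even parity. This gives the lemma.

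The main obstacle is rigorously identifying the normalizer in the alternating case, i.e. showing that no element of $\mathrm{GL}_d(2)$ beyond the $\mathrm{Sym}(m)$ image normalizes $N$. The intended argument is: if $g$ normalizes $N$, it induces an automorphism $\alpha$ of $N$. Choose $s$ in the $\mathrm{Sym}(m)$ image inducing the same automorphism. Then $gs^{-1}$ centralizes $N$, so it is a scalar by Schur's lemma and hence trivial over $\mathbb F_2$. This uses only $\mathrm{Out}(\mathrm{Alt}(m))=C_2$ for $m\ge 7$. The same centralizer argument handles the orthogonal case, given $\mathrm{Out}(\Omega_d^\pm(2))$ for $d\ge 10$, which is the graph automorphism realized by $\mathrm{O}_d^\pm(2)$.
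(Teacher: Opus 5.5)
Your proposal is correct and follows the paper's proof. In both cases you sandwich $G_0$ between $N$ and ${\bf N}_{\mathrm{GL}_d(2)}(N)$. For the alternating case you identify the normalizer as the natural $\mathrm{Sym}(m)$ via the trivial centralizer and $\mathrm{Aut}(\mathrm{Alt}(m))=\mathrm{Sym}(m)$, and for the orthogonal case as $\mathrm{O}_d^\pm(2)$. You then exclude the index-two overgroup because it contains transvections, contrary to Lemma~\ref{l:G0notransvection}.

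The only difference is in how you justify the orthogonal normalizer. You cite Kleidman--Liebeck (or $\mathrm{Out}(\Omega_d^\pm(2))$). The paper instead argues directly that a normalizing element preserves the polar form, and hence also preserves the quadratic form, since there is no nonzero $N$-fixed functional to account for the difference.
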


\begin{proof}
Since $N\unlhd G_0$, $G_0\leq{\bf N}_{\mathrm{GL}_d(2)}(N)$.

If $N=\mathrm{Alt}(c)$, where $c\in\{d+1,d+2\}$, then
${\bf N}_{\mathrm{GL}_d(2)}(N)=\mathrm{Sym}(c)$.
Indeed, the centralizer of $N$ is trivial by absolute irreducibility,
so conjugation embeds the normalizer into
$\mathrm{Aut}(\mathrm{Alt}(c))=\mathrm{Sym}(c)$; the standard
$\mathrm{Sym}(c)$ on the fully deleted module is already contained in
the normalizer. Thus either $G_0=N$ or $G_0=\mathrm{Sym}(c)$. The
second possibility contains a transposition, which acts as a
transvection, and is excluded by Lemma~\ref{l:G0notransvection}.

If $N=\Omega_d^\pm(2)$, then
${\bf N}_{\mathrm{GL}_d(2)}(N)=\mathrm O_d^\pm(2)$.
For completeness, let $Q$ be the invariant quadratic form. The polar
form of $Q$ is, up to scalar, the unique $N$-invariant nondegenerate
alternating form; over $\mathbb F_2$ the scalar is $1$. If an element
normalizes $N$, it sends $Q$ to another $N$-invariant quadratic form
with the same polar form. The difference of these two quadratic forms
has zero polar form and is the square of an $N$-invariant linear
functional. Since the natural $N$-module has no non-zero fixed
functional, the two quadratic forms are equal. Thus the normalizer is
contained in $\mathrm O_d^\pm(2)$, and the reverse containment is
standard. Since
$[\mathrm O_d^\pm(2):\Omega_d^\pm(2)]=2$, either $G_0=N$ or
$G_0=\mathrm O_d^\pm(2)$. The latter contains an orthogonal
transvection and is again excluded by Lemma~\ref{l:G0notransvection}.
\end{proof}

When $d\ge 6$, the examples $G_0=\Omega_d^\pm(2)$ do have relational complexity $d+1$ by Proposition~\ref{thrm:calculations}, giving part~\eqref{part32} of Theorem~\ref{thrm:main3}. 
The examples $G_0=\mathrm{Alt}(d+1)$ and $\mathrm{Alt}(d+2)$ also have relational complexity $d+1$ by Proposition~\ref{thrm:calculations}, giving part~\eqref{part22}  of Theorem~\ref{thrm:main3}. This completes the proof.
\section*{Declaration of AI use}
\noindent{\small{A large language model (Claude Opus 5.5) was used to audit the completed manuscript 
for any errors. It was not used to suggest repairs.}}
\thebibliography{10}
\bibitem{Cherlin2016}G. Cherlin, On the relational complexity of a finite permutation group, \textit{J. Algebraic Combin.} \textbf{43} (2016), 339--374.

\bibitem{Cherlin2000}G. Cherlin, Sporadic homogeneous structures, \emph{The Gelfand Mathematical Seminars, 1996--1999}. Dedicated to the memory of Chih-Han Sah, Birkhäuser, Boston (2000), 15--48.

\bibitem{Dickson}
L.~E. Dickson,
\emph{Linear Groups: With an Exposition of the Galois Field Theory},
Dover Publications, New York, 1958 reprint of the 1901 edition.

\bibitem{DixonMortimer}J.~D.~Dixon, B.~Mortimer, \textit{Permutation groups}, Graduate texts in mathematics 163, Springer-Verlag, New York, 1996.
\bibitem{Eberhard}S.~Eberhard, 
Diameter of classical groups generated by transvections,
\textit{J. Algebra} \textbf{653} (2024), 220--256.

\bibitem{GLieSp}N.~Gill, M.W.~Liebeck, P. Spiga, \emph{Cherlin's {C}onjecture for {F}inite {P}rimitive {B}inary {P}ermutation {G}roups}, Lecture Notes in Mathematics 2302, Springer, Cham, 2022.

\bibitem{GLS}N.~Gill, B.~Lod\`{a}, P.~Spiga, On the height and relational complexity of a finite permutation group, \textit{Nagoya Math. J.} \textbf{246} (2022), 372--411.

\bibitem{Giudici}
M. Giudici, Maximal subgroups of almost simple groups with socle
$\mathrm{PSL}(2,q)$,
arXiv:math/0703685.

\bibitem{GuralnickSaxl}R.~Guralnick, J.~Saxl, Generation of finite almost simple groups by conjugates, \textit{J.~Algebra} \textbf{268} (2003), 519--571.

\bibitem{ScottHudson}S.~Hudson, A paper on calculating the height and relational complexity of the primitive actions of $PSL_2(q)$ and $PGL_2(q)$, \href{https://arxiv.org/abs/2607.20295}{arxiv:math/2607.20295}.

\bibitem{Kantor}W.~M.~Kantor, Subgroups of classical groups generated by long root elements, \textit{Trans. Amer. Math. Soc. }\textbf{248} (1979), 347--379.

\bibitem{KelRo} V. Kelsey, C.M. Roney-Dougal, On relational complexity and base size of finite primitive groups, \textit{Pacific J. Math.} \textbf{318} (2022), 89--108.

\bibitem{KleidmanLiebeck}P.~Kleidman, M.~Liebeck, \textit{The Subgroup Structure of the Finite Classical Groups}, London Mathematical Society Lecture Note Series 129, Cambridge University Press, Cambridge, 1990.

\bibitem{McL69} J.~McLaughlin, Some subgroups of {${\rm SL}\sb{n}\,({\bf F}\sb{2})$}, \textit{Illinois J. Math.} \textbf{13} (1969), 108--115.

\bibitem{WuThesis} P. Wu,  Irredundant bases for finite almost simple groups, Ph.D. Thesis, University of St Andrews, 2025. \href{https://research-repository.st-andrews.ac.uk/bitstream/handle/10023/33031/Thesis-Peiran-Wu-complete-version.pdf?sequence=5&isAllowed=y}{Thesis repository}
\end{document}